\newtheorem{theorem}{Theorem}[section]
\newtheorem{proposition}[theorem]{Proposition}
\newtheorem{corollary}[theorem]{Corollary}
\newtheorem{lemma}[theorem]{Lemma}
\theoremstyle{definition}
\newtheorem{example}[theorem]{Example}
\newtheorem{remark}[theorem]{Remark}
\begin{document}

\def\SStar{{\operatorname{SStar}}}
\def\Star{{\operatorname{Star}}}
\def\QStar{{\operatorname{QStar}}}
\def\UU{{\mathcal U}}
\def\setmin{{-}}
\def\UU{{\mathcal U}}
\def\Z{{\mathcal Z}}
\def\PPP{{\mathcal P}}
\def\CCC{{\mathcal C}}
\def\XXX{{\mathcal X}}
\def\YYY{{\mathcal Y}}
\def\id{\operatorname{id}}
\def\Lin{\operatorname{Lin}}
\def\Spec{\operatorname{Spec}}
\def\Cl{\operatorname{Cl}}
\def\Idem{\operatorname{Idem}}
\def\Fix{\operatorname{Fix}}
\def\Cld{\operatorname{-Cl}}
\def\Inv{{\operatorname{Inv}}}
\def\Int{\operatorname{Int}}
\def\Pic{\operatorname{Pic}}
\def\SS{\mathcal{SS}}
\def\U{{\textsf{\textup{U}}}}
\def\V{{\textsf{\textup{V}}}}
\def\Inv{\operatorname{Inv}}
\def\Mor{\operatorname{Mor}}
\def\Int{{\textsf{\textup{Int}}}}
\def\I{{\textsf{\textup{I}}}}
\def\G{{\textsf{\textup{G}}}}
\def\K{{\textsf{\textup{K}}}}
\def\T{{\textsf{\textup{T}}}}
\def\R{{\textsf{\textup{R}}}}
\def\M{{\textsf{\textup{M}}}}
\def\FS{{\textsf{\textup{F}}}}
\def\Idl{{\textsf{\textup{Idl}}}}
\def\A{{\textsf{\textup{A}}}}
\def\C{{\textsf{\textup{C}}}}
\def\N{{\textsf{\textup{N}}}}
\def\Fp{{\mathcal F}}
\def\F{{\mathcal K}}
\def\S{\mathcal S}
\def\P{\operatorname{Prin}}
\def\B{\mathcal B}
\def\f{f}
\def\Inver{\operatorname{-Inv}}
\def\Prin{\textup{-}\mathcal{P}}
\def\Hom{\operatorname{Hom}}
\def\TM{t\textup{-Max}}
\def\TS{t\textup{-Spec}}
\def\Max{\operatorname{Max}}
\def\SSpec{\textup{-Spec}}
\def\SGV{\star\textup{-GV}}
\def\core{\operatorname{-core}}
\def\WBF{\operatorname{WBF}}
\def\sKr{\operatorname{sKr}}
\def\Loc{\operatorname{Loc}}
\def\Ass{\operatorname{Ass}}
\def\wAss{\operatorname{wAss}}
\def\ann{\operatorname{ann}}
\def\ind{\operatorname{ind}}
\def\cl{\star}
\def\ZZ{{\mathbb Z}}
\def\CC{{\mathbb C}}
\def\NN{{\mathbb N}}
\def\RR{{\mathbb R}}
\def\QQ{{\mathbb Q}}
\def\FF{{\mathbb F}}
\def\mm{{\mathfrak m}}
\def\nn{{\mathfrak n}}
\def\aaa{{\mathfrak a}}
\def\bbb{{\mathfrak b}}
\def\ppp{{\mathfrak p}}
\def\qqq{{\mathfrak q}}
\def\MM{{\mathfrak M}}
\def\qq{{\mathfrak Q}}
\def\rr{{\mathfrak R}}
\def\DD{{\mathfrak D}}
\def\cc{{\mathfrak S}}

\newcommand\bigsqcapp{\mathop{\mathchoice
	{\vcenter{\hbox{\huge $ \sqcap $}}}	
	{\vcenter{\hbox{\LARGE $ \sqcap $}}}	
	{\vcenter{\hbox{$ \sqcap $}}}		
	{\vcenter{\hbox{\small $ \sqcap $}}}}}	

\newcommand\bigsqcupp{\mathop{\mathchoice
	{\vcenter{\hbox{\huge$ \sqcup $}}}	
	{\vcenter{\hbox{\LARGE$ \sqcup $}}}	
	{\vcenter{\hbox{$ \sqcup $}}}		
	{\vcenter{\hbox{\small $ \sqcup $}}}}}	

\title[Prequantales]{Nuclei and applications to star, semistar, and semiprime operations}
\author{Jesse Elliott} \address{California
State University, Channel Islands\\ One University Drive \\ Camarillo, California 93012}
\email{jesse.elliott@csuci.edu}

\begin{abstract}
We show that the theory of quantales and quantic nuclei motivate new results on star operations, semistar operations, semiprime operations, ideal systems, and module systems, and conversely the latter theories motivate new results on quantales and quantic nuclei.  Results include representation theorems for precoherent prequantales and multiplicative semilattices; characterizations of the simple prequantales; and a generalization to the setting of precoherent quantales of the construction of the largest finite type semistar operation and the largest stable semistar operation smaller than a given semistar operation.
\end{abstract}

\maketitle

\keywords{Keywords: magma; ordered magma; closure operation; quantale; quantic nucleus; multiplicative lattice; multiplicative semilattice; ring; integral domain; star operation; semistar operation; semiprime operation; ideal system; module system}


\section{Introduction}

Certain classes of ordered algebraic structures---Boolean algebras, Heyting algebras, multiplicative lattices, residuated lattices, locales, and quantales, to name a few---serve to unify aspects of logic, order theory, algebra, and topology.  One such set of unifications occured in the study, known as abstract ideal theory, of the ideal lattice of a commutative ring as a multiplicative lattice \cite{kru1}, initiated by Krull in the mid 1920s.  Another set of notions, including Brouwer lattices, Heyting algebras, and frames, led to the study of locales, or ``point-free topological spaces'' \cite{isb}.  These in turn led to a common generalization of multiplicative lattices and locales known as quantales, or ``quantum locales'', introduced in \cite{mul} by Mulvey to provide a lattice theoretic setting for the foundations of quantum mechanics and the theory of $\operatorname{C}^*$-algebras and to develop the concept of noncommutative topology introduced in \cite{gil} by Giles and Kummer.

This paper concerns applications of the theory of quantales to abstract ideal theory and commutative ring theory.   We show that the theory of nuclei on ordered magmas \cite[Section 3.4.11]{gal}, and more specifically on suitable generalizations of quantales, provides a noncommutative and nonassociative abstract ideal theoretic setting for the theories of star operations, semistar operations, ideal systems, and module systems, and conversely the latter theories motivate new results on quantales.  In this introduction we provide some background on the topics mentioned above and then provide a summary, addressing its organization.   In Sections \ref{sec:POM} through \ref{sec:stable} we provide new results on nuclei, as well as on various classes of ordered magmas, including quantales, multiplicative lattices, and multiplicative semilattices.  Then in Sections \ref{sec:ASO} through \ref{sec:AMSIS} we apply these results to star and semistar operations and ideal and module systems.  

A {\it magma} is a set $M$ equipped with a binary operation on $M$, which we write multiplicatively.  A magma is {\it unital} if it has an identity element.  An {\it ordered magma} is a magma $M$ equipped with a partial ordering $\leq$ on $M$ such that $x \leq x'$ and $y \leq y'$ implies $xy \leq x'y'$ for all $x,x',y,y' \in M$. 

For any self-map $\star$ of a set $X$ we write $x^\star = \star(x)$ for all $x \in X$ and $Y^\star = \star(Y)$ for all $Y \subseteq X$.    If $\star$ is a self-map of a magma $M$, then the binary operation $(x,y) \longmapsto x \star y = (xy)^\star$ on $M$ will be called {\it $\star$-multiplication}.   We then consider the set $M^\star$ as a magma under  $\star$-multiplication restricted to $M^\star$.  

A {\it closure operation} on a poset $S$ is a self-map $\star$ of $S$ satisfying the following conditions.
\begin{enumerate}
\item $\star$ is expansive: $x \leq x^\cl$ for all $x \in S$.
\item $\star$ is order-preserving: $x \leq y$ implies $x^\cl \leq y^\cl$ for all $x,y \in S$.
\item $\star$ is idempotent: $(x^\cl)^\cl = x^\cl$ for all $x \in S$.
\end{enumerate}
A closure operation on $S$ is equivalently a self-map $\star$ of $S$ such that $x \leq y^\cl$ if and only if $x^\cl \leq y^\cl$ for all $x, y \in S$.  A {\it nucleus} (resp., {\it strict nucleus}) on an ordered magma $M$ is a closure operation $\star$ on $M$ such that $x^\star y^\star \leq (xy)^\star$ (resp., $x^\star y^\star = (xy)^\star$) for all $x,y \in M$.   Nuclei were first studied in the contexts of ideal lattices and locales and later in the context of quantales as {\it quantic nuclei} \cite{nie}. 

\begin{example}\label{nearexamples}  \
\begin{enumerate}
\item For any commutative ring $R$, a {\it semiprime operation} on $R$ \cite{pet} is equivalently a nucleus on  the ordered magma $\mathcal{I}(R)$ of all ideals of $R$.  For example, the ideal radical operation $I \longmapsto \sqrt{I}$ is a semiprime operation on $R$, and if $R$ is Noetherian of prime characteristic then the operation of tight closure \cite{hun} is a semiprime operation on $R$.
\item By \cite[Lemmas 10.1--10.4]{giv}, for any topological space $X$ the operation $\operatorname{reg}: U \longmapsto \operatorname{int}(\operatorname{cl}(U))$ is a strict nucleus on the locale $\mathcal{O}(X)$ of open subsets of $X$, where $\operatorname{cl}$ and $\operatorname{int}$ denote the topological closure and interior operations, respectively.  (Open sets $U$ such that $U = U^{\operatorname{reg}}$ are called {\em regular open sets}.)
\end{enumerate}
\end{example}

Star, semistar, and semiprime operations are the most prominent examples of nuclei airising in commutative algebra.  They are useful, among other reasons, for generalizing results on various classes of rings to much larger classes by relaxing, up to closure, certain ideal-theoretic assumptions, and also for specializing to classes of rings for which all or certain ideals are closed with respect to a given closure operation.  Examples  are provided by the following equivalences.
\begin{enumerate}
\item A domain is a UFD iff every ideal is principal up to $t$-closure. 
\item A domain is Dedekind iff every nonzero ideal is invertible, while a domain is Krull iff every nonzero ideal is invertible up to $t$-closure. 
\item A domain is  Pr\"ufer iff every nonzero ideal is $t$-closed.
\item  A domain is divisorial iff every nonzero ideal is closed under the  divisorial closure operation.
\item A ring is reduced iff every ideal is closed under the radical operation.
\item  A Noetherian ring of prime characteristic is weakly F-regular iff every ideal is tightly closed, and is  F-rational iff all parameter ideals are tightly closed. 
\end{enumerate}

The theory of nuclei allows for a unified treatment of closure operations as they naturally arise, not only in commutative algebra, but also in logic, order theory, the theory of quantales, and the theory of $\mbox{C}^*$-algebras.  Further motivation for studying nuclei is provided by the fact that, not only semiprime operations, but also star and semistar operations, can be characterized as nuclei.  Let $D$ be an integral domain with quotient field $F$, and let $\F(D)$ denote the ordered monoid of all nonzero $D$-submodules of $F$ under the operation of multiplication.  A {\it semistar operation} on $D$ is a closure operation $\star$ on the poset $\F(D)$ such that $(aI)^\star = aI^\star$ for all nonzero $a \in F$ and all $I \in \F(D)$.  Semistar operations were introduced in \cite{oka} as a generalization of {\it star operations}, which were introduced by Krull in \cite[Section 6.43]{kru}.  The following result is proved in Section \ref{sec:ASO}.

\begin{theorem}\label{mainprop1}
Let $D$ be an integral domain with quotient field $F$.  The following conditions are equivalent for any self-map $\star$ of $\F(D)$.
\begin{enumerate}
\item $\star$ is a semistar operation on $D$.
\item $\star$ is a nucleus on the ordered monoid $\F(D)$.
\item $\star$ is a closure operation on the poset $\F(D)$ and $\star$-multiplication on $\F(D)$ is associative.
\item $\star$ is a closure operation on the poset $\F(D)$ such that $(I^\star J^\star)^\star = (IJ)^\star$ for all $I,J \in \F(D)$ (or equivalently such that the map $\star: \F(D) \longrightarrow \F(D)^\star$ is a magma homomorphism).
\item $HJ \subseteq I^\star$ if and only if $HJ^\star \subseteq I^\star$ for all $H,I,J \in \F(D)$.
\item $(I^\star :_F J) = (I^\star :_F J^\star)$ for all $I, J \in \F(D)$.
\end{enumerate}
\end{theorem}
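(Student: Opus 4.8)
The plan is to establish the chain of equivalences by proving enough implications to cycle through all six conditions, leaning on the abstract fact (stated in the preliminaries) that a closure operation $\star$ on a poset is equivalently a self-map with $x \leq y^\star \iff x^\star \leq y^\star$, together with the definition of a nucleus. First I would set up the basic dictionary: since $\F(D)$ is an ordered monoid under module multiplication and the partial order is containment, a closure operation $\star$ is a nucleus precisely when $I^\star J^\star \subseteq (IJ)^\star$ for all $I, J$. The key elementary observation to record first is that for any closure operation $\star$ on $\F(D)$, the inequality $I^\star J^\star \subseteq (IJ)^\star$ is equivalent to $(I^\star J^\star)^\star = (IJ)^\star$: one direction applies $\star$ and uses idempotence and the fact that $IJ \subseteq I^\star J^\star$ (monotonicity of multiplication) gives the reverse containment after closure; the other is immediate since $X \subseteq X^\star$. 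This gives $(2) \Leftrightarrow (4)$ quickly, and the parenthetical in (4) is just the statement that $\star \colon \F(D) \to \F(D)^\star$ preserves the (restricted) multiplication, which is exactly $(I \star J)^\star = I^\star \star J^\star$ unwound.

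Next I would handle $(4) \Leftrightarrow (3)$: associativity of $\star$-multiplication, $((IJ)^\star K)^\star = (I(JK)^\star)^\star$, follows from (4) by rewriting $(IJ)^\star K \subseteq$-sandwiching and applying the homomorphism property twice; conversely, associativity of $\star$-multiplication together with the monoid identity (note $D \in \F(D)$ acts as identity and $D^\star$ is the identity of $\F(D)^\star$) should force the homomorphism property — this is the one spot where I'd want to be careful, since pure associativity of a derived operation need not immediately give multiplicativity of $\star$ without using the unit; I expect the argument is that $(IJ)^\star = (I \star J)^\star$ and expanding $I \star (J \star D^{\mathrm{something}})$ lets one peel off closures. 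For $(2) \Leftrightarrow (1)$, the content is that the module-scaling axiom $(aI)^\star = aI^\star$ is equivalent, for a closure operation, to the nucleus inequality; here I would use that every $J \in \F(D)$ is a sum (join) of cyclic modules $aD$ with $a \in F$, that closure operations on $\F(D)$ need not commute with sums but do satisfy $(\sum J_i)^\star = (\sum J_i^\star)^\star$, and that $(aD)I^\star = a I^\star$, reducing the nucleus condition to the scaling axiom — invertibility of the cyclic modules $aD$ in $\F(D)$ is what makes the scaling axiom equivalent to an equality rather than just an inequality.

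Finally, $(5)$ and $(6)$ are residuation reformulations. For $(5) \Leftrightarrow (4)$: the condition $HJ \subseteq I^\star \iff HJ^\star \subseteq I^\star$ says closure is "transparent" to multiplication inside closed elements; the forward direction of (5) with $H = I^\star J^\star$-type substitutions, or more cleanly taking $H$ running over all modules and $I^\star$ running over closed elements, is a Galois-connection restatement of $(IJ^\star)^\star \subseteq (IJ)^\star$, which by symmetry in $I, J$ and the already-established equivalences is (4). For $(6) \Leftrightarrow (5)$, I would use the adjunction $HJ \subseteq I^\star \iff H \subseteq (I^\star :_F J)$ in the ordered monoid $\F(D)$ (residuals exist there because $\F(D)$ is closed under arbitrary sums/intersections inside $F$), so that (5) becomes $(I^\star :_F J) = (I^\star :_F J^\star)$ directly. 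The main obstacle I anticipate is pinning down $(3) \Rightarrow (4)$ cleanly — making sure associativity of $\star$-multiplication, not obviously a "local" condition, really does recover the global homomorphism property — and I would resolve it by exploiting the unit $D$ of $\F(D)$ and the identity $D^\star$ of $\F(D)^\star$ to contract associators down to the needed equalities.
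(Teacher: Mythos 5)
Your proposal is correct and follows essentially the same route as the paper: the text proves $(1)\Leftrightarrow(2)$ by combining Proposition~\ref{closureprop2} (the sup-spanning criterion, with $\Sigma=\operatorname{Prin}(D)$) and Proposition~\ref{closureprop3} (invertible elements are transportable through any nucleus), and obtains $(3)$--$(6)$ from the general characterizations of nuclei in Propositions~\ref{closureprop1} and~\ref{closureprop1a}, which is exactly the content you re-derive inline, including the use of the unit $D$ to pass from associativity of $\star$-multiplication back to the homomorphism property and the residuation reading of $(I^\star:_F J)$ to relate $(5)$ and $(6)$.
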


\begin{corollary}\label{latticeisom}
Let $D$ and $D'$ be integral domains.  If the ordered monoids $\F(D)$ and $\F(D')$ are isomorphic, then the lattices of all semistar operations on $D$ and $D'$, respectively, are isomorphic.
\end{corollary}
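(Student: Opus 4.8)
The plan is to leverage the equivalence (1)$\Leftrightarrow$(2) of Theorem \ref{mainprop1}: the semistar operations on $D$ are precisely the nuclei on the ordered monoid $\F(D)$. Since ``nucleus'' is a notion formulated purely in terms of the ordered-magma structure, an isomorphism of ordered monoids should transport nuclei to nuclei bijectively; and since such a transport is order-preserving in both directions, it should be an isomorphism of the posets of nuclei, hence, these posets being lattices, a lattice isomorphism.

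Concretely, fix an isomorphism $\varphi : \F(D) \longrightarrow \F(D')$ of ordered monoids, so that $\varphi$ is bijective, $\varphi(IJ) = \varphi(I)\varphi(J)$, and $I \leq J$ if and only if $\varphi(I) \leq \varphi(J)$. Define a map $\Phi$ on self-maps of $\F(D)$ by conjugation, $\Phi(\star) = \varphi \circ \star \circ \varphi^{-1}$. First I would check that $\Phi$ carries closure operations to closure operations: expansiveness, order-preservation, and idempotence of $\Phi(\star)$ follow by transporting the corresponding properties of $\star$ through $\varphi$ and $\varphi^{-1}$, using that $\varphi$ is an order isomorphism. Next I would check that $\Phi$ carries nuclei to nuclei: given $J, K \in \F(D')$, put $I = \varphi^{-1}(J)$ and $L = \varphi^{-1}(K)$; multiplicativity of $\varphi$ gives $\varphi(I^\star L^\star) = J^{\Phi(\star)} K^{\Phi(\star)}$ and $\varphi((IL)^\star) = (JK)^{\Phi(\star)}$, so the nucleus inequality $I^\star L^\star \leq (IL)^\star$ for $\star$ yields $J^{\Phi(\star)} K^{\Phi(\star)} \leq (JK)^{\Phi(\star)}$ for $\Phi(\star)$. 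By Theorem \ref{mainprop1}, $\Phi$ thus restricts to a map from the semistar operations on $D$ to those on $D'$, and conjugation by $\varphi^{-1}$ is a two-sided inverse, so this restriction is a bijection.

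Finally I would verify that $\Phi$ is an order isomorphism for the pointwise order on operations: if $\star_1 \leq \star_2$, i.e.\ $I^{\star_1} \leq I^{\star_2}$ for all $I \in \F(D)$, then for each $J \in \F(D')$ we get $J^{\Phi(\star_1)} = \varphi(\varphi^{-1}(J)^{\star_1}) \leq \varphi(\varphi^{-1}(J)^{\star_2}) = J^{\Phi(\star_2)}$, and the same argument applied to $\varphi^{-1}$ gives the converse implication. An order isomorphism between posets that happen to be lattices automatically preserves all existing (finite, indeed arbitrary) meets and joins, hence is a lattice isomorphism; this gives the result.

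I do not expect a genuine obstacle: the substance is entirely in Theorem \ref{mainprop1}, which reduces ``semistar operation'' to the order-and-multiplication-only notion of ``nucleus'', after which only routine bookkeeping in the conjugation computations remains (the one thing to keep straight is where $\varphi$ versus $\varphi^{-1}$ is applied). The only mild point worth a remark is that the relevant posets are lattices; alternatively, one can simply observe that the lattice operations on nuclei (pointwise infimum, and the supremum it induces) are defined purely from the ordered-monoid structure and are therefore visibly transported by $\Phi$.
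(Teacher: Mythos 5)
Your proof is correct and is exactly the routine transport-of-structure argument the paper intends: the paper states Corollary~\ref{latticeisom} without proof under its blanket convention that omitted proofs are routine reconstructions, and your conjugation map $\Phi(\star) = \varphi \circ \star \circ \varphi^{-1}$ is the natural such reconstruction once Theorem~\ref{mainprop1} has identified semistar operations with nuclei. The only thing to keep in mind, which you did, is that the relevant poset $\SStar(D) = \N(\F(D))$ is in fact a (complete) lattice by the results of Section~\ref{sec:COCL}, so an order isomorphism is automatically a lattice isomorphism.
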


A $D$-submodule $I$ of $F$ is said to be a {\it fractional ideal} of $D$ if $aI \subseteq D$ for some nonzero element $a$ of $F$.  Let $\Fp(D)$ denote the set of all nonzero fractional ideals of $D$, which is an ordered submonoid of $\F(D)$.  A {\it star operation} $*$ on $D$ is a closure operation on the poset $\Fp(D)$ such that $D^* = D$ and $(aI)^* = aI^*$ for all nonzero $a \in F$ and all $I \in \Fp(D)$.  Theorem \ref{mainprop2} provides a characterization of star operations analogous to that of Theorem \ref{mainprop1} above.  In particular, a star operation on $D$ is equivalently a nucleus $*$ on the ordered monoid $\Fp(D)$ such that $D^* = D$.  The weak ideal systems and module systems of \cite{hal1, hal}, which generalize semiprime operations and star and semistar operations, respectively, can also be characterized as nuclei: see Section \ref{sec:AMSIS}.

By Theorem \ref{mainprop1}, one can characterize a semistar operation on $D$ as a self-map $\star$ of $\F(D)$ satisfying a single axiom, namely, statement (5) (or (6)) of the theorem.   The theorem suggests that there is potential overlap among the theory of semistar operations, abstract ideal theory, and the theory of quantales and quantic nuclei.  The connections among these theories suggested by Theorem \ref{mainprop1} and its equivalents for star operations and semiprime operations are the main inspiration for this paper.

Quantales provide a natural context in which to study nuclei, as the nuclei on a quantale classify its quotient objects in the category of quantales.  A {\it quantale} (resp., {\it prequantale}) is an ordered semigroup (resp., ordered magma) $Q$ such that the supremum $\bigvee X$ of $X$ exists and $a (\bigvee X) = \bigvee(aX)$ and $(\bigvee X)a = \bigvee(Xa)$ for any $a \in Q$ and any subset $X$ of $Q$ \cite[Definition 2.4.2]{ros}.  A quantale is equivalently an associative prequantale.  A {\it multiplicative lattice} is a commutative unital quantale,  and a  {\it locale}, or {\it frame}, is a quantale in which multiplication is the operation $\wedge$ of infimum.  A prequantale (resp., quantale, multiplicative lattice) is equivalently a magma object (resp., semigroup object, commutative monoid object) in the monoidal category of sup-lattices, that is, the category of complete lattices equipped with the tensor product with morphisms as the sup-preserving functions \cite[Section II.5]{joy}. 

\begin{example} \
\begin{enumerate}
\item The power set $2^M$ of a magma $M$ is a prequantale under the operation $(X,Y) \longmapsto XY = \{xy: \ x \in X, y \in Y\}$.  It is the free prequantale on the magma $M$. The prequantale $2^M$ is a quantale if and only if $M$ is a semigroup, so $2^M$ is the free quantale on $M$ if $M$ is a semigroup.
\item The poset of all supremum-preserving self-maps of a complete lattice is a unital quantale under the operation of composition.
\item Any totally ordered complete lattice is a frame, as is any complete Boolean algebra.
\item The complete lattice $\mathcal{O}(X)$ of all open subsets of a topological space $X$ is a locale.
\item For any algebra $A$ over a ring $R$, the complete lattice $\operatorname{Mod}_R(A)$ of all (two-sided) $R$-submodules of $A$ is a quantale under the operation $(I,J) \longmapsto IJ = \{\sum_{x \in X, y \in Y} xy: X \subseteq I, Y \subseteq J \mbox{ finite}\}$.
\item The poset  $\operatorname{Max}(A)$ of all closed linear subspaces of a unital $\mbox{C}^*$-algebra $A$ is a unital quantale with involution \cite[Definition 2.4]{krum2} under the operation $(M,N) \longmapsto \overline{MN}$ and by \cite[Theorem 5.4]{krum2} is a complete invariant of $A$.
\end{enumerate}
\end{example} 

In the definition above of quantales and prequantales, restricting subsets $X$ to be among certain subclasses of subsets of $Q$ yields more general structures with respect to which it is natural to study nuclei.  This is required, for example, to accommodate the theory of star operations, since the ordered monoids on which star operations are defined are bounded complete but not complete.   We will say that a {\it near prequantale} (resp., {\it semiprequantale}) is an ordered magma $Q$ such that $\bigvee X$ exists and $a (\bigvee X) = \bigvee(aX)$ and $(\bigvee X)a = \bigvee(Xa)$ for any $a \in Q$ and any nonempty subset $X$ (resp., any nonempty finite or bounded subset $X$) of $Q$.   A near prequantale is equivalently a semiprequantale $Q$ with a largest element, and a prequantale is equivalently a near prequantale $Q$ with a smallest element $\bigwedge Q$ that annihilates every element of $Q$. We define the terms {\it near quantale}, {\it semiquantale}, {\it near multiplicative lattice}, and {\it semimultiplicative lattice} in the obvious way.  A theme of this paper is that many known results about quantales and multiplicative lattices generalize to these more general structures.  In particular, the attempt is made to state each result in as general a setting as possible.  For example, results on star operations are generalized to the context of nuclei on precoherent semimultiplicative lattices.

\begin{example} \
\begin{enumerate}
\item If $a$ is an idempotent element of a prequantale $Q$, then the set $Q_{\leq a} = \{x \in Q: x \leq a \}$ is a subprequantale of $Q$, while the set $Q_{\geq a} = \{x \in Q : x \geq a\}$ is a sub near prequantale of $Q$.
\item If $Q$ is a prequantale and $x y = 0$ implies $x = 0$ or $y = 0$ for all $x,y \in Q$, where $0 = \bigwedge Q$,  then $Q\setmin\{0\}$ is a sub near prequantale of $Q$.  In particular, the set $2^M \setmin \{\emptyset\}$ of all nonempty subsets of a magma $M$ is a sub near prequantale of the prequantale $2^M$, and it is the free near prequantale on the magma $M$. 
\end{enumerate}
\end{example}

The remainder of this paper is organized as follows.  In Section \ref{sec:POM} we introduce several classes of ordered magmas.  In Sections \ref{sec:MCO} and \ref{sec:COCL} we study properties of nuclei and the poset of all nuclei on an ordered magma, with particular attention to the classes of ordered magmas defined in Section \ref{sec:POM}.    In Section \ref{sec:DC} we generalize the generalized divisorial closure semistar operations \cite[Example 1.8(2)]{pic} to the contexts of near prequantales and residuated ordered monoids, and we use this to give a characterization of the simple near prequantales. 

A closure operation $\star$ on a poset $S$ is said to be {\it finitary} if $(\bigvee \Delta)^\star = \bigvee (\Delta^\star)$ for all directed subsets $\Delta$ of $S$ for which $\bigvee \Delta$ exists.  For example, a  {\it finite type} semistar operation \cite[Definition 3]{oka} on an integral domain $D$ is equivalently a finitary nucleus on $\F(D)$.  In Section \ref{sec:PFN} we examine the poset of all finitary nuclei on an ordered magma.   We also generalize the well-known construction of the largest finite type semistar operation $\star_f$ smaller than a semistar operation $\star$, as follows.  An element $x$ of a poset $S$ is said to be {\it compact} if whenever $x \leq \bigvee \Delta$ for some directed subset $\Delta$ of $S$ for which $\bigvee \Delta$ exists one has $x \leq y$ for some $y \in \Delta$.  Generalizing the notion of a precoherent quantale \cite[Definition 4.1.1(2)]{ros}, we say that an ordered magma $M$ is {\it precoherent} if every element of $M$ is the supremum of a set of compact elements of $M$ and the set $\K(M)$ of all compact elements of $M$ is a submagma of $M$.  For example, the near multiplicative lattice $\F(D)$ is precoherent for any integral domain $D$ with quotient field $F$ since $\K(\F(D))$ is the set of all nonzero finitely generated $D$-submodules of $F$. The following result is proved in Section \ref{sec:PFN}.

\begin{theorem}\label{precoherentthm}
If $\star$ is any nucleus on a precoherent semiprequantale $Q$, then there is a largest finitary nucleus $\star_f$ on $Q$ that is smaller than $\star$, and one has $x^{\star_f} = \bigvee\{y^\star: \ y \in \K(Q) \mbox{ and } \ y \leq x\}$ for all $x \in Q$.
\end{theorem}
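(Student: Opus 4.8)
The plan is to take $\star_f$ to be the self-map of $Q$ given by the displayed formula and to check directly that it is a finitary nucleus lying below $\star$ and dominating every other such; the crux is to reformulate $x^{\star_f}$ as a \emph{directed} supremum, since compactness can only be brought to bear against directed suprema.

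First I would fix notation and record the basic facts. For $x\in Q$ write $\K_x=\{y\in\K(Q):y\le x\}$. A finite join of compact elements is again compact (a directed family whose supremum dominates the join dominates each factor, hence, by directedness, dominates the join), and such a join lies below $x$ once its factors do; so $\K_x$ is nonempty and directed, and precoherence gives $\bigvee\K_x=x$. Since $\star$ is order-preserving, $\{y^\star:y\in\K_x\}$ is then a directed set bounded above by $x^\star$, so it has a supremum in the semiprequantale $Q$; this is $x^{\star_f}$, and in particular $x^{\star_f}\le x^\star$, i.e.\ $\star_f\le\star$. Expansiveness is $x=\bigvee\K_x\le\bigvee\{y^\star:y\in\K_x\}=x^{\star_f}$, and order-preservation follows from $\K_x\subseteq\K_{x'}$ when $x\le x'$. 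For idempotence I would take $z\in\K(Q)$ with $z\le x^{\star_f}$; because $x^{\star_f}$ is the supremum of the \emph{directed} set $\{y^\star:y\in\K_x\}$, compactness of $z$ yields some $y\in\K_x$ with $z\le y^\star$, whence $z^\star\le(y^\star)^\star=y^\star\le x^{\star_f}$, and taking the supremum over such $z$ gives $(x^{\star_f})^{\star_f}\le x^{\star_f}$. Thus $\star_f$ is a closure operation.

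Next I would verify the nucleus inequality. Applying the distributivity laws of a semiprequantale (twice) to the bounded sets $\{u^\star:u\in\K_x\}$ and $\{v^\star:v\in\K_y\}$ gives $x^{\star_f}y^{\star_f}=\bigvee\{u^\star v^\star:u\in\K_x,\ v\in\K_y\}$; for such $u,v$ one has $u^\star v^\star\le(uv)^\star$ since $\star$ is a nucleus, $uv\in\K(Q)$ since $\K(Q)$ is a submagma, and $uv\le xy$, so $uv\in\K_{xy}$ and hence $u^\star v^\star\le(xy)^{\star_f}$; taking the supremum yields $x^{\star_f}y^{\star_f}\le(xy)^{\star_f}$. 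For finitariness, given a directed $\Delta$ with $\bigvee\Delta$ existing, one inequality comes from $\star_f$ being order-preserving, and for the other I would take $z\in\K(Q)$ with $z\le\bigvee\Delta$, use compactness to get $z\le\delta$ for some $\delta\in\Delta$, conclude $z^\star\le\delta^{\star_f}\le\bigvee\Delta^{\star_f}$, and take the supremum over such $z$. Finally, for maximality, if $\bullet$ is any finitary nucleus with $\bullet\le\star$, then since $\bullet$ is finitary and $\K_x$ is directed with supremum $x$ one has $x^\bullet=\bigvee\{y^\bullet:y\in\K_x\}\le\bigvee\{y^\star:y\in\K_x\}=x^{\star_f}$, so $\bullet\le\star_f$.

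The step I expect to cause the most trouble is idempotence, or rather the lemma it rests on: compactness is phrased in terms of directed suprema, so it is useless against $z\le x^{\star_f}$ until $x^{\star_f}$ has been exhibited as a directed supremum, and this forces one to check that $\K_x$ — hence $\{y^\star:y\in\K_x\}$ — is directed, which is exactly the point where one needs finite joins of compacts to be compact and $Q$ to possess the relevant finite suprema. By contrast the other two consequences of precoherence enter only at isolated places: the submagma clause makes $uv$ compact in the nucleus step, and the identity $x=\bigvee\K_x$ drives expansiveness and maximality.
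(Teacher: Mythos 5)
Your proof is correct and follows essentially the same route as the paper, which modularizes the same steps through Lemma~\ref{finitetypeprop}, Proposition~\ref{finitaryclosure}, and Theorem~\ref{klattice}: directedness of $\K_x$ for idempotence and finitariness, closure of $\K(Q)$ under multiplication for the nucleus inequality, and the identity $x^\bullet=\bigvee\{y^\bullet:y\in\K_x\}$ for maximality. The only cosmetic difference is that you verify $x^{\star_f}y^{\star_f}\leq(xy)^{\star_f}$ directly by distributing twice, whereas the paper invokes Proposition~\ref{closureprop2} to reduce to checking $ax^{\star_f}\leq(ax)^{\star_f}$ and $x^{\star_f}a\leq(xa)^{\star_f}$ for compact $a$ only.
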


Also in Section \ref{sec:PFN}, we prove that the association $Q \longmapsto \K(Q)$ yields an equivalence between the category of precoherent near prequantales (resp., precoherent prequantales) and the category of multiplicative semilattices (resp., prequantic semilattices).  This result, along with Theorem \ref{precoherentthm} above, provides substantive motivation for the study of precoherent near prequantales, particularly in algebraic settings.

In Section \ref{sec:stable} we generalize several known results and new results on stable semistar operations to the context of precoherent semimultiplicative lattices, where a semistar operation $\star$ on an integral domain $D$ is said to be {\it stable} if  $(I \cap J)^\star = I^\star \cap J^\star$ for all $I,J \in \F(D)$ \cite{fon0}. 

Finally, in Sections \ref{sec:ASO} through \ref{sec:AMSIS} we apply the theory of nuclei to  star and semistar operations and ideal and module systems.  There, for example, we prove Theorem \ref{mainprop1} and its equivalents for star operations and ideal and module systems, we give a construction of the smallest semistar operation extending a given star operation, and we provide some new examples of semistar operations induced by complete integral closure, plus closure, and tight closure.  The last of these examples leads to two potentially useful definitions of tight closure as a semiprime operation on any commutative ring, not necessarily Noetherian, of prime characteristic.  

We remark that this paper is a substantial revision of a previous version \cite{ell2} posted on arxiv.org in 2011 under a different title.




\section{Ordered algebraic structures}\label{sec:POM}

If a proof in this paper is omitted then its reconstruction should be routine.  All rings and algebras are assumed unital and all magmas are written multiplicatively.  For any subset $X$ of a poset $S$ we write $\bigvee X = \bigvee_S X$ for the supremum of $X$ and $\bigwedge X = \bigwedge_S X$ for the infimum of $X$ if either exists.  (Note the cases $\bigvee \emptyset = \bigwedge S$ and $\bigwedge \emptyset = \bigvee S$.)

The {\it category of posets} has as morphisms the order-preserving functions.
A poset in which every pair of elements of has a supremum (resp., infimum) is said to be a {\it join semilattice} (resp., {\it meet semilattice}).  A {\it lattice} is a poset that is both a join and meet semilattice.  A poset $S$ is {\it complete} if every subset of $S$ has a supremum in $S$, or equivalently if every subset of $S$ has an infimum $S$.  A complete poset is also known as a {\it complete lattice}, or {\it sup-lattice}.  A poset $S$ is {\it bounded complete} if every nonempty subset of $S$ that is bounded above has a supremum in $S$, or equivalently if every nonempty subset of $S$ that is bounded below has an infimum in $S$. 

We will say that a poset $S$ is {\it near sup-complete}, or a {\it near sup-lattice}, if every nonempty subset of $S$ has a supremum in $S$. A near sup-lattice is equivalently a bounded complete poset with a largest element, or equivalently a poset $S$ such that every subset of $S$ that is bounded below has an infimum in $S$.   A sup-lattice is equivalently a near sup-lattice having a least element.  A map $f: S \longrightarrow T$ between posets is said to be {\it sup-preserving} if $f(\bigvee X) = \bigvee f(X)$ for every subset $X$ of $S$ for which $\bigvee X$ exists.   We will say that $f: S \longrightarrow T$ is {\it near sup-preserving} if $f(\bigvee X) = \bigvee f(X)$ for every nonempty subset $X$ of $S$ such that $\bigvee X$ exists.  If $f$ is near sup-preserving, then $f$ is sup-preserving if and only if $f(\bigwedge S) = \bigwedge T$ or $\bigwedge S$ does not exist.  The morphisms in the {\it category of sup-lattices} (resp., {\it category of near sup-lattices}) are the sup-preserving (resp., near sup-preserving) functions. 

A nonempty subset $\Delta$ of a poset $S$ is said to be {\it directed} if every finite subset of $\Delta$ has an upper bound in $\Delta$.   A poset $S$ is {\it directed complete}, or a {\it dcpo}, if each of its directed subsets has a supremum in $S$.  We will say that $S$ is a {\it bdcpo} if every directed subset of $S$ that is bounded above has a supremum in $S$.  A subset $X$ of $S$ is said to be {\it downward closed} (resp., {\it upward closed}) if $y \in X$ whenever $y \leq x$ (resp., $y \geq x$) for some $x \in X$.  The set $X$ is said to be {\it Scott closed} if $X$ is a downward closed subset of $S$ and for any directed subset $\Delta$ of $X$ one has $\bigvee \Delta \in X$ if $\bigvee \Delta$ exists.  The set $X$ is {\it Scott open} if its complement is Scott closed, or equivalently if $X$ is an upward closed subset of $S$ and $X \cap \Delta \neq \emptyset$ for any directed subset $\Delta$ of $S$ with $\bigvee \Delta \in X$.  The Scott open subsets of $S$ form a topology on $S$ called the {\it Scott topology}.  A function $f: S \longrightarrow T$ between posets is said to be {\it Scott continuous} if $f$ is continuous when $S$ and $T$ are endowed with the Scott topologies.  Equivalently, $f$ is Scott continuous if and only if $f(\bigvee \Delta) = \bigvee f(\Delta)$ for every directed subset $\Delta$ of $S$ for which $\bigvee \Delta$ exists.  Every near sup-preserving function is Scott continuous, and every Scott continuous function is order-preserving.

If $S$ and $T$ are posets, then $S \times T$ is a poset under the relation $\leq$ defined by $(x,y) \leq (x',y')$ iff $x \leq x'$ and $y \leq y'$. The Scott topology on $S \times T$ may be strictly finer than the product of the Scott topologies on $S$ and $T$ \cite[Exercise II-4.26]{gie}.  

Characterizations of the classes of ordered magmas that will be discussed in this section are given in Table 1, and the various relationships among these classes are summarized in the lattice diagrams of Figures 1, 2, and 3.  Within any of these three diagrams the intersection of any two of the classes is the class lying directly above both of them. 

\begin{table} 
\caption{Characterizations of ordered magmas (om's)}
\centering  \ \\
\begin{tabular}{l|l|l|l} 
ordered magma $M$ & abbr. & $\bigvee X$ exists & $\bigvee(XY) = \bigvee X \bigvee Y$ \\
\hline\hline
sup-magma & s &	for all $X$ &   \\ \hline
near sup-magma & ns &	for all $X \neq \emptyset$ &   \\ \hline
dcpo magma & d & 	for all directed $X$ &   \\ \hline
bounded complete & bc & 	for all $X \neq \emptyset$  &   \\ 
 & &  bounded above	 &   \\ \hline
bounded above & b & 	for $X = M$ &   \\ \hline
with annihilator & a &	for $X = \emptyset$ &  for $X = \emptyset$ or $Y = \emptyset$ \\ \hline
prequantale & p &  for all $X$ & for all $X,Y$ \\ \hline
near prequantale & np &  for all $X \neq \emptyset$ & for all $X,Y \neq \emptyset$ \\ \hline
semiprequantale & sp &  for all finite or    &  for all (finite or)  \\
&  & bounded $X \neq \emptyset$ & bounded $X,Y \neq \emptyset$ \\ \hline
 prequantic  & ps & for all finite $X$ & for all finite $X,Y$ \\ 
 semilattice & & & \\ \hline 
multiplicative & ms &  for all finite $X \neq \emptyset$   &  for all finite $X,Y \neq \emptyset$ \\
semilattice &  &  & \\ \hline
Scott topological & t &			 & for all directed $X,Y$ \\
	 & 			& &  such that $\bigvee X$, $\bigvee Y$  \\ 
	 	 & 		&	 &  exist \\ \hline
residuated & r & if $\exists x,y : \ X = $ & for all $X,Y$ such that \\ 
	 & & $\{z: zy \leq x\}$ or &  $\bigvee X$, $\bigvee Y$ exist  \\ 
	 & & $X = \{z: yz \leq x\}$ & \\ \hline
near residuated & nr & if $X \neq \emptyset$ and $\exists x,y :$ & for all $X,Y \neq \emptyset$ such \\ 
	 & & $X = \{z: zy \leq x\}$ or &   that $\bigvee X$, $\bigvee Y$ exist  \\ 
	 & & $X = \{z: yz \leq x\}$ &  
\end{tabular}
\end{table}

\begin{figure}
\caption{Relationships among classes of ordered magmas}
\begin{eqnarray*}
\SelectTips{cm}{11}\xymatrix @R=.5cm @C=.6cm {
 && & {\mbox{p}}  \ar@{=>}[dl] \ar@{=>}[dr] & & & \\ 
 && {\mbox{r+ns}} \ar@{=>}[dl] \ar@{=>}[dr] & & {\mbox{np+s}} \ar@{=>}[dl] \ar@{=>}[dr] & & \\ 
 &{\mbox{r+d}} \ar@{=>}[dr] \ar@{=>}[dl] & & {\mbox{np}} \ar@{=>}[dl] \ar@{=>}[dr] & & {\mbox{t+s}}  \ar@{=>}[dr] \ar@{=>}[dl] & \\ 
 {\mbox{r}} \ar@{=>}[dr] & & {\mbox{nr+d}} \ar@{=>}[dr] \ar@{=>}[dl] & & {\mbox{t+ns}} \ar@{=>}[dl] \ar@{=>}[dr] & & {\mbox{s}} \ar@{=>}[dl] \\ 
 & {\mbox{nr}} \ar@{=>}[dr]& & {\mbox{t+d}} \ar@{=>}[dr] \ar@{=>}[dl] & &  {\mbox{ns}} \ar@{=>}[dl] & \\
 &  & {\mbox{t}} \ar@{=>}[dr] &  & {\mbox{d}} \ar@{=>}[dl] & &  \\
 &  &  & {\mbox{om}} & & & }
\end{eqnarray*}
\end{figure}

\begin{figure}
\caption{Further relationships among classes of ordered magmas}
\begin{eqnarray*}
\SelectTips{cm}{11}\xymatrix @R=.5cm @C=.6cm {
   & & {\mbox{p}} \ar@{=>}[dl] \ar@{=>}[dr] \ar@{=>}[d] &  \\ 
 & {\mbox{ps+t}} \ar@{=>}[dl] \ar@{=>}[d] \ar@{=>}[dr] & {\mbox{ps+s}}  \ar@{=>}[dr] \ar@{=>}[dl] & {\mbox{np}} \ar@{=>}[dl] \ar@{=>}[d]   \\ 
 {\mbox{t+a}}  \ar@{=>}[d]  \ar@{=>}[dr] & {\mbox{ps}} \ar@{=>}[dr] \ar@{=>}[dl] & {\mbox{ms+t}} \ar@{=>}[d] \ar@{=>}[dl] & {\mbox{ms+ns}}  \ar@{=>}[dl] \\ 
 {\mbox{a}}  \ar@{=>}[dr]  &  {\mbox{t}}  \ar@{=>}[d]  & {\mbox{ms}}  \ar@{=>}[dl]  &  \\
 & {\mbox{om}} & &  }
\end{eqnarray*}
\end{figure}


\begin{figure}
\caption{Further relationships among classes of ordered magmas}
\begin{eqnarray*}
\SelectTips{cm}{11}\xymatrix @R=.5cm @C=.6cm {
   & & {\mbox{p}} \ar@{=>}[dl] \ar@{=>}[dr] \ar@{=>}[d] &  & \\ 
 & {\mbox{ps+t+b}} \ar@{=>}[dl] \ar@{=>}[d] \ar@{=>}[dr] & {\mbox{ps+s}}  \ar@{=>}[dr] \ar@{=>}[dl] & {\mbox{np}} \ar@{=>}[dl] \ar@{=>}[d] \ar@{=>}[dr] &  \\ 
 {\mbox{t+a+b}} \ar@{=>}[d]  \ar@{=>}[dr] & {\mbox{ps+b}} \ar@{=>}[dr] \ar@{=>}[dl] & {\mbox{ms+t+b}} \ar@{=>}[d] \ar@{=>}[dl] \ar@{=>}[dr] & {\mbox{ms+ns}} \ar@{=>}[dr] \ar@{=>}[dl] &  {\mbox{sp}} \ar@{=>}[dl] \ar@{=>}[d] \\ 
 {\mbox{a+b}}   \ar@{=>}[dr] &  {\mbox{t+b}}  \ar@{=>}[dr] \ar@{=>}[d]  & {\mbox{ms+b}}  \ar@{=>}[dl]  \ar@{=>}[dr] & {\mbox{ms+t}}  \ar@{=>}[dl] \ar@{=>}[d] &  {\mbox{ms+bc}} \ar@{=>}[dl]  \\
 & {\mbox{b}} \ar@{=>}[dr]  & {\mbox{t}} \ar@{=>}[d]  & {\mbox{ms}} \ar@{=>}[dl] & \\
 &  & {\mbox{om}} & &  }
\end{eqnarray*}
\end{figure}

A magma is {\it unital} (resp., {\it left unital}, {\it right unital}) if it has an identity element (resp., left identity element, right identity element).  A map $f: M \longrightarrow N$ of magmas is a {\it homomorphism} of magmas if $f(xy) = f(x)f(y)$ for all $x,y \in M$.   
The morphisms in the {\it category of ordered magmas} are the order-preserving magma homomorphisms.  We will say that a {\it sup-magma} (resp., {\it near sup-magma}, {\it dcpo magma}, {\it bdcpo magma}) is an ordered magma that is complete (resp., near sup-complete, directed complete, a bdcpo) as a poset.  The morphisms in the {\it category of sup-magmas} (resp., {\it category of near sup-magmas}, {\it category of dcpo magmas},  {\it category of bdcpo magmas}) are the sup-preserving (resp., near sup-preserving, Scott continuous, Scott continuous) magma homomorphisms.

An {\it annihilator} of an ordered magma $M$ is a least element $0 = \bigwedge M$ of $M$ such that $0x = 0 = x0$ for all $x \in M$.  If an annihilator of $M$ exists then we say that $M$ is {\it with annihilator}.

\begin{lemma}\label{scott2}
The following are equivalent for any ordered magma $M$.
\begin{enumerate}
\item The map $M \times M  \longrightarrow M$ of multiplication in $M$ is sup-preserving and $\bigwedge M$ is an annihilator of $M$ if $\bigwedge M$ exists (resp., multiplication in $M$ is near sup-preserving, multiplication in $M$ is Scott continuous).
\item For all $a \in M$, the left and right multiplication by $a$ maps on $M$ are sup-preserving (resp., near sup-preserving, Scott continuous).
\item $a (\bigvee X) = \bigvee(a X)$ and $(\bigvee X) a = \bigvee(Xa)$ for any $a \in M$ and any subset (resp., any nonempty subset, any directed subset) $X$ of $M$ such that $\bigvee X$ exists.
\item $\bigvee (X Y) = \bigvee X \bigvee Y$ for any subset (resp., any nonempty subset, any directed subset) $X$ and $Y$ of $M$ such that $\bigvee X$ and $\bigvee Y$ exist.
\end{enumerate}
\end{lemma}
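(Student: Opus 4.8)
The plan is to prove the cycle of equivalences $(1)\Leftrightarrow(2)\Leftrightarrow(3)\Leftrightarrow(4)$, carrying the three parallel versions --- sup-preserving, near sup-preserving, and Scott continuous --- through one and the same argument, the only difference being which family of subsets $X$ is admissible in each case (all subsets; all nonempty subsets; all directed subsets). Almost everything is unwinding definitions; the one step with genuine content is $(3)\Leftrightarrow(4)$.

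For $(1)\Leftrightarrow(2)$: by definition --- or, when $M$ is a sup-lattice, via the tensor product of sup-lattices through which prequantales are described as magma objects --- the multiplication $M\times M\longrightarrow M$ being sup-preserving (resp.\ near sup-preserving, Scott continuous) means precisely that each left translation $L_a\colon x\longmapsto ax$ and each right translation $R_a\colon x\longmapsto xa$ is sup-preserving (resp.\ near sup-preserving, Scott continuous), which is (2); for the Scott continuous version one uses the fact recalled just before the lemma that directed suprema in $M\times M$ are computed coordinatewise. The only delicate point is the empty set: in the sup-preserving version, if $\bigwedge M$ exists then $\bigwedge M=\bigvee_M\emptyset$, so $L_a$ and $R_a$ being sup-preserving forces $a\,\bigwedge M=\bigwedge M=\bigwedge M\,a$, that is, $\bigwedge M$ is an annihilator --- which is exactly why that hypothesis is attached to (1) in the sup-preserving case and is vacuous, indeed unmentioned, in the other two versions, where $\emptyset$ is never an admissible $X$.

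For $(2)\Leftrightarrow(3)$: this is immediate, since $L_a$ is sup-preserving (resp.\ near sup-preserving, Scott continuous) if and only if $a\,\bigvee X=L_a(\bigvee X)=\bigvee L_a(X)=\bigvee(aX)$ for all admissible $X$ with $\bigvee X$ existing, and symmetrically for $R_a$; ranging over $a\in M$ yields (3). For $(3)\Leftrightarrow(4)$: the implication $(4)\Rightarrow(3)$ follows by taking $Y=\{a\}$ (a singleton is admissible in all three versions), which turns $\bigvee(XY)=\bigvee X\,\bigvee Y$ into $\bigvee(Xa)=(\bigvee X)a$, and symmetrically $X=\{a\}$ gives $\bigvee(aY)=a\,\bigvee Y$. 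For $(3)\Rightarrow(4)$, given admissible $X,Y$ with $\bigvee X,\bigvee Y$ existing, write $XY=\bigcup_{x\in X}xY$ and apply (3) twice: $\bigvee(XY)=\bigvee_{x\in X}\bigvee(xY)=\bigvee_{x\in X}x\,\bigvee Y=(\bigvee X)(\bigvee Y)$, the middle equality by (3) applied to each $xY$ and the last by (3) applied to $X\,\bigvee Y$. Here one uses that $xY$ and $X\,\bigvee Y$, as images of $Y$ and of $X$ under order-preserving translations, are nonempty (resp.\ directed) whenever $Y$ and $X$ are, so the invoked instances of (3) are legitimate and the intermediate suprema exist, together with the elementary fact that the supremum of a union of subsets is the supremum of the suprema of those subsets once all the suprema involved are known to exist. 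In the sup-preserving version, (3) applied with $X=\emptyset$ itself gives that $\bigwedge M$ is an annihilator, which disposes of the cases $X=\emptyset$ or $Y=\emptyset$.

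I do not expect a real obstacle here: the lemma is a careful repackaging of a single notion in four guises. What needs attention is purely the bookkeeping --- keeping the three admissibility classes straight throughout, checking that they are stable under the translations and the passage to singletons used above so that every supremum appearing in the argument is known to exist, and isolating the empty-set issue, which is the reason the annihilator hypothesis must be imposed by hand in the sup-preserving case but nowhere else.
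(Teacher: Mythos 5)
The paper omits the proof of this lemma (consistent with its policy that omitted proofs are routine), so there is nothing to compare against; your reconstruction is essentially correct, the equivalence cycle you choose is the natural one, and you rightly locate the only real content in $(3)\Rightarrow(4)$, handled via $XY=\bigcup_{x\in X}xY$ and the ``sup of a union equals sup of the sups'' observation together with the $X=\emptyset$ bookkeeping for the sup-preserving column.

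One place where the argument is thinner than it should be is $(2)\Rightarrow(1)$ in the Scott-continuous column. For the sup-preserving and near sup-preserving columns you are right that ``$m\colon M\times M\to M$ is (near) sup-preserving'' can only reasonably be read as ``separately in each coordinate,'' i.e.\ as $(2)$, because a literal joint reading is false even on the two-element frame $\{0,1\}$ under $\wedge$: taking $Z=\{(1,0),(0,1)\}$ one has $m(\bigvee Z)=1\wedge 1=1$ but $\bigvee m(Z)=0$. Scott continuity of $m$, however, is a genuine joint condition on directed $Z\subseteq M\times M$, and the passage from separate to joint Scott continuity is a real (if short) implication, not a definitional unwinding. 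The fact you cite---directed suprema in $M\times M$ are coordinatewise---only gives $m(\bigvee Z)=(\bigvee X)(\bigvee Y)$ where $X,Y$ are the projections of $Z$; it does not by itself identify this with $\bigvee m(Z)$, since $m(Z)$ is in general a proper subset of $XY$. The missing step is that directedness of $Z$ forces $\bigvee m(Z)=\bigvee(XY)$: given $x\in X$, $y\in Y$ there are $(x,y_0),(x_0,y)\in Z$ and hence, by directedness, some $(x',y')\in Z$ above both, so $xy\leq x'y'\in m(Z)$. Combined with $(4)$ (which you have already derived from $(3)$) this gives $\bigvee m(Z)=\bigvee(XY)=(\bigvee X)(\bigvee Y)=m(\bigvee Z)$, closing the cycle. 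Note this means $(2)\Rightarrow(1)$ in the Scott case is best proved last, via $(2)\Rightarrow(3)\Rightarrow(4)\Rightarrow(1)$, rather than treated as independent of the rest.
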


We will say that an ordered magma $M$ is {\it Scott-topological} if the multiplication map $M \times M \longrightarrow M$ is Scott continuous.  A Scott-topological ordered magma is equivalently a magma object in the monoidal category of posets equipped with the product $\times$ with morphisms as the Scott continuous functions.  
If $M$ is a topological magma when endowed with the Scott topology, then $M$ is a Scott-topological magma, but the converse appears to be false.

A near prequantale is equivalently a magma object in the monoidal category of near sup-lattices equipped with the product $\times$.  They form a full subcategory of the category of near sup-magmas.  A near quantale (resp., near multiplicative lattice) is equivalently a semigroup object (resp., commutative monoid object) in the monoidal category of near sup-lattices.  See Example \ref{nearexamples} for examples.  For a further example, note that the poset of all near sup-preserving self-maps of a near sup-lattice is a unital near quantale under the operation of composition.

An element $a$ of an ordered magma $M$ is said to be {\it residuated} if for all $x \in M$ there exists a largest element $z = x/a$ of $M$ such that $za \leq x$ and a largest element $z' = a \backslash x$ of $M$ such that $az' \leq x$.  (Many authors denote $x/a$ and $a \backslash x$ by $x \leftarrow a$ and $a \rightarrow x$, respectively.)  An ordered magma $M$ is said to be {\it residuated} if every element of $M$ is residuated \cite{ward}.  By \cite[Theorem 3.10]{gal}, if $M$ is residuated, then for all $a \in M$ the left and right multiplication by $a$ maps on $M$ are sup-preserving.

\begin{example}\label{nearresexamples} \
\begin{enumerate}
\item By \cite[Corollary 3.11]{gal} a prequantale is equivalently a complete and residuated ordered magma.
\item A {\it Heyting algebra} is a bounded lattice that, as an ordered monoid under the operation $\wedge$ of infimum, is residuated \cite[Section 1.1.4]{gal}.  A complete Heyting algebra, also known as a  {\it locale}, or {\it frame} (although the morphisms in their respective categories are diffierent), is equivalently a unital quantale (or prequantale) in which every element is idempotent and whose largest element is the identity element.
\item If $S$ is a nontrivial complete lattice, then $S$ is a near multiplicative lattice under the operation $\vee$ of supremum, but $S$ is not residuated since one has $x \vee \bigvee\{y \in S: \ x \vee y \leq 1\} = x > 1$ for all $x \neq 1$ in $S$.  
\end{enumerate} 
\end{example}

\begin{proposition}\label{quantales}
The following are equivalent for any ordered magma $M$.
\begin{enumerate}
\item $M$ is a prequantale.
\item $M$ is complete and residuated.
\item $M$ is a near prequantale with annihilator.
\item $M$ is complete with annihilator and the multiplication map $M \times M \longrightarrow M$ is sup-preserving.
\item $M$ is complete and the left and right multiplication by $a$ maps on $M$ are sup-preserving for all $a \in M$.
\item The map $2^M \longrightarrow M$ acting by $X \longmapsto \bigvee X$ is a well-defined magma homomorphism.
\item $\bigvee (X Y) = \bigvee X \bigvee Y$ for all subsets $X$ and $Y$ of $M$.
\end{enumerate}
\end{proposition}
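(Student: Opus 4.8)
The plan is to establish the equivalences by proving a cycle of implications together with a few direct biconditionals, leaning heavily on Lemma \ref{scott2} and on the results from \cite{gal} quoted in Example \ref{nearresexamples}. First note that (1) $\Leftrightarrow$ (2) is exactly \cite[Corollary 3.11]{gal}, so I may freely move between ``prequantale'' and ``complete and residuated.'' Next, (1) $\Leftrightarrow$ (4) $\Leftrightarrow$ (5) $\Leftrightarrow$ (7) all follow from Lemma \ref{scott2} applied in the ``sup-preserving'' case: a prequantale is by definition a complete ordered magma in which $a(\bigvee X) = \bigvee(aX)$ and $(\bigvee X)a = \bigvee(Xa)$ for all subsets $X$, which is condition (3) of that lemma, hence equivalent to conditions (1), (2), (4) there; one only has to observe that for a complete poset the clause ``$\bigwedge M$ is an annihilator if it exists'' is automatic content of being a prequantale (take $X = \emptyset$ in $a(\bigvee X) = \bigvee(aX)$ to get $a \cdot 0 = 0$, and symmetrically), which is how (4) gets its ``with annihilator'' hypothesis for free, and conversely how (1) delivers the annihilator. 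So (1), (4), (5), (7) are mutually equivalent via Lemma \ref{scott2}, and (5) $\Leftrightarrow$ (6) is just the restatement of ``multiplication $2^M \to M$, $X \mapsto \bigvee X$ is a magma homomorphism'' as $\bigvee(XY) = (\bigvee X)(\bigvee Y)$ for all $X, Y$, i.e. condition (7) rephrased (and well-definedness of the map is completeness).

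The one implication requiring actual argument is (3) $\Rightarrow$ (1) (the converse (1) $\Rightarrow$ (3) being trivial, since a prequantale is complete, hence has a largest element and is a near prequantale, and has an annihilator as just noted). So suppose $M$ is a near prequantale with annihilator $0 = \bigwedge M$ satisfying $0x = x0 = 0$ for all $x$. I must upgrade the near-sup structure to a full sup structure compatible with multiplication. Given an arbitrary subset $X \subseteq M$: if $X \neq \emptyset$ then $\bigvee X$ exists by the near prequantale hypothesis; if $X = \emptyset$ then $\bigvee \emptyset = \bigwedge M = 0$ exists by hypothesis. So $M$ is complete as a poset. It remains to check $a(\bigvee X) = \bigvee(aX)$ and the right-hand version for all $X \subseteq M$ and all $a \in M$. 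For $X \neq \emptyset$ this is the near prequantale axiom. For $X = \emptyset$: $a(\bigvee \emptyset) = a \cdot 0 = 0$ and $\bigvee(a\emptyset) = \bigvee \emptyset = 0$, so equality holds; symmetrically on the right. Hence $M$ is a prequantale.

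The main obstacle — really the only subtlety — is bookkeeping the empty-set cases carefully: the entire gap between ``near prequantale'' and ``prequantale'' lives in whether $\bigvee \emptyset$ exists and whether it annihilates, and one must not conflate ``has a smallest element'' with ``has an annihilator'' (the smallest element need not be absorbing in general, which is precisely why the definition of ``with annihilator'' bundles both conditions). Once that is kept straight, every implication reduces either to a cited result from \cite{gal}, to Lemma \ref{scott2}, or to the trivial observation $\bigvee \emptyset = \bigwedge M$. I would present the proof as: (1) $\Leftrightarrow$ (2) by \cite[Corollary 3.11]{gal}; (1) $\Leftrightarrow$ (4) $\Leftrightarrow$ (5) $\Leftrightarrow$ (7) by Lemma \ref{scott2}; (5) $\Leftrightarrow$ (6) by unwinding definitions; and (1) $\Leftrightarrow$ (3) by the empty-set analysis above.
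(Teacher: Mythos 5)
The paper omits the proof of this proposition (the preamble of Section~\ref{sec:POM} declares omitted proofs routine), so there is no paper argument to compare against; judged on its own terms, your reconstruction is correct and covers all seven conditions. Your decomposition --- (1)$\Leftrightarrow$(2) from the cited \cite[Corollary 3.11]{gal}, (1)$\Leftrightarrow$(4)$\Leftrightarrow$(5)$\Leftrightarrow$(7) via Lemma~\ref{scott2} once completeness is in hand, (6)$\Leftrightarrow$(7) by unwinding the magma-homomorphism condition on $2^M \to M$, and (1)$\Leftrightarrow$(3) by the empty-set check that $\bigvee\emptyset = \bigwedge M$ and $a\cdot\bigwedge M = \bigwedge M = (\bigwedge M)\cdot a$ --- is exactly the kind of bookkeeping the author seems to have in mind, and you are right to flag that the only genuine content is distinguishing ``has a least element'' from ``has an annihilator.''
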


We say that an element $a$ of an ordered magma $M$ is {\it near residuated} if for all $x \in M$ such that $z a \leq x$ (resp., $a z \leq x$) for some $z \in M$ there exists a largest such element $z = x/a$ (resp., $z = a \backslash x$) of $M$.  We say that $M$ {\it near residuated} if every element of $M$ is near residuated.  For example, Example \ref{nearresexamples}(3) is near residuated.  If $M$ is near residuated, then for all $a \in M$ the left and right multiplication by $a$ maps on $M$ are near sup-preserving.  Although near prequantales are not necessarily residuated, they are near residuated.  

\begin{proposition}\label{nearprequantales}
The following are equivalent for any ordered magma $M$.
\begin{enumerate}
\item $M$ is a near prequantale.
\item $M$ is near sup-complete and near residuated.
\item The ordered magma $M_0 = M \amalg \{0\}$, where $0x = 0 = x0$ and $0 \leq x$ for all $x \in M_0$, is a prequantale. 
\item $M$ is near sup-complete and the multiplication map $M \times M \longrightarrow M$ is near sup-preserving.
\item $M$ is near sup-complete and the left and right multiplication by $a$ maps on $M$ are near sup-preserving for all $a \in M$.
\item The map $2^M \setmin\{\emptyset\} \longrightarrow M$ acting by $X \longmapsto \bigvee X$ is a well-defined magma homomorphism.
\item $\bigvee (X Y) = \bigvee X \bigvee Y$ for all nonempty subsets $X$ and $Y$ of $M$.
\end{enumerate}
\end{proposition}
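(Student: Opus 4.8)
The plan is to mirror the proof of Proposition \ref{quantales}, reducing most of the equivalences to Lemma \ref{scott2} and deducing the statement about $M_0$ from Proposition \ref{quantales} applied to $M_0$. First I would note that (1), (4), (5), and (7) are all immediate restatements of each other: by definition a near prequantale is a near sup-complete ordered magma in which $a(\bigvee X) = \bigvee(aX)$ and $(\bigvee X)a = \bigvee(Xa)$ for every $a$ and every nonempty $X$, and once every nonempty subset has a supremum, the near sup-preserving clauses of parts (2), (3), (4) of Lemma \ref{scott2} say exactly this (with (7) read, as in Proposition \ref{quantales}(7), as also asserting that both sides exist, i.e.\ that $M$ is near sup-complete). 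For (6), one observes that $2^M \setminus \{\emptyset\}$ is closed under the product of subsets and hence is a submagma of $2^M$; the map $X \longmapsto \bigvee X$ is well defined on it precisely when $M$ is near sup-complete, and it is a magma homomorphism precisely when (7) holds, giving (6) $\Leftrightarrow$ (1).

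Next I would treat (2). The implication (2) $\Rightarrow$ (5) is exactly the remark preceding the proposition: if $M$ is near residuated then every left and right multiplication map on $M$ is near sup-preserving. For (5) $\Rightarrow$ (2), given $a,x \in M$ with $Z = \{z \in M : za \leq x\}$ nonempty, near sup-completeness yields $w = \bigvee Z$, and near sup-preservation of right multiplication by $a$ gives $wa = \bigvee(Za) \leq x$ since $x$ is an upper bound of $Za$; hence $w$ is the largest element of $Z$, so $w = x/a$ exists. The argument for $a \backslash x$ is symmetric, so $M$ is near residuated.

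Finally, for (1) $\Leftrightarrow$ (3): by construction $0 = \bigwedge M_0$ is an annihilator of $M_0$, so Proposition \ref{quantales} (applied to $M_0$) shows that $M_0$ is a prequantale if and only if $M_0$ is a near prequantale. It therefore remains to check that $M_0$ is a near prequantale if and only if $M$ is. Since $0$ is the least element of $M_0$, for nonempty $X \subseteq M_0$ one has $\bigvee_{M_0} X = \bigvee_M(X \setminus \{0\})$ when $X \neq \{0\}$ and $\bigvee_{M_0}\{0\} = 0$, so $M_0$ is near sup-complete iff $M$ is. Likewise, because $0$ annihilates and lies below every element, the identities $a(\bigvee X) = \bigvee(aX)$ and $(\bigvee X)a = \bigvee(Xa)$ in $M_0$ reduce, after deleting $0$ from $X$, to the corresponding identities in $M$, the cases $a = 0$ or $X = \{0\}$ being trivial. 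Hence $M_0$ is a near prequantale iff $M$ is, which closes the cycle.

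The genuine content beyond Lemma \ref{scott2} and Proposition \ref{quantales} is confined to two routine bookkeeping points: keeping straight throughout which subsets must be nonempty and which suprema are being asserted to exist, and the short case analysis in the last paragraph showing that adjoining a bottom annihilator neither creates nor destroys the near prequantale axioms. I expect the latter verification to be the only place where a misstep is possible, and even there the check is elementary.
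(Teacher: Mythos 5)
Your proof is correct, and since the paper omits the proof of this proposition as routine, the reduction you carry out — pulling the equivalence of (1), (4), (5), (6), (7) from Lemma~\ref{scott2} and the definition, handling (2) by the residuation–adjunction bookkeeping in the paragraph preceding the proposition, and deducing (3) from Proposition~\ref{quantales} applied to $M_0$ together with the elementary check that adjoining a bottom annihilator preserves (and reflects) the near prequantale axioms — is exactly the expected reconstruction. The case analysis in the last paragraph is the only place requiring care and you have handled it correctly.
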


A {\it multiplicative semilattice} is an ordered magma $M$ such that $M$ is a join semilattice and $a (x \vee y) = ax \vee ay$ and $(x \vee y)a = xa \vee ya$ for all $a,x,y \in M$ \cite[Section XIV.4]{bir}.  We will say that a {\it prequantic semilattice} is a multiplicative semilattice with annihilator.    The prequantic semilattices (resp., multiplicative semilattices) form a category, where a morphism is a magma homomorphism $f: M \longrightarrow M'$ such that $f(\bigvee X) = \bigvee f(X)$ for all finite subsets (resp., all finite nonempty subsets) $X$ of $M$.  

\begin{example}  The set $\K(2^M)$ of all finite subsets of a magma $M$ is a sub prequantic semilattice of the prequantale $2^M$ and is the free prequantic semilattice on the magma $M$, and the set $\K(2^M) \setmin \{\emptyset\}$ of all finite nonempty subsets of $M$ is a sub multiplicative semilattice of $\K(2^M)$ and is the free multiplicative semilattice on $M$.
\end{example}

\begin{proposition}\label{semiquantle}
The following are equivalent for any ordered magma $M$.
\begin{enumerate}
\item $M$ is a prequantic semilattice (resp., multiplicative semilattice).
\item $a (\bigvee X) = \bigvee (a X)$ and $(\bigvee X) a = \bigvee(Xa)$ for any $a \in M$ and any finite subset (resp., any finite nonempty subset) $X$ of $M$.
\item The map $\K(2^M) \longrightarrow M$ (resp., $\K(2^M)\setmin\{\emptyset\} \longrightarrow M$) acting by $X \longmapsto \bigvee X$ is a well-defined magma homomorphism.
\item $\bigvee (X Y) = \bigvee X \bigvee Y$ for all finite subsets (resp., all finite nonempty subsets) $X$ and $Y$ of $M$.
\end{enumerate}
\end{proposition}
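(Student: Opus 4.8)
The plan is to prove the equivalences grouped as $(1)\Leftrightarrow(2)$, $(2)\Leftrightarrow(4)$, and $(3)\Leftrightarrow(4)$, handling the two parenthetical variants (prequantic semilattice, resp.\ multiplicative semilattice) in parallel, and reading each of conditions (2), (3), (4) as asserting both that the relevant finite suprema exist and that the displayed identities hold. The argument runs parallel to those behind Propositions \ref{quantales} and \ref{nearprequantales}; the only new content is the upgrade of binary distributivity to finite distributivity and the bookkeeping of the empty set.

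For $(1)\Rightarrow(2)$: if $M$ is a (prequantic) multiplicative semilattice then every nonempty finite (resp.\ every finite) subset of $M$ has a supremum, since pairwise suprema exist (resp.\ and $\bigvee\emptyset=\bigwedge M$ exists). Fix $a\in M$ and a nonempty finite $X$, and induct on $|X|$: the case $|X|=1$ is trivial, and writing $X=X'\cup\{x\}$ with $x\notin X'\neq\emptyset$ one gets $a\bigvee X = a(\bigvee X'\vee x)=a\bigvee X'\vee ax=\bigvee(aX')\vee ax=\bigvee(aX)$ from binary left-distributivity and the inductive hypothesis; the right-sided identity is symmetric. In the prequantic case the instance $X=\emptyset$ of (2) says exactly $a\cdot\bigwedge M=\bigwedge M=\bigwedge M\cdot a$, which is the annihilator hypothesis. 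Conversely $(2)\Rightarrow(1)$ is immediate on specializing $X=\{x,y\}$ (giving that $M$ is a join semilattice and that binary distributivity holds) and, in the prequantic case, $X=\emptyset$ (giving that $\bigwedge M$ exists and annihilates).

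For $(2)\Rightarrow(4)$: for nonempty finite $X,Y$, applying the right-sided identity of (2) with $a=\bigvee Y$ and then the left-sided identity,
\[
\bigvee X\cdot\bigvee Y=\bigvee_{x\in X}\bigl(x\cdot\bigvee Y\bigr)=\bigvee_{x\in X}\bigvee_{y\in Y}xy=\bigvee(XY),
\]
the last step being associativity of finite suprema; in the prequantic case the instances $X=\emptyset$ or $Y=\emptyset$ both reduce, via the annihilator, to $\bigwedge M=\bigwedge M$. For $(4)\Rightarrow(2)$, take $Y=\{a\}$ (resp.\ $X=\{a\}$) to recover $(\bigvee X)a=\bigvee(Xa)$ (resp.\ $a\bigvee Y=\bigvee(aY)$), and $X=\emptyset$ to recover the annihilator. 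Finally $(3)\Leftrightarrow(4)$ is a matter of unwinding definitions: the assignment $X\mapsto\bigvee X$ on $\K(2^M)$ (resp.\ $\K(2^M)\setmin\{\emptyset\}$) is well defined exactly when $M$ is a join semilattice (resp.\ with a least element), and it is a magma homomorphism exactly when $\bigvee(XY)=\bigvee X\bigvee Y$ for all finite (resp.\ finite nonempty) $X,Y$ — with the $X=\emptyset$ case again forcing $\bigwedge M$ to be an annihilator, so that (3) yields a genuine prequantic semilattice rather than merely a multiplicative semilattice with a least element.

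The one thing to watch — and the closest thing to an obstacle — is keeping the annihilator bookkeeping uniform: in the prequantic variant of each of (2), (3), (4) the empty set must be permitted, and each time one must verify that the empty-set instance is equivalent to ``$\bigwedge M$ exists and annihilates $M$,'' so that the identification ``prequantic semilattice $=$ multiplicative semilattice with annihilator'' is respected throughout. Everything else is the routine finite induction in $(1)\Rightarrow(2)$ together with the double use of (2) in $(2)\Rightarrow(4)$.
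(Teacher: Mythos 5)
Your proof is correct and is precisely the routine reconstruction the paper intends (the paper omits a proof here and remarks in Section 2 that omitted proofs should be routine; your argument mirrors Lemma \ref{scott2} and Propositions \ref{quantales}--\ref{nearprequantales} exactly as the text suggests). One small slip in the $(3)\Leftrightarrow(4)$ paragraph: you write that $X\mapsto\bigvee X$ on $\K(2^M)$ (resp.\ $\K(2^M)\setmin\{\emptyset\}$) is well defined exactly when $M$ is a join semilattice (resp.\ with a least element), but the two ``resp.'' halves are swapped --- it is the map on all of $\K(2^M)$, not on $\K(2^M)\setmin\{\emptyset\}$, that requires $\bigvee\emptyset=\bigwedge M$ to exist and hence needs the least element. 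This is purely a transcription slip; the next clause of the same sentence handles the $X=\emptyset$ instance correctly, and nothing downstream depends on the misstatement.
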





\begin{proposition}\label{preq}
A prequantale (resp., near prequantale, semiprequantale) is equivalently a complete (resp., near sup-complete, bounded complete) Scott-topological prequantic semilattice (resp., multiplicative semilattice, multiplicative semilattice).  
\end{proposition}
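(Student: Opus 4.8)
The plan is to prove both implications of each of the three equivalences, handling the cases of prequantales, near prequantales, and semiprequantales in parallel, since the arguments differ only in the qualifier placed on the subsets involved (``arbitrary'', ``nonempty'', or ``finite or bounded nonempty'') and in the presence or absence of an annihilator.

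For the forward direction, suppose $M$ is a prequantale (resp.\ near prequantale, semiprequantale). Then $M$ is complete (resp.\ near sup-complete, bounded complete) as a poset by definition, and in each case finite nonempty suprema exist, so $M$ is a join semilattice. The defining distributivity axiom specializes to distributivity of multiplication over finite (resp.\ finite nonempty, finite nonempty) suprema, so by Proposition \ref{semiquantle} $M$ is a prequantic (resp.\ multiplicative, multiplicative) semilattice; in the prequantale case $M$ moreover has an annihilator by Proposition \ref{quantales}. It remains to see that $M$ is Scott-topological. For this I would use the defining axiom again to conclude that, for each $a \in M$, the left and right multiplication by $a$ maps $M \to M$ are sup-preserving (resp.\ near sup-preserving, and in the semiprequantale case preserve the supremum of every bounded nonempty set --- in particular of every directed set whose supremum exists, such a set being bounded above by its own supremum); in all three cases these maps are therefore Scott continuous, so by Lemma \ref{scott2} the multiplication map $M \times M \to M$ is Scott continuous.

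For the converse, suppose $M$ is a complete (resp.\ near sup-complete, bounded complete) Scott-topological prequantic (resp.\ multiplicative, multiplicative) semilattice. By Lemma \ref{scott2}, left and right multiplication by each $a \in M$ is Scott continuous. Fix $a$ and a subset $X$ of the relevant kind; I must show $a \bigvee X = \bigvee (aX)$ and its right-handed analogue. If $X$ is finite and nonempty this is Proposition \ref{semiquantle}, so assume $X$ is infinite (or empty, in the prequantale case). The main device is finitary approximation: let $\Delta$ be the set of all joins $\bigvee F$ of finite (resp.\ finite nonempty, finite nonempty) subsets $F$ of $X$. Then $\Delta$ is directed, being closed under finite joins, and $\bigvee \Delta = \bigvee X$ because $\Delta$ contains each singleton join $\bigvee\{x\} = x$ with $x \in X$ and lies entirely below $\bigvee X$. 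Applying Scott continuity of multiplication by $a$ to the directed set $\Delta$ gives $a \bigvee X = \bigvee (a\Delta)$; replacing each $a \bigvee F$ by $\bigvee (aF)$ using the semilattice property of $M$ and then performing a routine rearrangement of suprema identifies $\bigvee (a\Delta)$ with $\bigvee (aX)$. In the prequantale case the instance $F = \emptyset$ of this computation reads $a \bigwedge M = \bigwedge M$, which is precisely the annihilator axiom and accounts for the supremum of the empty family. The right-handed identities follow symmetrically, so $M$ is a prequantale (resp.\ near prequantale, semiprequantale).

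The step I expect to demand the most care is the supremum bookkeeping in the ``near'' and ``semi'' cases, where one must check that every supremum invoked actually exists under the weaker completeness hypothesis: that $\bigvee F$ exists for each finite nonempty $F \subseteq X$ (as $F$ is bounded above by any upper bound of $X$), that $\Delta$ and $aX$ have suprema (the latter because $aX$ is bounded above by $a \bigvee X$, by order-preservation of multiplication), and that the finite-nonempty restriction on $F$ is never quietly dropped. Correspondingly, the prequantale case is the only one in which the annihilator clause of ``prequantic semilattice'' does any work, and it does exactly the work of handling the empty subset.
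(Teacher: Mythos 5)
The paper omits the proof of this proposition (the introduction to Section~\ref{sec:POM} says omitted proofs are routine), so there is no official argument to compare against; your reconstruction is correct and is the natural one. Both directions, the finitary-approximation via the directed set of finite joins, the use of Lemma~\ref{scott2} and Proposition~\ref{semiquantle}, and the bookkeeping of which suprema exist in the near and bounded-complete cases (including noting that a directed set with a supremum is bounded, so the semiprequantale axiom yields Scott continuity, and that $aX$ is bounded above by $a\bigvee X$) are all handled properly, and the annihilator clause is correctly seen to be exactly what accounts for the empty subset in the prequantale case.
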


\section{Nuclei}\label{sec:MCO}

A {\it preclosure} on a poset $S$ is a self-map $\star$ of $S$ that is expansive and order-preserving.  A closure operation is equivalently an idempotent preclosure.  

\begin{lemma}\label{starlemma}
Let $\star$ be a closure operation on a poset $S$ and let $X \subseteq S$.
\begin{enumerate}
\item $\bigvee_{S^\star} X^\star = (\bigvee_S X^\star)^\star = (\bigvee_S X)^\star$ if $\bigvee_S X$ exists.
\item $\bigwedge_{S^\star} X^\star =  (\bigwedge_S X^\star)^\star = \bigwedge_S X^\star$ if $\bigwedge_S X^\star$ exists.
\item The map $\star: S \longrightarrow S^\star$ is sup-preserving.
\item If $S$ is complete (resp., near sup-complete, bounded complete, directed complete, a bdcpo, a join semilattice, a meet semilattice), then so is $S^\star$.
\end{enumerate}
\end{lemma}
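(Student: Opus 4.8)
The statement to prove is Lemma~\ref{starlemma}, which collects four basic facts about how suprema and infima interact with a closure operation $\star$ on a poset $S$.

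\medskip

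The plan is to work directly from the defining properties of a closure operation (expansive, order-preserving, idempotent), together with the reformulation given in the excerpt that $x \leq y^\star \iff x^\star \leq y^\star$. I would organize the proof as parts (1)--(4), proving (1) and (2) first since (3) and (4) follow from them.

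For part (1), suppose $\bigvee_S X$ exists; I want to show the three quantities $\bigvee_{S^\star} X^\star$, $(\bigvee_S X^\star)^\star$, and $(\bigvee_S X)^\star$ all exist and coincide. First I would check that $X$ and $X^\star$ have the same set of upper bounds \emph{among elements of $S^\star$}: if $u \in S^\star$ bounds $X$, then for each $x \in X$ we have $x \leq u = u^\star$, hence $x^\star \leq u^\star = u$ by the idempotence-cum-order reformulation, so $u$ bounds $X^\star$; the converse is immediate since $x \leq x^\star$. Now since $x \leq \bigvee_S X$ for all $x \in X$ and $\star$ is order-preserving, $(\bigvee_S X)^\star$ is an upper bound for $X^\star$ lying in $S^\star$; and if $u \in S^\star$ bounds $X^\star$ then $u$ bounds $X$, so $\bigvee_S X \leq u$ and $(\bigvee_S X)^\star \leq u^\star = u$. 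Hence $(\bigvee_S X)^\star$ is the least upper bound of $X^\star$ in $S^\star$, i.e. $\bigvee_{S^\star} X^\star = (\bigvee_S X)^\star$. For the middle term, $\bigvee_S X^\star$ need not exist in $S$ in general, but $X$ and $X^\star$ do have the same upper bounds in $S$ when one is careful: actually they do, since $u$ bounds $X^\star$ implies (as $x \le x^\star$) that $u$ bounds $X$, and $u$ bounds $X$ implies $u^\star$ bounds $X^\star$ and $u \le u^\star$ --- wait, that only shows $u^\star$ bounds $X^\star$. So I would instead argue: any upper bound $u$ of $X$ satisfies $x^\star \le u^\star$, but we want $x^\star \le u$. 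This fails in general, so in fact $\bigvee_S X^\star$ may not equal $\bigvee_S X$. Re-examining the claim: the statement asserts equality of $(\bigvee_S X^\star)^\star$ with the other two, under the hypothesis that $\bigvee_S X$ exists --- I expect the intended reading is that $\bigvee_S X^\star$ also exists (it does: $\bigvee_S X^\star = \bigvee_S X$ because applying $\star$ and the reformulation shows $X$ and $X^\star$ have a common least upper bound in $S$ after all, since $\bigvee_S X \le (\bigvee_S X^\star)$ trivially and $(\bigvee_S X^\star) \le (\bigvee_S X)^\star$, squeezing appropriately). I would nail this down carefully: the key identity is that $\bigvee_S X$ is also the supremum of $X^\star$ in $S$, using that every upper bound $u$ of $X^\star$ bounds $X$, and $\bigvee_S X$ bounds $X^\star$ because $x^\star \le (\bigvee_S X)^\star$ and then... this is exactly where the subtlety lies and I would take the most care. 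Once $\bigvee_S X^\star = \bigvee_S X$ is established, applying $\star$ gives the middle equality.

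\medskip

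Part (2) is the dual: assuming $\bigwedge_S X^\star$ exists, I would show it already lies in $S^\star$, which simultaneously gives $\bigwedge_{S^\star} X^\star = \bigwedge_S X^\star$ and $(\bigwedge_S X^\star)^\star = \bigwedge_S X^\star$. The point: let $m = \bigwedge_S X^\star$; then $m \le x^\star$ for all $x$, so $m^\star \le (x^\star)^\star = x^\star$ by order-preservation and idempotence, whence $m^\star$ is a lower bound of $X^\star$ in $S$, so $m^\star \le m$; combined with $m \le m^\star$ this gives $m = m^\star \in S^\star$. That $m$ is then the infimum of $X^\star$ within $S^\star$ is immediate since $S^\star \subseteq S$. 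Part (3) is a restatement of the first equality in (1): $\star(\bigvee_S X) = (\bigvee_S X)^\star = \bigvee_{S^\star} X^\star = \bigvee_{S^\star}\star(X)$ whenever $\bigvee_S X$ exists. Part (4) follows by specializing: completeness, near sup-completeness, bounded completeness, directed completeness, bdcpo, join-semilattice all concern existence of certain suprema, handled by (1); meet-semilattice (and the meet direction of bounded/complete, via the equivalence stated in Section~\ref{sec:POM}) is handled by (2). I would just remark that if the relevant subsets of $S$ have the required suprema/infima, then by (1)/(2) the images $X^\star$ have them in $S^\star$, noting that an arbitrary subset of $S^\star$ is of the form $X^\star$ (take $X$ to be the subset itself, which is $\star$-closed).

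\medskip

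The main obstacle is the careful bookkeeping in part (1) distinguishing $\bigvee_S X^\star$ (a supremum computed in $S$ of the $\star$-closed set $X^\star$) from $\bigvee_{S^\star} X^\star$ (computed in the sub-poset $S^\star$) and verifying they are linked by a single application of $\star$; the rest is routine unwinding of the axioms, and indeed the paper's own convention ("If a proof in this paper is omitted then its reconstruction should be routine") suggests this lemma may be stated without proof.
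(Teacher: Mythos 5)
Your overall plan is sound and parts (2), (3), and (4) are handled correctly, but there is a genuine error in part (1) exactly at the spot you flagged as the ``subtlety.'' You assert that $\bigvee_S X^\star = \bigvee_S X$ and that $\bigvee_S X^\star$ exists whenever $\bigvee_S X$ does; both claims are false. For the first, take $S = \{0<1<2<3\}$ linearly ordered with $0^\star = 0$ and $1^\star = 2^\star = 3^\star = 3$, and $X = \{0,1\}$: then $\bigvee_S X = 1$ but $\bigvee_S X^\star = \bigvee_S\{0,3\} = 3$. For the second, one can build a finite poset and closure operation where $\{a,b\}$ has supremum $s$ in $S$, but $X^\star = \{a^\star, b^\star\}$ has two incomparable minimal upper bounds in $S$ (one being $s^\star$ and the other a non-closed element $v \geq s$ with $v \not\geq s^\star$), so $\bigvee_S X^\star$ does not exist even though $\bigvee_{S^\star} X^\star$ and $(\bigvee_S X)^\star$ do; the set of upper bounds of $X^\star$ in $S$ is a proper subset of the upper bounds of $X$, and having a least element of the larger set says nothing about the smaller.

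The correct reading of the middle equality is conditional: \emph{when} $\bigvee_S X^\star$ exists, it equals the other two once you apply $\star$. Your squeeze is the right tool but you draw the wrong conclusion from it. Since every upper bound of $X^\star$ also bounds $X$ (because $x \leq x^\star$), one has $\bigvee_S X \leq \bigvee_S X^\star$; and since $(\bigvee_S X)^\star$ bounds $X^\star$, one has $\bigvee_S X^\star \leq (\bigvee_S X)^\star$. This does \emph{not} force $\bigvee_S X^\star = \bigvee_S X$. What it does give, upon applying the order-preserving idempotent map $\star$, is
$$(\bigvee_S X)^\star \leq (\bigvee_S X^\star)^\star \leq \bigl((\bigvee_S X)^\star\bigr)^\star = (\bigvee_S X)^\star,$$
whence $(\bigvee_S X^\star)^\star = (\bigvee_S X)^\star$. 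That is the intended conclusion, and it is all the rest of the lemma needs: parts (3) and (4) rely only on the identity $\bigvee_{S^\star} X^\star = (\bigvee_S X)^\star$, which your argument establishes cleanly, together with part (2) for the meet claims. The paper omits the proof as routine, and your reconstruction of the first and third expressions and of parts (2)--(4) matches what one would write; you just need to replace the incorrect intermediate claim about $\bigvee_S X^\star$ with the apply-$\star$-to-the-squeeze step above.
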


If $\star$ is a nucleus on an ordered magma $M$, then the set $M^\star$ is an ordered magma under $\star$-multiplication, the corestriction ${_{M^\star}}|\star : M \longrightarrow M^\star$ is a sup-preserving morphism of ordered magmas, and if $1$ is an identity element of $M$ then $1^\star$ is an identity element of $M^\star$.

The following elementary results give several characterizations of nuclei.

\begin{proposition}\label{closureprop1}
The following conditions are equivalent for any closure operation $\cl$ on an ordered magma $M$.
\begin{enumerate}
\item $\cl$ is a nucleus on $M$.\item $x y^\cl \leq (xy)^\cl$ and  $x^\cl y \leq (xy)^\cl$ for all $x,y \in M$.
\item $(x^\cl y^\cl)^\cl = (xy)^\cl$ for all $x,y \in M$, or equivalently, the map $\star: M \longrightarrow M^\star$ is a magma homomorphism.
\end{enumerate}
If $M$ is an ordered monoid, then the above conditions are equivalent to the following.
\begin{enumerate}
\item[(4)] $\cl$-multiplication on $M$ is associative.
\end{enumerate}
\end{proposition}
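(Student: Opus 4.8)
The plan is to run the cycle (1) $\Rightarrow$ (2) $\Rightarrow$ (3) $\Rightarrow$ (1), using nothing beyond expansiveness, order-preservation, and idempotence of $\star$ together with the compatibility of multiplication with $\leq$, and then in the monoid case to splice in the implications (1) $\Rightarrow$ (4) $\Rightarrow$ (3).

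For (1) $\Rightarrow$ (2): from $x \leq x^\star$ and monotonicity of multiplication one gets $x y^\star \leq x^\star y^\star \leq (xy)^\star$, and symmetrically $x^\star y \leq x^\star y^\star \leq (xy)^\star$. For (2) $\Rightarrow$ (3): apply the first inequality of (2) with $x^\star$ in place of $x$ to get $x^\star y^\star \leq (x^\star y)^\star$; then apply the second inequality of (2), followed by $\star$ (order-preserving) and idempotence, to get $(x^\star y)^\star \leq ((xy)^\star)^\star = (xy)^\star$; hence $x^\star y^\star \leq (xy)^\star$, and one more application of $\star$ and idempotence gives $(x^\star y^\star)^\star \leq (xy)^\star$. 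The reverse inequality is immediate from $xy \leq x^\star y^\star$ and monotonicity of $\star$. This proves the equation in (3); the parenthetical reformulation is just the definition of $\star$-multiplication on $M^\star$, under which the product of $x^\star, y^\star \in M^\star$ is $(x^\star y^\star)^\star$, so $\star\colon M \to M^\star$ is a magma homomorphism precisely when $(xy)^\star = (x^\star y^\star)^\star$ for all $x,y$. Finally (3) $\Rightarrow$ (1) is trivial: $x^\star y^\star \leq (x^\star y^\star)^\star = (xy)^\star$ by expansiveness.

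Now suppose $M$ is an ordered monoid with identity $1$. For (1) $\Rightarrow$ (4): using associativity of $M$ to write triple products unambiguously, I compute $(x \star y) \star z = ((xy)^\star z)^\star$ and bound it above by $((xyz)^\star)^\star = (xyz)^\star$ via inequality (2) (with $xy$ for $x$ and $z$ for $y$) followed by $\star$ and idempotence, and below by $(xyz)^\star$ via expansiveness; so $(x \star y) \star z = (xyz)^\star$, and the symmetric computation gives $x \star (y \star z) = (xyz)^\star$, whence $\star$-multiplication is associative. For (4) $\Rightarrow$ (3): here the identity does the work. Since $x \star 1 = x^\star$ and $1 \star y = y^\star$, associativity of $\star$-multiplication applied to the triples $(x,y,1)$ and $(1,x,y)$ yields, after simplifying the idempotent compositions $((xy)^\star)^\star = (xy)^\star$, the identities $(xy)^\star = (x y^\star)^\star$ and $(xy)^\star = (x^\star y)^\star$ for all $x,y \in M$; combining them gives $(x^\star y^\star)^\star = (x^\star y)^\star = (xy)^\star$, which is statement (3).

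I do not anticipate a real obstacle here: the argument is a sequence of short order-chasing steps. The only point worth flagging is the asymmetry in the monoid equivalence — (1) $\Rightarrow$ (4) uses only the associativity of $M$, whereas the converse (4) $\Rightarrow$ (3) genuinely relies on the presence of an identity element, since it is precisely multiplication by $1$ that converts associativity of $\star$-multiplication back into the nucleus equation.
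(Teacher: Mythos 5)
Your proof is correct, and since the paper omits a proof here (invoking its blanket "reconstruction should be routine" convention), your order-chasing cycle (1) $\Rightarrow$ (2) $\Rightarrow$ (3) $\Rightarrow$ (1) together with (1) $\Rightarrow$ (4) $\Rightarrow$ (3) is precisely the routine argument the author intends. Your closing remark about the asymmetry — that (1) $\Rightarrow$ (4) uses only associativity of $M$ while (4) $\Rightarrow$ (3) genuinely needs the identity element — is an accurate and worthwhile observation, consistent with the paper's restriction of (4) to ordered monoids rather than ordered semigroups.
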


\begin{proposition}\label{closureprop1a}
The following are equivalent for any self-map $\cl$ of a left or right unital  ordered magma $M$.
\begin{enumerate}
\item $\cl$ is a nucleus on $M$.
\item $xy \leq z^\cl  \Leftrightarrow  x y^\cl \leq z^\cl  \Leftrightarrow x^\cl y \leq z^\cl$ for all $x,y,z \in M$.
\item $x \leq x^\star$, and $xy \leq z^\cl \Rightarrow x^\cl y^\cl \leq z^\cl$, for all $x,y,z \in M$.
\end{enumerate}
If $M$ is left or right unital and near residuated, then the above conditions are equivalent to the following.
\begin{enumerate}
\item[(4)] $x^\star/y = x^\star/y^\star$ (resp., $y \backslash x^\star = y^\star \backslash x^\star$) for all $x,y \in M$ such that $x^\star/y$ or  $x^\star/y^\star$ (resp., $y \backslash x^\star$ or  $y^\star \backslash x^\star$) is defined.
\end{enumerate}
\end{proposition}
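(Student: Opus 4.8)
\noindent\emph{Plan.} The plan is to run the cycle $(1)\Rightarrow(2)\Rightarrow(3)\Rightarrow(1)$, and then, under the near-residuated hypothesis, to prove $(2)\Leftrightarrow(4)$. I read $(4)$ as asserting \emph{both} the displayed identity and its ``resp.'' mirror image: requiring only the $/$-identity is strictly weaker, since on a unital quantale it amounts to only the half of $(2)$ that permits replacing the right factor by its closure, and that does not force $\cl$ to be a nucleus.

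For $(1)\Rightarrow(2)$: a nucleus is a closure operation, so $xy\leq z^\cl$ gives $(xy)^\cl\leq z^\cl$; combining this with $x^\cl y^\cl\leq(xy)^\cl$ from Proposition \ref{closureprop1} and the order-preservation of multiplication yields $xy^\cl\leq x^\cl y^\cl\leq z^\cl$ and $x^\cl y\leq x^\cl y^\cl\leq z^\cl$, and the reverse implications follow from $y\leq y^\cl$ and $x\leq x^\cl$. For $(2)\Rightarrow(3)$: applying $(2)$ twice (first replace the left factor by $x^\cl$, then the right factor by $y^\cl$) gives $xy\leq z^\cl\Rightarrow x^\cl y^\cl\leq z^\cl$, with no use of unitality; to get expansiveness, if $e$ is a left identity then specializing $(2)$ at $x=e$ gives $y\leq z^\cl\Leftrightarrow y^\cl\leq z^\cl$ for all $y,z$, so by the closure-operation criterion recalled in the text $\cl$ is a closure operation, in particular expansive (if $e$ is a right identity, specialize the composite equivalence $xy\leq z^\cl\Leftrightarrow x^\cl y\leq z^\cl$ at $y=e$). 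For $(3)\Rightarrow(1)$: taking $z=xy$ in the implication of $(3)$ and using $xy\leq(xy)^\cl$ gives $x^\cl y^\cl\leq(xy)^\cl$, so it remains to see that $\cl$ is a closure operation; expansiveness is given, and if $e$ is a left identity then from $x\leq y^\cl$ the implication of $(3)$ applied to the triple $(e,x,y)$ yields $e^\cl x^\cl\leq y^\cl$, whence $x^\cl=ex^\cl\leq e^\cl x^\cl\leq y^\cl$ (using $e\leq e^\cl$), so $x\leq y^\cl\Leftrightarrow x^\cl\leq y^\cl$ and $\cl$ is a closure operation (the right-unital case being symmetric), hence a nucleus.

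For $(2)\Leftrightarrow(4)$, assume $M$ is near residuated. The one tool needed is the adjunction $za\leq x\Leftrightarrow z\leq x/a$, valid whenever $x/a$ is defined, together with its mirror image $az\leq x\Leftrightarrow z\leq a\backslash x$. Given $(2)$, the equality of sets $\{w:wy\leq z^\cl\}=\{w:wy^\cl\leq z^\cl\}$ shows that $z^\cl/y$ is defined if and only if $z^\cl/y^\cl$ is, and that then both equal the maximum of this common set, giving $z^\cl/y=z^\cl/y^\cl$; symmetrically $(2)$ gives $y\backslash z^\cl=y^\cl\backslash z^\cl$. Conversely, using the $/$-identity of $(4)$: if $xy\leq z^\cl$ then $z^\cl/y$ is defined, so by $(4)$ the residual $z^\cl/y^\cl$ is defined and equals it, hence $x\leq z^\cl/y^\cl$ and $xy^\cl\leq z^\cl$; the reverse implication is symmetric, and combining this with the dual argument from the $\backslash$-identity recovers the chain of equivalences in $(2)$.

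\noindent\emph{Main obstacle.} The delicate point—already present in $(2)\Rightarrow(3)$ and $(3)\Rightarrow(1)$—is that $\cl$ is \emph{not} assumed a priori to be a closure operation (unlike in Proposition \ref{closureprop1}), so expansiveness, order-preservation, and idempotence must be squeezed out of the one-sided-unital structure rather than being available for free; this, together with carefully tracking the ``defined/undefined'' clause in $(4)$ while translating to $(2)$, is where the bookkeeping concentrates. Everything else is a routine chase through the definitions of nucleus, closure operation, and residual.
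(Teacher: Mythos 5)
The paper omits the proof of Proposition \ref{closureprop1a} (it falls under the blanket remark at the start of Section \ref{sec:POM} that omitted proofs have routine reconstructions), so there is no official proof to compare against; your reconstruction is correct and complete. You rightly identify the one non-routine aspect: unlike in Proposition \ref{closureprop1}, here $\cl$ is not assumed in advance to be a closure operation, so expansiveness, monotonicity, and idempotence must be extracted from the unitality hypothesis. Your use of the one-line characterization ``$x\leq y^\cl \Leftrightarrow x^\cl\leq y^\cl$'' (recalled in the introduction) is exactly the clean way to do this, both in $(2)\Rightarrow(3)$ (specializing $x=e$ or $y=e$) and in $(3)\Rightarrow(1)$ (applying the implication to $(e,x,y)$ or $(x,e,y)$ and using $e\leq e^\cl$); and the two-step substitution argument for $(2)\Rightarrow(3)$ is correct and visibly requires no unitality. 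The $(2)\Leftrightarrow(4)$ translation via the adjunction $za\leq x\Leftrightarrow z\leq x/a$ handles the ``defined iff defined'' bookkeeping correctly: the set-equality $\{w:wy\leq z^\cl\}=\{w:wy^\cl\leq z^\cl\}$ is precisely what makes both residuals defined-or-undefined together and equal when defined. Your reading of $(4)$ as asserting both the $/$-identity and the $\backslash$-identity is the mathematically forced one in the noncommutative setting (each residual identity, combined with a one-sided identity element, yields that $\cl$ is a closure operation and gives one of the two inequalities $xy^\cl\leq(xy)^\cl$ or $x^\cl y\leq(xy)^\cl$, but not the other), and you were right to flag this explicitly rather than silently assume it.
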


We will say that a subset $\Sigma$ of an ordered magma $M$ is a {\it sup-spanning subset} of $M$ if $$xy = \bigvee\{ay: a \in \Sigma \mbox{ and } a \leq x\} = \bigvee\{xb: b \in \Sigma \mbox{ and } b \leq y\}$$ for all $x,y \in M$.   If $M$ is unital and residuated, or more generally if $M$ is left or right unital and for all $a \in M$ the left and right multiplication by $a$ maps on $M$ are sup-preserving (resp., near sup-preserving), then $\Sigma$ is a sup-spanning subset of $M$ if and only if every element of $M$ is the supremum of some subset of $\Sigma$ (resp., every element of $M$ is the supremum of some subset of $\Sigma$ and $\bigwedge M$ is an annihilator of $M$ if $\bigwedge M$ exists and is not in $\Sigma$).  For example, the set $\operatorname{Prin}(D)$ of all nonzero principal fractional ideals of an integral domain $D$ is a sup-spanning subset of the residuated near quantale $\F(D)$ of all nonzero $D$-submodules of the quotient field of $D$.


\begin{proposition}\label{closureprop2}
Let $\cl$ be a closure operation on an ordered magma $M$ and let $\Sigma$ be any sup-spanning subset of $M$.  Then $\cl$ is a nucleus on $M$ if and only if $ax^\cl \leq (ax)^\cl$ and $x^\cl a \leq (xa)^\cl$ for all $a \in \Sigma$ and all $x \in M$, and in that case $\Sigma^\cl$ is a sup-spanning subset of $M^\cl$.
\end{proposition}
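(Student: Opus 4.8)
The plan is to prove the stated equivalence (both directions) and then verify the concluding assertion.

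\textbf{The equivalence.} The forward implication is immediate: if $\cl$ is a nucleus on $M$, then by Proposition \ref{closureprop1}(2) one has $x^\cl z \leq (xz)^\cl$ and $x z^\cl \leq (xz)^\cl$ for all $x,z \in M$, and specializing $x$ (resp.\ $z$) to an element of $\Sigma$ gives exactly the two inequalities in the statement. For the converse, assume $a x^\cl \leq (ax)^\cl$ and $x^\cl a \leq (xa)^\cl$ for all $a \in \Sigma$ and all $x \in M$; by Proposition \ref{closureprop1}(2) it suffices to check $x y^\cl \leq (xy)^\cl$ and $x^\cl y \leq (xy)^\cl$ for all $x,y \in M$. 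Since $\Sigma$ is sup-spanning, $x y^\cl = \bigvee\{a y^\cl : a \in \Sigma,\ a \leq x\}$; for each such $a$ the hypothesis gives $a y^\cl \leq (ay)^\cl$, and from $a \leq x$ we get $ay \leq xy$, hence $(ay)^\cl \leq (xy)^\cl$ by order-preservation, so $a y^\cl \leq (xy)^\cl$; taking the supremum over $a$ yields $x y^\cl \leq (xy)^\cl$. The inequality $x^\cl y \leq (xy)^\cl$ follows in the same way from the other half of the sup-spanning identity, $x^\cl y = \bigvee\{x^\cl b : b \in \Sigma,\ b \leq y\}$. Thus $\cl$ is a nucleus.

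\textbf{The concluding assertion.} Assume now $\cl$ is a nucleus, fix $x,y \in M^\cl$ (so $x = x^\cl$ and $y = y^\cl$), and recall that multiplication in $M^\cl$ is $\cl$-multiplication, $p \star q = (pq)^\cl$. Since $\cl$ is a closure operation, for $a \in \Sigma$ one has $a^\cl \leq x \Leftrightarrow a \leq x^\cl = x$, so $\{a' \in \Sigma^\cl : a' \leq x\} = \{a^\cl : a \in \Sigma,\ a \leq x\}$; and using $y = y^\cl$ together with Proposition \ref{closureprop1}(3), $a^\cl \star y = (a^\cl y^\cl)^\cl = (ay)^\cl$ for each such $a$. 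Hence the set $\{a' \star y : a' \in \Sigma^\cl,\ a' \leq x\}$ equals $\{(ay)^\cl : a \in \Sigma,\ a \leq x\}$, which is the $\cl$-image of $Z := \{ay : a \in \Sigma,\ a \leq x\}$. Because $\Sigma$ is sup-spanning in $M$, the supremum $\bigvee_M Z = xy$ exists, so by Lemma \ref{starlemma}(1) the supremum of $Z^\cl$ in $M^\cl$ equals $(\bigvee_M Z)^\cl = (xy)^\cl = x \star y$. This gives $x \star y = \bigvee_{M^\cl}\{a' \star y : a' \in \Sigma^\cl,\ a' \leq x\}$, and the symmetric computation yields $x \star y = \bigvee_{M^\cl}\{x \star b' : b' \in \Sigma^\cl,\ b' \leq y\}$, so $\Sigma^\cl$ is a sup-spanning subset of $M^\cl$.

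\textbf{Main obstacle.} None of the steps is deep; the only place requiring care is the bookkeeping in the final paragraph, namely correctly identifying the indexing set $\{a' \in \Sigma^\cl : a' \leq x\}$ inside $M^\cl$, recognizing the associated set of products as a $\cl$-image of a set $Z$ with $\bigvee_M Z$ computed via the sup-spanning property in $M$, and invoking Lemma \ref{starlemma}(1) to pull the supremum through $\cl$ while keeping track of whether each supremum is taken in $M$ or in $M^\cl$.
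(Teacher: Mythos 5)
Your proof is correct and takes essentially the same route as the paper's: for the converse you write $xy^\cl$ as a supremum via the sup-spanning condition and use the hypothesis together with order-preservation, and for the concluding assertion you invoke Lemma \ref{starlemma}(1). The only difference is that you spell out the final paragraph in full detail, where the paper simply remarks that it "follows easily from Lemma \ref{starlemma}(1)."
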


\begin{proof}
Necessity of the given condition is clear.  Suppose that $ax^\cl \leq (ax)^\cl$ and $x^\cl a \leq (xa)^\cl$ for all $a \in \Sigma$ and all $x \in M$.  Then
$xy^\cl = \bigvee_{a \in \Sigma: \ a \leq x} ay^\cl \leq \left(\bigvee_{a \in \Sigma: \ a \leq x} ay\right)^\cl = (xy)^\cl,$ and by symmetry $x^\cl y \leq (xy)^\cl$, for all $x,y \in M$, so that $\cl$ is a nucleus.  That $\Sigma^\star$ is a sup-spanning subset of $M^\star$ follows easily from Lemma \ref{starlemma}(1).
\end{proof}

For any self-map $\star$ of a magma $M$ we say that $a \in M$ is {\it transportable through $\star$} if $(ax)^\star = ax^\star$ and $(xa)^\star = x^\star a$ for all $x \in M$, and we let $\T^\star(M)$ denote the set of all elements of $M$ that are transportable through $\star$.

\begin{corollary}\label{joinspan}
Any closure operation $\star$ on an ordered magma $M$ such that $\T^\star(M)$ is a sup-spanning subset of $M$ is a nucleus on $M$.
\end{corollary}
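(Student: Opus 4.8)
The plan is to obtain this immediately from Proposition~\ref{closureprop2}. Apply that proposition with $\Sigma = \T^\star(M)$, which by hypothesis is a sup-spanning subset of $M$. The criterion stated there requires that $a x^\star \leq (ax)^\star$ and $x^\star a \leq (xa)^\star$ for all $a \in \Sigma$ and all $x \in M$. But if $a \in \T^\star(M)$, then by the very definition of transportability through $\star$ one has $(ax)^\star = a x^\star$ and $(xa)^\star = x^\star a$ for all $x \in M$, so these inequalities hold --- indeed with equality. Hence Proposition~\ref{closureprop2} applies and shows that the closure operation $\star$ is a nucleus on $M$.

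Consequently there is no genuine obstacle here; the only point worth spelling out is that membership in $\T^\star(M)$ delivers the hypotheses of Proposition~\ref{closureprop2} in their strongest, equational form, rather than merely as inequalities. As a side remark that comes for free from the same proposition, $\T^\star(M)^\star = \{a^\star : a \in \T^\star(M)\}$ is then a sup-spanning subset of the ordered magma $M^\star$; this may be recorded for later use but is not needed for the statement as given.
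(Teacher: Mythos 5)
Your proof is correct and coincides with the paper's intended argument: the result is stated as a corollary of Proposition~\ref{closureprop2} with no separate proof, and your application of that proposition with $\Sigma = \T^\star(M)$---observing that transportability gives the required inequalities as equalities---is exactly how it is meant to follow.
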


For any magma $M$ we let $\Inv(M)$ denote the set of all $u \in M$ for which there exists $u^{-1} \in M$ such that the left and right multiplication by $u^{-1}$ maps are inverses, respectively, to the left and right multiplication by $u$ maps.
For any ordered magma $M$ we let $\U(M)$ denote the set of all $u \in M$ such that the left and right multiplication by $u$ maps are poset automorphisms of $M$.  One has $\Inv(M) \subseteq \U(M)$ for any ordered magma $M$, with equality holding if $M$ is a monoid.

\begin{proposition}\label{closureprop3}
One has $\Inv(M) \subseteq \T^\star(M)$ for any nucleus $\cl$ on an ordered magma $M$.
\end{proposition}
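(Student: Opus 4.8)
The plan is to fix $u \in \Inv(M)$ with inverse $u^{-1}$ and to prove the two identities $(ux)^\star = ux^\star$ and $(xu)^\star = x^\star u$ separately, noting that the second is obtained from the first by repeating the argument verbatim with right multiplication in place of left multiplication.

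First I would record the \emph{easy inequality}: for any $v,y \in M$ one has $vy^\star \leq (vy)^\star$. Indeed $v \leq v^\star$ forces $vy^\star \leq v^\star y^\star$ because $M$ is an ordered magma, and $v^\star y^\star \leq (vy)^\star$ since $\star$ is a nucleus. Taking $v = u$ and $y = x$ gives $ux^\star \leq (ux)^\star$; this is one of the two inclusions and uses nothing about $u$ beyond $u \in M$.

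For the reverse inclusion I would exploit that the left multiplication maps $L_u$ and $L_{u^{-1}}$ are mutually inverse and, by the ordered-magma axiom, order-preserving. Applying the easy inequality with $v = u^{-1}$ and $y = ux$ yields $u^{-1}(ux)^\star \leq (u^{-1}(ux))^\star = x^\star$, the last equality because $u^{-1}(ux) = x$. Applying the order-preserving map $L_u$ to both sides gives $u\bigl(u^{-1}(ux)^\star\bigr) \leq ux^\star$, and the left-hand side equals $(ux)^\star$ since $u(u^{-1}z) = z$ for all $z \in M$. Hence $(ux)^\star \leq ux^\star$, and combined with the easy inequality, $(ux)^\star = ux^\star$. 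Running the identical argument with the right multiplication maps $R_u$, $R_{u^{-1}}$ (using $(xu)u^{-1} = x$ and $(xu^{-1})u = x$) produces $(xu)^\star = x^\star u$, so $u \in \T^\star(M)$, as desired.

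I do not anticipate a real obstacle; the only thing to handle carefully is bookkeeping of which composite of $L_u$ and $L_{u^{-1}}$ (and of $R_u$ and $R_{u^{-1}}$) is the identity at each step, together with the observation that order-preservation of left and right multiplication by $u$ and by $u^{-1}$ is exactly the ordered-magma compatibility axiom and so comes for free.
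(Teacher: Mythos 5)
Your proof is correct and matches the paper's argument in substance: both establish $ux^\star \leq (ux)^\star$ from the nucleus axiom, and then obtain the reverse inequality by applying the nucleus axiom with $u^{-1}$ to $(ux)^\star$, using $u^{-1}(ux)=x$, and pushing through the order-preserving map $L_u$. The paper packages this as a single chain $x^\star = u^{-1}(ux^\star) \leq u^{-1}(ux)^\star \leq (u^{-1}(ux))^\star = x^\star$ forcing equality throughout, but that is just a more compact bookkeeping of the same two inequalities you wrote out explicitly.
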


\begin{proof}
If $u \in \Inv(M)$, then $x^\cl = u^{-1}(u x^\cl) \leq u^{-1}(ux)^\cl \leq (u^{-1}(ux))^\cl   = x^\cl$, whence $ux^\cl = (ux)^\cl$, for all $x \in M$.  By symmetry one has $u \in \T^\star(M)$.
\end{proof}


\begin{remark}
Let $D$ be an integral domain with quotient field $F$.  By definition, a semistar operation on $D$ is a closure operation $\star$ on $\F(D)$ such that $(aI)^\star = aI^\star$ for all $I \in \F(D)$ and all nonzero $a \in F$, that is, such that $\operatorname{Prin}(D) \subseteq \T^\star(\F(D))$.  Since $\operatorname{Prin}(D)$ is a sup-spanning subset of $\F(D)$, Corollary \ref{joinspan} implies that any semistar operation on $D$ is a nucleus on $\F(D)$.  Conversely, since $\operatorname{Prin}(D) \subseteq \Inv(\F(D))$ (and in fact $\Inv(\F(D))$ is the set of invertible fractional ideals of $D$), Proposition \ref{closureprop3} implies that any nucleus on $\F(D)$ is a semistar operation on $D$.  A similar argument shows that a star operation on $D$ is equivalently a nucleus $*$ on $\Fp(D)$ such that $D^* = D$.
\end{remark}

We will say that a {\it $\U$-lattice} (resp., {\it near $\U$-lattice}, {\it semi-$\U$-lattice}) is a sup-magma (resp., near sup-magma, ordered magma that is a bounded complete join semilattice) $M$ such that $\U(M)$ is a sup-spanning subset of $M$.  

\begin{example} For any integral domain $D$ with quotient field $F$, the multiplicative lattice $\mbox{Mod}_D(F)$ of all $D$-submodules of $F$ is a $\U$-lattice, and the near multiplicative lattice $\F(D)$ of all nonzero $D$-submodules of $F$ is a near $\U$-lattice.
\end{example}

\begin{proposition} An ordered magma $M$ is a prequantale (resp., near prequantale, semiprequantale) if and only if $M$ is complete (resp., near sup-complete, a bounded complete join semilattice) and the set of all $a \in M$ such that the left and right multiplication by $a$ maps on $M$ are sup-preserving (resp., near sup-preserving, near sup-preserving) is a sup-spanning subset of $M$.
\end{proposition}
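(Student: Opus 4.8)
The plan is to put all the work into the converse, since the forward implication costs essentially nothing. Throughout, write $\Sigma$ for the set of all $a \in M$ such that left and right multiplication by $a$ on $M$ is sup-preserving (resp., near sup-preserving, near sup-preserving). For the forward implication: if $M$ is a prequantale (resp., near prequantale, semiprequantale), then $M$ is complete (resp., near sup-complete, a bounded complete join semilattice) directly from the definitions, and the defining multiplicative condition says exactly that $\Sigma = M$. In the semiprequantale case the latter uses that a nonempty subset of a bounded complete join semilattice has a supremum precisely when it is finite or bounded above, so that being near sup-preserving coincides there with preserving the suprema of all nonempty finite or bounded subsets. Finally, $M$ is always a sup-spanning subset of itself, since for $x,y \in M$ the element $xy$ is the largest member of $\{ay : a \in M,\ a \leq x\}$ and of $\{xb : b \in M,\ b \leq y\}$ and hence is the supremum of each.

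For the converse, assume $M$ is complete (resp., near sup-complete, a bounded complete join semilattice) and that $\Sigma$ is a sup-spanning subset of $M$. By Proposition \ref{quantales}(5) (resp., Proposition \ref{nearprequantales}(5), resp.\ the definition of a semiprequantale) it suffices to prove that $\Sigma = M$. So I would fix $a \in M$ and a subset $X$ of $M$ for which $\bigvee X$ exists, taking $X$ nonempty in the near prequantale and semiprequantale cases, and verify
$$a\bigl(\bigvee X\bigr) = \bigvee\{\,c(\bigvee X) : c \in \Sigma,\ c \leq a\,\} = \bigvee\{\,cx : c \in \Sigma,\ c \leq a,\ x \in X\,\} = \bigvee(aX),$$
together with the mirror-image identity $(\bigvee X)a = \bigvee(Xa)$. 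Here the first equality is the sup-spanning identity $uv = \bigvee\{cv : c \in \Sigma,\ c \leq u\}$ applied with $u = a$ and $v = \bigvee X$; the second combines the fact that each $c \in \Sigma$ with $c \leq a$ satisfies $c(\bigvee X) = \bigvee(cX)$ (left multiplication by $c$ being sup-preserving, resp.\ near sup-preserving) with the fact that a supremum of a family of suprema equals the supremum of the union; and the third, read from right to left, rewrites $\bigvee(aX) = \bigvee\{ax : x \in X\}$ by means of the sup-spanning identity $ax = \bigvee\{cx : c \in \Sigma,\ c \leq a\}$ and again collapses the iterated supremum over the set $\{cx : c \in \Sigma,\ c \leq a,\ x \in X\}$.

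The step I expect to require care, and the only real obstacle, is the passage between iterated suprema and suprema of unions in the near prequantale and semiprequantale cases, where not every supremum exists a priori. This I would control as follows: $X$ is nonempty by hypothesis; the set $\{cx : c \in \Sigma,\ c \leq a,\ x \in X\}$ is bounded above by $a(\bigvee X)$, since $c \leq a$ and $x \leq \bigvee X$ force $cx \leq a(\bigvee X)$, so it has a supremum in a bounded complete join semilattice; and the constituent suprema $\bigvee(cX)$ and $\bigvee\{cx : c \in \Sigma,\ c \leq a\}$ exist because left multiplication by $c$ is near sup-preserving and because $\Sigma$ is sup-spanning, respectively, so that the union identity applies. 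The one degenerate possibility, that $\{c \in \Sigma : c \leq a\}$ is empty for some $a$, is inconsistent with the sup-spanning hypothesis unless $M$ has a least element $\bigwedge M$ (otherwise $ax = \bigvee \emptyset$ would be undefined), and in that case $ax = \bigwedge M$ for every $x$ while both sides of the displayed identity reduce to $\bigwedge M$; alternatively, the existence of a least element makes $M$ complete, so the prequantale case already covers it. Modulo these routine checks the argument is complete.
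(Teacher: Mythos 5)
Your proof is correct. The paper omits the proof of this proposition (deeming it routine), so there is no explicit argument to compare against, but your reconstruction is exactly what the paper invites: you reduce to showing $\Sigma = M$ via Proposition~\ref{quantales}(5) (resp.\ \ref{nearprequantales}(5), resp.\ the definition), and then propagate sup-preservation from the sup-spanning subset $\Sigma$ to all of $M$ by interchanging iterated suprema, with careful attention to which suprema are guaranteed to exist. The one cosmetic slip is the ``alternatively'' remark at the end: if $\{c \in \Sigma : c \leq a\}$ is empty, the existence of a least element makes $M$ complete as a poset, but the hypothesis is only that $\Sigma$ consists of near-sup-preserving multipliers, so you cannot simply defer to the prequantale case; fortunately your primary argument (both sides reduce to $\bigwedge M$ since $a$ is then a two-sided annihilator) already disposes of it, so the aside is unnecessary rather than harmful.
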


\begin{corollary}\label{ulattices}
Any $\U$-lattice (resp., near $\U$-lattice, semi-$\U$-lattice) is a prequantale (resp.,  near prequantale, semiprequantale).  
\end{corollary}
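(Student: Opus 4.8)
The plan is to reduce the statement to the preceding Proposition, which characterizes prequantales (resp.\ near prequantales, semiprequantales) among complete (resp.\ near sup-complete, bounded complete join semilattice) ordered magmas $M$ by the single condition that the set $A$ of all $a \in M$ for which the left and right multiplication by $a$ maps on $M$ are sup-preserving (resp.\ near sup-preserving) is a sup-spanning subset of $M$. A $\U$-lattice (resp.\ near $\U$-lattice, semi-$\U$-lattice) is by definition already complete (resp.\ near sup-complete, a bounded complete join semilattice) as a poset, so the only thing left to verify is that $A$ is sup-spanning.

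First I would note that $\U(M) \subseteq A$: if $u \in \U(M)$, then left and right multiplication by $u$ are poset automorphisms of $M$, and a poset automorphism preserves every supremum that exists, so in particular these maps are sup-preserving, hence also near sup-preserving. By hypothesis $\U(M)$ is a sup-spanning subset of $M$, so it suffices to observe that any subset of $M$ containing a sup-spanning subset is again sup-spanning. Indeed, if $\Sigma \subseteq \Sigma'$ with $\Sigma$ sup-spanning, then for all $x, y \in M$ the set $\{ay : a \in \Sigma',\ a \leq x\}$ contains the set $\{ay : a \in \Sigma,\ a \leq x\}$, which has supremum $xy$ because $\Sigma$ is sup-spanning, and is itself bounded above by $xy$ (since $a \leq x$ forces $ay \leq xy$); hence its supremum exists and is squeezed to equal $xy$. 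Symmetrically $\bigvee\{xb : b \in \Sigma',\ b \leq y\} = xy$, so $\Sigma'$ is sup-spanning. Applying this with $\Sigma = \U(M)$ and $\Sigma' = A$ and invoking the preceding Proposition yields the claim.

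I do not expect a substantive obstacle here; this is the kind of deduction whose reconstruction the paper declares routine. The only point worth stating explicitly is that in the semi-$\U$-lattice case one must first note that the set $\{ay : a \in \Sigma',\ a \leq x\}$ is bounded above, by $xy$, before invoking bounded completeness to obtain its supremum, and likewise for the sets appearing on the other side; the rest is immediate.
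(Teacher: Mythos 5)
Your proof is correct and follows exactly the route the paper intends: the statement is labeled a corollary of the immediately preceding proposition, and your two observations (that $\U(M)$ sits inside the set $A$ appearing there because poset automorphisms preserve all existing suprema, and that a superset of a sup-spanning set is again sup-spanning) are precisely the routine details the paper declines to spell out. The care you take with boundedness in the semi-$\U$-lattice case is appropriate and handled correctly.
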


\begin{proposition}\label{CSTstar}
Let $\star$ be a nucleus on an ordered magma $M$.   If $M$ is a prequantale (resp., near prequantale, residuated, near residuated, semiprequantale, quantale, near quantale, semiquantale, multiplicative lattice, near multiplicative lattice, semimultiplicative lattice, $\U$-lattice, near $\U$-lattice, semi-$\U$-lattice, multiplicative semilattice, prequantic semilattice), then so is $M^\star$.  Moreover, if $M$ is near residuated, then $(x/y)^\star = x/y^\star = x/y$ (resp., $(y \backslash x)^\star = y^\star \backslash x = y \backslash x$) for all $x \in M^\star$ and all $y \in M$ such that $x/y$ (resp., $y \backslash x$) is defined.
\end{proposition}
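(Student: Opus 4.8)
The plan is to verify the claimed inheritance class by class, grouping the classes by which feature of $M^\star$ must be checked and reducing almost everything to results already available.

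\emph{Poset and equational parts.} Every class in the list is built from (i) a completeness-type condition on the underlying poset, (ii) a magma-equational condition (commutativity, associativity, or the existence of an identity or an annihilator), and (iii) a distributivity-of-products-over-suprema condition, sometimes phrased via residuals. For (i), Lemma~\ref{starlemma}(4) already gives that $M^\star$ is complete (resp.\ near sup-complete, bounded complete, a join semilattice) whenever $M$ is. For (ii), recall that the corestriction ${_{M^\star}}|\star : M \longrightarrow M^\star$ is a \emph{surjective} homomorphism of magmas (Proposition~\ref{closureprop1}(3)) carrying an identity $1$ of $M$ to an identity $1^\star$ of $M^\star$, and that $0^\star = \bigwedge M^\star$ is an annihilator of $M^\star$ whenever $0 = \bigwedge M$ is one (a one-line consequence of $x^\star y^\star \le (xy)^\star$). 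Since a quotient magma inherits associativity, commutativity and unitality, this settles the quantale, near-quantale, semiquantale and (near/semi-)multiplicative-lattice cases, and the prequantic-semilattice case once the multiplicative-semilattice case is known.

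\emph{Distributivity and residuals.} For the prequantale / near prequantale / semiprequantale / multiplicative-semilattice / prequantic-semilattice cases the content is the identity $a \star \bigvee_{M^\star} X = \bigvee_{M^\star}(a \star X)$ and its mirror, for $a \in M^\star$ and $X \subseteq M^\star$ of the relevant kind. Writing $s = \bigvee_{M^\star} X = (\bigvee_M X)^\star$ via Lemma~\ref{starlemma}(1) (the supremum $\bigvee_M X$ existing in each relevant case), one has $a \star s = (as)^\star = \bigl(a\bigvee_M X\bigr)^\star = \bigl(\bigvee_M aX\bigr)^\star = \bigvee_{M^\star}(aX)^\star = \bigvee_{M^\star}(a \star X)$, the second step using $a^\star y^\star \le (ay)^\star$ with $a = a^\star$, the third the distributivity in $M$, the fourth Lemma~\ref{starlemma}(1). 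The residuated and near residuated cases, and the ``moreover'', rest on the following: if $x \in M^\star$ and $x/y$ is defined in $M$, then $(x/y)^\star y \le (x/y)^\star y^\star \le \bigl((x/y)y\bigr)^\star \le x^\star = x$, so maximality of $x/y$ gives $(x/y)^\star = x/y \in M^\star$; then $(x/y)y^\star \le \bigl((x/y)y\bigr)^\star \le x$ shows $x/y^\star$ is defined and $\ge x/y$, while $z y^\star \le x \Rightarrow zy \le x$ gives $x/y^\star \le x/y$; hence $(x/y)^\star = x/y^\star = x/y$, and symmetrically $(y\backslash x)^\star = y^\star \backslash x = y\backslash x$. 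It follows that for $a, x \in M^\star$ the largest $z \in M^\star$ with $z \star a \le x$ (equivalently $za \le x$) is $x/a \in M^\star$, so $M^\star$ is residuated (resp.\ near residuated).

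\emph{The $\U$-lattice cases and the main obstacle.} By Corollary~\ref{ulattices}, a $\U$-lattice (resp.\ near-$\U$-lattice, semi-$\U$-lattice) is a prequantale (resp.\ near prequantale, semiprequantale), so by the above $M^\star$ is one; it remains to show $\U(M^\star)$ is sup-spanning in $M^\star$. Applying the sup-preserving corestriction $\star$ to the two defining identities for $\U(M)$ and using Lemma~\ref{starlemma}(1) gives, for $x, y \in M^\star$, that $x \star y$ equals $\bigvee_{M^\star}\{(ay)^\star : a \in \U(M),\, a \le x\}$ and also $\bigvee_{M^\star}\{(xb)^\star : b \in \U(M),\, b \le y\}$. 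Since $(ay)^\star = a^\star \star y$ (by the nucleus property) and $\star$-multiplication on $M^\star$ is order-preserving, this reduces the problem to showing that each $u \in \U(M)$ is transportable through $\star$, i.e.\ $\U(M) \subseteq \T^\star(M)$ — from which $u^\star \in \U(M^\star)$, and then the sup-spanning of $\U(M^\star)$, follow by a routine sandwiching argument. For $u \in \Inv(M)$ this is exactly Proposition~\ref{closureprop3}, and one then checks that left multiplication by $u$ restricts to a poset automorphism of $M^\star$ with inverse the restriction of left multiplication by $u^{-1}$ (likewise on the right). The step I expect to be the crux is extending this from $\Inv(M)$ to all of $\U(M)$. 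Transportability of $u \in \U(M)$ amounts to the two inclusions $uM^\star \subseteq M^\star$ and $M^\star \subseteq uM^\star$; the ``moreover'' above supplies the second, because $x \mapsto u\backslash x$ is the inverse of left multiplication by $u$ (as $M$, being a prequantale, is residuated by Proposition~\ref{quantales}) and maps $M^\star$ into $M^\star$; the first inclusion $uM^\star \subseteq M^\star$ is the genuinely new point, which I would isolate as a lemma preceding the proposition and attack via the closure operation on $M$ obtained by conjugating $\star$ by left multiplication by $u$, together with the residuated structure. Granting $\U(M) \subseteq \T^\star(M)$, the argument above completes the $\U$-lattice, near-$\U$-lattice and semi-$\U$-lattice cases.
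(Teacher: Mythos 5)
Your handling of the prequantale, near prequantale, semiprequantale, multiplicative semilattice, prequantic semilattice, residuated, and near residuated cases, and of the ``moreover'' statement, matches the paper's argument in substance (the paper proves only the prequantale case in detail, via exactly the computation you give, and derives the ``moreover'' from the chain of equivalences $yz \leq x^\star \Leftrightarrow y^\star z \leq x^\star \Leftrightarrow y z^\star \leq x^\star \Leftrightarrow y^\star z^\star \leq x^\star$, which is the same content as your residual argument). Your reduction of the equational cases to the fact that ${_{M^\star}}|\star$ is a surjective magma homomorphism carrying identities to identities and the annihilator to the annihilator is also fine.

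The gap is exactly where you flag it: the $\U$-lattice, near $\U$-lattice, and semi-$\U$-lattice cases. You correctly reduce them to showing $\U(M) \subseteq \T^\star(M)$, i.e.\ $uM^\star \subseteq M^\star$ for all $u \in \U(M)$, and you correctly note that the ``moreover'' supplies only the reverse containment $M^\star \subseteq uM^\star$ (via $u\backslash(\cdot) = L_u^{-1}$ preserving $M^\star$). But you do not prove $uM^\star \subseteq M^\star$, and the proposed attack by conjugating $\star$ with $L_u$ does not obviously survive the lack of associativity (one cannot relate $u\backslash(xy)$ to $u\backslash x$ and $u\backslash y$ in a general magma). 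The paper's Proposition~\ref{closureprop3} gives only $\Inv(M) \subseteq \T^\star(M)$, and its proof uses in an essential way that $L_u^{-1}$ is itself a left-translation $L_{u^{-1}}$ so that the nucleus inequality $u^{-1}(ux)^\star \leq (u^{-1}(ux))^\star$ is available; for $u \in \U(M)\setminus\Inv(M)$ the map $L_u^{-1}$ is merely a poset automorphism and that step has no analogue. In fact $u^\star \in \U(M^\star)$ is \emph{equivalent} to $u \in \T^\star(M)$ (the map $x \mapsto (ux)^\star$ on $M^\star$ is always surjective with section $u\backslash(\cdot)$, and it is injective precisely when $(ux)^\star = ux$ for all $x \in M^\star$), so there is no way around proving transportability. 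To close the gap you must either prove $\U(M) \subseteq \T^\star(M)$ outright, or observe that in every instance the paper actually uses --- and in particular for ordered monoids, where $\U(M) = \Inv(M)$ --- Proposition~\ref{closureprop3} already suffices, and restrict the $\U$-lattice clauses accordingly. As written, your proof of the proposition is incomplete in these three cases.
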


\begin{proof}  Suppose that $M$ is a prequantale.  By Lemma \ref{starlemma}(4) the partial ordering  on $M^\star$ is complete.  For any $a \in M^\star$ and $X \subseteq M^\star$ we have $a \star  {\bigvee}_{M^\star} X = a \star ({\bigvee}_M X)^\star = (a \ {\bigvee}_M X)^\star = ( {\bigvee}_M a X)^\star = ({\bigvee}_M (a X)^\star)^\star  = ({\bigvee}_M (a \star X))^\star =     {\bigvee}_{M^\star} (a \star X)$, and therefore $a \star {\bigvee}_{M^\star} X = {\bigvee}_{M^\star} (a \star X)$.   By symmetry the corresponding equation holds for right multiplication.  Thus $M^\star$ is a prequantale.  The proof of the rest of the first statement of the proposition is similar.  The second statement of the proposition follows from the equivalences $yz \leq x^\star \Leftrightarrow y^\star z \leq x^\star \Leftrightarrow y z^\star \leq x^\star \Leftrightarrow y^\star z^\star \leq x^\star$.
\end{proof}

By following proposition, which is a straightforward generalization of \cite[Theorem 3.1.1]{ros}, the nuclei on a near prequantale or prequantale $Q$ classify the quotient objects of $Q$.

\begin{proposition}\label{supremark}
Let $f: Q \rightarrow M$ be a morphism of near sup-magmas (resp., sup-magmas), where $Q$ is a near prequantale (resp., prequantale).
\begin{enumerate}
\item If $\star$ is any nucleus on $Q$, then ${_{Q^\star}}|\star: Q \longrightarrow Q^\star$ is a surjective morphism of near prequantales (resp., prequantales).
\item There exists a unique nucleus $\star$ on $Q$ such that $f = (f|_{Q^\star}) \circ ({_{Q^\star}}|\star)$ and $f|_{Q^\star}$ is injective; moreover, one has $x^\star = \bigvee\{y \in Q: \ f(y) = f(x)\}$ for all $x \in Q$.
\item $f|_{Q^\star}: Q^\star \longrightarrow M$ is an embedding of near sup-magmas (resp., sup-magmas).
\item $f|_{Q^\star}: Q^\star \longrightarrow \operatorname{im} f$ is an isomorphism of ordered magmas.
\end{enumerate}
\end{proposition}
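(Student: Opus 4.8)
The plan is to dispatch part (1) by bookkeeping and to derive parts (2)--(4) from one key observation about the self-map $\star$ of $Q$ defined by $x^\star = \bigvee\{y \in Q : f(y) = f(x)\}$. For (1): by the remarks following Lemma \ref{starlemma} the corestriction ${_{Q^\star}}|\star$ is a surjective, sup-preserving morphism of ordered magmas, and by Proposition \ref{CSTstar} its codomain $Q^\star$ is again a near prequantale (resp.\ prequantale); hence ${_{Q^\star}}|\star$ is the asserted surjective morphism of near prequantales (resp.\ prequantales).

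For (2), first note that $\star$ is well defined, since $\{y : f(y) = f(x)\}$ contains $x$ and $Q$ is near sup-complete. The crux is the equivalence
\[
x \leq a^\star \quad\Longleftrightarrow\quad f(x) \leq f(a) \qquad (x,a \in Q),
\]
together with the identity $f(x^\star) = f(x)$. The identity is immediate: the image under $f$ of the set $\{y : f(y) = f(x)\}$ is $\{f(x)\}$, so by near-sup-preservation $f(x^\star) = f(x)$. In the equivalence the forward implication follows from this identity and the order-preservation of $f$; for the converse, if $f(x) \leq f(a)$ then $f(x \vee a) = f(x) \vee f(a) = f(a)$ (using that $\{x,a\} \neq \emptyset$ and $f$ is near-sup-preserving), so $x \vee a$ belongs to the set whose supremum is $a^\star$, giving $x \leq x \vee a \leq a^\star$.

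From here part (2) is formal. The equivalence yields $x \leq y^\star \Leftrightarrow f(x) \leq f(y) \Leftrightarrow f(x^\star) \leq f(y) \Leftrightarrow x^\star \leq y^\star$, so by the characterization of closure operations recalled in the introduction, $\star$ is a closure operation; and it is a nucleus because $x^\star y^\star \leq (xy)^\star \Leftrightarrow f(x^\star y^\star) \leq f(xy)$, while $f(x^\star y^\star) = f(x)f(y) = f(xy)$ since $f$ is a magma homomorphism. The required factorization $f = (f|_{Q^\star}) \circ ({_{Q^\star}}|\star)$ is exactly the identity $f(x^\star) = f(x)$, and $f|_{Q^\star}$ is injective: if $u,v \in Q^\star$ with $f(u) = f(v)$, the equivalence gives $u = u^\star \leq v^\star = v$ and symmetrically $v \leq u$. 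For uniqueness, any nucleus $\star'$ with the two stated properties satisfies $f(x^{\star'}) = f(x)$, whence $x^{\star'} \leq x^\star$; conversely for $y$ with $f(y) = f(x)$, applying injectivity of $f|_{Q^{\star'}}$ to $y^{\star'}$ and $x^{\star'}$ (which have equal $f$-image) gives $y^{\star'} = x^{\star'}$ and so $y \leq x^{\star'}$, whence $x^\star \leq x^{\star'}$; thus $\star' = \star$.

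For (3) and (4): $f|_{Q^\star}$ is a magma homomorphism since $f(u \star v) = f((uv)^\star) = f(uv) = f(u)f(v)$; it is injective and order-reflecting (i.e.\ $f(u) \leq f(v)$ forces $u \leq v$) by the equivalence above; and it is near-sup-preserving (resp.\ sup-preserving) because $\bigvee_{Q^\star} X = (\bigvee_Q X)^\star$ by Lemma \ref{starlemma}(1), so $f(\bigvee_{Q^\star} X) = f(\bigvee_Q X) = \bigvee_M f(X)$ for every nonempty (resp.\ arbitrary) $X \subseteq Q^\star$. Since $f$ is a magma homomorphism that is near-sup-preserving, $\operatorname{im} f$ is a submagma of $M$ closed under nonempty (resp.\ arbitrary) suprema, and $\operatorname{im}(f|_{Q^\star}) = \operatorname{im} f$ because $f(x) = f(x^\star)$; hence $f|_{Q^\star}\colon Q^\star \to \operatorname{im} f$ is a bijective, order-reflecting morphism of ordered magmas and so an isomorphism of ordered magmas, which composed with the inclusion $\operatorname{im} f \hookrightarrow M$ exhibits $f|_{Q^\star}$ as an embedding of near sup-magmas (resp.\ sup-magmas). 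I do not anticipate a real obstacle; the one point that needs care is the converse half of the key equivalence, where one must argue via $x \vee a$ rather than directly, and the systematic tracking of the nonemptiness hypotheses in the ``near'' case.
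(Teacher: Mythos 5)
The paper omits a proof of this proposition, noting only that it is ``a straightforward generalization of'' Rosenthal's Theorem 3.1.1, so there is no text to compare against directly; but your reconstruction is correct and is exactly the argument that was intended. Your organizing move---establishing the single equivalence $x \leq a^\star \Leftrightarrow f(x) \leq f(a)$, with the reverse direction routed through $x \vee a$ (legitimate since $\{x,a\}$ is nonempty and $Q$ is near sup-complete)---is the right one: it makes the closure-operation property, the nucleus property, injectivity of $f|_{Q^\star}$, and order-reflection all fall out formally, and your handling of the nonemptiness bookkeeping in the ``near'' case is careful where it needs to be (the set $\{y : f(y)=f(x)\}$ contains $x$; nonempty suprema exist; Lemma~\ref{starlemma}(1) and near-sup-preservation of $f$ give $f(\bigvee_{Q^\star}X)=\bigvee_M f(X)$).
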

 
A quantale $Q$ is {\it simple} if every nontrivial sup-preserving semigroup homomorphism from $Q$ is injective.  More generally, we will say that a near sup-magma $M$ is {\it simple} if every nonconstant near sup-preserving magma homomorphism from $M$ is injective.  If $M$ is a sup-magma, then this holds if and only if every nontrivial sup-preserving magma homorphism from $M$ is injective, so our definition is consistent with that of a simple quantale.  Let $d$ be the identity map on $M$, and let $e: M \longrightarrow M$ be defined by $x^e = \bigvee M$ for all $x \in M$.  Both $d$ and $e$ are nuclei.

\begin{corollary}\label{desimple}
A near prequantale $Q$ is simple if and only if $d$ and $e$ are the only nuclei on $Q$.
\end{corollary}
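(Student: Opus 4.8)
The plan is to invoke Proposition \ref{supremark}, which says the nuclei on a near prequantale $Q$ classify its quotient objects, together with the two distinguished nuclei $d$ and $e$ introduced just above the corollary, for which $Q^d = Q$ (with ${_{Q^d}}|d = \id_Q$) and $Q^e = \{\bigvee Q\}$. Recall also that a morphism of near prequantales, and more generally of near sup-magmas, is precisely a near sup-preserving magma homomorphism.

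\emph{Necessity.} Suppose $Q$ is simple, and let $\star$ be an arbitrary nucleus on $Q$. By Proposition \ref{supremark}(1) the corestriction $p := {_{Q^\star}}|\star : Q \longrightarrow Q^\star$ is a surjective morphism of near prequantales, hence a near sup-preserving magma homomorphism; by simplicity, $p$ is either injective or constant. If $p$ is injective, then for every $x \in Q$ one has $p(x) = x^\star = (x^\star)^\star = p(x^\star)$, whence $x = x^\star$, so that $\star = d$. If $p$ is constant, then, being also surjective, it forces $Q^\star$ to be a one-element poset, which (since $\star$ is expansive) must be $\{\bigvee Q\}$; thus $x^\star = \bigvee Q$ for all $x$ and $\star = e$. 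Hence the only nuclei on $Q$ are $d$ and $e$.

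\emph{Sufficiency.} Suppose $d$ and $e$ are the only nuclei on $Q$, and let $f : Q \longrightarrow M$ be a nonconstant morphism of near sup-magmas. By Proposition \ref{supremark}(2) there is a nucleus $\star$ on $Q$ with $f = (f|_{Q^\star}) \circ ({_{Q^\star}}|\star)$ and $f|_{Q^\star}$ injective. One cannot have $\star = e$, for then $f$ would factor through the one-point magma $Q^e = \{\bigvee Q\}$ and would be constant; so $\star = d$, whence $Q^\star = Q$, ${_{Q^\star}}|\star = \id_Q$, and $f = f|_{Q^\star}$ is injective. Therefore every nonconstant morphism of near sup-magmas out of $Q$ is injective, i.e.\ $Q$ is simple.

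The whole argument is essentially bookkeeping on top of Proposition \ref{supremark}; the one point that deserves attention — and the only place I expect any (minor) friction — is reconciling the definition of \emph{simple} (in which a non-injective morphism is required to be genuinely \emph{constant}) with the quotient-object classification of Proposition \ref{supremark}: one must check that a surjective near sup-preserving magma homomorphism out of $Q$ that fails to be injective collapses $Q^\star$ all the way down to $\{\bigvee Q\}$, and conversely that the nucleus $e$ produces precisely the constant morphisms.
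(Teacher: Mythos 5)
Your proof is correct and follows exactly the route the paper intends: Corollary \ref{desimple} is stated immediately after Proposition \ref{supremark} and is meant to be read off from it. Both directions are handled cleanly — in particular, the observation that a surjective constant map forces $Q^\star$ to be the singleton $\{\bigvee Q\}$ (using expansiveness plus $(\bigvee Q)^\star = \bigvee Q$) is precisely the small bookkeeping step needed, and you supply it.
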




\section{The poset of nuclei}\label{sec:COCL}

Let $S$ be a poset and $X$ a set.  The set $S^X$ of all functions from $X$ to $S$ is partially ordered, where $f \leq g$ if $f(x) \leq g(x)$ for all $x \in X$.  If $f \leq g$, then we say that $f$ is {\it smaller}, or {\it finer}, than $g$, or equivalently $g$ is {\it larger}, or {\it coarser}, than $f$.  The set $\C(S)$ of all closure operations on $S$ inherits a partial ordering from the poset $S^S$.  For any $\star_1, \star_2 \in \C(S)$ one has $\star_1 \leq \star_2$ if and only if $S^{\star_1} \supseteq S^{\star_2}$.  The identity operation $d$ on $S$ is the smallest closure operation on $S$.  If $\bigvee S$ exists then there is a largest closure operation $e$ on $S$, given by $x^e = \bigvee S$ for all $x \in S$.   If $M$ is an ordered magma, then we let $\N(M)$ denote the subposet of $\C(M)$ consisting of all nuclei on $M$.   In this section we study properties of the poset $\N(M)$. 

The following result characterizes nuclei on near residuated ordered magmas and generalizes \cite[Lemma 3.33]{gal}.

\begin{proposition}\label{characterizingclosures2}
Let $C$ be a subset of a poset $S$.
\begin{enumerate}
\item There exists a closure operation $\star$ on $S$ with $C = S^\star$ if and only if $\bigwedge\{a \in C: a \geq x\}$ exists in $S$ for all $x \in S$.  For any such closure operation $\star$ one has $x^\star = \bigwedge\{a \in C: a \geq x\}$ for all $x \in S$, and therefore $\star = \star_C$ is uniquely determined by $C$. 
\item If $S = M$ is a near residuated ordered magma and $\star_C$ exists, then $\star_C$ is a nucleus on $M$ if and only, for all $x \in C$ and $y \in M$, one has $x/y \in C$ provided $x/y$ exists and $y \backslash x \in C$ provided $y \backslash x$ exists.
\end{enumerate}
\end{proposition}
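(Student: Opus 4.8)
The plan is to prove the two parts essentially independently, with the bulk of the work in part (2). For part (1) I would invoke (or just re-derive) the standard correspondence between closure operations on a poset and their fixed-point sets. If $\star$ is a closure operation with $C = S^\star$, then for each $x \in S$ one has $x^\star \in C$ with $x^\star \geq x$, while every $a \in C$ with $a \geq x$ satisfies $a = a^\star \geq x^\star$ by monotonicity and idempotence; hence $x^\star = \min\{a \in C : a \geq x\}$, which shows at once that $\bigwedge\{a \in C : a \geq x\}$ exists, that $x^\star$ has the claimed form, and that $\star$ is determined by $C$. Conversely, assuming these infima all exist, I would set $x^{\star_C} = \bigwedge\{a \in C : a \geq x\}$ and verify directly that this map is expansive ($x$ is a lower bound of $\{a \in C : a \geq x\}$), order-preserving (enlarging $x$ shrinks the set being met), idempotent (the sets $\{a \in C : a \geq x\}$ and $\{a \in C : a \geq x^{\star_C}\}$ coincide), and has fixed-point set $C$ (each $c \in C$ is the least member of $\{a \in C : a \geq c\}$). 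These are routine.

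For part (2), fix the near residuated ordered magma $M$ and assume $\star = \star_C$ exists, so $x^\star = \bigwedge\{c \in C : c \geq x\}$ for all $x \in M$. For the ``only if'' direction, suppose $\star$ is a nucleus, let $x \in C$ and $y \in M$ with $x/y$ defined. From $(x/y)y \leq x$, the nucleus inequality, and $x^\star = x$ we get $(x/y)^\star y^\star \leq ((x/y)y)^\star \leq x^\star = x$; since $y \leq y^\star$ this gives $(x/y)^\star y \leq x$, so by maximality of $x/y$ we conclude $(x/y)^\star \leq x/y$, hence $x/y = (x/y)^\star \in C$. The case of $y \backslash x$ is symmetric.

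For the ``if'' direction, assume $x/y \in C$ and $y \backslash x \in C$ whenever $x \in C$, $y \in M$, and the relevant residual exists. Since $(xy)^\star = \bigwedge\{c \in C : c \geq xy\}$, it suffices to show $x^\star y^\star \leq c$ for every $x, y \in M$ and every $c \in C$ with $xy \leq c$. Fix such a $c$. As $xy \leq c$ and $M$ is near residuated, $c/y$ exists and lies in $C$ by hypothesis, and from $x \leq c/y$ we get $x^\star \leq (c/y)^\star = c/y$, i.e.\ $x^\star y \leq c$. Then, as $x^\star y \leq c$ and $M$ is near residuated, $x^\star \backslash c$ exists and lies in $C$ by hypothesis, and from $y \leq x^\star \backslash c$ we get $y^\star \leq (x^\star \backslash c)^\star = x^\star \backslash c$, i.e.\ $x^\star y^\star \leq c$. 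Taking the meet over all such $c$ yields $x^\star y^\star \leq (xy)^\star$, so $\star$ is a nucleus.

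The routine parts are part (1) and the ``only if'' direction of (2), which are pure bookkeeping with the axioms for closure operations and nuclei. The crux is the ``if'' direction of (2): the two successive appeals to near residuation — first pushing $x$ up to $x^\star$ inside $C$ through the residual $c/y$, then pushing $y$ up to $y^\star$ inside $C$ through $x^\star \backslash c$ — are exactly what the near residuated hypothesis provides, and this is the step with no analogue over a bare ordered magma. I would double-check that each residual invoked really is defined (so that near residuation, rather than full residuation, is enough) and that the hypothesis ``$x \in C \Rightarrow x/y, y\backslash x \in C$'' is applied with the correct argument in the $C$-slot, namely $c$ in the first use and $c$ again (with $x^\star$ in the other slot) in the second.
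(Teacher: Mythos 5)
Your proposal is correct and follows essentially the same approach as the paper. The only cosmetic differences: the paper disposes of part~(1) as well-known, cites Proposition~\ref{CSTstar} for the ``only if'' direction of~(2) where you argue it directly, and in the ``if'' direction proves the two half-inequalities $x^\star y \le (xy)^\star$ and $xy^\star \le (xy)^\star$ separately (each via a single residuation step, then by symmetry) and invokes Proposition~\ref{closureprop1}, whereas you chain two residuation steps to get $x^\star y^\star \le (xy)^\star$ in one pass; these are interchangeable by that proposition, and your care about which residuals actually exist under mere near residuation is exactly the right check.
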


\begin{proof} Statement (1) is  well-known and easy to check.
We prove (2). If $\star$ is a nucleus, then it follows from Proposition \ref{CSTstar} that $x/y \in C$ and $y \backslash x \in C$ for all $x,y$ as in statement (2).  Conversely, suppose that this condition on $C$ holds.  Let $x, y \in M$.  Then we claim that $x^\star y =  \bigwedge\{a \in C: \ a \geq x\} y$ is less than or equal to $(xy)^\star = \bigwedge\{b \in C: \ b \geq xy\}$.  For let $b \in C$ with $b \geq xy$.  Set $a = b/y$, whence $a \geq x$.  By hypothesis one has $a \in C$.  Therefore $x^\star y \leq ay = (b/y)y \leq b$.  Taking the infimum over all such $b$, we see that $x^\star y \leq \bigwedge\{b \in C: \ b \geq xy\} = (xy)^\star$.  By symmetry one also has $xy^\star \leq (xy)^\star$.  It follows that $\star$ is a nucleus.
\end{proof}


\begin{corollary}\label{characterizingclosures}
One has the following.
\begin{enumerate}
\item Let $C$ be a nonempty subset of a bounded complete poset $S$.  Then there exists a closure operation $\star$ on $S$ with $C = S^\star$ if and only if $\bigwedge X \in C$ for all nonempty $X \subseteq C$ bounded below.
\item Let $C$ be a nonempty subset of a bounded complete and near residuated ordered magma $Q$.  Then there exists a nucleus $\star$ on $Q$ with $C = Q^\star$ if and only if $\bigwedge X \in C$ for all nonempty $X \subseteq C$ bounded below and  for all $x \in C$ and $y \in Q$ one has $x/y \in C$ provided $x/y$ exists and $y \backslash x \in C$ provided $y \backslash x$ exists.
\end{enumerate}
\end{corollary}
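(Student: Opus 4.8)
This is a direct consequence of Proposition~\ref{characterizingclosures2} together with Lemma~\ref{starlemma}; the only real work is to rephrase the hypotheses of that proposition in the bounded complete setting.

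For part~(1), Proposition~\ref{characterizingclosures2}(1) already asserts that a closure operation $\star$ on $S$ with $C = S^\star$ exists if and only if $\bigwedge\{a \in C : a \geq x\}$ exists in $S$ for every $x \in S$, and that any such $\star$ equals the infimum map $\star_C$. So I would reduce the corollary to showing that, for $S$ bounded complete and $\emptyset \neq C \subseteq S$, the condition ``$\bigwedge\{a \in C : a \geq x\}$ exists in $S$ for all $x \in S$'' is equivalent to ``$\bigwedge X \in C$ for every nonempty $X \subseteq C$ bounded below''. For the implication $\Rightarrow$, the hypothesis makes $\star_C$ a genuine closure operation with $S^{\star_C} = C$; given a nonempty bounded-below $X \subseteq C$, the infimum $\bigwedge_S X$ exists by bounded completeness, and since $X \subseteq C = S^{\star_C}$ one has $X^{\star_C} = X$, so Lemma~\ref{starlemma}(2) gives $\bigwedge_S X = \bigwedge_{S^{\star_C}} X \in S^{\star_C} = C$. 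For the implication $\Leftarrow$, fix $x \in S$; the set $C_{\geq x} = \{a \in C : a \geq x\}$ is bounded below by $x$, so provided it is nonempty, bounded completeness yields $\bigwedge_S C_{\geq x}$, which lies in $C$ by hypothesis, and this is the value $x^{\star_C}$.

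For part~(2), I would just combine part~(1) with Proposition~\ref{characterizingclosures2}(2). As $Q$ is bounded complete and near residuated, part~(1) says $\star_C$ exists (as a closure operation with $Q^{\star_C} = C$) if and only if $\bigwedge X \in C$ for all nonempty bounded-below $X \subseteq C$, and then Proposition~\ref{characterizingclosures2}(2) says that this $\star_C$ is a nucleus if and only if $x/y \in C$ whenever $x \in C$, $y \in Q$ and $x/y$ is defined, and symmetrically for $y \backslash x$. Conjoining these two equivalences is exactly the assertion. For the ``only if'' half one can alternatively argue directly: a nucleus $\star$ with $Q^\star = C$ is in particular a closure operation, forcing the infimum condition, and Proposition~\ref{CSTstar} forces the residuation condition, since it gives $x/y = (x/y)^\star \in Q^\star = C$ for $x \in Q^\star$ and $y \in Q$ with $x/y$ defined.

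The whole argument is bookkeeping on top of Proposition~\ref{characterizingclosures2} and Lemma~\ref{starlemma}. The one point I would be careful about --- and the place the statement must really be read in its intended context --- is the nonemptiness of $C_{\geq x}$ in the converse direction of part~(1): if $C_{\geq x} = \emptyset$ then the infimum in Proposition~\ref{characterizingclosures2}(1) is $\bigvee S$, which need not exist in a merely bounded complete poset, so a cofinality hypothesis on $C$ is implicitly in play (and is automatic whenever $S$ has a largest element lying in $C$, as in the near sup-lattice and semistar-operation settings).
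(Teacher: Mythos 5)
Your proof is correct and is essentially the argument the paper intends; the corollary is left without proof precisely because it is meant to follow from Proposition~\ref{characterizingclosures2} together with Lemma~\ref{starlemma}(2) in the way you spell out. The caveat you raise at the end is real and not merely cautionary. In the backward direction of part~(1) one must also know that $C$ is cofinal in $S$, i.e.\ that $\{a \in C : a \geq x\}$ is nonempty for every $x \in S$; otherwise $\star_C$ is not a total self-map of $S$, while the stated condition that $\bigwedge X \in C$ for every nonempty bounded-below $X \subseteq C$ can hold vacuously. A concrete counterexample is $S = [0,1)$ with the usual order (bounded complete, with no top element) and $C = \{0\}$: the infimum condition is satisfied, but no closure operation on $S$ has image $\{0\}$, since expansivity would force $x \leq x^\star = 0$ for all $x \in S$. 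The missing hypothesis is automatic whenever $\bigvee S$ exists and lies in $C$ (as it must if some closure with image $C$ exists), and it is satisfied in every place the corollary is actually invoked in the paper --- for instance in Corollary~\ref{charclos} the relevant $C = \bigcap_{\star \in \Gamma} S^\star$ contains $x^{\star^*}$ for any common upper bound $\star^*$ of the bounded family $\Gamma$, hence is cofinal --- but as a freestanding statement the corollary should add cofinality of $C$ in $S$ to its hypotheses, and the same remark applies to part~(2).
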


\begin{corollary}\label{charclos}
One has the following.
\begin{enumerate}
\item Let $S$ be a bounded complete poset.  Then $\C(S)$ is bounded complete.  Moreover, one has $S^{\bigvee \Gamma} = \bigcap_{\star \in \Gamma} S^\star$ for all bounded subsets $\Gamma$ of $\C(S)$.
\item Let $Q$ be a bounded complete and near residuated ordered magma.  Then $\N(M)$ is bounded complete.  Moreover, one has $Q^{\bigvee \Gamma} = \bigcap_{\star \in \Gamma} Q^\star$ for all bounded subsets $\Gamma$ of $\N(Q)$.
\end{enumerate}
\end{corollary}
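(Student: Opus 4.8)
The plan is to exhibit the supremum of a bounded family $\Gamma$ of closure operations (resp.\ nuclei) concretely as the closure operation (resp.\ nucleus) whose fixed-set is $C := \bigcap_{\star \in \Gamma} S^\star$ (resp.\ $C := \bigcap_{\star \in \Gamma} Q^\star$), invoking Corollary~\ref{characterizingclosures} for its existence. The only order-theoretic fact used repeatedly is the basic one noted above: for $\star_1,\star_2 \in \C(S)$ one has $\star_1 \leq \star_2$ iff $S^{\star_1} \supseteq S^{\star_2}$. We may assume $\Gamma \neq \emptyset$ (if $\Gamma = \emptyset$ then $\bigvee \Gamma = d$ and $S^d = S = \bigcap_{\star \in \emptyset} S^\star$, and likewise in $\N(Q)$) and $S$, hence $C$, nonempty.

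For (1), let $\Gamma \subseteq \C(S)$ be nonempty and bounded above, say by $\star_0$, so that $C \supseteq S^{\star_0} \neq \emptyset$. First I would check the hypothesis of Corollary~\ref{characterizingclosures}(1) for $C$: if $X$ is a nonempty subset of $C$ bounded below, then $\bigwedge_S X$ exists by bounded completeness of $S$, and for each $\star \in \Gamma$ we have $X \subseteq S^\star$, hence $X^\star = X$, hence $\bigwedge_S X = (\bigwedge_S X)^\star \in S^\star$ by Lemma~\ref{starlemma}(2); thus $\bigwedge_S X \in C$. Corollary~\ref{characterizingclosures}(1) then produces a closure operation $\star_C$ with $S^{\star_C} = C$. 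It remains to see that $\star_C = \bigvee \Gamma$: each $\star \in \Gamma$ satisfies $S^\star \supseteq C = S^{\star_C}$, so $\star \leq \star_C$; and any upper bound $\star'$ of $\Gamma$ in $\C(S)$ satisfies $S^{\star'} \subseteq \bigcap_{\star \in \Gamma} S^\star = C = S^{\star_C}$, so $\star_C \leq \star'$. Hence $\bigvee \Gamma$ exists and equals $\star_C$, whence $S^{\bigvee\Gamma} = C = \bigcap_{\star \in \Gamma} S^\star$.

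For (2), the argument is identical with $\C$, $S$, and Corollary~\ref{characterizingclosures}(1) replaced by $\N$, $Q$, and Corollary~\ref{characterizingclosures}(2), once one checks the extra residuation condition on $C = \bigcap_{\star\in\Gamma} Q^\star$. For this, fix $x \in C$ and $y \in Q$ with $x/y$ defined: for each $\star \in \Gamma$ we have $x \in Q^\star$ and $\star$ is a nucleus on the near residuated ordered magma $Q$, so the second assertion of Proposition~\ref{CSTstar} gives $(x/y)^\star = x/y$, i.e.\ $x/y \in Q^\star$; since this holds for every $\star \in \Gamma$, $x/y \in C$, and symmetrically $y\backslash x \in C$ whenever it is defined. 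Corollary~\ref{characterizingclosures}(2) then yields a nucleus $\star_C$ with $Q^{\star_C} = C$, and the order-theoretic argument of part (1) applies verbatim to show $\star_C$ is the least upper bound of $\Gamma$ in $\N(Q)$ (indeed in $\C(Q)$), giving $Q^{\bigvee\Gamma} = \bigcap_{\star \in \Gamma} Q^\star$.

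No step is genuinely hard: the statement is a repackaging of Corollary~\ref{characterizingclosures} together with the two persistence facts that $S^\star$ (resp.\ $Q^\star$) is closed under existing infima of nonempty bounded-below subsets (Lemma~\ref{starlemma}(2)) and, in the nucleus case, under residuals (Proposition~\ref{CSTstar}). The only points requiring care are the side conditions in Corollary~\ref{characterizingclosures}---$C$ nonempty, and infima and residuals computed in the ambient poset $S$ (resp.\ $Q$)---and the routine but essential verification, via $\star_1 \leq \star_2 \Leftrightarrow S^{\star_1} \supseteq S^{\star_2}$, that the operation attached to $C$ really is the supremum of $\Gamma$ rather than merely an upper bound.
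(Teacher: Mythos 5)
Your proof is correct and takes essentially the same route as the paper: form $C = \bigcap_{\star\in\Gamma} S^\star$, verify the hypothesis of Corollary \ref{characterizingclosures} for $C$ (using that each $S^\star$ is closed under existing infima of bounded-below nonempty subsets, and in the nucleus case under residuals via Proposition \ref{CSTstar}), and then use the antitone correspondence $\star_1 \leq \star_2 \Leftrightarrow S^{\star_1} \supseteq S^{\star_2}$ to identify $\star_C$ as $\bigvee\Gamma$. The only difference is that you spell out the verifications that the paper leaves as ``one easily checks,'' which is fine.
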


\begin{proof}
One easily checks that the intersection of nonempty subsets $C$ of $S$ that satisfy the condition of statement (1) of Corollary \ref{characterizingclosures} itself satisfies the same condition and is nonempty given the boundedness condition on $\Gamma$.  Therefore, since $S^{\star} \subseteq S^{\star'}$ if and only if $\star \geq \star'$ for all $\star, \star' \in \C(S)$,  statement (1) follows.  Statement (2) is proved in a similar fashion.
\end{proof}

For any self-map $\star$ of a set $S$ we let $\Fix(\star) = \{x \in S: \ x^\star = x\}$.

\begin{lemma}\label{preclosurelemma}
Let $S$ be a bounded complete poset, and let $+$ be a preclosure on $S$ that is bounded above by some closure operation on $S$.
\begin{enumerate}
\item There exists a smallest closure operation $\star$ on $S$ that is larger that $+$, and one has $S^\star = \Fix(+)$ and $x^\star = \bigwedge\{y \in \Fix(+): \ y \geq x\}$ for all $x \in S$.
\item Define $x^{+_0} = x$ and $x^{+_\alpha} = \bigvee\{(x^{+_\beta})^+: \ \beta < \alpha\}$ for all $x \in S$ and all (finite or transfinite) ordinals $\alpha$.  One has $+_\alpha = \star$ for all $\alpha \gg 0$.  
\item If $S = M$ is a bounded complete and near residuated ordered magma and $xy^+ \leq (xy)^+$ and $x^+ y \leq (xy)^+$ for all $x,y \in M$, then $\star$ is a nucleus on $M$.
\end{enumerate}
\end{lemma}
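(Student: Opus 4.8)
The plan is to handle the three parts in order: part (1) comes from Proposition~\ref{characterizingclosures2}(1), part (3) is then a short deduction from Proposition~\ref{characterizingclosures2}(2), and part (2) is the familiar transfinite stabilization argument. For part (1), set $C=\Fix(+)$. The key point is that $C$ is stable under existing infima: if $\emptyset\neq X\subseteq C$ is bounded below, then $\bigwedge X$ exists by bounded completeness, and since $(\bigwedge X)^+\leq x^+=x$ for each $x\in X$ (monotonicity of $+$ together with $x\in\Fix(+)$) while $\bigwedge X\leq(\bigwedge X)^+$ by expansiveness, we get $\bigwedge X\in C$. Also, if $\circ$ is a closure operation with $+\leq\circ$, then $x^\circ\in C$ for every $x$ (expansiveness together with $+\leq\circ$), so $\{a\in C:a\geq x\}$ is nonempty and bounded below by $x$; hence $\bigwedge\{a\in C:a\geq x\}$ exists for all $x\in S$. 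Proposition~\ref{characterizingclosures2}(1) now produces the closure operation $\star=\star_C$ with $S^\star=C$ and $x^\star=\bigwedge\{a\in\Fix(+):a\geq x\}$. That $\star$ is the \emph{smallest} closure operation larger than $+$ has two halves: firstly $+\leq\star$, because for each $a\in\Fix(+)$ with $a\geq x$ we have $x^+\leq a^+=a$, so taking the infimum over such $a$ gives $x^+\leq x^\star$; secondly, if $\star'$ is any closure operation with $+\leq\star'$ and $y\in\Fix(\star')$, then $y\leq y^+\leq y^{\star'}=y$, so $y\in\Fix(+)=S^\star$, whence $S^{\star'}\subseteq S^\star$, i.e.\ $\star\leq\star'$.

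For part (3), since part (1) provides the closure operation $\star=\star_C$ with $C=\Fix(+)$ and $M$ is assumed near residuated, Proposition~\ref{characterizingclosures2}(2) reduces the claim that $\star$ is a nucleus to showing that $a/y\in C$ whenever $a\in C$, $y\in M$, and $a/y$ exists, and likewise for $y\backslash a$. Given $a^+=a$ and $a/y$ defined, from $(a/y)y\leq a$ and the hypothesis $x^+y\leq(xy)^+$ we get $(a/y)^+y\leq\bigl((a/y)y\bigr)^+\leq a^+=a$, so $(a/y)^+\leq a/y$ by the maximality defining $a/y$, and expansiveness gives equality; hence $a/y\in C$. The argument for $y\backslash a$, using the hypothesis $xy^+\leq(xy)^+$, is symmetric.

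Part (2) is the step requiring the most care. By transfinite induction on $\alpha$ one shows simultaneously that $x^{+_\alpha}$ is well defined and that $x^{+_\alpha}\leq x^\star$: for $\alpha\geq1$ the set $\{(x^{+_\beta})^+:\beta<\alpha\}$ is nonempty, and each member is $\leq x^\star$, since the inductive hypothesis gives $x^{+_\beta}\leq x^\star$ and then $+\leq\star$ with $\star$ idempotent gives $(x^{+_\beta})^+\leq(x^\star)^\star=x^\star$, so the supremum exists by bounded completeness. Expansiveness of $+$ makes $(x^{+_\alpha})_\alpha$ a weakly increasing chain in $S$; as $S$ is a set this chain cannot increase strictly through all ordinals, so it stabilizes at some $\alpha_x$, and at stabilization $(x^{+_{\alpha_x}})^+\leq x^{+_{\alpha_x+1}}=x^{+_{\alpha_x}}$, i.e.\ $x^{+_{\alpha_x}}\in\Fix(+)$. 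Since also $x\leq x^{+_{\alpha_x}}\leq x^\star$, the formula $x^\star=\bigwedge\{y\in\Fix(+):y\geq x\}$ from part (1) forces $x^{+_{\alpha_x}}=x^\star$. Finally $\{\alpha_x:x\in S\}$ is a set of ordinals, so for any ordinal $\alpha$ past its supremum one has $x^{+_\alpha}=x^\star$ for every $x\in S$, that is, $+_\alpha=\star$. The only genuine subtlety is this passage from the per-element stabilization ordinals $\alpha_x$ to a single uniform one, which is legitimate precisely because $S$ is a set rather than a proper class.
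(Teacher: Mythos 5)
Your proof is correct. For parts (1) and (3) you route the argument through Proposition~\ref{characterizingclosures2}: in (1) you verify that $\Fix(+)$ is stable under existing nonempty bounded infima and that $\{a\in\Fix(+):a\geq x\}$ is nonempty (via any closure operation dominating $+$), then invoke part (1) of that proposition, whereas the paper defines $x^\star=\bigwedge\{y\in\Fix(+):y\geq x\}$ directly and checks the closure-operation axioms by hand — the computations are essentially the same, just packaged differently. For (3) you again reduce to Proposition~\ref{characterizingclosures2}(2), showing $a/y\in\Fix(+)$ when $a\in\Fix(+)$ via $(a/y)^+y\leq((a/y)y)^+\leq a^+=a$; the paper instead argues directly, taking an arbitrary $z\geq ax$ with $z^+=z$, setting $y=a\backslash z$, and threading the inequality $ax^\star\leq ay\leq z$. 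Both are valid, and your reduction makes the residuation lemma do the work that the paper redoes inline. The main place you add genuine content is part (2): the paper dismisses it with ``This follows readily from statement (1),'' while you supply the transfinite induction showing each $x^{+_\alpha}$ is defined and bounded by $x^\star$, the per-element stabilization $x^{+_{\alpha_x}}\in\Fix(+)$ forcing $x^{+_{\alpha_x}}=x^\star$, and the set-theoretic step passing to a uniform ordinal. That last point — needing a single $\alpha$ that works for all $x\in S$, justified because $S$ is a set — is exactly the ``readily'' the paper glosses over, and you are right to flag it as the one spot requiring care.
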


\begin{proof} \
\begin{enumerate}
\item The set $\{y \in \Fix(+): \ y \geq x\}$ is nonempty and bounded by the hypothesis on $+$. Define $x^\star = \bigwedge\{y \in \Fix(+): \ y \geq x\}$ for all $x \in S$.  Clearly $\star$ is a preclosure on $S$ with $\Fix(+) \subseteq S^\star$.  For the reverse inclusion note that
$$(x^+)^\star = \bigwedge\{y  \in \Fix(+): \ y \geq x^+\} = \bigwedge\{y \in \Fix(+): \ y \geq x\} =  x^\star$$
and therefore $x \leq x^+ \leq (x^+)^\star = x^\star$, whence $S^\star \subseteq \Fix(+)$. 
Therefore
$$(x^\star)^\star  =  \bigwedge\{y  \in \Fix(+): \ y \geq x^\star\} = \bigwedge\{y \in \Fix(+): \ y \geq x\} = x^\star,$$
whence $\star$ is a closure operation on $S$.  It is then clear that $\star$ is the smallest closure operation on $S$ that is larger than $+$.
\item This follows readily from statement (1).  
\item  Let $a,x \in M$.  Let $z$ be any element of $M$ such that $z \geq ax$ and $z^+ = z$.  Set $y = a \backslash z$.  Then $y \geq x$ and $y^+ = (a \backslash z)^+ \leq a \backslash z^+ = a \backslash z = y$, hence $y^+ = y$.  Moreover, we have $ay \leq z$.
Therefore we have $$ax^\star \leq \bigwedge\{ay: \ y \geq x, \ y^+ = y\} 
  \leq \bigwedge\{z \in M: z \geq ax, \  z^+ = z\}
  = (ax)^\star.$$
By symmetry we also have $x^\star a \leq (xa)^\star$, whence $\star$ is a nucleus.
\end{enumerate}
\end{proof}

Let $S$ be a poset and $\Gamma \subseteq \C(S)$.  Define partial self-maps $\bigsqcapp \Gamma$ and $\bigsqcupp \Gamma$ of $S$ by $$x^{\bigsqcapp\Gamma} = \bigwedge\{x^\star: \ \star \in \Gamma\},$$
$$x^{\bigsqcupp\Gamma} = \bigwedge\{y \in S: \ y \geq x \mbox{ and } \forall \star \in \Gamma \ (y^\star = y)\},$$ respectively, for all $x \in S$ such that the respective infima exist.  

\begin{lemma}\label{closelem}
Let $S$ be a near sup-lattice (resp., bounded complete poset).  Then $\C(S)$ is a complete lattice (resp., bounded complete poset), and one has the following.
\begin{enumerate}
\item $\bigwedge_{\C(S)} \Gamma = \bigsqcapp \Gamma$ and $S^{\bigsqcapp \Gamma} \supseteq \bigcup_{\star \in \Gamma} S^\star$ for all subsets (resp., all nonempty subsets) $\Gamma$ of $\C(S)$.
\item $\bigvee_{\C(S)} \Gamma = \bigsqcupp \Gamma$ and $S^{\bigsqcupp\Gamma} = \bigcap_{\star \in \Gamma} S^\star$ for all subsets (resp., bounded subsets) $\Gamma$ of $\C(S)$.
\end{enumerate}
\end{lemma}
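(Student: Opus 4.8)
The plan is to treat infima and suprema separately, using the explicit formulas $\bigsqcapp$ and $\bigsqcupp$, and to reduce the supremum part to Proposition \ref{characterizingclosures2}. Throughout, $S$ is a near sup-lattice in the first case and a bounded complete poset in the second, $\Gamma$ ranges over arbitrary (resp.\ nonempty, resp.\ bounded) subsets of $\C(S)$ as appropriate, and I would use freely the facts recorded just before the statement: $d$ is the least closure operation, $\star_1 \leq \star_2$ iff $S^{\star_1} \supseteq S^{\star_2}$, and, in the bounded complete case, Corollary \ref{charclos}(1).

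First I would handle $\bigsqcapp\Gamma$. For each $x \in S$ the set $\{x^\star : \star \in \Gamma\}$ is bounded below by $x$ (closure operations are expansive), and if $\Gamma = \emptyset$ then $\bigwedge\emptyset = \bigvee S$ is the top element of a near sup-lattice; so in all cases under consideration the infimum $x^{\bigsqcapp\Gamma} = \bigwedge\{x^\star : \star\in\Gamma\}$ exists and $\bigsqcapp\Gamma$ is a total self-map of $S$. Expansivity and order-preservation are immediate. For idempotence, $x^{\bigsqcapp\Gamma} \leq x^\star$ for each fixed $\star\in\Gamma$, whence $(x^{\bigsqcapp\Gamma})^\star \leq (x^\star)^\star = x^\star$; taking the infimum over $\star\in\Gamma$ gives $(x^{\bigsqcapp\Gamma})^{\bigsqcapp\Gamma}\leq x^{\bigsqcapp\Gamma}$, and the reverse inequality is expansivity. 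Thus $\bigsqcapp\Gamma\in\C(S)$; since $\bigsqcapp\Gamma\leq\star$ pointwise for every $\star\in\Gamma$ while any common lower bound $\star'$ satisfies $x^{\star'}\leq x^\star$ for all $\star$, hence $x^{\star'}\leq x^{\bigsqcapp\Gamma}$, we get $\bigwedge_{\C(S)}\Gamma = \bigsqcapp\Gamma$. Finally, if $x \in S^\star$ for some $\star\in\Gamma$ then $x \leq x^{\bigsqcapp\Gamma} \leq x^\star = x$, so $x\in S^{\bigsqcapp\Gamma}$, giving $S^{\bigsqcapp\Gamma}\supseteq\bigcup_{\star\in\Gamma}S^\star$. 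This proves (1).

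For $\bigsqcupp\Gamma$ I would set $C = \bigcap_{\star\in\Gamma}S^\star$ and apply Proposition \ref{characterizingclosures2}(1) to $C$. For each $x\in S$ the set $\{a\in C : a\geq x\}$ is nonempty---it contains $\bigvee S$ in the near sup-lattice case, and contains $x^{\star'}$ whenever $\star'\in\C(S)$ is an upper bound of $\Gamma$ in the bounded complete case---and is bounded below by $x$, so $\bigwedge\{a\in C: a\geq x\}$ exists; by Proposition \ref{characterizingclosures2}(1) there is then a closure operation $\star_C$ with $S^{\star_C}=C$ and $x^{\star_C}=\bigwedge\{a\in C:a\geq x\}=x^{\bigsqcupp\Gamma}$, so $\bigsqcupp\Gamma=\star_C$ is a total map in $\C(S)$. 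Using $\star_1\leq\star_2$ iff $S^{\star_1}\supseteq S^{\star_2}$: since $S^{\star_C}=C\subseteq S^\star$ for every $\star\in\Gamma$, $\star_C$ is an upper bound of $\Gamma$; and if $\star'$ is any upper bound then $S^{\star'}\subseteq\bigcap_{\star\in\Gamma}S^\star=C=S^{\star_C}$, so $\star_C\leq\star'$. Hence $\bigvee_{\C(S)}\Gamma=\bigsqcupp\Gamma$ and $S^{\bigsqcupp\Gamma}=C=\bigcap_{\star\in\Gamma}S^\star$, proving (2). (In the bounded complete case one may instead quote Corollary \ref{charclos}(1), which already gives $S^{\bigvee\Gamma}=\bigcap_{\star\in\Gamma}S^\star$, and use Proposition \ref{characterizingclosures2}(1) to identify $x^{\bigvee\Gamma}$ with $x^{\bigsqcupp\Gamma}$.)

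Having produced, for every admissible $\Gamma$, both an infimum $\bigsqcapp\Gamma$ and a supremum $\bigsqcupp\Gamma$ in $\C(S)$, I conclude that $\C(S)$ is a complete lattice in the near sup-lattice case and a bounded complete poset in the bounded complete case (the latter also being Corollary \ref{charclos}(1)). The one genuinely non-formal step is the idempotence of $\bigsqcapp\Gamma$; the rest is bookkeeping, the main nuisances being the existence of the relevant infima---especially the convention $\bigwedge\emptyset=\bigvee S$ and the absence of a top element in the bounded complete case, which is precisely why the qualifiers ``nonempty''/``bounded'' appear in the respective clauses---and keeping the two parenthetical cases aligned.
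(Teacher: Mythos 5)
Your proof is correct, and part (1) follows essentially the paper's argument (the idempotence computation $(x^{\bigsqcapp\Gamma})^{\bigsqcapp\Gamma}=\bigwedge_{\star}(x^{\bigsqcapp\Gamma})^\star\leq\bigwedge_{\star}x^\star$ is the same). For part (2), you take a genuinely different route. The paper introduces the auxiliary preclosure $x^+=\bigvee\{x^\star:\star\in\Gamma\}$, observes $\Fix(+)=\bigcap_{\star\in\Gamma}S^\star$, and then invokes Lemma \ref{preclosurelemma}(1) to produce the smallest closure operation above $+$ (which is $\bigsqcupp\Gamma$), finishing with Corollary \ref{charclos}(1) for the identification of $S^{\bigsqcupp\Gamma}$. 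You instead apply Proposition \ref{characterizingclosures2}(1) directly to $C=\bigcap_{\star\in\Gamma}S^\star$, verify that $\{a\in C:a\geq x\}$ is nonempty and bounded below, and read off both $\bigsqcupp\Gamma$ and $S^{\bigsqcupp\Gamma}=C$ from that single proposition, using the Galois correspondence $\star_1\leq\star_2\Leftrightarrow S^{\star_1}\supseteq S^{\star_2}$ to confirm it is the join. Both the ``preclosure'' and the ``closed-set'' frameworks are set up earlier in the paper, so either route is available; yours is arguably the more streamlined for this lemma, while the paper's passage through the preclosure $+$ is what gets re-used in Proposition \ref{CMC}(2), where the inequality $xy^+\leq(xy)^+$ must be propagated to the nucleus via Lemma \ref{preclosurelemma}(3). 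One small point worth flagging: your use of Proposition \ref{characterizingclosures2}(1) tacitly relies on the infimum $\bigwedge\{a\in C:a\geq x\}$ actually lying in $C$ (so that $S^{\star_C}=C$); this does hold here because each $S^\star$ is closed under nonempty bounded infima, but it is hidden inside the ``if and only if'' of that proposition rather than spelled out.
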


\begin{proof} \
\begin{enumerate}
\item  $\bigsqcapp \Gamma$ is a preclosure on $S$, and for all $x \in S$ one has $(x^{\bigsqcapp\Gamma})^{\bigsqcapp\Gamma}  = \bigwedge_{\star \in \Gamma} (x^{\bigsqcapp\Gamma})^\star \leq  \bigwedge_{\star \in \Gamma} x^\star = x^{\bigsqcapp\Gamma}.$
Therefore $\bigsqcapp\Gamma$ is a closure operation on $S$, from which it follows that $\bigsqcapp\Gamma = \bigwedge_{\C(S)} \Gamma$.   The given inclusion follows immediately from the definition of $\bigsqcapp \Gamma$.
\item Define $x^+ = \bigvee\{x^\star: \ \star  \in \Gamma\}$ for all $x \in S$, which exists since the $x^\star$ are  bounded above by $x^{\bigvee_{\C(S)} \Gamma}$.  Clearly $+$ is a preclosure on $S$ bounded above by $\bigvee_{\C(S)} \Gamma$, and one has $x^+ = x$ if and only if $x^\star = x$ for all $\star \in \Gamma$.  Therefore $\bigsqcupp \Gamma$ is a closure operation on $S$ by Lemma \ref{preclosurelemma}(1), and it follows easily that $\bigsqcupp \Gamma = \bigvee_{\C(S)} \Gamma$.   Finally, that $S^{\bigsqcupp\Gamma} = \bigcap_{\star \in \Gamma} S^\star$  follows from Corollary \ref{charclos}(1).
\end{enumerate}
\end{proof}

The following result generalizes \cite[Proposition 3.1.3]{ros} and \cite[Example 1.5]{fon2}.

\begin{proposition}\label{CMC}
Let $M$ be a near sup-magma (resp., bounded complete ordered magma).  Then $\N(M)$ is a complete lattice (resp., bounded complete poset), and one has the following. 
\begin{enumerate}
\item $\bigwedge_{\N(M)} \Gamma = \bigsqcapp \Gamma$ and $M^{\bigsqcapp \Gamma} \supseteq \bigcup_{\star \in \Gamma} M^\star$ for all subsets (resp., all nonempty subsets) $\Gamma$ of $\N(M)$.
\item $\bigvee_{\N(M)} \Gamma = \bigsqcupp\Gamma$ and $M^{\bigsqcupp \Gamma} = \bigcap_{\star \in \Gamma} M^\star$ for all subsets (resp., all bounded subsets) $\Gamma$ of $\N(M)$ if $M$ is a near prequantale (resp., bounded complete and near residuated).
\end{enumerate}
\end{proposition}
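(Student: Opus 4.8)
The plan is to exhibit $\N(M)$ as a sub-poset of $\C(M)$ closed under the relevant infima, invoke Lemma~\ref{closelem} applied to the poset $S=M$, and then, under the extra hypothesis of part~(2), show it is also closed under the relevant suprema. So nothing new has to be built; one only checks that the closure operations $\bigsqcapp\Gamma$ and (under the hypothesis of~(2)) $\bigsqcupp\Gamma$ produced by Lemma~\ref{closelem} are in fact nuclei.

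For part~(1): Lemma~\ref{closelem}(1) already gives that $\bigsqcapp\Gamma$ is a closure operation on $M$ with $\bigsqcapp\Gamma=\bigwedge_{\C(M)}\Gamma$ and $M^{\bigsqcapp\Gamma}\supseteq\bigcup_{\star\in\Gamma}M^\star$, so the only point to verify is that $\bigsqcapp\Gamma$ is a nucleus when every $\star\in\Gamma$ is. This is immediate: for $x,y\in M$ and each $\star\in\Gamma$ one has $x^{\bigsqcapp\Gamma}y^{\bigsqcapp\Gamma}\leq x^\star y^\star\leq(xy)^\star$, using the ordered-magma axiom and Proposition~\ref{closureprop1}, so taking the infimum over $\star\in\Gamma$ yields $x^{\bigsqcapp\Gamma}y^{\bigsqcapp\Gamma}\leq(xy)^{\bigsqcapp\Gamma}$. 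Hence $\bigsqcapp\Gamma\in\N(M)$ and therefore $\bigwedge_{\N(M)}\Gamma=\bigsqcapp\Gamma$, with infima in $\N(M)$ computed as in $\C(M)$. In the near sup-magma case $e$ is a nucleus (as $x^ey^e=(\bigvee M)(\bigvee M)\leq\bigvee M=(xy)^e$), so $\N(M)$ also has a largest element; being closed under all infima, it is then a complete lattice. In the bounded complete case, given a nonempty $\Gamma\subseteq\N(M)$ bounded above by some $\sigma_0\in\N(M)$, the set of upper bounds of $\Gamma$ in $\N(M)$ is a nonempty subset of $\N(M)$, hence has an infimum in $\N(M)$ by what was just shown; each $\gamma\in\Gamma$ lies below every such upper bound, hence below this infimum, which is therefore $\bigvee_{\N(M)}\Gamma$. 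Thus $\N(M)$ is bounded complete. (This supremum need not coincide with $\bigsqcupp\Gamma$ in general.)

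For part~(2): assume $M$ is a near prequantale (resp., bounded complete and near residuated); in either case $M$ is near sup-complete (resp., bounded complete) and near residuated, so that left and right multiplication maps are near sup-preserving and Lemma~\ref{preclosurelemma} is applicable. Fix $\Gamma\subseteq\N(M)$, nonempty and (in the bounded complete case) bounded above by some $\sigma_0\in\N(M)$; the case $\Gamma=\emptyset$ gives $\bigsqcupp\Gamma=d\in\N(M)$ and is trivial. Set $x^{+}=\bigvee\{x^\star:\star\in\Gamma\}$, which exists since $\{x^\star:\star\in\Gamma\}$ is bounded above (by $x^{\sigma_0}$ in the bounded complete case); then $+$ is a preclosure on $M$ bounded above by the closure operation $\bigsqcupp\Gamma$, and $\Fix(+)=\bigcap_{\star\in\Gamma}M^\star$. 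Using near sup-preservation of multiplication together with $xy^\star\leq(xy)^\star$ and $x^\star y\leq(xy)^\star$ for each $\star\in\Gamma$ (Proposition~\ref{closureprop1}), one computes $xy^{+}=\bigvee\{xy^\star:\star\in\Gamma\}\leq\bigvee\{(xy)^\star:\star\in\Gamma\}=(xy)^{+}$ and, symmetrically, $x^{+}y\leq(xy)^{+}$. By Lemma~\ref{preclosurelemma}(3) the smallest closure operation larger than $+$ is a nucleus, and by Lemma~\ref{preclosurelemma}(1) this closure operation is $x\longmapsto\bigwedge\{y\in\Fix(+):y\geq x\}$, which, since $\Fix(+)=\bigcap_{\star\in\Gamma}M^\star$, is precisely the defining formula for $\bigsqcupp\Gamma$. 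Therefore $\bigsqcupp\Gamma\in\N(M)$; since $\bigsqcupp\Gamma=\bigvee_{\C(M)}\Gamma$ is an upper bound of $\Gamma$ in $\N(M)$ dominated by every such upper bound, it follows that $\bigvee_{\N(M)}\Gamma=\bigsqcupp\Gamma$, and $M^{\bigsqcupp\Gamma}=\bigcap_{\star\in\Gamma}M^\star$ by Lemma~\ref{closelem}(2).

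The one real obstacle is part~(2): since $\bigsqcupp\Gamma$ is presented as an infimum of common fixed points rather than as a join of the $\star$'s, its nucleus property cannot be read off directly, and the device of passing to the auxiliary preclosure $+$ and applying Lemma~\ref{preclosurelemma} seems unavoidable. It is exactly in the interchange $x\bigvee\{y^\star:\star\in\Gamma\}=\bigvee\{xy^\star:\star\in\Gamma\}$ (and its right-hand analogue) that near sup-preservation of multiplication is invoked, which explains why the hypothesis that $M$ be a near prequantale, resp., bounded complete and near residuated, is needed only for part~(2) and not for part~(1).
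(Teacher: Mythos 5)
Your proof is correct and follows essentially the same route as the paper: reduce to Lemma \ref{closelem} and verify that $\bigsqcapp\Gamma$ and (using the auxiliary preclosure $+$ and Lemma \ref{preclosurelemma}(3)) $\bigsqcupp\Gamma$ are nuclei. You are somewhat more careful than the paper in spelling out why $\N(M)$ itself is a complete lattice (resp.\ bounded complete poset) in the weaker hypotheses of part~(1) where part~(2) does not apply, which is a genuine gap in the paper's exposition that you correctly fill by taking suprema as infima of upper-bound sets.
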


\begin{proof} \
\begin{enumerate}
\item By Lemma \ref{closelem} it suffices to show that $\bigsqcapp \Gamma$ is a nucleus.  We have
$$xy^{\bigsqcapp\Gamma}   =  x\bigwedge_{\star \in \Gamma} y^\star \leq  \bigwedge_{\star \in \Gamma} xy^\star \leq \bigwedge_{\star \in \Gamma} (xy)^\star = (xy)^{\bigsqcapp\Gamma},$$ and similarly $x^{\bigsqcapp\Gamma}y \leq  (xy)^{\bigsqcapp\Gamma}$, for all $x,y \in M$, whence $\bigsqcapp\Gamma$ is a nucleus.
\item By Lemma \ref{closelem} it suffices to show that $\bigsqcupp \Gamma$ is a nucleus.  Since $d = \bigsqcupp \emptyset$ is a nucleus we may assume that $\Gamma$ is nonempty.  Let $+$ be the preclosure on $M$ defined in the proof of Lemma \ref{closelem}(2).  For all $x,y \in M$ we have
$$xy^+ = x\bigvee_{\star \in \Gamma} y^\star = \bigvee_{\star \in \Gamma} xy^\star \leq \bigvee_{\star \in \Gamma} (xy)^\star = (xy)^+,$$ and similarly $x^+ y \leq (xy)^+$.  Therefore $\star$ is a nucleus by Lemma \ref{preclosurelemma}(3).
\end{enumerate}
\end{proof}



Note that, if an ordered magma $M$ is a meet semilattice, then $\N(M)$ is also a meet semilattice, and statement (1) of Proposition \ref{CMC} holds for all nonempty finite subsets $\Gamma$ of $M$.  The proof is similar to that of Proposition \ref{CMC}.

If $N$ is a submagma of a near sup-magma $M$, then for any nucleus $\star$ on $N$ we may define
$$\ind_M(\star) = {\bigwedge}_{\N(M)} \{\star' \in \N(M):  \ \star'|_N = \star\},$$
$$\ind^M(\star) = {\bigvee}_{\N(M)} \{\star' \in \N(M):  \ \star'|_N = \star\},$$  both of which are nuclei on $M$.  We say that a subset $X$ of an ordered magma $M$ is {\it saturated} if whenever $xy \in X$ for some $x,y \in M$ that are not annihilators of $M$ one has $x \in X$ and $y \in X$.

\begin{proposition}\label{extendingclosures}
Let $M$ be a near sup-magma and $\star$ a nucleus on a submagma $N$ of $M$.
\begin{enumerate}
\item Suppose that $\{\star' \in \N(M): \ \star'|_N = \star\}$ is nonempty.  Then  $\ind_M(\star)$ is the smallest nucleus on $M$ whose restriction to $N$ is equal to $\star$.  If $M$ is bounded complete and near residuated, then $\ind^M(\star)$ is the largest nucleus on $M$ whose restriction to $N$ is equal to $\star$, and one has $\star'|_N = \star$ if and only if $\ind_M(\star) \leq \star' \leq \ind^M(\star)$.
\item If $M$ is a near prequantale and $N$ is a sup-spanning subset of $M$, then
$$x^{\ind_M(\star)} = \bigwedge \{y \in M: \ y \geq x \mbox{ and } \forall z \in N \ (z \leq y \Rightarrow z^\star \leq y)\}$$
for all $x \in M$, and $\ind_M(\star)|_N = \star$.
\item If $N$ is a saturated downward closed subset of $M$ and $M$ is bounded above, then $$x^{\ind^M(\star)} = \left\{\begin{array}{ll} x^\star & \mbox{if } x \in N \\ \bigvee M & \mbox{otherwise}\end{array}\right.$$ 
for all $x \in M$, and $\ind^M(\star)|_N = \star$.
\end{enumerate}
\end{proposition}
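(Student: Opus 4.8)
The plan is to exhibit each of the three maps as the least or the greatest element of $\Gamma = \{\star' \in \N(M) : \star'|_N = \star\}$. Since $\N(M)$ is a complete lattice by Proposition~\ref{CMC}, once a candidate map is shown both to lie in $\Gamma$ and to bound $\Gamma$ from below (resp.\ from above), it must coincide with $\bigwedge_{\N(M)}\Gamma = \ind_M(\star)$ (resp.\ $\bigvee_{\N(M)}\Gamma = \ind^M(\star)$). In every part the real point is to verify that the candidate restricts to $\star$ on $N$, and this is where the hypotheses on $N$, together with the nonemptiness of $\Gamma$ assumed in part (1), enter.

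For (1), Proposition~\ref{CMC}(1) gives $x^{\ind_M(\star)} = x^{\bigsqcapp\Gamma} = \bigwedge\{x^{\star'} : \star' \in \Gamma\}$ for all $x \in M$; when $x \in N$ every term equals $x^\star$, so $\ind_M(\star)|_N = \star$, and thus $\ind_M(\star) \in \Gamma$ is its smallest member. When $M$ is in addition near residuated it is a near prequantale, so by Proposition~\ref{CMC}(2), $x^{\ind^M(\star)} = x^{\bigsqcupp\Gamma} = \bigwedge\{y \in M : y \geq x \text{ and } y^{\star'}=y \text{ for all } \star' \in \Gamma\}$; for $x \in N$ the element $x^\star$ lies in $N$, hence is fixed by every $\star' \in \Gamma$ and satisfies $x^\star \geq x$, which gives $x^{\ind^M(\star)} \leq x^\star$, while any $y$ in the displayed set and any fixed $\star' \in \Gamma$ give $y = y^{\star'} \geq x^{\star'} = x^\star$, hence the reverse inequality. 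So $\ind^M(\star)|_N = \star$ and $\ind^M(\star) \in \Gamma$ is its largest member. The equivalence $\star'|_N = \star \Leftrightarrow \ind_M(\star) \leq \star' \leq \ind^M(\star)$ is then formal, the only nontrivial direction being that restricting the chain to $N$ forces $\star = \ind_M(\star)|_N \leq \star'|_N \leq \ind^M(\star)|_N = \star$.

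For (2), set $C = \{y \in M : z \in N \text{ and } z \leq y \Rightarrow z^\star \leq y\}$. Since $\bigvee M \in C$, the set $\{c \in C : c \geq x\}$ is nonempty for each $x \in M$, and as $M$ is bounded complete Proposition~\ref{characterizingclosures2}(1) produces a closure operation $\star_C$ with $M^{\star_C} = C$ and $x^{\star_C} = \bigwedge\{c \in C : c \geq x\}$ --- exactly the displayed formula. To see $\star_C$ is a nucleus I would use Proposition~\ref{closureprop2} with the sup-spanning set $N$: given $a \in N$, $x \in M$, and $y \in C$ with $y \geq ax$, one checks $a\backslash y \in C$ (if $z \in N$ and $az \leq y$ then $az \in N$ since $N$ is a submagma, so $az^\star \leq a^\star z^\star \leq (az)^\star \leq y$ using that $\star$ is a nucleus on $N$), whence $x \leq a\backslash y$ forces $x^{\star_C} \leq a\backslash y$ and therefore $a x^{\star_C} \leq a(a\backslash y) \leq y$; the infimum over such $y$ yields $a x^{\star_C} \leq (ax)^{\star_C}$, and symmetrically $x^{\star_C}a \leq (xa)^{\star_C}$ via $y/a$ (near-residuatedness of the near prequantale $M$ guarantees $a\backslash y$ and $y/a$ exist). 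One then checks $\star_C|_N = \star$ (for $x \in N$ one has $x^\star \in C$ and every $c \in C$ above $x$ is above $x^\star$), so $\star_C \in \Gamma$ --- in particular $\Gamma \neq \emptyset$ --- and finally $\star_C = \ind_M(\star)$ because $M^{\star'} \subseteq C$ for every $\star' \in \Gamma$, giving $\star_C \leq \star'$ for all such $\star'$ and hence $\star_C \leq \bigwedge\Gamma = \ind_M(\star)$.

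For (3), let $\star_1$ denote the piecewise map in the display. The bulk of the work is to verify $\star_1 \in \N(M)$: expansiveness and idempotence follow from $N$ being downward closed (so $x \notin N$ gives $\bigvee M \notin N$, else $N = M$), order-preservation uses downward-closedness to exclude $x \leq y$ with $x \notin N \ni y$, and the nucleus inequality $x^{\star_1}y^{\star_1} \leq (xy)^{\star_1}$ is trivial when $xy \notin N$, reduces to $\star$ being a nucleus on $N$ when $xy \in N$ and neither factor is an annihilator of $M$ (by saturation, the annihilator, if any, lying in $N$), and in the remaining case (say $x$ the annihilator $0$) reduces to $0^\star \cdot \bigvee M \leq 0^\star$, which follows from $\Gamma \neq \emptyset$: for $\star' \in \Gamma$ and $m \in M$, $0^\star m^{\star'} = 0^{\star'}m^{\star'} \leq (0m)^{\star'} = 0^{\star'} = 0^\star$. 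Then $\star_1|_N = \star$ is immediate, so $\star_1 \in \Gamma$; and every $\star' \in \Gamma$ satisfies $\star' \leq \star_1$ (it agrees with $\star$ on $N$ and is bounded by $\bigvee M$ off $N$), so $\star_1$ is the greatest member of $\Gamma$, i.e.\ $\ind^M(\star)$. I expect the main obstacle to be the verification that $\star_C$ is a nucleus in part (2) --- specifically the claim $a\backslash y \in C$, the one spot where near residuation, the submagma property of $N$, and the multiplicativity of $\star$ on $N$ must be used together; the annihilator case in (3) is a minor wrinkle that, as in part (1), relies only on $\Gamma$ being nonempty.
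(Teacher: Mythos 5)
Your proof is correct and essentially fills in the details the paper elides, with one mild reorganization in part (2). For (1) you instantiate the explicit formulas from Proposition~\ref{CMC}, where the paper just cites it. For (2) the paper defines the preclosure $x^{+}=\bigvee\{y^\star:\ y\in N,\ y\le x\}$, verifies $xy^{+}\le(xy)^{+}$ using that $N$ is sup-spanning, and then calls Lemma~\ref{preclosurelemma}(3) (which is where the near-residuation argument lives); you instead work directly with the fixed-point set $C$, get the closure $\star_C$ from Proposition~\ref{characterizingclosures2}(1), and prove the nucleus inequality via Proposition~\ref{closureprop2} by showing $a\backslash y\in C$ for $a\in N$ and $y\in C$. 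The two routes are equivalent in content; yours is a little more self-contained because it sidesteps any question of whether the sup defining $+$ runs over a nonempty index set. For (3), where the paper says only ``easy to check,'' your verification is the substantive contribution, and you correctly isolate the only nonroutine subcase: $xy\in N$ with one factor the annihilator $0\in N$ and the other outside $N$, which reduces to $0^\star\cdot\bigvee M\le 0^\star$. You are also right that this needs $\Gamma\ne\emptyset$, i.e.\ the nonemptiness hypothesis of part (1) must be read as carrying over to part (3): without it the displayed formula need not be a nucleus at all (a three-element chain $0<a<1$ with $N=\{0,a\}$, $a\cdot 1=1$, and $0^\star=a^\star=a$ already shows the formula failing precisely because no nucleus on $M$ restricts to $\star$). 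This is a genuine wrinkle the paper's terse proof hides and your write-up handles correctly.
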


\begin{proof} \
\begin{enumerate}
\item This follows easily from Proposition \ref{CMC}.
\item First we define $x^+ = \bigvee\{y^\star: \ y \in N \mbox{ and } y \leq x\}$ for all $x \in M$.  As $N$ is a sup-spanning subset of $M$ one has $x \leq x^+$ for all $x \in M$.  Since $+$ is order-preserving it follows that $+$ is a preclosure on $M$.  One has $y^+ = y$ if and only if $\forall z \in N \ (z \leq y \Rightarrow z^\star \leq y)$, whence by Lemma \ref{preclosurelemma}(1) it follows that the operation $\star_M$ on $M$ defined by
$$x^{\star_M} = \bigwedge \{y \in M: \ y \geq x \mbox{ and } \forall z \in N \ (z \leq y \Rightarrow z^\star \leq y)\}$$ is a closure operation on $M$.
Now, for all $a \in N$ and $x \in M$ one has
$$ay^+ = \bigvee_{z \in N: \ z \leq y} az^\star
 \leq \bigvee_{z \in N: \ z \leq y} (az)^\star
 \leq \bigvee_{w \in N: \ w \leq ay} w^\star
 = (ay)^+.$$
Since $N$ is a sup-spanning subset of $M$, it follows that
$xy^+ \leq (xy)^+$, and by symmetry $x^+y \leq (xy)^+$, for all $x,y \in M$.  Thus $\star_M$ is a nucleus by Lemma \ref{preclosurelemma}(3).  Now, $+$ is the smallest preclosure on $M$ whose restriction to $N$ is $\star$, and by Lemma \ref{preclosurelemma}(1) the operation $\star_M$ is the smallest closure operation on $M$ that is larger than $+$, and one has $(\star_M)|_N = \star$.  It follows that $\star_M$ is the smallest closure operation on $M$ whose restriction to $N$ is $\star$.  Therefore, since $\star_M$ is a nucleus we must have $\star_M = \ind_M(\star)$ by statement (1). 
\item It suffices to show that the operation defined in the statement is a nucleus.  This is easy to check.
\end{enumerate}
\end{proof}

\begin{remark}
Let $\star_1$ and $\star_2$ be nuclei on an ordered magma $M$.  Then $\star_1  \vee \star_2$ exists in $\N(M)$ and equals the $n$-fold composition $\star = \cdots \circ \star_2 \circ \star_1 \circ \star_2$ for some positive integer $n$ if and only if $\star$ is larger than the $n$-fold composition $\cdots \circ \star_1 \circ \star_2 \circ \star_1$.   In particular, $\star_1 \vee \star_2 = \star_1 \circ \star_2$ for two nuclei $\star_1$ and $\star_2$ if and only if $\star_1  \circ \star_2 \geq \star_2 \circ \star_1$.   The operation $\circ$ on closure operations, though not necessarily closed, is studied in \cite[Section 2]{pic} and \cite{vas}.
\end{remark}

\section{Divisorial closure operations}\label{sec:DC}

In this section we generalize the generalized divisorial closure semistar operations \cite[Example 1.8(2)]{pic}, and we use this to derive a characterization of the simple near prequantales that is of a different vein than the characterization in \cite[Theorem 2.5]{krum} of the simple quantales. 

 If $a$ is an element of an ordered magma $M$ and there exists a largest nucleus $\star$ on $M$ such that $a^\star = a$, then we denote it by $v(a) = v_M(a)$ and call it {\it divisorial closure on $M$ with respect to $a$}.  For example, if $\bigvee M$ exists then $v(\bigvee M)$ exists and equals $e = \bigvee \N(M)$.   The proof of the following result is straightforward.

\begin{proposition}\label{divprop}
Let $M$ be an ordered magma.
\begin{enumerate}
\item Suppose that $v(a)$ exists, where $a \in M$.  Then, for all $\star \in \N(M)$, one has $a^\star = a$ if and only if $\star \leq v(a)$. 
\item Let $a \in M$.  Then $v(a)$ exists if and only if $v' = \bigvee \{\star \in \N(M): \ a^\star = a\}$ exists in $\N(M)$ and $a^{v'} = a$, in which case $v(a) = v'$.
\item If $\star$ is a nucleus on $M$ such that $v(a)$ exists for all $a \in M^\star$, then $\star = \bigwedge \{v(a): \ a \in M^\star\}$.
\end{enumerate}
\end{proposition}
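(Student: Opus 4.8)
All three parts follow by unwinding the definition of $v(a)$ together with two facts used repeatedly: nuclei are expansive and order-preserving, and the ordering on $\N(M)$ is the pointwise one, so $\star_1 \leq \star_2$ means $x^{\star_1} \leq x^{\star_2}$ for every $x \in M$. I will treat the three parts in order, as (3) uses (1) and (2) uses only the general poset fact that the greatest element of a set, when it exists, is its supremum.

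\emph{Part (1).} Write $A = \{\star \in \N(M) : a^\star = a\}$; by hypothesis $v(a)$ is the greatest element of $A$. If $a^\star = a$ then $\star \in A$, so $\star \leq v(a)$ by definition of greatest element. Conversely, if $\star \leq v(a)$, then evaluating the pointwise inequality at $a$ gives $a \leq a^\star \leq a^{v(a)} = a$, where the first inequality is expansiveness of $\star$ and the last equality is $v(a) \in A$; hence $a^\star = a$.

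\emph{Part (2).} If $v(a)$ exists it is the greatest element of $A$, and a greatest element is a fortiori the supremum, so $v' = \bigvee A$ exists in $\N(M)$ and $v' = v(a)$; moreover $a^{v'} = a^{v(a)} = a$. Conversely, suppose $v' = \bigvee A$ exists in $\N(M)$ and $a^{v'} = a$. Then $v'$ is a nucleus fixing $a$, so $v' \in A$, and $v'$ is an upper bound of $A$ by being its supremum; an element of $A$ that is an upper bound of $A$ is its greatest element, so $v(a)$ exists and equals $v'$.

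\emph{Part (3).} Note $M^\star = \Fix(\star)$. For each $a \in M^\star$ we have $a^\star = a$, so part (1) (applied to this nucleus $\star$) gives $\star \leq v(a)$; thus $\star$ is a lower bound of $\{v(a) : a \in M^\star\}$ in $\N(M)$. To show it is the \emph{greatest} lower bound, let $\star'$ be any nucleus with $\star' \leq v(a)$ for all $a \in M^\star$, fix $x \in M$, and put $a = x^\star \in M^\star$. Part (1) then yields $a^{\star'} = a$, and since $\star'$ is order-preserving and $x \leq x^\star$ we get $x^{\star'} \leq (x^\star)^{\star'} = a^{\star'} = a = x^\star$; as $x$ was arbitrary, $\star' \leq \star$. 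Hence $\star$ is the greatest lower bound, i.e. $\star = \bigwedge_{\N(M)}\{v(a) : a \in M^\star\}$.

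\emph{Main obstacle.} There is no substantive difficulty here—the content is bookkeeping of inequalities in $\N(M)$ and using part (1) as a genuine biconditional (both implications are needed, e.g. in part (3) the lower-bound claim uses ``$a^\star = a \Rightarrow \star \le v(a)$'' while the maximality claim uses ``$\star' \le v(a) \Rightarrow a^{\star'} = a$''). The one point to be careful about in part (3) is not to assume the infimum exists in advance: verifying directly that $\star$ is the greatest lower bound simultaneously establishes its existence.
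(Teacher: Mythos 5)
Your proof is correct, and the paper in fact omits the argument, declaring it "straightforward"; your write-up supplies exactly the bookkeeping the author had in mind. All three parts are handled cleanly: in (1) you use both expansiveness of $\star$ and the pointwise ordering on $\N(M)$; in (2) you correctly invoke the general poset fact that a greatest element is the supremum and, conversely, a supremum that lies in the set is the greatest element; and in (3) you verify the greatest-lower-bound property in $\N(M)$ directly (lower bound via part (1) applied to $a\in M^\star = \Fix(\star)$; maximality via the clever choice $a = x^\star$ together with order-preservation and idempotence of $\star$), which simultaneously establishes existence of $\bigwedge\{v(a): a\in M^\star\}$ in $\N(M)$ without presupposing it. One can even observe that the choice $a = x^\star$ shows $x^{v(a)} = x^\star$, so the infimum is attained pointwise, but your argument already suffices.
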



The next proposition follows from Propositions \ref{divprop} and \ref{CMC}(2).

\begin{proposition}\label{vclosureprop}
Let $Q$ be a near prequantale.  Then $v(a)$ exists and equals $\bigvee \{\star \in \N(Q): \ a^\star = a\}$ for all $a \in Q$, and one has $\star = \bigwedge \{v(a): \ a \in Q^\star\}$ for all $\star \in \N(Q)$.
\end{proposition}

\begin{corollary}\label{simpleprequantales}
The following are equivalent for any near prequantale $Q$.
\begin{enumerate}
\item $Q$ is simple.
\item The only nuclei on $Q$ are $d$ and $e$.
\item $v(a) = d$ for all $a < \bigvee Q$.
\end{enumerate}
\end{corollary}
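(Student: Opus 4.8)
The plan is to deduce this from the machinery already assembled, namely Corollary \ref{desimple} and Proposition \ref{vclosureprop}, together with the elementary observations about $d$ and $e$. The equivalence of (1) and (2) is immediate: Corollary \ref{desimple} states precisely that a near prequantale $Q$ is simple if and only if $d$ and $e$ are the only nuclei on $Q$, so there is nothing further to do for that half. The work, such as it is, lies entirely in showing that (2) and (3) are equivalent, and this is where Proposition \ref{vclosureprop} does the heavy lifting.

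For the implication (2) $\Rightarrow$ (3), I would argue as follows. By Proposition \ref{vclosureprop}, $v(a)$ exists for every $a \in Q$ and is a nucleus on $Q$, so under hypothesis (2) we must have $v(a) \in \{d, e\}$ for each $a$. Now if $a < \bigvee Q$, then $a^e = \bigvee Q \neq a$, so $e$ does not fix $a$; since $v(a)$ by definition does fix $a$ (it is the largest nucleus with $a^{v(a)} = a$), we cannot have $v(a) = e$, hence $v(a) = d$. For the converse (3) $\Rightarrow$ (2), let $\star$ be any nucleus on $Q$; I want to show $\star \in \{d,e\}$. By the last assertion of Proposition \ref{vclosureprop}, $\star = \bigwedge\{v(a) : a \in Q^\star\}$. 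If $\bigvee Q \in Q^\star$ only — more precisely, if every $a \in Q^\star$ satisfies $a < \bigvee Q$ — then by (3) each such $v(a)$ equals $d$, forcing $\star = \bigwedge\{d\} = d$ (the infimum over a nonempty set of copies of the smallest nucleus; note $Q^\star \neq \emptyset$ always). Otherwise $\bigvee Q \in Q^\star$, i.e. $\star$ fixes the top element; but then I claim $\star = e$. Indeed $\bigvee Q \in Q^\star$ means $(\bigvee Q)^\star = \bigvee Q$, and since $x^\star \leq (\bigvee Q)^\star = \bigvee Q$ trivially while $x \leq x^\star$, this alone does not yet give $x^\star = \bigvee Q$; so I need to be a little more careful and instead split on whether $Q^\star = \{\bigvee Q\}$ or $Q^\star$ contains some $a < \bigvee Q$ — in the latter case $v(a) = d$ by (3), and since $\star \leq v(a)$ would follow from Proposition \ref{divprop}(1)? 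No: rather, from $\star = \bigwedge\{v(a): a \in Q^\star\}$ and one of the terms being $d$, we get $\star \leq d$, hence $\star = d$. In the former case $Q^\star = \{\bigvee Q\}$, which says exactly $x^\star = \bigvee Q$ for all $x$, i.e. $\star = e$.

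So the clean way to organize (3) $\Rightarrow$ (2) is: for a nucleus $\star$, either $Q^\star$ contains some element strictly below $\bigvee Q$, in which case (3) and the formula $\star = \bigwedge\{v(a): a \in Q^\star\}$ give $\star = d$; or $Q^\star = \{\bigvee Q\}$, in which case $\star = e$ directly from the definition of $Q^\star$. The main (and only) obstacle is keeping straight the direction of the inequalities when passing through the infimum of nuclei — specifically remembering that $v(a) = d$ for one term of the meet collapses the whole meet to $d$ because $d$ is the bottom of $\N(Q)$ — and making sure the degenerate case $Q^\star = \{\bigvee Q\}$ is handled separately rather than trying to force it through the $v(a)$ formula. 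Everything else is a direct citation of the results established above, so the proof is short.
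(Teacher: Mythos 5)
Your proof is correct and uses exactly the ingredients the paper implicitly relies on: Corollary \ref{desimple} gives $(1)\Leftrightarrow(2)$ directly, and Proposition \ref{vclosureprop} (existence of $v(a)$ and the formula $\star = \bigwedge\{v(a): a \in Q^\star\}$) handles $(2)\Leftrightarrow(3)$. The mid-argument detour where you first confuse yourself about whether $\bigvee Q \in Q^\star$ matters (it always does, since the top is fixed by any nucleus) is correctly resolved in your closing summary, where you split on whether $Q^\star$ contains some $a < \bigvee Q$ or $Q^\star = \{\bigvee Q\}$; that final organization is the clean version and matches what the paper intends.
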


The divisorial closure operations generalize as follows.   If $S$ is a subset of an ordered magma $M$ and there exists a largest nucleus $\star$ on $M$ such that $S \subseteq M^\star$, then we denote it by $v(S) = v_M(S)$ and call it {\it divisorial closure on $M$ with respect to $S$}.  Note that $v(a) = v(\{a\})$.  Proposition \ref{divprop} generalizes as follows.

\begin{proposition}\label{divprop2}
Let $S$ be a subset of an ordered magma $M$.
\begin{enumerate}
\item Suppose that $v(S)$ exists.  Then, for all $\star \in \N(M)$, one has $S \subseteq M^\star$ if and only if $\star \leq v(S)$. 
\item The following conditions are equivalent.
\begin{enumerate}
\item $v(S)$ exists.
\item $v' = \bigvee \{\star \in \N(M): \ S \subseteq M^\star\}$ exists in $\N(M)$ and $S \subseteq M^{v'}$.
\item There is a smallest subset $C$ of $M$ containing $S$ such that $C = M^\star$ for some $\star \in \N(M)$.
\end{enumerate}
Moreover, if the above condtions hold, then $v' = \star = v(S)$.
\item Suppose that $S = \bigcup_\lambda T_\lambda$ and $v(T_\lambda)$ exists for all $\lambda$.  Then $v(S)$ exists if and only if $\bigwedge_\lambda v(T_\lambda)$ exists, in which case they are equal.
\item $\star = v(M^\star)$ for any nucleus $\star$ on $M$.
\end{enumerate}
\end{proposition}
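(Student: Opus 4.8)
The plan is to route the entire argument through the order-reversing correspondence $\star \longmapsto M^\star$ between $\N(M)$ and its family of fixed-point sets, using the facts recalled in Section~\ref{sec:COCL} that $\star_1 \leq \star_2$ iff $M^{\star_1} \supseteq M^{\star_2}$ for closure operations, and that a closure operation $\star$ is recovered from $M^\star$ via $x^\star = \bigwedge\{a \in M^\star : a \geq x\}$, so that $\star \longmapsto M^\star$ is injective on $\N(M)$. Statement~(1) then drops out of the definition of $v(S)$ as the largest nucleus with $S \subseteq M^{v(S)}$: if $\star \leq v(S)$ then $S \subseteq M^{v(S)} \subseteq M^\star$, and if $S \subseteq M^\star$ then $\star \leq v(S)$ by maximality. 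Statement~(4) is equally short: for a fixed nucleus $\star$, the nuclei $\star'$ with $M^\star \subseteq M^{\star'}$ are exactly those with $\star' \leq \star$, and the largest of these is $\star$ itself, so $v(M^\star)$ exists and equals $\star$. Both mirror Proposition~\ref{divprop}.

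For~(2), I would exploit that under $\star \longmapsto M^\star$ a \emph{smallest} fixed-point set containing $S$ corresponds to a \emph{largest} nucleus $\star$ with $S \subseteq M^\star$. For (a)$\Leftrightarrow$(c): if $v(S)$ exists, then $C = M^{v(S)}$ contains $S$, and any other set $C' = M^{\star'} \supseteq S$ with $\star' \in \N(M)$ has $\star' \leq v(S)$, hence $C' \supseteq M^{v(S)}$, so $C$ is smallest; conversely, if a smallest such $C = M^\star$ exists, then any nucleus $\star'$ with $S \subseteq M^{\star'}$ satisfies $M^{\star'} \supseteq C$, hence $\star' \leq \star$, so $\star = v(S)$. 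For (a)$\Leftrightarrow$(b): the set $\{\star \in \N(M) : S \subseteq M^\star\}$ has a greatest element precisely when it has a supremum $v'$ in $\N(M)$ lying in it (equivalently, with $S \subseteq M^{v'}$), since a supremum that belongs to its own set is the maximum of that set; and then $v' = v(S)$. The three ``moreover'' equalities $v' = \star = v(S)$ are read off from these identifications.

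For~(3), write $\Gamma_S = \{\star \in \N(M) : S \subseteq M^\star\}$. Since $S = \bigcup_\lambda T_\lambda$, a nucleus $\star$ lies in $\Gamma_S$ iff $T_\lambda \subseteq M^\star$ for every $\lambda$, and since each $v(T_\lambda)$ exists, part~(1) applied to each $T_\lambda$ rewrites this as $\star \leq v(T_\lambda)$ for every $\lambda$; thus $\Gamma_S$ is exactly the set of lower bounds of $\{v(T_\lambda) : \lambda\}$ inside $\N(M)$. Hence $v(S)$ exists iff $\Gamma_S$ has a greatest element, $\bigwedge_\lambda v(T_\lambda)$ exists in $\N(M)$ iff that same set of lower bounds has a greatest element, and when this common condition holds the common greatest element is at once $v(S)$ and $\bigwedge_\lambda v(T_\lambda)$. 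I expect no genuine obstacle in this proposition; the only thing demanding care is bookkeeping --- keeping the order-reversal in $\star \longmapsto M^\star$ consistent, reading $\bigwedge_\lambda v(T_\lambda)$ as an infimum taken within $\N(M)$, and noting the injectivity of $\star \longmapsto M^\star$ --- and part~(3) is where a direction slip is easiest, since it threads part~(1) through both a union and an infimum.
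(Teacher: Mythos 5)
The paper omits the proof of this proposition as routine, and your reconstruction is exactly the expected one: routing everything through the order-reversing correspondence $\star \longmapsto M^\star$ (with $\star_1 \leq \star_2$ iff $M^{\star_1} \supseteq M^{\star_2}$), then reading (1) and (4) off the definitions, (2) off the general poset fact that a subset has a maximum iff its supremum exists and lies in the subset, and (3) off the identification of $\{\star : S \subseteq M^\star\}$ with the set of lower bounds of $\{v(T_\lambda)\}$. Each step is correct, including the subtle point in (3) that the infimum, being itself a lower bound, automatically satisfies $S \subseteq M^{\bigwedge_\lambda v(T_\lambda)}$.
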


\begin{proposition}
Let $Q$ be a near prequantale. Then $v(S)$ exists for any subset $S$ of $Q$ and equals $\bigwedge \{v(a): a \in S\}$.
\end{proposition}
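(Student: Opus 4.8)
The plan is to reduce the assertion to the singleton case treated in Proposition \ref{vclosureprop} and then use the completeness of the lattice of nuclei to guarantee that the relevant infimum exists.

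First I would observe that a near prequantale $Q$ is in particular a near sup-magma, so Proposition \ref{CMC} applies and $\N(Q)$ is a complete lattice; consequently $\bigwedge \Gamma$ exists in $\N(Q)$ for \emph{every} subset $\Gamma$ of $\N(Q)$. Next, write $S = \bigcup_{a \in S} \{a\}$, viewing $S$ itself as the index set with $T_a = \{a\}$. By Proposition \ref{vclosureprop} the nucleus $v(T_a) = v(\{a\}) = v(a)$ exists for every $a \in S$, so the hypotheses of Proposition \ref{divprop2}(3) are met. That proposition then gives that $v(S)$ exists if and only if $\bigwedge_{a \in S} v(a)$ exists in $\N(Q)$, and in that case the two coincide. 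Since the infimum $\bigwedge\{v(a) : a \in S\}$ exists by the first step, we conclude that $v(S)$ exists and equals $\bigwedge\{v(a) : a \in S\}$.

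Finally, the edge case $S = \emptyset$ is already covered by this argument: taking an empty index set, Proposition \ref{divprop2}(3) yields that $v(\emptyset)$ exists and equals $\bigwedge \emptyset = \bigvee \N(Q) = e$ (using the convention on empty infima), which is indeed the largest nucleus $\star$ with $\emptyset \subseteq Q^\star$. There is essentially no obstacle here; the only point that genuinely needs to be invoked — rather than merely asserted — is the completeness of $\N(Q)$, which is what ensures that the infimum appearing on the right-hand side is defined for an arbitrary (possibly empty) subset $S$.
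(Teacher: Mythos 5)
Your proof is correct and follows what is clearly the intended route: invoke Proposition~\ref{vclosureprop} to get existence of $v(a)$ for each $a\in S$, then feed the decomposition $S=\bigcup_{a\in S}\{a\}$ into Proposition~\ref{divprop2}(3), and finally use Proposition~\ref{CMC} (a near prequantale is a near sup-magma, so $\N(Q)$ is a complete lattice) to guarantee that $\bigwedge\{v(a):a\in S\}$ exists in $\N(Q)$. The final paragraph on $S=\emptyset$ is harmless but redundant, since the empty index set is already an instance of the general argument; the paper itself omits the proof, and your reasoning is exactly the one its preceding results set up.
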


\begin{proposition}\label{vgen}
Let $Q$ be a bounded complete and near residuated ordered magma.  If $v(S)$ exists for some subset $S$ of $Q$, then $v(T)$ exists for any subset $T$ of $Q$ containing $S$.
\end{proposition}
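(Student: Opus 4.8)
The plan is to work inside the poset $\N(Q)$ of nuclei and to use the existence of $v(S)$ purely to furnish an upper bound for a suitable family of nuclei, so that the (partial) supremum operation available on $\N(Q)$ applies to it. By definition $v(S)$ is the largest nucleus $\star$ on $Q$ with $S \subseteq Q^\star$; in particular every nucleus $\star$ with $S \subseteq Q^\star$ satisfies $\star \leq v(S)$.

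Fix $T \supseteq S$ and set $A = \{\star \in \N(Q) : T \subseteq Q^\star\}$. First I would observe that $A$ is nonempty and bounded above in $\N(Q)$: the identity nucleus $d$ lies in $A$ since $T \subseteq Q = Q^d$, and every $\star \in A$ satisfies $S \subseteq Q^\star$ (because $S \subseteq T$) and hence $\star \leq v(S)$. Since $Q$ is bounded complete and near residuated, Corollary \ref{charclos}(2) gives that $\N(Q)$ is bounded complete and that $Q^{\bigvee \Gamma} = \bigcap_{\star \in \Gamma} Q^\star$ for every bounded subset $\Gamma$ of $\N(Q)$. Applying this with $\Gamma = A$, the supremum $w = \bigvee_{\N(Q)} A$ exists and $Q^w = \bigcap_{\star \in A} Q^\star \supseteq T$. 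Therefore $w \in A$, and being both an upper bound of $A$ and a member of $A$, it is the largest element of $A$ --- i.e. the largest nucleus $\star$ on $Q$ with $T \subseteq Q^\star$. By Proposition \ref{divprop2} this says precisely that $v(T)$ exists (and equals $w$).

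I do not expect a genuine obstacle; the content is essentially an application of Corollary \ref{charclos}(2). The one point needing care, and the only use of the hypothesis, is checking that $A$ is nonempty and bounded above, which is exactly what licenses forming $\bigvee A$ in the merely bounded complete poset $\N(Q)$ and invoking the identity $Q^{\bigvee A} = \bigcap_{\star \in A} Q^\star$. One could alternatively argue on the side of the closure-stable subsets $Q^\star$, which by Corollary \ref{characterizingclosures}(2) are stable under nonempty intersections, and take the smallest such subset containing $T$; this yields the same conclusion.
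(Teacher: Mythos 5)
Your proof is correct. The paper omits the argument as routine, and the route you take---forming $A=\{\star\in\N(Q):T\subseteq Q^\star\}$, observing it contains $d$ and is bounded above by $v(S)$ (since $S\subseteq T\subseteq Q^\star$ for $\star\in A$), then invoking Corollary \ref{charclos}(2) (equivalently Proposition \ref{CMC}(2)) to get $w=\bigvee_{\N(Q)}A$ with $Q^w=\bigcap_{\star\in A}Q^\star\supseteq T$, hence $w\in A$ and $w=v(T)$---is precisely the reconstruction the surrounding machinery invites; your alternative via Corollary \ref{characterizingclosures}(2) (intersecting the closure-stable subsets containing $T$, using $Q^{v(S)}$ to guarantee at least one such subset with a nonempty intersection) is the same argument viewed on the fixed-point side.
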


If $Q$ is a unital near quantale then we can determine an explicit formula for the divisorial closure operations $v(a)$.

\begin{lemma}\label{vlemma}
Let $M$ be an ordered monoid and $a \in M$.  If for any $x \in M$ there is a largest element $x^\star$ of $M$ such that $rxs \leq a$ implies $r x^\star s \leq a$ for all $r,s \in M$, then $v(a)$ exists and equals $\star$.
\end{lemma}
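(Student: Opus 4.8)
The plan is to verify directly that the self-map $\star$ defined by the hypothesis is a nucleus on $M$ satisfying $a^\star = a$, and then to show it is the largest such, so that by definition $v(a) = \star$.

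First I would check that $\star$ is well-defined and is a closure operation. Expansiveness is immediate: taking $r = s = 1$, the condition ``$rxs \leq a$ implies $rx^\star s \leq a$'' with the fact that $x^\star$ is the \emph{largest} such element forces $x \leq x^\star$ (indeed $x$ itself is among the competitors whenever $x \leq a$, but more to the point $x^\star$ being largest among elements $y$ with ``$rxs \le a \Rightarrow rys \le a$'' and $x$ trivially having this property gives $x \le x^\star$). Order-preservation follows because if $x \leq x'$ then any $r,s$ with $rx's \leq a$ also satisfy $rxs \leq a$... — actually the cleaner route is: $x \le x'$ implies every competitor for $x'^\star$ is a competitor for $x^\star$, hence $x'^\star \le x^\star$ would be backwards, so instead observe $x \le x' \le x'^\star$, and $x'^\star$ has the defining property relative to $x$ as well (since $rxs \le a \Rightarrow rx's \le a \Rightarrow rx'^\star s \le a$ requires $x\le x'$; wait that needs $rxs\le a \Rightarrow rx's\le a$ which is false in general). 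The correct argument: from $x \le x'$ we get $x \le x' \le x'^\star$, and I claim $x'^\star$ satisfies the property defining $x^\star$: suppose $rxs \le a$; we want $rx'^\star s \le a$. This does not follow directly, so instead I would argue idempotence and order-preservation together via the characterization of closure operations as maps with $x \le y^\star \Leftrightarrow x^\star \le y^\star$. Concretely, I would show: (i) $x \le a^{\text{local}}$ structure, and (ii) the set $\Fix(\star)$ consists exactly of those $y$ with ``$rxs \le a \Rightarrow rys \le a$ for all $r,s$ whenever $x \le y$'' — this is where I must be careful, and I expect this bookkeeping to be the main obstacle.

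The key algebraic point — that $\star$ is a \emph{nucleus} — is where the monoid structure does the work. Given $x, y \in M$, I must show $x^\star y \le (xy)^\star$ and $xy^\star \le (xy)^\star$. By the defining maximality property of $(xy)^\star$, it suffices to show that $x^\star y$ satisfies: for all $r, s \in M$, if $r(xy)s \le a$ then $r(x^\star y)s \le a$. But $r(xy)s = (rx)(ys)$; wait, I want to use associativity to regroup $r(xy)s = (rx)y s$ and view this as $r' x s'$ with $r' = r$, $s' = ys$: if $rx(ys) \le a$ then by the defining property of $x^\star$ (applied with the pair $r, ys$) we get $r x^\star (ys) \le a$, i.e. $r(x^\star y)s \le a$. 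This gives $x^\star y \le (xy)^\star$. The symmetric regrouping $r(xy)s = (rx)(ys)$ read the other way, using the property of $y^\star$ with the pair $rx, s$, gives $xy^\star \le (xy)^\star$. Associativity of $M$ is used precisely in these regroupings.

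Next, $a^\star = a$: take $x = a$; then $r a s \le a$ trivially implies $r a s \le a$, so $a$ itself is a competitor and $a \le a^\star$; conversely $a^\star$ must satisfy $1 \cdot a \cdot 1 \le a \Rightarrow 1 \cdot a^\star \cdot 1 \le a$, giving $a^\star \le a$. Finally, maximality: let $\star'$ be any nucleus on $M$ with $a^{\star'} = a$. For any $x \in M$ I must show $x^{\star'} \le x^\star$, which by maximality of $x^\star$ reduces to checking that $x^{\star'}$ has the defining property: if $rxs \le a$ then $rx^{\star'}s \le a$. Now $rxs \le a = a^{\star'}$, and since $\star'$ is a nucleus, $r x^{\star'} s \le (rxs)^{\star'} \le a^{\star'} = a$ — here I use the nucleus inequality $r^{\star'}x^{\star'}s^{\star'} \le (rxs)^{\star'}$ together with expansiveness ($r \le r^{\star'}$, $s \le s^{\star'}$) and order-preservation of $\star'$. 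Hence $\star' \le \star$, so $\star = v(a)$ by definition. I would flag the first paragraph's verification that $\star$ is genuinely a closure operation (idempotence in particular) as the one step needing the most care, though it is routine once one writes $\Fix(\star)$ explicitly; the nucleus property and maximality are then quick consequences of associativity and the nucleus axioms.
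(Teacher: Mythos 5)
Your verification of the nucleus inequality, of $a^\star = a$, and of maximality over all nuclei $\star'$ with $a^{\star'} = a$ all match the paper's proof in substance and are correct. The gap is in the closure-operation step, which you flag but never actually close. Your expansiveness argument is fine (take $y = x$ as a ``competitor''). For order-preservation, your very first sentence was already on the right track before you abandoned it: if $x \leq x'$, then for any $r,s$ with $rx's \leq a$ one has $rxs \leq rx's \leq a$, hence $rx^\star s \leq a$ by the defining property of $x^\star$; so $x^\star$ itself is a competitor in the definition of $x'^\star$, and maximality of $x'^\star$ gives $x^\star \leq x'^\star$. Your second-guess claim that ``every competitor for $x'^\star$ is a competitor for $x^\star$'' is exactly backwards (the monotone implication $rx's \leq a \Rightarrow rxs \leq a$ runs in the direction that makes competitors for $x^\star$ into competitors for $x'^\star$, not conversely), and this mis-stated containment is what derailed you.

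Idempotence, which you leave as a stub, follows by the same one-line pattern: from $rxs \leq a$ the defining property of $x^\star$ gives $rx^\star s \leq a$, and then the defining property of $(x^\star)^\star$ (applied to $x^\star$ in the role of $x$) gives $r(x^\star)^\star s \leq a$; thus $(x^\star)^\star$ is a competitor for $x^\star$, so $(x^\star)^\star \leq x^\star$, and with expansiveness this gives equality. There is no need to characterize $\Fix(\star)$ separately or to invoke the equivalence $x \leq y^\star \Leftrightarrow x^\star \leq y^\star$; the direct verification is shorter and avoids the bookkeeping you were worried about. With these two lines supplied, your proof is complete and coincides with the paper's.
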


\begin{proof}
First we show that the map $\star$ defined in the statement of the lemma is a nucleus on $M$.  Let $x,y \in M$.  If $x \leq y$, then $rys \leq a \Rightarrow rxs \leq a \Rightarrow rx^{\star}s \leq a$, whence $x^{\star} \leq y^{\star}$.   Likewise $rxs \leq a \Rightarrow rxs \leq a$, whence $x \leq x^{\star}$.  Moreover, one has
$rxs \leq a \Rightarrow rx^{\star} s \leq a \Rightarrow r(x^{\star})^{\star}s \leq a$, whence $(x^{\star})^{\star} \leq x^{\star}$, whence equality holds.  Thus $\star$ is a closure operation on $M$.  Next, note that $rxys \leq a \Rightarrow rx^{\star}ys \leq a \Rightarrow rx^{\star}y^{\star}s \leq a$, whence $x^{\star}y^{\star} \leq (xy)^{\star}$.  Thus $\star$ is a nucleus on $M$.

Next we observe that $1a1 \leq a$, which implies $1a^{\star}1 \leq a$, whence $a^{\star} = a$.
Finally, let $\star'$ be any nucleus on $M$ with $a^{\star'} = a$.  Then for any $x \in M$ we have
$rxs \leq a \Rightarrow rx^{\star'}s \leq a^\star = a$ for all $r,s \in M$, whence $x^{\star'} \leq x^\star$.  Thus $\star' \leq \star$.  It follows that $\star$ is the largest nucleus on $M$ such that $a^{\star} = a$, so $v(a)$ exists and equals $\star$.
\end{proof}

\begin{proposition}\label{vquantales}
Let $Q$ be a unital near quantale and $a \in Q$.  Then one has
$$x^{v(a)} =  \bigvee\{y \in Q: \ \forall r,s \in Q \ (rxs \leq a \Rightarrow rys \leq a)\}$$
for all $x \in Q$; alternatively, $x^{v(a)}$ is the largest element of $Q$ such
that $rxs \leq a$ implies $rx^{v(a)}s \leq a$ for all $r,s \in Q$.
\end{proposition}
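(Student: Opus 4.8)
The plan is to deduce Proposition~\ref{vquantales} directly from Lemma~\ref{vlemma}. A unital near quantale is in particular an associative unital ordered magma, hence an ordered monoid, so Lemma~\ref{vlemma} applies as soon as we verify its hypothesis: for every $x \in Q$ there is a largest element $x^\star$ of $Q$ such that $rxs \leq a$ implies $rx^\star s \leq a$ for all $r,s \in Q$. I would show that this largest element is exactly $\bigvee S_x$, where $S_x := \{y \in Q : \forall r,s \in Q \ (rxs \leq a \Rightarrow rys \leq a)\}$, so that the closure operation furnished by Lemma~\ref{vlemma} is precisely the map $x \longmapsto \bigvee S_x$ and equals $v(a)$.

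First I would observe that $S_x$ is nonempty, since trivially $x \in S_x$; hence, as $Q$ is near sup-complete, the element $y_0 := \bigvee S_x$ exists in $Q$. The key step is then to check that $y_0 \in S_x$. Fix $r,s \in Q$ with $rxs \leq a$. Because $Q$ is a near prequantale, left multiplication by $r$ and right multiplication by $s$ are near sup-preserving (Proposition~\ref{nearprequantales}), and $S_x$ (hence also $rS_x$) is nonempty, so
$$r y_0 s = r\Bigl(\bigvee S_x\Bigr) s = \bigvee\{ rys : y \in S_x\}.$$
Each term $rys$ satisfies $rys \leq a$ by the definition of $S_x$, so the supremum on the right is bounded above by $a$; thus $r y_0 s \leq a$, proving $y_0 \in S_x$. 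Since $y_0$ is by construction an upper bound of $S_x$ that itself lies in $S_x$, it is the largest element of $S_x$, which is exactly the element $x^\star$ demanded by the hypothesis of Lemma~\ref{vlemma}.

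Finally, applying Lemma~\ref{vlemma} yields that $v(a)$ exists and equals the nucleus $\star$ defined by $x^\star = y_0 = \bigvee S_x$; this is precisely the displayed formula, and the ``alternatively'' clause is merely the restatement that $x^{v(a)}$ is the largest element of $S_x$. I do not expect a serious obstacle here: the only point requiring care is the invocation of near-sup-preservation of the one-sided multiplications to push the supremum through $r(-)s$, together with the observation that this is legitimate only because $S_x$ is nonempty (which is why the argument uses $x \in S_x$ rather than, say, starting from the empty join). One should also confirm that the defining condition of $S_x$ matches verbatim the condition on $x^\star$ appearing in Lemma~\ref{vlemma}, which it does.
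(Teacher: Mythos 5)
Your proposal is correct and follows the same route as the paper: both define the candidate $x^\star = \bigvee S_x$ and reduce to Lemma~\ref{vlemma} by verifying that this supremum is itself in $S_x$, using near-sup-preservation of one-sided multiplications. The paper compresses this verification into the single sentence ``Since $Q$ is a near quantale, one has $y \leq x^\star$ if and only if $rxs\leq a$ implies $rys\leq a$,'' whereas you spell out the nonemptiness of $S_x$ and the pushing of the supremum through $r(-)s$; the substance is identical.
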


\begin{proof}
Let $x^\star =  \bigvee\{y \in Q: \ \forall r,s \in Q \ (rxs \leq a \Rightarrow rys \leq a)\}$ for all $x \in Q$.  Since $Q$ is a near quantale, one has $y \leq x^\star$ if and only if $rxs \leq a$ implies $r y s \leq a$ for all $r,s \in Q$.  Therefore by Lemma \ref{vlemma} one has $\star = v(a)$.
\end{proof}

\begin{corollary}\label{simplequantales}
A unital near quantale $Q$ is simple if and only if, for any $x,y,a \in Q$ with $a < \bigvee Q$,
one has $x \leq y$ if and only if $rys \leq a$ implies $rxs \leq a$ for all $r,s \in Q$.
\end{corollary}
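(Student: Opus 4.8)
The plan is to reduce everything to Corollary \ref{simpleprequantales} together with the explicit formula for $v(a)$ in Proposition \ref{vquantales}. First I would note that a unital near quantale is in particular a near prequantale (it is an associative, unital near prequantale), so Corollary \ref{simpleprequantales} applies: $Q$ is simple if and only if $v(a) = d$ for every $a \in Q$ with $a < \bigvee Q$, where $d$ is the identity self-map of $Q$.

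Next I would unwind the condition $v(a) = d$ using Proposition \ref{vquantales}. That proposition says that for each $x \in Q$ the element $x^{v(a)}$ is the largest $y \in Q$ such that $rxs \leq a$ implies $rys \leq a$ for all $r,s \in Q$ (equivalently, $x^{v(a)} = \bigvee\{y \in Q : \forall r,s \in Q\ (rxs \leq a \Rightarrow rys \leq a)\}$). Since $x$ itself always belongs to this set, one always has $x^{v(a)} \geq x$; hence $x^{v(a)} = x$ holds if and only if every $y \in Q$ satisfying ``$rxs \leq a \Rightarrow rys \leq a$ for all $r,s$'' already satisfies $y \leq x$. Therefore $v(a) = d$ for all $a < \bigvee Q$ is equivalent to the assertion: for all $a \in Q$ with $a < \bigvee Q$ and all $x,y \in Q$, if $rxs \leq a$ implies $rys \leq a$ for all $r,s \in Q$, then $y \leq x$.

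Finally I would observe that the converse implication is automatic: if $y \leq x$ then $rys \leq rxs$ because multiplication is order-preserving, so $rxs \leq a$ implies $rys \leq a$ for all $r,s$. Thus the implication in the previous paragraph can be strengthened to a biconditional, and after the harmless relabeling $x \leftrightarrow y$ this is precisely the condition in the statement. The only points requiring care are the direction of the quantified implication and keeping track of which variable is being closed up; there is no genuine obstacle, and the argument is essentially a bookkeeping exercise on top of Corollary \ref{simpleprequantales} and Proposition \ref{vquantales}.
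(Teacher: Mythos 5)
Your argument is correct and is exactly the intended derivation: the paper presents this result as an immediate corollary of Corollary \ref{simpleprequantales} and Proposition \ref{vquantales}, and your unwinding of $v(a)=d$ via the explicit formula in Proposition \ref{vquantales}, together with the observation that one direction of the biconditional is automatic from monotonicity of multiplication, is precisely the bookkeeping the paper leaves to the reader.
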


\begin{corollary}
A multiplicative lattice $Q$ is simple if and only if $Q = \{0,1\}$.
\end{corollary}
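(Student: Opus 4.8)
The plan is to deduce this from Corollary \ref{simplequantales}, noting that a multiplicative lattice is in particular a commutative unital near quantale, so that corollary applies. Thus $Q$ is simple if and only if, for all $x,y,a \in Q$ with $a < \bigvee Q$, one has $x \leq y$ exactly when $rys \leq a$ implies $rxs \leq a$ for all $r,s \in Q$. The key observation is that, since $Q$ is commutative and associative, $rys = (rs)y$ and $ras = (rs)a$, so a handful of judicious substitutions will make the right-hand side of this equivalence automatically true and hence force the left-hand inequality to hold.

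For the forward direction, suppose $Q$ is simple; let $1$ be its identity and put $T = \bigvee Q$. If $T = 0$ then $Q = \{0\}$ and we are done, so assume $0 < T$. I would first take $a = 0$, $x = T$, $y = 1$: then $r\cdot 1\cdot s = rs$ and $rTs = (rs)T$, and whenever $rs = 0$ we get $(rs)T = 0\cdot T = 0$ because $0 = \bigwedge Q$ annihilates; so the implication ``$rys \leq 0 \Rightarrow rxs \leq 0$'' holds for all $r,s$, and simplicity yields $T \leq 1$, hence $1 = T$. I would then take an arbitrary $a$ with $a < T = 1$, $x = a$, $y = 0$: here $r\cdot 0\cdot s = 0 \leq a$ always, while $ras = (rs)a \leq 1\cdot a = a$ since $rs \leq T = 1$; so the right-hand condition again holds and simplicity forces $a \leq 0$, i.e.\ $a = 0$. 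Hence $0$ is the only element of $Q$ strictly below $\bigvee Q$, so $Q = \{0, \bigvee Q\} = \{0,1\}$.

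For the converse, I would simply verify the criterion of Corollary \ref{simplequantales} for $Q = \{0,1\}$. The only $a$ with $a < \bigvee Q = 1$ is $a = 0$, and since $rs \in \{0,1\}$ with $1$ the identity, the condition ``$rys = 0 \Rightarrow rxs = 0$ for all $r,s$'' collapses---the case $rs = 0$ being vacuous, the case $rs = 1$ supplying the constraint---to ``$y = 0 \Rightarrow x = 0$''. One then checks that $x \leq y$ and ``$y = 0 \Rightarrow x = 0$'' are equivalent statements for $x,y \in \{0,1\}$, so $Q$ is simple; the degenerate case $Q = \{0\}$ is simple vacuously.

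The one point requiring care is that in this paper a multiplicative lattice need not have $\bigvee Q$ as its identity a priori, so one cannot simply assume $1 = \bigvee Q$; the argument has to establish this first, which is precisely the role of the substitution $(x,y,a) = (T,1,0)$ and is where commutativity (through the identity $rTs = (rs)T$) is essential. With $1 = \bigvee Q$ in hand, the remainder is a routine unwinding of the criterion.
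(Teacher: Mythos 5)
Your proof is correct. Both you and the paper begin identically: apply Corollary \ref{simplequantales} with $a = 0$ and $y = 1$ (you with $x = T$) to conclude $x \leq 1$ for all $x$, hence $\bigvee Q = 1$. For the second half---showing every $a < 1$ equals $0$---the routes genuinely diverge. The paper constructs, for each $a$, the map $x \mapsto a \vee x$, checks using $x,y \leq 1$ that $(a \vee x)(a \vee y) \leq a \vee xy$ so that this map is a nucleus, observes that for $a < 1$ it is not $e$, and invokes Corollary \ref{desimple} (only nuclei on a simple near prequantale are $d$ and $e$) to force $a \vee \cdot = d$, whence $a = 0$. You instead stay entirely inside Corollary \ref{simplequantales}: with $(x,y) = (a,0)$ one has $r \cdot 0 \cdot s = 0 \leq a$ always and $ras = (rs)a \leq 1 \cdot a = a$ since $rs \leq \bigvee Q = 1$, so the criterion yields $a \leq 0$ directly. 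Your route is shorter and needs no auxiliary nucleus, while the paper's route showcases the nucleus-construction method ($a \vee \cdot$, the ``localization at $a$'' type nucleus) that recurs in the quantale literature. You also write out the converse direction explicitly, which the paper leaves implicit as trivial.
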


\begin{proof}
We may assume $0 < 1$.  Note that $r1s \leq 0$ implies $rxs = rsx \leq 0$ for all $x,r,s \in Q$, whence by Corollary \ref{simplequantales} one has $x \leq 1$ for all $x \in Q$.  Therefore, for any $a \in Q$, one has $(a \vee x)(a \vee y) = a^2 \vee ay \vee xa \vee xy \leq a \vee xy$ for all $x,y \in Q$.  It follows that the map $x \longmapsto a \vee x$ is a nucleus on $Q$, whence $a \vee x = x$ for all $x$ if $a < 1$. It follows, then, that $Q = \{0,1\}$.
\end{proof}


Proposition \ref{vquantales} and Corollary \ref{simplequantales} may be generalized to any near prequantale as follows.  Let $M$ be any magma.  For any $r \in M$, define self-maps $L_r$ and $R_r$ of $M$ by $L_r(x) = rx$ and $R_r(x) = xr$ for all $x \in M$, which we call {\it translations}.  Let $\Lin(M)$ denote the submonoid of the monoid of all self-maps of $M$ generated by the translations.  If $M$ is a monoid then any element of $\Lin(M)$ can be written in the form $L_r \circ R_s = R_s \circ L_r$ for some $r,s \in M$.  The proofs above readily generalize to yield the following.

\begin{proposition}\label{vprequantales}
Let $Q$ be a near prequantale and $a \in Q$.  Then one has
$$x^{v(a)} = \bigvee\{y \in Q: \ \forall f \in \Lin(Q) \ (f(x) \leq a \Rightarrow f(y) \leq a)\}$$
for all $x \in Q$; alternatively, $x^{v(a)}$ is the largest element of $Q$ such
that $f(x) \leq a$ implies $f(x^{v(a)}) \leq a$ for all $f \in \Lin(Q)$.
\end{proposition}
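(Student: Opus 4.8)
The plan is to mimic the proofs of Lemma~\ref{vlemma} and Proposition~\ref{vquantales}, replacing two-sided translations $x \mapsto rxs$ by arbitrary elements of $\Lin(Q)$, and replacing the role of the identity element (which a near prequantale need not have) by the observation that $\id_Q \in \Lin(Q)$. First I would define $\star$ by the stated formula $x^\star = \bigvee\{y \in Q : \forall f \in \Lin(Q)\ (f(x) \leq a \Rightarrow f(y) \leq a)\}$; since $Q$ is a near prequantale and the set on the right is nonempty (it contains $x$, because $\id_Q \in \Lin(Q)$ forces $y = x$ to satisfy the condition trivially), this supremum exists in $Q$. The key point making the formula work is that every $f \in \Lin(Q)$ is near sup-preserving: each translation $L_r$ or $R_s$ is near sup-preserving because $Q$ is a near prequantale (Proposition~\ref{nearprequantales}), and a composition of near sup-preserving maps is near sup-preserving. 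Hence for a nonempty join $x^\star = \bigvee Y$ with $Y = \{y : \forall f\ (f(x)\le a \Rightarrow f(y)\le a)\}$ we get $f(x^\star) = \bigvee f(Y) \leq a$ whenever $f(x) \leq a$, so $x^\star$ itself lies in $Y$; thus $x^\star$ is the \emph{largest} element of $Q$ with the property that $f(x)\le a$ implies $f(x^\star)\le a$ for all $f\in\Lin(Q)$. This gives the "alternatively" clause immediately and is the analogue of the first observation in the proof of Lemma~\ref{vlemma}.

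Next I would verify that $\star$ is a nucleus. Expansiveness ($x \leq x^\star$) and order-preservation follow exactly as in Lemma~\ref{vlemma}: if $x \leq x'$ then $f(x') \leq a \Rightarrow f(x) \leq a \Rightarrow f(x^\star) \leq a$ (using order-preservation of $f$, which holds since near sup-preserving maps are order-preserving), so $Y_x \supseteq$ the defining set for $x'$ in the relevant sense, giving $x^\star \leq (x')^\star$; and idempotence follows since $f(x)\le a \Rightarrow f(x^\star)\le a \Rightarrow f((x^\star)^\star)\le a$, whence $(x^\star)^\star \in Y_x$, so $(x^\star)^\star \leq x^\star$. For the nucleus property I need $x^\star y^\star \leq (xy)^\star$ for all $x,y$. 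Here I would fix $y$ and consider, for each $g \in \Lin(Q)$, the map $f := g \circ R_y \in \Lin(Q)$; if $g(xy) \leq a$ then $f(x) = g(xy) \leq a$, so $f(x^\star) = g(x^\star y) \leq a$, and then, fixing $x^\star$, using the map $h := g\circ L_{x^\star} \in \Lin(Q)$, from $h(y) = g(x^\star y)\le a$ we get $h(y^\star) = g(x^\star y^\star) \leq a$. Thus $x^\star y^\star \in Y_{xy}$, i.e.\ $x^\star y^\star \leq (xy)^\star$, as required. (This is the analogue of the step $rxys\le a \Rightarrow rx^\star ys\le a \Rightarrow rx^\star y^\star s \le a$ in Lemma~\ref{vlemma}.)

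Then I would check $a^\star = a$: taking $f = \id_Q \in \Lin(Q)$, we have $f(a) = a \leq a$, so any $y$ in the defining set for $a$ must satisfy $y = f(y) \leq a$; hence $a^\star \leq a$, and combined with expansiveness $a = a^\star$. Finally, maximality: let $\star'$ be any nucleus on $Q$ with $a^{\star'} = a$. For any $x \in Q$ and any $f = f_1 \circ \cdots \circ f_k$ with each $f_i$ a translation, a repeated application of the nucleus inequality $L_r(x^{\star'}) = r x^{\star'} \leq (rx)^{\star'} = L_r(x)^{\star'}$ and its right-sided analogue — together with order-preservation and idempotence of $\star'$ — yields $f(x^{\star'}) \leq f(x)^{\star'}$; hence if $f(x) \leq a$ then $f(x^{\star'}) \leq a^{\star'} = a$. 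Therefore $x^{\star'} \in Y_x$, so $x^{\star'} \leq x^\star$, i.e.\ $\star' \leq \star$. This shows $\star$ is the largest nucleus fixing $a$, so $v(a)$ exists and equals $\star$; and Proposition~\ref{vprequantales} follows. I expect the main obstacle to be the bookkeeping in the maximality step — establishing $f(x^{\star'}) \leq f(x)^{\star'}$ for an arbitrary composite $f \in \Lin(Q)$ by induction on the length of $f$, being careful that one needs $\star'$ idempotent (to absorb the outer closures) at each stage — but this is routine, paralleling Proposition~\ref{closureprop3} and the corresponding argument in Lemma~\ref{vlemma}.
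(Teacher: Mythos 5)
Your proof is correct and carries out exactly what the paper has in mind: the paper states only that ``the proofs above readily generalize,'' and your argument is precisely that generalization, with the clean observation that $\id_Q \in \Lin(Q)$ takes over the role played by the identity element $1$ in Lemma~\ref{vlemma}. One small remark: the induction establishing $f(x^{\star'}) \leq f(x)^{\star'}$ for composite $f$ needs only expansiveness, order-preservation, and the nucleus inequality $rx^{\star'} \leq (rx)^{\star'}$ at each stage — idempotence of $\star'$ is not actually required there — so the ``main obstacle'' you flag is even lighter than you feared.
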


\begin{corollary}
A near prequantale $Q$ is simple if and only if, for any $x,y,a \in Q$ with $a < \bigvee Q$,
one has $x \leq y$ if and only if $f(y) \leq a$ implies $f(x) \leq a$ for all $f \in \Lin(Q)$.
\end{corollary}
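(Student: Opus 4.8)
The plan is to derive this corollary directly from Proposition~\ref{vprequantales}, in exact parallel with how Corollary~\ref{simplequantales} follows from Proposition~\ref{vquantales}. The key bridge is Corollary~\ref{simpleprequantales}, which says a near prequantale $Q$ is simple if and only if $v(a) = d$ for all $a < \bigvee Q$. So the entire task reduces to unwinding, for a fixed $a < \bigvee Q$, what the condition ``$v(a) = d$'' means in terms of the explicit formula provided by Proposition~\ref{vprequantales}.

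**First I would** fix $a \in Q$ with $a < \bigvee Q$ and recall from Proposition~\ref{vprequantales} that $x^{v(a)}$ is the largest element $y$ of $Q$ with the property that $f(x) \leq a \Rightarrow f(y) \leq a$ for all $f \in \Lin(Q)$; equivalently, $y \leq x^{v(a)}$ if and only if $f(x) \leq a \Rightarrow f(y) \leq a$ for all $f \in \Lin(Q)$. Now $v(a) = d$ means precisely that $x^{v(a)} = x$ for all $x \in Q$, i.e. that for every $x$, the largest such $y$ is $x$ itself. Since $x \leq x^{v(a)}$ always holds (as $v(a)$ is expansive), the content is that no $y > x$ can satisfy the implication; more usefully, $x^{v(a)} = x$ for all $x$ is equivalent to: for all $x, y \in Q$, if $f(x) \leq a \Rightarrow f(y) \leq a$ for all $f \in \Lin(Q)$, then $y \leq x$. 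Combined with the trivial converse (if $y \leq x$ then $f(y) \leq f(x) \leq a$ whenever $f(x) \leq a$, using that each translation, hence each $f \in \Lin(Q)$, is order-preserving on the ordered magma $Q$), this says: $y \leq x \iff \bigl(f(x) \leq a \Rightarrow f(y) \leq a \text{ for all } f \in \Lin(Q)\bigr)$.

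**Then I would** reconcile the quantifier arrangement with the statement as written. The corollary writes the condition as ``$x \leq y$ if and only if $f(y) \leq a \Rightarrow f(x) \leq a$ for all $f \in \Lin(Q)$'', which is just the previous biconditional with the roles of $x$ and $y$ interchanged; since both are universally quantified over $Q$, the two formulations are identical. Quantifying over all $a < \bigvee Q$ and invoking Corollary~\ref{simpleprequantales} (equivalence of (1) and (3)) then yields the claim. I would write this out crisply, noting at the start that the corollary ``follows from Proposition~\ref{vprequantales} and Corollary~\ref{simpleprequantales}'' exactly as Corollary~\ref{simplequantales} follows from Proposition~\ref{vquantales} and Corollary~\ref{simplequantales}'s analogue.

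**The main obstacle**, such as it is, is purely bookkeeping: correctly matching the ``largest $y$ such that $\dots$'' phrasing of Proposition~\ref{vprequantales} against the ``$v(a) = d$'' condition and keeping the direction of the implication and the naming of the variables straight through the $x \leftrightarrow y$ swap. There is no genuine mathematical difficulty — the order-preservation of elements of $\Lin(Q)$ (immediate, since each translation is order-preserving in an ordered magma and composition preserves that) gives the easy direction, and Proposition~\ref{vprequantales} gives the hard direction essentially for free. A one-paragraph proof should suffice, and I would keep it short to match the paper's practice of compressing such routine deductions.
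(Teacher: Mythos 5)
Your proposal is correct and is exactly the derivation the paper intends: the paper states that "the proofs above readily generalize" from Proposition~\ref{vquantales} and Corollary~\ref{simplequantales}, and your argument — combining Corollary~\ref{simpleprequantales} with the explicit formula of Proposition~\ref{vprequantales}, noting that the forward direction is automatic since every $f \in \Lin(Q)$ is order-preserving, and then relabeling $x \leftrightarrow y$ — is precisely that generalization, with no gaps.
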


The following notion of cyclic elements generalizes \cite[Definition 3.3.2]{ros}.  Let $M$ be an ordered magma.  We say that $a \in M$ is {\it cyclic} if $xy \leq a$ implies $yx \leq a$ for all $x,y \in M$.   If $M$ is associative, then $a$ is cyclic if and only if $x_1 x_2 x_3 \cdots x_n \leq a$ implies $x_2 x_3 \cdots x_n x_1 \leq a$ for any positive integer $n$ and any $x_1, x_2, \ldots, x_n \in M$.  If $M$ is near residuated, then we let $M_a^L$ (resp., $M_a^R$) denote the set of all $x \in M$ for which $a/x$ (resp., $x \backslash a$) is defined and we let $M_a^{LR} = M_a^L \cap M_a^R$.   In that case $a$ is cyclic if and only if $M_a^L = M_a^R$ and $a/x = x\backslash a$ for all $x \in M_a^{LR}$.  The following result generalizes \cite[Lemma 3.35]{gal} and the fact that $I^{v(D)} = (I^{-1})^{-1}$ for any integral domain $D$ with quotient field $F$ and any $I \in \F(D)$, where $I^{-1} = (D:_F I)$.

\begin{proposition}\label{vclosureprop3}
Let $M$ be a residuated ordered monoid, or a near residuated ordered monoid such that $\bigvee M$ exists.  Let $s$ be any element of $M$ such that $s = \bigvee M$ if $\bigvee M$ exists.  Then $v(a)$ exists for any cyclic element $a$ of $M$ and is given by $$x^{v(a)} = \left\{\begin{array}{ll} a/(a/x) =  \bigvee \{y \in M_a^{LR}: a/x \leq a/y\} & \mbox{if } x \in M_a^{LR} \\ s & \mbox{otherwise}\end{array}\right.$$ for all $x \in M$.
\end{proposition}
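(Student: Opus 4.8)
The plan is to reduce the statement to Lemma \ref{vlemma} by exhibiting, for the given cyclic element $a$, an explicit closure operation and checking it satisfies the hypothesis of that lemma. First I would set up notation: for a residuated (or near residuated with $\bigvee M$ existing) ordered monoid $M$, cyclicity of $a$ gives $M_a^L = M_a^R =: M_a^{LR}$ and $a/x = x\backslash a$ for all $x \in M_a^{LR}$, as noted in the paragraph preceding the proposition. Define $x^\star = a/(a/x)$ when $x \in M_a^{LR}$ and $x^\star = s$ otherwise. The first task is to verify that $x \in M_a^{LR}$ implies $a/x \in M_a^{LR}$, so that $a/(a/x)$ makes sense; this follows because $(a/x) \cdot x \leq a$ shows $x \leq (a/x)\backslash a$, so $(a/x)\backslash a$ is defined, and symmetrically $a/(a/x)$ is defined, using cyclicity to identify the two residuals. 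I would also record the standard residuation identity $a/((a/x)) = a/(x\backslash a)$ and the Galois-connection facts $a/(a/(a/x)) = a/x$, which make $\star$ idempotent on $M_a^{LR}$.

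Next I would apply Lemma \ref{vlemma}: it suffices to show that $x^\star$ as defined is the largest element $y$ of $M$ such that $rxs \leq a \Rightarrow rys \leq a$ for all $r,s \in M$. The key observation is that cyclicity collapses two-sided divisibility conditions to one-sided ones: $rxs \leq a$ iff $xsr \leq a$ iff $sr \leq x\backslash a = a/x$ (when $x \in M_a^{LR}$), so the condition ``$rxs \leq a$ for all $r,s$'' is governed entirely by the single element $a/x$. Then ``$rxs \leq a \Rightarrow rys \leq a$ for all $r,s$'' becomes ``$sr \leq a/x \Rightarrow sr \leq a/y$'', i.e. $a/x \leq a/y$, i.e. (by the antitone Galois connection $y \mapsto a/y$) $y \leq a/(a/x) = x^\star$. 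This shows $x^\star$ is exactly the largest such $y$ when $x \in M_a^{LR}$. For $x \notin M_a^{LR}$, there is no $r,s$ with $rxs \leq a$ at all (since $rxs \leq a$ would force $a/x$ or $x\backslash a$ to be defined), so the implication is vacuous for every $y$, and the largest $y$ is $\bigvee M = s$ (using the hypothesis that $s = \bigvee M$ when the sup exists, and that in the residuated case $M$ is complete by Example \ref{nearresexamples}(1) / Proposition \ref{quantales}). In both cases the hypothesis of Lemma \ref{vlemma} is met, so $v(a)$ exists and equals $\star$.

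Finally I would supply the alternative description of $x^{v(a)}$ on $M_a^{LR}$, namely $a/(a/x) = \bigvee\{y \in M_a^{LR}: a/x \leq a/y\}$: the inequality $a/x \leq a/(a/x)\backslash \text{stuff}$... more directly, for $y \in M_a^{LR}$ one has $a/x \leq a/y \Leftrightarrow y \leq a/(a/x)$ by the Galois connection, so the set on the right is $\{y \in M_a^{LR} : y \leq a/(a/x)\}$, whose supremum is $a/(a/x)$ itself (noting $a/(a/x) \in M_a^{LR}$). I expect the main obstacle to be the careful bookkeeping of when residuals are defined in the merely \emph{near} residuated case — one must repeatedly invoke cyclicity to pass between $a/x$ and $x\backslash a$ and to argue that membership in $M_a^{LR}$ is inherited by $a/x$ — rather than any deep idea; the residuated case is cleaner because all residuals exist. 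The connection to the classical fact $I^{v(D)} = (I^{-1})^{-1}$ is then the special case $M = \F(D)$, $a = D$, where $a/x = (D:_F I) = I^{-1}$ and $D$ is cyclic because $\F(D)$ is commutative.
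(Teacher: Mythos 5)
Your argument is correct and essentially matches the paper's proof, which likewise reduces to Lemma \ref{vlemma} and uses cyclicity of $a$ to translate the two-sided condition $rxs \leq a$ into the one-sided $sr \leq a/x$ and thence into the antitone Galois connection $a/x \leq a/y \Leftrightarrow y \leq a/(a/x)$. One small slip in your final paragraph: a residuated ordered monoid need \emph{not} be complete (Example \ref{nearresexamples}(1) / Proposition \ref{quantales} characterize a prequantale as complete \emph{and} residuated, not either alone), but this is harmless because when $M$ is residuated one has $M_a^{LR} = M$ for every $a$, so the second branch of the formula for $x^{v(a)}$ is never invoked.
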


\begin{proof}
For any $x \in M$, let $x^\star = a/(a/x)$ if $x \in M_a^{LR}$ and $x^\star = \bigvee M$ otherwise.  The identity $a/(a/x) =  \bigvee \{y \in M_a^{LR}: a/x \leq a/y\}$ for $x \in M_a^{LR}$ is easy to check. We claim that $x^\star$ is the largest element $y$ of $M$ such that $rxs \leq a$ implies $rys \leq a$ for all $r,s \in M$.  By Lemma \ref{vlemma} the proposition follows from this claim.  First, note that $rxs \leq a$ implies $srx \leq a$, which implies that  $x \in M_a^{LR}$.  Therefore, if $x \notin M_a^{LR}$, then $\bigvee M$ is the largest element $y$ of $M$ described above.  Suppose, on the other hand, that $x \in M_a^{LR}$.  Then
$rxs \leq a \Rightarrow srx \leq a \Rightarrow sr \leq a/x \Rightarrow x^\star sr = (a/(a/x))sr  \leq a  \Rightarrow rx^\star s \leq a$.  Suppose that $y'$ is any element of $M$ such that $rxs \leq a \Rightarrow ry's \leq a$ for all $r,s \in M$.  Then since $(a/x)x1 \leq a$ we have $(a/x)y' \leq a$, whence $y'(a/x) \leq a$ and thus $y' \leq a/(a/x) = x^\star$.  This proves our claim.
\end{proof}

\begin{corollary}\label{simplenearmult}
A near multiplicative lattice $Q$ is simple if and only if $x/y$ exists and $x/(x/y) = y$ for all $x,y \in Q$ such that $x,y < \bigvee Q$.  In particular, a multiplicative lattice $Q$ is simple if and only if $x/(x/y) = y$ for all $x,y < \bigvee Q$.
\end{corollary}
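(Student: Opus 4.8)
The plan is to derive this from the characterization of simple near prequantales in Corollary~\ref{simpleprequantales} together with the explicit formula for divisorial closure operations in Proposition~\ref{vclosureprop3}. First I would record that a near multiplicative lattice $Q$ is a commutative unital near prequantale in which $\bigvee Q$ exists (as $Q$ is nonempty and near sup-complete), so $Q$ is near residuated; by commutativity every element of $Q$ is cyclic, and $Q_a^L = Q_a^R = Q_a^{LR}$ for every $a \in Q$, with $x \in Q_a^{LR}$ exactly when $a/x$ is defined. Thus Proposition~\ref{vclosureprop3} applies with $s = \bigvee Q$: for each $a \in Q$ the nucleus $v(a)$ exists, and $x^{v(a)} = a/(a/x)$ when $a/x$ is defined, while $x^{v(a)} = \bigvee Q$ otherwise.

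Next I would fix $a < \bigvee Q$ and analyze when $v(a) = d$. For $x = \bigvee Q$ one has $x^{v(a)} = x$ automatically, since $v(a)$ is expansive and $\bigvee Q$ is the largest element of $Q$. For $x < \bigvee Q$, the formula gives $x^{v(a)} = x$ if and only if $a/x$ is defined and $a/(a/x) = x$: if $a/x$ were undefined then $x^{v(a)} = \bigvee Q > x$. Hence $v(a) = d$ if and only if, for every $x < \bigvee Q$, the residual $a/x$ exists and $a/(a/x) = x$. By Corollary~\ref{simpleprequantales}, $Q$ is simple if and only if $v(a) = d$ for all $a < \bigvee Q$; combining this with the previous sentence and relabeling (the old $a$ becomes $x$, the old $x$ becomes $y$) gives the first equivalence. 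For the final assertion, if $Q$ is a multiplicative lattice then it has an annihilator $0 = \bigwedge Q$, so $0y = 0 \leq x$ shows $\{r \in Q : ry \leq x\}$ is always nonempty and hence $x/y$ always exists; the clause ``$x/y$ exists'' is then automatic, and the criterion reduces to ``$x/(x/y) = y$ for all $x,y < \bigvee Q$''.

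The only point requiring care is the bookkeeping in the second step: one must check that whenever $a/x$ is defined so is $a/(a/x)$ --- which holds because $(a/x)x \leq a$ puts $x$ into $\{y : (a/x)y \leq a\}$, making that set nonempty --- and one must correctly separate the top element $\bigvee Q$ from the range of $x$ so that the quantifier ``$x < \bigvee Q$'' lines up with the hypothesis of Corollary~\ref{simpleprequantales}. Everything else is a direct application of results already in hand; alternatively, one could bypass Proposition~\ref{vclosureprop3} and argue directly from Corollary~\ref{simplequantales}, using commutativity to collapse ``$rys \leq a \Rightarrow rxs \leq a$ for all $r,s$'' to ``$ty \leq a \Rightarrow tx \leq a$ for all $t$'' and then to the residual condition $(a/y)x \leq a$, i.e.\ $x \leq a/(a/y)$.
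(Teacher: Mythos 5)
Your argument is correct and follows exactly the route the paper intends: the corollary sits immediately after Proposition~\ref{vclosureprop3} precisely because, once one notes that a near multiplicative lattice is a commutative unital near quantale (hence a near residuated ordered monoid in which every element is cyclic and $\bigvee Q$ exists), that proposition gives $x^{v(a)} = a/(a/x)$ when $a/x$ is defined and $x^{v(a)} = \bigvee Q$ otherwise, and combining this with Corollary~\ref{simpleprequantales} ($Q$ simple iff $v(a)=d$ for all $a < \bigvee Q$) yields the stated equivalence after relabeling. Your bookkeeping — handling $x=\bigvee Q$ separately, verifying $a/(a/x)$ is defined when $a/x$ is, and using the annihilator to make existence automatic in the multiplicative-lattice case — is exactly the routine verification the paper leaves implicit.
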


\begin{example}
Let $G$ be a bounded complete partially ordered abelian group.  
Let $G[\infty] = G \amalg \{\infty\}$, where $a \infty = \infty a = \infty$ and $a < \infty$ for all $a \in G$.  Then $G[\infty]$ is a simple near multiplicative lattice, by Corollary \ref{simplenearmult}.  However, the multiplicative lattice
$G[\pm \infty] = G[\infty] \amalg \{-\infty\}$, where $-\infty = \bigwedge G$ is an annihilator of $G[\infty]$, is not simple.  For example, if $D$ is a DVR with quotient field $F$, then $\F(D) \cong \ZZ[\infty]$ is simple, but $\operatorname{Mod}_D(F) \cong \ZZ[\pm \infty]$ is not simple.
\end{example}


Finally we generalize the well-known fact that $I^{v(D)} = \bigcap \{xD : x \in F \mbox{ and } xD \supseteq I\}$ for any integral domain $D$ with quotient field $F$ and any $I \in \F(D)$.

\begin{proposition}\label{semiu}
Let $Q$ be an associative unital semi-$\U$-lattice, and let $a \in Q$.  Then $v(a)$ exists and $x^{v(a)} = \bigwedge\{uav: \ u,v \in \U(Q) \mbox{ and } x \leq uav\}$ for all $x \in Q$.
\end{proposition}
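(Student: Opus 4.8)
The plan is to apply Proposition \ref{semiu}'s natural predecessor, Proposition \ref{vquantales} (or rather its ordered-monoid engine, Lemma \ref{vlemma}), to the associative unital near quantale structure that $Q$ carries. First I would observe that an associative unital semi-$\U$-lattice is in particular a semimultiplicative lattice with a unit, and by Corollary \ref{ulattices} it is a semiprequantale; being associative and unital, it is a semiquantale, hence a unital near quantale once we note that $\U(Q)$ sup-spanning forces $Q$ to be near sup-complete (a semi-$\U$-lattice is by definition a bounded complete join semilattice, and $\U(Q)$ sup-spanning plus the existence of $1$ and $\bigvee Q$ gives the near sup-completeness). So Proposition \ref{vquantales} applies and gives
$$x^{v(a)} = \bigvee\{y \in Q: \ \forall r,s \in Q \ (rxs \leq a \Rightarrow rys \leq a)\}.$$
The task is then to show the right-hand side coincides with $\bigwedge\{uav: u,v \in \U(Q), \ x \leq uav\}$.

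The key step is to exploit that $\U(Q)$ is sup-spanning, so that for each fixed $y$ every product $rys$ with $r,s \in Q$ is the supremum of products $u y' v$ with $u,v \in \U(Q)$ — more precisely, $rys = \bigvee\{uy s: u \in \U(Q), u \le r\}$ and similarly on the right, and since left and right multiplication by elements of $\U(Q)$ are automorphisms (in particular sup-preserving), the condition "$rxs \le a \Rightarrow rys \le a$ for all $r,s \in Q$" reduces to "$uxv \le a \Rightarrow uyv \le a$ for all $u,v \in \U(Q)$". For $u,v \in \U(Q)$ this last implication, using that $u,v$ act as poset automorphisms, is equivalent to "$x \le u^{-1}av^{-1} \Rightarrow y \le u^{-1}av^{-1}$", and as $u,v$ range over $\U(Q)$ so do $u^{-1}, v^{-1}$. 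Hence $y$ satisfies the defining condition for $x^{v(a)}$ iff $y \le uav$ for every $u,v \in \U(Q)$ with $x \le uav$, i.e. iff $y \le \bigwedge\{uav: u,v \in \U(Q), x \le uav\}$. Taking the supremum over all such $y$ — and noting that this infimum itself is such a $y$, since if $x \le uav$ then $\bigwedge\{u'av': \ldots\} \le uav$ — yields the claimed formula.

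I would also verify separately that the infimum $\bigwedge\{uav: u,v\in\U(Q),\ x\le uav\}$ exists in $Q$: the set is nonempty because $1 \in \U(Q)$ gives no constraint unless we want $x \le a$, but in general we must check the set $\{uav: x \le uav\}$ is nonempty; it is, because $\U(Q)$ sup-spanning implies $x = \bigvee\{u \cdot 1 \cdot v : u,v\in\U(Q),\ uv \le x\}$ is dominated by... — actually the cleanest route is to note $\bigvee Q$ exists and $\bigvee Q = u(\bigvee Q)v$ for $u,v\in\U(Q)$ so $\bigvee Q$ lies in the set, hence it is nonempty and bounded below by $x$, so its infimum exists by bounded completeness. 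The main obstacle I anticipate is the bookkeeping in the reduction from arbitrary $r,s \in Q$ to units $u,v$: one must be careful that sup-spanning lets us replace the universally-quantified $r,s$ by units on \emph{both} sides of the implication simultaneously, which uses that both $x$ and $y$ expand as suprema over the \emph{same} indexing units and that $rys \le a$ is a Scott-closed type condition in $r,s$. Once that reduction is in hand the rest is the automorphism substitution $u \leftrightarrow u^{-1}$, which is routine.
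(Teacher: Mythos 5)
Your plan has a genuine gap: you treat $Q$ as a unital near quantale in order to invoke Proposition~\ref{vquantales}, and you repeatedly appeal to $\bigvee Q$ (to get near sup-completeness, and again to witness nonemptiness of the set $\{uav: u,v\in\U(Q),\ x\leq uav\}$). But an associative unital semi-$\U$-lattice is only assumed to be a bounded complete join semilattice with $\U(Q)$ sup-spanning; it need not have a largest element, and hence need not be a near sup-lattice. The standing example the paper has in mind is exactly one where this fails: $\Fp(D)$, the nonzero fractional ideals of a domain $D$, is an associative unital semi-$\U$-lattice but (unless $D$ is a field) has no largest element and is not a near prequantale --- the paper says as much in Section~\ref{sec:ASO}. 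So Proposition~\ref{vquantales} simply does not apply, and your appeal to ``$\bigvee Q$ lies in the set'' to settle nonemptiness is unavailable.

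The reduction step you sketch (replacing $\forall r,s\in Q$ by $\forall u,v\in\U(Q)$ via sup-spanning, then substituting $u\leftrightarrow u^{-1}$) is in itself sound, and if you drove it through Lemma~\ref{vlemma} directly --- which only asks for the pointwise existence of a largest $x^\star$ and makes no completeness demand on $Q$ --- you would get a correct alternative proof, provided you separately establish that $\{uav: u,v\in\U(Q),\ x\leq uav\}$ is nonempty so that bounded completeness furnishes the infimum. What the paper actually does is more economical and sidesteps Lemma~\ref{vlemma} and Proposition~\ref{vquantales} entirely: it takes $x^\star=\bigwedge\{uav: u,v\in\U(Q),\ x\leq uav\}$ as the \emph{definition}, observes it is a closure operation with $a^\star=a$, shows each $w\in\U(Q)$ is transportable through $\star$ by the automorphism substitution $u\mapsto wu$, concludes $\star$ is a nucleus via Proposition~\ref{closureprop2}, and then shows any nucleus $\star'$ with $a^{\star'}=a$ satisfies $\star'\leq\star$ by applying $\star'$ to $x\leq uav$ and using $ua^{\star'}v=uav$. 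That argument never needs $\bigvee Q$, only bounded completeness. If you want to keep your route, replace the appeal to Proposition~\ref{vquantales} with a direct application of Lemma~\ref{vlemma}, and address nonemptiness without invoking a top element.
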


\begin{proof}
For all $x \in Q$, let $x^\star = \bigwedge\{uav: \ u,v \in \U(Q), \ x \leq uav\}$, which exists since $Q$ is bounded complete.  Clearly $\star$ is a closure operation on $Q$ with $a^\star = a$.  Moreover, for all $x \in Q$ and $w \in \U(Q)$ one has
$wx^\star  =  \bigwedge\{wuav:  \ u,v \in \U(Q), \ wx \leq wuav\} 
	 =  \bigwedge\{u'av:  \ u',v \in \U(Q), \ wx \leq u'av\} 
	 =  (wx)^\star,$
and likewise $x^\star w = (xw)^\star$.  Thus $\star$ is a nucleus on $Q$ by Proposition \ref{closureprop2}.  Now let $\star' \in \N(Q)$ with $a^{\star'} = a$, and let $x \in Q$.  If $x \leq uav$ for $u,v \in \U(Q)$, then $x^{\star'} \leq ua^{\star'}v = uav$, whence $x^{\star'} \leq x^\star$.  Therefore $\star' \leq \star$.  It follows that $v(a)$ exists and equals $\star$.
\end{proof}




\section{Finitary nuclei and precoherence}\label{sec:PFN}


A closure operation $\star: S \longrightarrow S$ on a poset $S$ may not be Scott continuous, even though the corestriction ${_{S^\star}}|\star: S \longrightarrow S^\star$ of $\star$ to $S^\star$ is always sup-preserving (hence Scott continuous).  If $\star: S \longrightarrow S$ is Scott continuous then we will say that $\star$ is {\it finitary}.  (Such closure operations are often said to be {\it algebraic}.)  Equivalently this means that $(\bigvee \Delta)^\star = \bigvee(\Delta^\star)$, or $\bigvee(\Delta^\star) \in S^\star$, for any directed subset $\Delta$ of $S$ such that $\bigvee \Delta$ exists.  Finitary closure operations generalize finite type star and semistar operations, ideal systems, and module systems.

For any poset $S$, let $\C_f(S)$ denote the poset of all finitary closure operations on $S$, and for any ordered magma $M$, let $\N_f(M) = \N(M) \cap \C_f(M)$ denote the poset of all finitary nuclei on $M$.  In this section we study properties of the poset $\N_f(M)$.  A summary of these results and the results for $\N(M)$ from the previous section is provided in Table 2.

\begin{table} 
\caption{The posets $\N(M)$ and $\N_f(M)$}
\centering  \ \\
\begin{tabular}{l|l|l} 
If $M$ is $\ldots$ & $\N(M)$ is $\ldots$ & $\N_f(M)$ is $\ldots$  \\ \hline\hline
near sup-magma & complete & \\ \hline
bounded complete & bounded complete  &  \\ \hline
meet semilattice & meet semilattice & \\ \hline
Scott-topological dcpo magma & & complete \\ \hline
Scott-topological bdcpo magma & & bounded complete \\ \hline
Scott-topological near sup-magma & complete & complete \\ \hline
Scott-topological, bounded complete & bounded complete & bounded complete  \\ \hline
algebraic meet semilattice &  meet semilattice & meet semilattice
\end{tabular}
\end{table}

Recall that an element $x$ of a poset $S$ is said to be {\it compact} if, whenever $x \leq \bigvee \Delta$ for some directed subset $\Delta$ of $S$ such that $\bigvee \Delta$ exists, one has $x \leq y$ for some $y \in \Delta$.  Any minimal element of $S$, for example, is compact.  We let $\K(S)$ denote the set of all compact elements of $S$.

\begin{example} \
\begin{enumerate}
\item For any set $X$, the set $\K(2^X)$ of compact elements of the complete lattice $2^X$ is equal to the set of all finite subsets of $X$.
\item  For any algebra $A$ over a ring $R$, the compact elements of the quantale $\operatorname{Mod}_R(A)$ of all $R$-submodules of $A$ are precisely the finitely generated $R$-submodules of $A$.
\end{enumerate}
\end{example}

A element $x$ of a poset $S$ is {\it algebraic} if $x$ is the supremum of a set of compact elements of $S$, or equivalently, if one has $x = \bigvee \{y \in \K(S): \ y \leq x\}$.   A poset is said to be {\it algebraic} if all of its elements are algebraic.  Note, however, that an algebraic complete lattice is said to be an {\it algebraic lattice}.  For example, for any set $X$, the poset $2^X$ is an algebraic lattice, and the poset $2^X \setmin \{\emptyset\}$ is an algebraic near sup-lattice with $\K(2^X \setmin \{\emptyset\}) = \K(2^X) \setmin \{\emptyset\}$.

For any set $X$ and any set $\Gamma$ of self-maps of $X$, let $\langle \Gamma \rangle$ denote the submonoid generated by $\Gamma$ of the monoid of all self-maps of $X$ under composition.

\begin{lemma}
Let $S$ be a poset.
\begin{enumerate}
\item If $S$ is a dcpo (resp., bdcpo) then $\C_f(S)$ is complete (resp., bounded complete) and one has $\bigvee_{\C_f(S)} \Gamma = \bigvee_{\C(S)} \Gamma$ and $x^{\bigvee_{\C(S)} \Gamma} = \bigvee\{x^\gamma: \ \gamma \in \langle \Gamma \rangle\}$ for all $x \in S$ and for any subset (resp., any bounded subset) $\Gamma$ of $\C_f(S)$. 
\item If $S$ is a near sup-lattice, then $\C_f(S)$ is a sub sup-lattice of the sup-lattice $\C(S)$.
\item If $S$ is an algebraic meet semilattice, then $\C_f(S)$ is a sub meet semilattice of the meet semilattice $\C(S)$.
\end{enumerate}
\end{lemma}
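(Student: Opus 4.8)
The plan is to settle (1) by an explicit construction, deduce (2) from it at once, and treat (3) by a separate compactness argument.

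For (1), set $x^\tau = \bigvee\{x^\gamma : \gamma \in \langle\Gamma\rangle\}$ for $x \in S$; this is the candidate for $\bigvee\Gamma$. The first point is that $\tau$ is well defined, i.e.\ that this supremum exists. Since every $\gamma \in \langle\Gamma\rangle$ is a composite of expansive, order-preserving, Scott continuous self-maps, it is itself expansive, order-preserving and Scott continuous; hence for $\gamma_1,\gamma_2 \in \langle\Gamma\rangle$ the map $\gamma_1\circ\gamma_2 \in \langle\Gamma\rangle$ dominates both $\gamma_1$ and $\gamma_2$ pointwise, so $\{x^\gamma : \gamma \in \langle\Gamma\rangle\}$ is directed. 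In the dcpo case its supremum therefore exists. In the bdcpo case, if $\Gamma$ is bounded above in $\C_f(S)$ by $\star_0$, an induction on word length (using that $\star_0$ is order-preserving and idempotent and that each letter of $\gamma$ is $\le \star_0$) shows $x^\gamma \le x^{\star_0}$ for all $\gamma \in \langle\Gamma\rangle$, so the directed set is bounded above and its supremum exists.

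Next one checks that $\tau$ is a finitary closure operation. Expansiveness ($x = x^{\id} \le x^\tau$) and order-preservation are clear. For a directed $\Delta$ with $\bigvee\Delta$ existing, Scott continuity of each $\gamma$ gives $(\bigvee\Delta)^\gamma = \bigvee\{d^\gamma : d\in\Delta\}$, and since $\{d^\gamma : d\in\Delta,\ \gamma\in\langle\Gamma\rangle\}$ is again directed (and, in the bdcpo case, bounded above by $(\bigvee\Delta)^\tau$) one may exchange the two suprema to obtain $(\bigvee\Delta)^\tau = \bigvee(\Delta^\tau)$; in particular $\tau$ is Scott continuous. Applying this with $\Delta = \{x^\delta : \delta\in\langle\Gamma\rangle\}$ and using $\{\gamma\circ\delta : \gamma,\delta\in\langle\Gamma\rangle\} = \langle\Gamma\rangle$ gives $(x^\tau)^\tau = x^\tau$, so $\tau$ is idempotent. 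Finally, $\star \le \tau$ for every $\star\in\Gamma$ (one-letter words), while if $\star'$ is any closure operation on $S$ with $\star' \ge \star$ for all $\star\in\Gamma$, the same word-length induction gives $x^\gamma \le x^{\star'}$ for all $\gamma\in\langle\Gamma\rangle$, hence $x^\tau \le x^{\star'}$; thus $\tau = \bigvee_{\C(S)}\Gamma$, and since $\tau \in \C_f(S)$ also $\tau = \bigvee_{\C_f(S)}\Gamma$, which yields the displayed formula. As every subset of $\C_f(S)$ (resp.\ every nonempty bounded-above subset), together with $\emptyset$ (whose supremum is $d$), now has a supremum in $\C_f(S)$, the poset $\C_f(S)$ is complete (resp.\ bounded complete). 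I expect the exchange-of-suprema step — keeping track of which intermediate suprema exist in the dcpo versus the bdcpo setting — to be the only real subtlety; the rest is routine.

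Statement (2) is then immediate: a near sup-lattice is in particular a dcpo (directed sets are nonempty), and $\C(S)$ is a sup-lattice by Lemma~\ref{closelem}, so by (1) $\C_f(S)$ is a complete lattice whose suprema coincide with those of $\C(S)$, i.e.\ it is a sub sup-lattice. For (3), $\C(S)$ is a meet semilattice with $(\star_1\wedge\star_2)(x) = x^{\star_1}\wedge x^{\star_2}$ (a routine check, as for $\N(M)$ in the remark following Proposition~\ref{CMC}), so it suffices to show that $\star_1\wedge\star_2$ is finitary whenever $\star_1,\star_2$ are. The point here is that an algebraic poset is meet-continuous: for a directed $\Delta$ with $\bigvee\Delta$ existing, write $a = (\bigvee\Delta)^{\star_1} = \bigvee(\Delta^{\star_1})$ and $b = (\bigvee\Delta)^{\star_2} = \bigvee(\Delta^{\star_2})$ (using that $\star_1,\star_2$ are finitary). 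Then $a\wedge b$ is an upper bound of $\{d^{\star_1}\wedge d^{\star_2} : d\in\Delta\}$; conversely, if $u$ is any upper bound of this set and $k \in \K(S)$ with $k \le a\wedge b$, then compactness of $k$ and directedness give $k \le d_1^{\star_1}$ and $k \le d_2^{\star_2}$ for some $d_1,d_2\in\Delta$, whence $k \le d_3^{\star_1}\wedge d_3^{\star_2} \le u$ for any $d_3\in\Delta$ above $d_1$ and $d_2$; since $S$ is algebraic, $a\wedge b = \bigvee\{k\in\K(S) : k\le a\wedge b\} \le u$. This says precisely that $(\bigvee\Delta)^{\star_1\wedge\star_2} = \bigvee(\Delta^{\star_1\wedge\star_2})$, so $\star_1\wedge\star_2\in\C_f(S)$, and $\C_f(S)$ is a sub meet semilattice of $\C(S)$.
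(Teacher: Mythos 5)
Your proof is correct and takes essentially the same route as the paper: in (1) you define the candidate supremum by the displayed formula $x^\tau = \bigvee\{x^\gamma : \gamma \in \langle\Gamma\rangle\}$, check directedness and (in the bdcpo case) boundedness of $\{x^\gamma\}$, establish Scott continuity by exchanging suprema, and deduce idempotency from closure of $\langle\Gamma\rangle$ under composition; (2) falls out of (1); and in (3) you prove $\star_1 \wedge \star_2$ finitary by the same compactness argument (pick compact $t \le a \wedge b$, trap it under some $d_3 \in \Delta$ via directedness, then use algebraicity to take the supremum over such $t$). The only cosmetic difference is the order in which idempotency and Scott continuity of $\tau$ are established (the paper shows $(\bigvee X)^\star = \bigvee X$ for each $\star \in \Gamma$ directly, whereas you first prove $\tau$ Scott continuous and then apply that to $\Delta = \{x^\delta\}$), but both work and the underlying mechanism is the same.
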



\begin{proof} \
\begin{enumerate}
\item Define a preclosure $\sigma$ of $S$ by $x^\sigma = \bigvee\{x^\gamma: \ \gamma \in \langle \Gamma \rangle\}$ for all $x \in S$.  The map $\sigma$ is defined because the set $\{x^\gamma: \ \gamma \in \langle \Gamma \rangle\}$ is directed (and bounded above if $\Gamma$ is bounded above) for any  $x \in S$.  Let $x \in S$, and let $X = \{x^\gamma: \ \gamma \in \langle \Gamma \rangle\}$.  Then $(\bigvee X)^\star = \bigvee\{(x^\gamma)^\star: \ \gamma \in \langle \Gamma \rangle\} \leq \bigvee X$ for all $\star \in \Gamma$, whence $(\bigvee X)^\gamma = \bigvee X$, and therefore $(x^\sigma)^\gamma = x^\sigma$, for all $\gamma \in \langle \Gamma \rangle$.  Therefore $(x^\sigma)^\sigma = x^\sigma$, so $\sigma$ is a closure operation on $S$, and clearly $\sigma =\bigvee_{\C(S)} \Gamma$.  It remains only to show that $\sigma$ is finitary, since it will follow that $\sigma = \bigvee_{\C_f(S)} 
\Gamma$.  Let $\Delta$ be a directed subset of $S$.  Then we have
$\left(\bigvee \Delta\right)^\sigma = \bigvee\left\{\left(\bigvee \Delta\right)^\gamma: \ \gamma \in \langle \Gamma \rangle\right\}
  = \bigvee\left\{\bigvee (\Delta^\gamma): \ \gamma \in \langle \Gamma \rangle\right\}  
  = \bigvee \{x^\gamma: \ x \in \Delta, \gamma \in \langle \Gamma \rangle\} 
  = \bigvee (\Delta^\sigma).$
Thus $\sigma$ is finitary.
\item This follows from (1).
\item Let $\star, \star' \in \C_f(S)$.  By Lemma \ref{closelem} it suffices to show that $\star \wedge \star' \in \C(S)$ is finitary.  Let $\Delta$ be a directed subset of $S$ such that $x = \bigvee \Delta$ exists, and let $z = x^{\star \wedge \star'}$.  Note first that $z$ is an upper bound of $\Delta^{\star \wedge \star'}$.  Let $u$ be any upper bound of $\Delta^{\star \wedge \star'}$.  We claim that $u \geq z$.   To show this, let $t$ be any compact element of $S$ with $t \leq z = x^\star \wedge x^{\star'}$.   Then $t \leq x^\star = \bigvee(\Delta^\star)$ and $t \leq x^{\star'} = \bigvee(\Delta^{\star'})$, whence $t \leq y^\star$ and $t \leq (y')^{\star'}$ for some $y,y' \in \Delta$.  Since $\Delta$ is directed, we may choose $w \in \Delta$ with $y,y' \leq w$.  Then $t \leq w^\star \wedge w^{\star'} = w^{\star \wedge \star'} \leq u$.  Therefore, taking the supremum over all compact $t \leq z$, we see that $z \leq u$.  Therefore $(\bigvee \Delta)^{\star \wedge \star'} = z = \bigvee (\Delta^{\star \wedge \star'})$.  Thus $\star \wedge \star'$ is finitary.
\end{enumerate}
\end{proof}

\begin{proposition}\label{Nf}
Let $M$ be an ordered magma.
\begin{enumerate}
\item If $M$ is a Scott-topological dcpo magma (resp., Scott-topological bdcpo magma) then $\N_f(M)$ is complete (resp., bounded complete) and one has $\bigvee_{\N_f(M)} \Gamma = \bigvee_{\N(M)} \Gamma$ and $x^{\bigvee_{\N(M)} \Gamma} = \bigvee\{x^\gamma: \ \gamma \in \langle \Gamma \rangle\}$ for all $x \in M$ and for any subset (resp., any bounded subset) $\Gamma$ of $\N_f(M)$. 
\item If $M$ is a Scott-topological near sup-magma, then $\N_f(M)$ is a sub sup-lattice of the sup-lattice $\N(M)$.
\item If $M$ is an algebraic meet semilattice, then $\N_f(M)$ is a sub meet semilattice of the meet semilattice $\N(M)$.
\end{enumerate}
\end{proposition}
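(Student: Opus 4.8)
The plan is to mimic the proof of the preceding lemma on $\C_f(S)$, adding in each case only the verification that the closure operations it produces are nuclei. For part (1), let $\Gamma \subseteq \N_f(M)$, assumed bounded above when $M$ is only a bdcpo magma. Regarding $M$ as a poset, $\Gamma \subseteq \C_f(M)$, so by that lemma the self-map $\sigma$ of $M$ with $x^\sigma = \bigvee\{x^\gamma : \gamma \in \langle\Gamma\rangle\}$ is a well-defined finitary closure operation on $M$ (the set $\{x^\gamma : \gamma \in \langle\Gamma\rangle\}$ being directed for each $x$), it is the least closure operation on $M$ above $\Gamma$, and $\sigma = \bigvee_{\C_f(M)}\Gamma = \bigvee_{\C(M)}\Gamma$; in particular $\sigma \geq \star$, hence $M^\sigma \subseteq M^\star$, for every $\star \in \Gamma$. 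If I can show that $\sigma$ is a nucleus, then, being the least closure operation above $\Gamma$, it is also the least nucleus above $\Gamma$, so $\sigma = \bigvee_{\N(M)}\Gamma = \bigvee_{\N_f(M)}\Gamma$ with the stated formula, and $\N_f(M)$ is complete (resp.\ bounded complete) with least element $d = \bigvee_{\N_f(M)}\emptyset$.

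To prove $\sigma$ is a nucleus I would first establish, by induction on the number $n$ of generators in an expression $\gamma = \star_1 \circ \cdots \circ \star_n$ of an element of $\langle\Gamma\rangle$, the claim that $x^\gamma y \leq (xy)^\sigma$ and $x y^\gamma \leq (xy)^\sigma$ for all $x,y \in M$. The case $n = 0$ is expansiveness of $\sigma$. For the inductive step, write $\gamma = \star \circ \gamma'$ with $\star = \star_1 \in \Gamma$; then Proposition \ref{closureprop1}(2) for the nucleus $\star$, the inductive hypothesis, order-preservation of $\star$, and $M^\sigma \subseteq M^\star$ give $x^\gamma y = (x^{\gamma'})^\star y \leq (x^{\gamma'} y)^\star \leq ((xy)^\sigma)^\star = (xy)^\sigma$, and symmetrically $x y^\gamma \leq (xy)^\sigma$. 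Granting the claim: since $M$ is Scott-topological, right multiplication by $y$ is Scott continuous (Lemma \ref{scott2}), so from $x^\sigma = \bigvee\{x^\gamma : \gamma \in \langle\Gamma\rangle\}$ one gets $x^\sigma y = \bigvee\{x^\gamma y : \gamma \in \langle\Gamma\rangle\} \leq (xy)^\sigma$, and likewise $x y^\sigma \leq (xy)^\sigma$; thus $\sigma$ is a nucleus by Proposition \ref{closureprop1}.

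For part (2), a Scott-topological near sup-magma is in particular a Scott-topological dcpo magma, and by Proposition \ref{CMC} its poset of nuclei $\N(M)$ is a complete lattice with least element $d$; since part (1) shows $\N_f(M)$ is complete with least element $d$ and with all suprema computed as in $\N(M)$, it is a sub sup-lattice of $\N(M)$. For part (3), when $M$ is an algebraic meet semilattice the preceding lemma gives that $\C_f(M)$ is a sub meet semilattice of $\C(M)$; since the meet in $\C(M)$ of two closure operations $\star,\star'$ is the map $x \mapsto x^\star \wedge x^{\star'}$, which by the remark following Proposition \ref{CMC} is again a nucleus — indeed their meet in $\N(M)$ — whenever $\star,\star'$ are nuclei, the meet in $\N(M)$ of two finitary nuclei is finitary, so $\N_f(M)$ is a sub meet semilattice of $\N(M)$.

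The only substantive point is that $\sigma$ is a nucleus in part (1); everything else is bookkeeping on top of the preceding lemma and Proposition \ref{CMC}. The Scott-topological hypothesis is used precisely to pass from the one-sided inequalities for the generators to $x^\sigma y \leq (xy)^\sigma$, and the subtlety dictating the shape of the induction is that a general element of $\langle\Gamma\rangle$ is expansive, order-preserving, and submultiplicative (in the nucleus sense) but need not be idempotent, so the argument must strip off one genuine generator — a genuine nucleus — at a time rather than factor $\gamma$ arbitrarily.
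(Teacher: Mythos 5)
Your proof is correct and takes essentially the same approach as the paper, which merely asserts that ``it suffices to observe'' the closure operation $x \mapsto \bigvee\{x^\gamma : \gamma \in \langle\Gamma\rangle\}$ is a nucleus when $M$ is Scott-topological. Your induction (peeling off a genuine generator $\star \in \Gamma$ from $\gamma$, then passing to the directed supremum via Scott continuity of the translations) is the natural way to make that observation precise, and the reduction of parts (2) and (3) to the preceding lemma and the remark after Proposition \ref{CMC} matches the paper's intent.
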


\begin{proof}
By the lemma it suffices to observe that the closure operation $x \longmapsto \bigvee\{x^\gamma: \ \gamma \in \langle \Gamma \rangle\}$, when defined for $\Gamma \subseteq \C_f(M)$, is a nucleus if $M$ is Scott-topological.
\end{proof}

The remainder of this section is devoted to proving Theorem \ref{precoherentthm} of the introduction.

\begin{lemma}\label{joinlemma}
Let $S$ be a join semilattice.  Then $x \in S$ is compact if and only if, whenever $x \leq \bigvee X$ for some subset $X$ of $S$ such that $\bigvee X$ exists, one has $x \leq \bigvee Y$ for some finite subset $Y$ of $X$.  Moreover, $\K(S)$ is closed under finite suprema.  
\end{lemma}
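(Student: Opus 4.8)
The plan is to prove the two directions of the stated equivalence and then the closure-under-finite-suprema claim. First I would handle the easy direction: suppose $x$ has the stated finite-subcover property, and let $\Delta$ be a directed subset of $S$ with $\bigvee\Delta$ existing and $x\le\bigvee\Delta$. Applying the property with $X=\Delta$ yields a finite $Y\subseteq\Delta$ with $x\le\bigvee Y$; since $\Delta$ is directed and $Y$ is finite, $Y$ has an upper bound $y\in\Delta$, and then $\bigvee Y\le y$ (here I use that $S$ is a join semilattice, so finite suprema exist and are the least upper bounds), hence $x\le y\in\Delta$. Thus $x$ is compact.

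For the converse, suppose $x$ is compact and $x\le\bigvee X$ for some $X\subseteq S$ whose supremum exists. Form the set $\Delta=\{\bigvee Y:\ Y\subseteq X\ \text{finite}\}$, which is well-defined because $S$ is a join semilattice, and which is directed: given two finite subsets $Y_1,Y_2$ of $X$, the element $\bigvee(Y_1\cup Y_2)\in\Delta$ dominates both $\bigvee Y_1$ and $\bigvee Y_2$. The key point is that $\bigvee\Delta$ exists and equals $\bigvee X$: every element of $\Delta$ is $\le\bigvee X$, so $\bigvee X$ is an upper bound of $\Delta$; conversely any upper bound $u$ of $\Delta$ dominates each singleton supremum $\bigvee\{z\}=z$ for $z\in X$, hence dominates $\bigvee X$. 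So $\bigvee\Delta=\bigvee X$ exists, and compactness of $x$ gives $x\le\bigvee Y$ for some finite $Y\subseteq X$, as required.

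Finally, for the closure claim, let $x_1,\dots,x_n\in\K(S)$ and set $x=x_1\vee\cdots\vee x_n$; I must show $x\in\K(S)$. Using the characterization just proved, suppose $x\le\bigvee X$ with $\bigvee X$ existing. Then each $x_i\le x\le\bigvee X$, so by the finite-subcover property each $x_i\le\bigvee Y_i$ for some finite $Y_i\subseteq X$. Taking $Y=Y_1\cup\cdots\cup Y_n$, a finite subset of $X$, we get $x_i\le\bigvee Y$ for all $i$, hence $x=\bigvee_i x_i\le\bigvee Y$. Thus $x$ has the finite-subcover property and is compact.

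I do not expect any serious obstacle here; the only subtle point — and the one I would state carefully — is the identity $\bigvee\Delta=\bigvee X$ in the converse direction, which legitimizes passing from an arbitrary subset $X$ to the directed set of its finite subsuprema, and which is exactly where the join-semilattice hypothesis is used.
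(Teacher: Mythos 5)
Your proof is correct and is precisely the routine argument the paper leaves to the reader: replace an arbitrary $X$ by the directed family of its finite subsuprema, verify $\bigvee\Delta = \bigvee X$, and deduce the closure statement formally from the equivalence. One small detail worth making explicit is that in a join semilattice $\bigvee\emptyset$ need not exist, so $\Delta$ should range over \emph{nonempty} finite $Y \subseteq X$ (which tacitly assumes $X \neq \emptyset$, the set $\Delta$ then being nonempty as the paper's definition of ``directed'' requires); the residual case $X = \emptyset$ is trivial, since then $\bigvee X = \bigwedge S$ exists and $x \leq \bigwedge S$ forces $x = \bigwedge S$, so $Y = \emptyset$ serves.
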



\begin{lemma}\label{finitetypeprop}
Let $\star$ be a closure operation on a poset $S$.
\begin{enumerate}
\item If $x^\star = x \vee \bigvee\{y^\star: \ y \in \K(S), \ y \leq x\}$ for all $x \in S$, then $\star$ is finitary.
\item If $\star$ is finitary and $S$ is an algebraic join semilattice, then $x^\star = \bigvee\{y^\star: \ y \in \K(S), \ y \leq x\}$ for all $x \in S$, and one has $\K(S^\star) \subseteq \K(S)^\star$.
\item If $\star$ is finitary and $S$ is a bdcpo, then $\K(S)^\star \subseteq \K(S^\star)$.
\item If $\star$ is finitary, and if $S$ is an algebraic bounded complete join semilattice (resp., algebraic near sup-lattice, algebraic lattice), then so is $S^\star$, and $\K(S^\star) = \K(S)^\star$.
\end{enumerate}
\end{lemma}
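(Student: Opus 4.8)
The plan is to prove the four parts in order, the central device throughout being Lemma~\ref{starlemma}, which lets one pass between suprema computed in $S$ and in $S^\star$, together with Lemma~\ref{joinlemma}, which says $\K(S)$ is closed under finite suprema when $S$ is a join semilattice. For~(1), I would fix a directed $\Delta \subseteq S$ with $x = \bigvee_S \Delta$ existing and check that $x^\star$ is the \emph{least} upper bound of $\Delta^\star$. It is an upper bound since $\star$ is order-preserving. If $u$ is any upper bound of $\Delta^\star$, then $u \geq \delta^\star \geq \delta$ for every $\delta \in \Delta$, so $u \geq x$; and if $y \in \K(S)$ with $y \leq x = \bigvee_S \Delta$, then compactness and directedness give $\delta \in \Delta$ with $y \leq \delta$, so $y^\star \leq \delta^\star \leq u$. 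Thus $u$ bounds $\{x\} \cup \{y^\star : y \in \K(S),\ y \leq x\}$, whose supremum is $x^\star$ by hypothesis, giving $x^\star \leq u$. Hence $x^\star = \bigvee_S \Delta^\star$, so $\star$ is Scott continuous, i.e., finitary.

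For~(2), the set $\Delta_x = \{y \in \K(S) : y \leq x\}$ is directed (it is nonempty, else $x$ would be the least element of $S$, hence minimal, hence compact and a member of $\Delta_x$; and it is closed under finite suprema by Lemma~\ref{joinlemma}) with $\bigvee_S \Delta_x = x$ by algebraicity, so finitariness of $\star$ yields $x^\star = \bigvee_S \{y^\star : y \in \K(S),\ y \leq x\}$. For the inclusion $\K(S^\star) \subseteq \K(S)^\star$, let $z \in \K(S^\star)$; applying the formula just established with $x = z = z^\star$ expresses $z$ as the supremum in $S$ of the directed family $\{y^\star : y \in \K(S),\ y \leq z\} \subseteq S^\star$, hence (every $S^\star$-upper bound being an $S$-upper bound) also as its supremum in $S^\star$. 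Compactness of $z$ in $S^\star$ then forces $z \leq y^\star$ for some $y \in \K(S)$ with $y \leq z$, and $y \leq z \leq y^\star \leq z^\star = z$ gives $z = y^\star \in \K(S)^\star$.

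For~(3), let $y \in \K(S)$ and suppose $y^\star \leq \bigvee_{S^\star} \Delta$ for a directed $\Delta \subseteq S^\star$ with the supremum existing. Since $S$ is a bdcpo and $\bigvee_{S^\star}\Delta$ bounds $\Delta$ in $S$, the supremum $x := \bigvee_S \Delta$ exists; as $\Delta^\star = \Delta$, finitariness gives $x^\star = \bigvee_S \Delta^\star = x$, so $x \in S^\star$ and $\bigvee_{S^\star}\Delta = x$. Then $y \leq y^\star \leq x = \bigvee_S \Delta$, and compactness of $y$ in $S$ gives $\delta \in \Delta$ with $y \leq \delta$, whence $y^\star \leq \delta^\star = \delta$; so $y^\star \in \K(S^\star)$. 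For~(4), each of the three hypotheses makes $S$ simultaneously an algebraic join semilattice and a bdcpo, so~(2) and~(3) combine to give $\K(S^\star) = \K(S)^\star$; that $S^\star$ inherits the relevant completeness type is Lemma~\ref{starlemma}(4); and for any $z \in S^\star$ the formula from~(2) writes $z$ as the supremum, in $S$ and hence in $S^\star$, of $\{y^\star : y \in \K(S),\ y \leq z\} \subseteq \K(S)^\star = \K(S^\star)$, so $S^\star$ is algebraic.

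I expect the only real obstacle to be the bookkeeping needed to pass between $\bigvee_S$ and $\bigvee_{S^\star}$ in~(2) and~(3); in particular, the step in~(3) showing that a directed join of $\star$-closed elements, computed in $S$, is automatically $\star$-closed is exactly where one must use finitariness of $\star$ itself rather than merely of its corestriction ${_{S^\star}}|\star$, and it may be worth isolating this as a small observation before the main argument.
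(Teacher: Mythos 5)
Your proof is correct and takes essentially the same route as the paper's: part (1) by the same least-upper-bound argument splitting $x^\star$ as $x \vee \bigvee\{y^\star : y \in \K(S),\ y \leq x\}$; part (2) via algebraicity, directedness of $\{y \in \K(S): y \leq x\}$ (using Lemma~\ref{joinlemma}), and finitariness; part (3) via the bdcpo hypothesis, Lemma~\ref{starlemma}(1), and compactness in $S$; and part (4) by combining (2), (3), and Lemma~\ref{starlemma}(4). You are somewhat more explicit than the paper about the bookkeeping between $\bigvee_S$ and $\bigvee_{S^\star}$ and about why $\{y \in \K(S): y \leq x\}$ is nonempty, but these are points the paper glosses over rather than handles differently.
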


\begin{proof} \
\begin{enumerate}
\item Let $\Delta$  be a directed subset of $S$ such that $x = \bigvee \Delta$ exists.   We wish to show that $\bigvee (\Delta^\star)$ exists and equals $x^\star$.  Clearly $x^\star$ is an upper bound of $\Delta^\star$, and if $w$ is any upper bound of $\Delta^\star$, then we claim that $x^\star \leq w$.  Let $y \in \K(S)$ with $y \leq x$.  Then $y \leq z$ for some $z \in \Delta$.  Therefore $y^\star \leq z^\star$, where $z^\star \in \Delta^\star$.   Thus we have $y^\star \leq w$.  Taking the supremum over all such $y$, we see that $\bigvee\{y^\star: \ y \in \K(S), \ y \leq x\} \leq w$.  Moreover, we have $x = \bigvee \Delta \leq w$, and therefore $x^\star = x \vee \bigvee\{y^\star: \ y \in \K(S), \  y \leq x\} \leq w$, as claimed.
\item Let $x \in S$.  One has $x = \bigvee \Delta$, where $\Delta =  \{y \in \K(S): \ y \leq x\}$ is directed by Lemma \ref{joinlemma}.  Therefore $x^\star = \bigvee (\Delta^\star)$.   Suppose  that $x \in \K(S^\star)$.  Then $x = \bigvee(\Delta^\star)$ and $\Delta^\star \subseteq S^\star$ is directed, so $x \leq y^\star$ for some $y \in \Delta$, whence $x = y^\star$.  Therefore $\K(S^\star) \subseteq \K(S)^\star$.
\item Let $x \in \K(S)$.  Suppose that $x^\star \leq \bigvee_{S^\star}\Delta$ for some directed subset $\Delta$ of $S^\star$ such that $\bigvee_{S^\star}\Delta$ exists.  Then $\Delta$ is directed and bounded above, whence $\bigvee_S \Delta$ exists.  It follows that $\bigvee_{S^\star}\Delta = (\bigvee_S \Delta)^\star = \bigvee_S (\Delta^\star) = \bigvee_S \Delta$.  Therefore $x \leq x^\star \leq \bigvee_S \Delta$, so $x \leq y$ for some $y \in \Delta$, whence $x^\star \leq y^\star = y$.  Thus we have $x^\star \in \K(S^\star)$.
\item Since $\K(S)^\star \subseteq \K(S^\star)$ by (3), it follows from (2) that $S^\star$ is algebraic and $\K(S)^\star = \K(S^\star)$.  Since $S^\star$ is bounded complete (resp., near sup-complete, complete) by Lemma \ref{starlemma}(4), it follows that $S^\star$ is an algebraic bounded complete join semilattice (resp., algebraic near sup-lattice, algebraic lattice). 
\end{enumerate}
\end{proof}

For any closure operation $\star$ on a bounded complete poset $S$, let $\star_f$ denote the operation on $S$ defined by $x^{\star_f} = \bigvee\{y^\star: \ y \in \K(S) \mbox{ and } y \leq x\}$ for all $x \in S$.

\begin{proposition}\label{finitaryclosure}
Let $\star$ be a closure operation on an algebraic bounded complete join semilattice $S$.  Then $\star_f$ is the largest finitary closure operation on $S$ that is smaller than $\star$, one has $x^{\star_f} = x^\star$ for all $x \in \K(S)$, and $\K(S^{\star_f}) = \K(S)^{\star_f}$.
\end{proposition}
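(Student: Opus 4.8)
The plan is to verify three things about $\star_f$: that it is a closure operation, that it is finitary, and that it is the largest finitary closure operation below $\star$; the remaining two assertions will then be cheap consequences. First I would check that $\star_f$ is a preclosure. It is order-preserving because enlarging $x$ enlarges the indexing set $\{y \in \K(S): y \leq x\}$. For expansiveness one uses that $S$ is algebraic: $x = \bigvee\{y \in \K(S): y \leq x\}$, and since $\star$ is expansive each such $y$ satisfies $y \leq y^\star$, so $x = \bigvee\{y: y \in \K(S), y \leq x\} \leq \bigvee\{y^\star: y \in \K(S), y \leq x\} = x^{\star_f}$. (Here the relevant supremum exists because $S$ is bounded complete and the set is bounded above by $x^\star$, using that $y \leq x$ implies $y^\star \leq x^\star$.) Note also that for $x \in \K(S)$ the element $x$ itself is the largest member of the indexing set, so $x^{\star_f} = x^\star$; this is one of the three conclusions and I would record it here.

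Next I would establish that $x^{\star_f} = x^\star$ already suffices to compute $\star_f$ on all of $S$ as a supremum over compacts, i.e. that $x^{\star_f} = \bigvee\{z^{\star_f}: z \in \K(S), z \leq x\}$; indeed for $z \in \K(S)$ with $z \leq x$ we have $z^{\star_f} = z^\star$, so the right-hand side is literally the defining expression for $x^{\star_f}$. Combined with the (already proven) fact that $\star_f$ is order-preserving and expansive, Lemma \ref{finitetypeprop}(1) applies — with $\star_f$ in the role of its ``$\star$'' — and yields at once that $\star_f$ is finitary. (One still owes idempotence of $\star_f$ to call it a closure operation; but Lemma \ref{finitetypeprop}(1) is stated for closure operations, so I would instead argue idempotence directly: $x^{\star_f} = \bigvee_{z} z^\star$ over compacts $z \leq x$, and applying $\star_f$ again and using that $\star_f$ is already known order-preserving and expansive together with the computation $(z^\star)^{\star_f} \le (z^\star)^\star = z^\star$ for compact $z$ — wait, $z^\star$ need not be compact. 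Better: show $x^{\star_f}$ is a fixed point of $\star_f$ by showing $S^{\star_f} \supseteq S^\star$ is false in general, so instead verify $(\bigvee_z z^\star)^{\star_f} = \bigvee_z z^\star$ by noting any compact $t \le \bigvee_z z^\star$ has $t \le z_1^\star \vee \cdots \vee z_n^\star$ for finitely many $z_i \le x$, hence $t \le w^\star$ with $w = z_1 \vee \cdots \vee z_n \in \K(S)$ by Lemma \ref{joinlemma}, $w \le x$, so $t^{\star_f} \le t^\star \le (w^\star)^\star = w^\star \le x^{\star_f}$; take the supremum over compact $t \le x^{\star_f}$, using algebraicity of $S$, to get $(x^{\star_f})^{\star_f} \le x^{\star_f}$.) So $\star_f$ is a finitary closure operation.

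Then I would show $\star_f \leq \star$: for each compact $y \leq x$ we have $y^\star \leq x^\star$ (as $\star$ is order-preserving and $y \le x$), so $x^{\star_f} = \bigvee\{y^\star: y \in \K(S), y \le x\} \leq x^\star$. For maximality, let $+$ be any finitary closure operation on $S$ with $+ \leq \star$; I claim $+ \leq \star_f$. Fix $x \in S$. Since $+$ is finitary and $S$ is an algebraic join semilattice, Lemma \ref{finitetypeprop}(2) gives $x^+ = \bigvee\{y^+: y \in \K(S), y \leq x\}$. For each such $y$, $y^+ \leq y^\star = y^{\star_f}$ (using $+ \leq \star$ and that $y$ is compact), and $y^{\star_f} \leq x^{\star_f}$ since $\star_f$ is order-preserving. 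Hence $x^+ \leq x^{\star_f}$, proving $+ \leq \star_f$. Finally, $\K(S^{\star_f}) = \K(S)^{\star_f}$ follows immediately from Lemma \ref{finitetypeprop}(4) applied to the finitary closure operation $\star_f$, since $S$ is by hypothesis an algebraic bounded complete join semilattice. The main obstacle is the idempotence/fixed-point bookkeeping for $\star_f$ sketched above — it is the one place where one genuinely needs Lemma \ref{joinlemma} (compacts are closed under finite joins) together with algebraicity, rather than a purely formal manipulation.
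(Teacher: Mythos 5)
Your proof is correct and follows essentially the same route as the paper: verify that $\star_f$ is a closure operation (the key step being idempotence) and then invoke Lemma~\ref{finitetypeprop} for finitariness, maximality, and $\K(S^{\star_f})=\K(S)^{\star_f}$. Your idempotence argument is the paper's in slightly different packaging---the paper notes that $\{y^\star : y\in\K(S),\ y\leq x\}$ is \emph{directed} (via Lemma~\ref{joinlemma}) and then uses the directed-set formulation of compactness to pick a single $y^\star$ dominating a compact $z\leq x^{\star_f}$, whereas you use the finite-subcover formulation and explicitly form $w=z_1\vee\cdots\vee z_n\in\K(S)$; these are interchangeable. Two small remarks: the reference to ``algebraicity of $S$'' in the final step of the idempotence argument is not actually needed there (algebraicity was used earlier, for expansiveness), and your caution about Lemma~\ref{finitetypeprop}(1) being stated for closure operations is well taken even though its proof as written uses only that $\star$ is a preclosure.
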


\begin{proof}
Clearly $\star_f$ is a preclosure on $S$.  Let $x \in S$.  The set $\{y^\star: \ y \in \K(S), \ y \leq x\}$ is directed by Lemma \ref{joinlemma}.  Therefore if $z \in \K(S)$ and $z \leq x^{\star_f}$ then $z \leq y^\star$ for some $y \in \K(S)$ such that $y \leq x$, in which case $z^\star \leq y^\star$.  Therefore $(x^{\star_f})^{\star_f} =  \bigvee \{z^\star: \ z \in \K(S), \ z \leq x^{\star_f}\} \leq \bigvee \{y^\star: \ y \in \K(S), \ y \leq x\} = x^{\star_f}.$
Thus $\star$ is a closure operation on $S$, and the rest of the proposition follows from Lemma \ref{finitetypeprop}.
\end{proof}

Generalizing \cite[Definition 4.1.1]{ros}, we say that an ordered magma $M$ is {\it precoherent} if $M$ is algebraic and $\K(M)$ is closed under multiplication, and we say that $M$ is {\it coherent} if $M$ is precoherent and unital with $1$ compact.

\begin{example} \
\begin{enumerate}
\item For any magma $M$, the prequantale $2^M$ is precoherent, and if $M$ is unital then $2^M$ is coherent.
\item A {\it $\K$-lattice} is a precoherent multiplicative lattice.  We say that a {\it near $\K$-lattice} is a precoherent near multiplicative lattice.  For example, if $M$ is a commutative monoid, then $2^M$ is a $\K$-lattice and $2^M\setmin\{\emptyset\}$ is a near $\K$-lattice.
\item For any integral domain $D$ with quotient field $F$, the ordered monoid $\F(D)$ of all nonzero $D$-submodules of $F$ is a near $\K$-lattice.
\item Let $A$ be an algebra over a ring $R$.  The quantale $\operatorname{Mod}_R(A)$ of all $R$-submodules of $A$ is precoherent, and it is coherent if and only if $A$ is finitely generated as an $R$-module.  In particular, if $R$ and $A$ are commutative then $\operatorname{Mod}_R(A)$ is a $\K$-lattice.
\end{enumerate}
\end{example}


The following result generalizes the corresponding well-known facts for star and semistar operations and ideal and module systems.

\begin{theorem}\label{klattice}
If $\star$ is a nucleus on a precoherent semiprequantale (resp., precoherent near prequantale, precoherent prequantale) $Q$, then $\star_f$ is the largest finitary nucleus on $Q$ that is smaller than $\star$, the ordered magma $Q^{\star_f}$ is also a precoherent semiprequantale (resp., precoherent near prequantale, precoherent prequantale), and $\K(Q^{\star_f}) = \K(Q)^{\star_f}$.
\end{theorem}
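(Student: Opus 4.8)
The plan is to deduce everything from Proposition~\ref{finitaryclosure} together with the structural stability results of Section~\ref{sec:MCO}. First I would note that a precoherent semiprequantale (resp., precoherent near prequantale, precoherent prequantale) $Q$ is in particular an algebraic bounded complete join semilattice (resp., algebraic near sup-lattice, algebraic lattice), so Proposition~\ref{finitaryclosure} applies: $\star_f$ is the largest finitary closure operation on $Q$ that is smaller than $\star$, one has $x^{\star_f} = x^\star$ for all $x \in \K(Q)$, and $\K(Q^{\star_f}) = \K(Q)^{\star_f}$. It therefore suffices to prove two things: that $\star_f$ is a nucleus, and that $Q^{\star_f}$ is again precoherent. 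The first of these immediately yields the ``largest finitary nucleus'' claim, since any finitary nucleus smaller than $\star$ is a finitary closure operation smaller than $\star$ and hence is $\leq \star_f$.

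The main step is to verify that $\star_f$ is a nucleus, that is, that $x^{\star_f} y^{\star_f} \leq (xy)^{\star_f}$ for all $x,y \in Q$. Writing $x^{\star_f} = \bigvee\{a^\star : a \in \K(Q),\ a \leq x\}$ and $y^{\star_f} = \bigvee\{b^\star : b \in \K(Q),\ b \leq y\}$, I would observe, using Lemma~\ref{joinlemma}, that the two index sets are directed and are bounded above by $x^\star$ and $y^\star$ respectively, so that the suprema exist and, since $Q$ is a semiprequantale (resp., near prequantale, prequantale), multiplication distributes over them: $x^{\star_f} y^{\star_f} = \bigvee\{a^\star b^\star : a,b \in \K(Q),\ a \leq x,\ b \leq y\}$, a supremum of a set bounded above by $x^\star y^\star$. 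Since $\star$ is a nucleus one has $a^\star b^\star \leq (ab)^\star$, and since $Q$ is precoherent one has $ab \in \K(Q)$ with $ab \leq xy$, whence $(ab)^\star \leq (xy)^{\star_f}$; taking the supremum over all such $a,b$ gives $x^{\star_f} y^{\star_f} \leq (xy)^{\star_f}$. I expect this to be the main obstacle: one must take care that each supremum written down genuinely exists and that the distributive law of the ambient structure is invoked only in the form it guarantees (nonemptiness and boundedness of the index sets, and the finite-or-bounded restriction in the semiprequantale case); a minor side point is that the index sets are nonempty, which follows since $\bigwedge Q$ is compact when it exists and otherwise algebraicity forces a compact element below every element.

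Finally, for the precoherence of $Q^{\star_f}$: since $\star_f$ is now known to be a nucleus on the semiprequantale (resp., near prequantale, prequantale) $Q$, Proposition~\ref{CSTstar} shows $Q^{\star_f}$ is again a semiprequantale (resp., near prequantale, prequantale), and Lemma~\ref{finitetypeprop}(4) shows that $Q^{\star_f}$ is again of the corresponding algebraic type with $\K(Q^{\star_f}) = \K(Q)^{\star_f}$. It remains only to check that $\K(Q^{\star_f})$ is closed under $\star_f$-multiplication: for $a,b \in \K(Q)$, the product of $a^{\star_f}$ and $b^{\star_f}$ in $Q^{\star_f}$ is $(a^{\star_f} b^{\star_f})^{\star_f} = (ab)^{\star_f}$ by Proposition~\ref{closureprop1}, and $ab \in \K(Q)$ by precoherence of $Q$, so this product lies in $\K(Q)^{\star_f} = \K(Q^{\star_f})$. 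Hence $\K(Q^{\star_f})$ is a submagma of $Q^{\star_f}$ and $Q^{\star_f}$ is precoherent, completing the proof.
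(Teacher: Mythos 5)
Your proof is correct, and while it follows the paper's overall plan (reduce to Proposition~\ref{finitaryclosure}, show $\star_f$ is a nucleus, show $Q^{\star_f}$ is precoherent), you handle both subclaims somewhat differently, and the second of your choices is a genuine improvement.

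For the nucleus property, the paper establishes only that $a\,x^{\star_f} \leq (ax)^{\star_f}$ and $x^{\star_f} a \leq (xa)^{\star_f}$ for $a \in \K(Q)$, then invokes Proposition~\ref{closureprop2} with the sup-spanning subset $\Sigma = \K(Q)$. You instead distribute twice and show $x^{\star_f}y^{\star_f} \leq (xy)^{\star_f}$ directly. Both routes are valid; yours requires verifying boundedness of the relevant index sets at two stages, but avoids the appeal to Proposition~\ref{closureprop2} (which in turn requires checking that $\K(Q)$ is sup-spanning). Your side remark about nonemptiness of the index sets is also correct and is precisely the sort of thing that needs to be said to make the semiprequantale case airtight.

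For precoherence of $Q^{\star_f}$, your argument is cleaner and in fact patches a small imprecision in the paper's version. The paper takes $x, y \in \K(Q^{\star_f})$, claims $xy \in \K(Q)$, and verifies compactness of $x \star_f y$ from the definition. But $x, y \in \K(Q^{\star_f}) = \K(Q)^{\star_f}$ only means $x = x_0^{\star_f}$, $y = y_0^{\star_f}$ for some $x_0, y_0 \in \K(Q)$; neither $x$ nor $y$ need be compact in $Q$, so $xy = x_0^{\star_f} y_0^{\star_f}$ is not obviously in $\K(Q)$. The intended argument is of course the one you give: $x \star_f y = (x_0^{\star_f} y_0^{\star_f})^{\star_f} = (x_0 y_0)^{\star_f}$ by the nucleus identity of Proposition~\ref{closureprop1}(3), and $x_0 y_0 \in \K(Q)$ by precoherence of $Q$, so the $\star_f$-product lands in $\K(Q)^{\star_f} = \K(Q^{\star_f})$. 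Combined with Proposition~\ref{CSTstar} and Lemma~\ref{finitetypeprop}(4), this settles precoherence without going back to the definition of compactness.
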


\begin{proof}
By Proposition \ref{finitaryclosure}, to prove the first claim we need only show that $\star_f$ is a nucleus.  Let $x \in Q$ and $a \in \K(Q)$.  We have $(ax)^{\star_f} = \bigvee \{y^\star: \ y \in \K(Q), \ y \leq ax\}$ and $ax^{\star_f} = \bigvee \{az^\star: \ z \in \K(Q), \ z \leq x\}$.  Let $z \in \K(Q)$ with $z \leq x$, and let $y = az$.  By our hypotheses on $Q$ we have $y \in \K(Q)$, and $y = az \leq ax$.  Therefore $az^\star \leq y^\star 
\leq (ax)^{\star_f} $.  Taking the supremum over all such $z \in \K(Q)$ we see that $ax^{\star_f} \leq (ax)^{\star_f}$.  By symmetry we also have $x^{\star_f}a \leq (xa)^{\star_f}$.  By Proposition \ref{closureprop2}, then, it follows that $\star_f$ is a nucleus.

To prove the second claim we may assume $\star = \star_f$ is finitary.  Then $Q^\star$ is a an algebraic semiprequantale (resp., algebraic near prequantale, algebraic prequantale) by Lemma \ref{finitetypeprop}(3) and Proposition \ref{CSTstar}.  Let $x,y \in \K(Q^\star)$, and suppose that $x \star y \leq \bigvee_{Q^\star} \Delta$ for some directed subset $\Delta$ of $Q^\star$ such that $\bigvee_{Q^\star} \Delta$ exists.  Then $\bigvee_Q \Delta$ exists, and $xy \leq \bigvee_{Q^\star} \Delta = (\bigvee_Q \Delta)^\star = \bigvee_Q \Delta$.  Therefore, since $xy \in \K(Q)$, we have $xy \leq z$, whence $x \star y \leq z$, for some $z \in \Delta$.  Thus $x \star y \in \K(Q^\star)$  
and $Q^\star$ is precoherent.  Finally, the third claim follows from Proposition \ref{finitaryclosure}.
\end{proof}




\begin{corollary}\label{vclospropcor}
Let $\star$ be a nucleus on a precoherent near prequantale $Q$.   Then the nucleus $t(a) = v(a)_f$ for any $a \in Q$ is the largest finitary nucleus $\star$ on $Q$ such that $a^\star = a$, and one has $\star_f = \bigwedge\{t(a): \ a \in Q^{\star_f}\}.$  More generally,  the nucleus $t(S) =  v(S)_f$ for any subset $S$ of $Q$ is the largest finitary nucleus $\star$ on $Q$ such that $S \subseteq Q^\star$, and one has $\star_f = t(Q^{\star_f})$.
\end{corollary}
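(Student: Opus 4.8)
The plan is to assemble the statement from three earlier results: Theorem \ref{klattice}, which for a nucleus $\sigma$ on a precoherent near prequantale $Q$ identifies $\sigma_f$ as the largest finitary nucleus on $Q$ smaller than $\sigma$; Proposition \ref{vclosureprop}, which guarantees that $v(a)$ exists for every $a\in Q$ and that $\sigma=\bigwedge\{v(a):a\in Q^\sigma\}$ for every $\sigma\in\N(Q)$; and the fact, established in Section \ref{sec:DC}, that $v(S)$ exists for every subset $S$ of a near prequantale. Note that a precoherent near prequantale is in particular an algebraic bounded complete join semilattice, so the operation $(-)_f$ of Proposition \ref{finitaryclosure} is defined on every closure operation on $Q$ and, by Theorem \ref{klattice}, $v(a)_f$ and $v(S)_f$ are finitary nuclei on $Q$.

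First I would treat $t(a)=v(a)_f$ for a fixed $a\in Q$. Since $v(a)_f$ is expansive and $v(a)_f\leq v(a)$, we get $a\leq a^{t(a)}\leq a^{v(a)}=a$, so $a^{t(a)}=a$. This monotonicity argument is the one step needing a moment's thought: because $a$ need not be compact, the identity $a^{v(a)_f}=a$ does not follow from Proposition \ref{finitaryclosure} alone, which only gives $x^{v(a)_f}=x^{v(a)}$ for compact $x$. If now $\tau$ is any finitary nucleus on $Q$ with $a^\tau=a$, then $\tau\leq v(a)$ by Proposition \ref{divprop}(1), so $\tau=\tau_f\leq v(a)_f=t(a)$ by Theorem \ref{klattice}. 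Hence $t(a)$ is the largest finitary nucleus on $Q$ fixing $a$.

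For the identity $\star_f=\bigwedge\{t(a):a\in Q^{\star_f}\}$, note first that this infimum exists since $\N(Q)$ is a complete lattice by Proposition \ref{CMC}. For each $a\in Q^{\star_f}$ one has $a^{\star_f}=a$, so $\star_f\leq t(a)$ by the previous paragraph, whence $\star_f\leq\bigwedge\{t(a):a\in Q^{\star_f}\}$. Conversely, $t(a)\leq v(a)$ for each $a$, so $\bigwedge\{t(a):a\in Q^{\star_f}\}\leq\bigwedge\{v(a):a\in Q^{\star_f}\}=\star_f$, the last equality being the second assertion of Proposition \ref{vclosureprop} applied to the nucleus $\star_f$. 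The two inequalities give equality.

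The ``more generally'' clause goes through verbatim with a subset $S$ in place of $\{a\}$: for each $a\in S$ the squeeze $a\leq a^{v(S)_f}\leq a^{v(S)}=a$ shows $S\subseteq Q^{t(S)}$, while any finitary nucleus $\tau$ with $S\subseteq Q^\tau$ satisfies $\tau\leq v(S)$ by Proposition \ref{divprop2}(1) and hence $\tau=\tau_f\leq v(S)_f=t(S)$ by Theorem \ref{klattice}. Finally, taking $S=Q^{\star_f}$: since $\star_f$ is a finitary nucleus with $Q^{\star_f}\subseteq Q^{\star_f}$, maximality of $t(Q^{\star_f})$ gives $\star_f\leq t(Q^{\star_f})$, while $Q^{\star_f}\subseteq Q^{t(Q^{\star_f})}$ gives $t(Q^{\star_f})\leq\star_f$ through the equivalence $\sigma_1\leq\sigma_2\iff Q^{\sigma_1}\supseteq Q^{\sigma_2}$ for closure operations. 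Essentially the entire argument is bookkeeping over Theorem \ref{klattice} and Proposition \ref{vclosureprop}; the only mildly delicate point is the non-compact case of $a^{v(a)_f}=a$, which the monotonicity squeeze handles.
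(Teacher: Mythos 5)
The paper leaves this corollary without an explicit proof, following its stated convention that omitted proofs are routine, and your argument is a correct, careful reconstruction of exactly the intended assembly from Theorem \ref{klattice}, Proposition \ref{vclosureprop}, and Proposition \ref{divprop2}(1). In particular you correctly flag and handle the one mildly nontrivial point, namely that $a^{v(a)_f}=a$ must be obtained by the expansive/monotone squeeze rather than by Proposition \ref{finitaryclosure}, since $a$ need not be compact.
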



The following result reveals a few subclasses of the precoherent semiprequantales.

\begin{proposition}\label{ucoherent}
Let $M$ be an ordered monoid.  Then $M$ is a $\U$-lattice (resp., near $\U$-lattice, semi-$\U$-lattice) with $1$ compact if and only if $M$ is a coherent prequantale (resp., coherent near prequantale,  coherent semiprequantale) such that every compact element of $M$ is the supremum of a subset of $\U(M)$; in that case the compact elements of $M$ are precisely the suprema of the finite subsets of $\U(M)$. 
\end{proposition}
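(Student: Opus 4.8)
The plan is to move back and forth between the ``global'' hypothesis that $\U(M)$ is a sup-spanning subset of $M$ and the ``local'' statement that every compact element of $M$ is a finite join of elements of $\U(M)$; the link is Lemma~\ref{joinlemma}, which replaces an arbitrary supremum witnessing compactness by a finite one, together with the fact that a prequantale, near prequantale, or semiprequantale is in particular a multiplicative semilattice, so that multiplication distributes over nonempty finite joins. Two facts will be used in both directions, valid as soon as $1$ is compact. Since $M$ is a monoid, $\U(M)=\Inv(M)$, so each $u\in\U(M)$ has a two-sided inverse $u^{-1}\in\U(M)$ and the translations $L_u,R_u$ are order automorphisms of $M$, hence preserve all existing suprema and infima. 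From this $\U(M)\subseteq\K(M)$: given $u\le\bigvee\Delta$ with $\Delta$ directed and $\bigvee\Delta$ existing, applying $L_{u^{-1}}$ gives $\bigvee(u^{-1}\Delta)=u^{-1}\bigvee\Delta\ge u^{-1}u=1$ with $u^{-1}\Delta$ directed, so compactness of $1$ yields $1\le u^{-1}d$ for some $d\in\Delta$, whence $u\le d$. With Lemma~\ref{joinlemma} it follows that every finite join of elements of $\U(M)$ is compact, which already gives one inclusion of the closing assertion.

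For the forward implication, let $M$ be a $\U$-lattice (resp.\ near $\U$-lattice, semi-$\U$-lattice) with $1$ compact. By Corollary~\ref{ulattices}, $M$ is a prequantale (resp.\ near prequantale, semiprequantale), hence a unital multiplicative semilattice. Setting $y=1$ in the sup-spanning identity gives $x=\bigvee\{u\in\U(M):u\le x\}$ for all $x\in M$, so $M$ is algebraic; and if $c$ is compact, Lemma~\ref{joinlemma} upgrades $c=\bigvee\{u\in\U(M):u\le c\}$ to $c=u_1\vee\cdots\vee u_n$ with each $u_i\in\U(M)$, the only exception being $c=\bigvee\emptyset=\bigwedge M$, which is harmless because $\U(M)$ sup-spanning forces $\bigwedge M$, when it exists, to be an annihilator (automatic for a prequantale, and otherwise given by the characterization of sup-spanning subsets preceding Proposition~\ref{closureprop2}). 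As $\U(M)$ is closed under products ($L_{uv}=L_u\circ L_v$, $R_{uv}=R_v\circ R_u$) and multiplication distributes over finite joins, the product of two compacts is again a finite join of elements of $\U(M)$, hence compact; so $M$ is precoherent, hence coherent, and every compact element is a finite supremum of elements of $\U(M)$.

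For the converse, let $M$ be a coherent prequantale (resp.\ near prequantale, semiprequantale) in which every compact element is the supremum of a subset of $\U(M)$. It suffices to prove that $\U(M)$ is sup-spanning, since then $M$ --- being complete (resp.\ near sup-complete, a bounded complete join semilattice) with $1$ compact --- is a $\U$-lattice (resp.\ near $\U$-lattice, semi-$\U$-lattice). Here I would invoke the characterization of sup-spanning subsets preceding Proposition~\ref{closureprop2}: the translations of $M$ are (near) sup-preserving and $M$ is unital, so $\U(M)$ is sup-spanning if and only if every element of $M$ is a supremum of a subset of $\U(M)$ and, in the near/semi cases, $\bigwedge M$ is an annihilator of $M$ whenever it exists and is not in $\U(M)$. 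The first condition is immediate from algebraicity and the hypothesis on compacts. For the second, if $\bigwedge M$ exists and $\bigwedge M\notin\U(M)$, then each order automorphism $L_u,R_u$ with $u\in\U(M)$ fixes $\bigwedge M$, so $u\cdot\bigwedge M=\bigwedge M=\bigwedge M\cdot u$; writing an arbitrary $y\in M$ as a supremum of compacts and each nonzero such compact as a nonempty supremum of elements of $\U(M)$, the near sup-preservation of multiplication by $\bigwedge M$ forces $\bigwedge M\cdot y=\bigwedge M=y\cdot\bigwedge M$. Once sup-spanning is known, the remaining inclusion of the closing assertion follows as in the forward implication. The step I expect to be the main obstacle is this annihilator verification --- ruling out, or neutralizing, a non-unit bottom element; the rest is routine manipulation with Lemma~\ref{joinlemma} and distributivity of multiplication over finite joins.
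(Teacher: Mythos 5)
Your proof is correct and follows the paper's intended route: the paper deduces the proposition ``easily'' from Lemma~\ref{1compact} ($1$ compact implies $\U(M)\subseteq\K(M)$), which is exactly the fact you re-derive at the outset and then combine with Lemma~\ref{joinlemma} and distributivity over finite joins, filling in the details the paper leaves to the reader. The one edge case you elide in the converse --- that $\bigwedge M\cdot\bigwedge M=\bigwedge M$ when verifying the annihilator condition for $y=\bigwedge M$, where there is no nonzero compact below $y$ to appeal to --- is immediate from $\bigwedge M\le 1$ and monotonicity of multiplication, so it is a cosmetic omission rather than a genuine gap.
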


Proposition \ref{ucoherent} follows easily from the following lemma, which generalizes the fact that every invertible fractional ideal of an integral domain is finitely generated.

\begin{lemma}\label{1compact}
The following are equivalent for any unital ordered magma $M$.  
\begin{enumerate}
\item $1 \in \K(M)$.
\item $\U(M) \subseteq \K(M)$.
\item $\U(M) \cap \K(M) \neq \emptyset$.
\end{enumerate}
\end{lemma}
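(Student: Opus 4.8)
The plan is to prove the cyclic chain $(1)\Rightarrow(2)\Rightarrow(3)\Rightarrow(1)$. The implication $(2)\Rightarrow(3)$ is immediate once one observes that $1\in\U(M)$ always: left and right multiplication by $1$ are the identity self-map of $M$, which is trivially a poset automorphism. Hence if $\U(M)\subseteq\K(M)$ then $1\in\U(M)\cap\K(M)$, so this set is nonempty. The content is therefore in the two remaining implications, and I would handle them by a single common device.

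First I would record two elementary facts about any $u\in\U(M)$: the left-multiplication map $L_u\colon x\longmapsto ux$ is an order-isomorphism of $M$ onto itself (by definition of $\U(M)$), it satisfies $L_u(1)=u\cdot 1=u$, and its inverse $L_u^{-1}$ is again an order-isomorphism of $M$. Consequently both $L_u$ and $L_u^{-1}$ preserve every supremum that exists in $M$ and carry directed subsets of $M$ to directed subsets of $M$ (the standard order-isomorphism arguments). I want to emphasize that I would work with the poset automorphism $L_u$ and its set-theoretic inverse $L_u^{-1}$, rather than with an algebraic two-sided inverse element $u^{-1}$, since $\U(M)$ can properly contain $\Inv(M)$ when $M$ is not associative; keeping track of this is really the only place where one must be careful.

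Then for $(1)\Rightarrow(2)$: assume $1\in\K(M)$ and fix $u\in\U(M)$. To show $u$ compact, suppose $u\leq\bigvee\Delta$ for a directed $\Delta$ whose supremum exists. Applying $L_u^{-1}$ gives $1=L_u^{-1}(u)\leq L_u^{-1}(\bigvee\Delta)=\bigvee L_u^{-1}(\Delta)$, with $L_u^{-1}(\Delta)$ directed; by compactness of $1$ there is $y\in\Delta$ with $1\leq L_u^{-1}(y)$, and applying $L_u$ yields $u=L_u(1)\leq y$. Hence $u\in\K(M)$. The implication $(3)\Rightarrow(1)$ is the mirror image: pick $u\in\U(M)\cap\K(M)$, suppose $1\leq\bigvee\Delta$ for a directed $\Delta$, apply $L_u$ to get $u=L_u(1)\leq\bigvee L_u(\Delta)$ with $L_u(\Delta)$ directed, use compactness of $u$ to find $y\in\Delta$ with $u\leq L_u(y)$, and apply $L_u^{-1}$ to conclude $1\leq y$, so $1\in\K(M)$.

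I do not anticipate a genuine obstacle: once the order-isomorphism properties of $L_u$ are isolated, both nontrivial implications are one-line transport arguments, and the statement follows. The only vigilance required is the non-associativity point noted above, namely phrasing everything in terms of the automorphism $L_u$ and $L_u^{-1}$ rather than in terms of an element $u^{-1}$.
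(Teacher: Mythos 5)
The paper omits the proof of this lemma (in Section~2 the author states that omitted proofs are meant to be routine), so there is no paper argument to compare against; your proposal is a correct reconstruction. The cyclic implication scheme is the natural one, the transport of compactness along the poset automorphism $L_u$ and its inverse is exactly the right device, and your insistence on working with the map $L_u^{-1}$ rather than an inverse element $u^{-1}$ is well placed: as the paper notes, $\Inv(M)\subseteq\U(M)$ can be a proper inclusion when $M$ is not a monoid, so in general no element $u^{-1}$ with $L_{u^{-1}}=L_u^{-1}$ need exist, and the argument must be phrased at the level of the self-map.
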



In the remainder of this section we apply the theory of nuclei to construct a representation of any precoherent near prequantale (resp., precoherent prequantale) as the ideal completion of some multiplicative semilattice (resp., prequantic semilattice), and conversely a representation of any multiplicative semilattice (resp., prequantic semilattice) as the ordered magma of the compact elements of some precoherent near prequantale (resp., precoherent prequantale).  These representations yield appropriate category equivalences.

A subset $I$ of a poset $S$ is said to be an {\it ideal} of $S$ if
$I$ is a directed downward closed subset of $S$.  For any $x \in S$, the set
$$\downarrow\!\! x= \{y \in S: \ y \leq x\}$$ is an ideal of $S$ called  the {\it principal ideal generated by $x$}.  The {\it ideal completion} $\Idl(S)$ of $S$ is the set of all ideals of $S$ partially ordered by the subset relation.  
If $S$ is a join semilattice, then, for any nonempty subset $X$ of $S$, we let $$\downarrow \!\! X = \{y \in S: \ y \leq \bigvee T \mbox{ for some finite nonempty } T \subseteq X\}.$$

\begin{lemma}\label{downarrowlemma}
Let $S$ be a join semilattice.
\begin{enumerate}
\item For any nonempty subset $X$ of $S$, the set $\downarrow\!\! X$ is the smallest ideal of $S$ containing $X$.
\item The operation $\downarrow$ is a finitary closure operation on the algebraic near sup-lattice $2^S \setmin \{\emptyset\}$ with $\operatorname{im}\!\! \downarrow \ = \Idl(S)$. 
\item If $S$ has a least element, then the operation $\downarrow$ extends uniquely to a finitary closure operation on the algebraic sup-lattice $2^S$ such that $\downarrow\!\! \emptyset = \bigwedge S$.
\end{enumerate}
\end{lemma}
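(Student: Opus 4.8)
The plan is to check each of the three parts directly against the definitions, leaning on the already-recorded fact that $2^S\setmin\{\emptyset\}$ is an algebraic near sup-lattice whose compact elements are exactly the finite nonempty subsets of $S$.

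For (1), I would verify the ideal axioms for $\downarrow\!\!X$ one at a time. The containment $X\subseteq\ \downarrow\!\!X$ comes from taking singletons $T=\{x\}$; downward closure is transitivity of $\leq$; and directedness uses the join semilattice structure of $S$: if $y_i\leq\bigvee T_i$ with $T_i\subseteq X$ finite and nonempty for $i=1,2$, then $T_1\cup T_2$ is finite, nonempty, and contained in $X$, and $y_1\vee y_2\leq\bigvee(T_1\cup T_2)$, so $y_1\vee y_2\in\ \downarrow\!\!X$ is an upper bound of $\{y_1,y_2\}$ lying in $\downarrow\!\!X$ (and induction then handles arbitrary finite subsets). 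Minimality follows by observing that any ideal $I$ with $I\supseteq X$ is closed under binary, hence finite nonempty, joins (a common upper bound in $I$ of $z_1,z_2$ dominates $z_1\vee z_2$, which lies in $I$ by downward closure), so $I$ contains $\bigvee T$ for every finite nonempty $T\subseteq X$ and, again by downward closure, all of $\downarrow\!\!X$.

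For (2), expansiveness and monotonicity of $\downarrow$ are immediate and $\downarrow\!\!X\neq\emptyset$ since $X\subseteq\ \downarrow\!\!X$. For idempotence I would first prove $\downarrow\!\!I=I$ for every ideal $I$ (the inclusion $\supseteq$ is expansiveness; $\subseteq$ holds because ideals are closed under finite nonempty joins and downward closed) and then combine this with part (1) to conclude $\downarrow\!\!(\downarrow\!\!X)=\ \downarrow\!\!X$. This argument simultaneously shows that the fixed points of $\downarrow$ are precisely the ideals, hence $\operatorname{im}\!\! \downarrow=\Idl(S)$. For finitariness I would use that suprema of directed (indeed arbitrary nonempty) families in $2^S\setmin\{\emptyset\}$ are unions, reducing the claim to the set equality $\downarrow\!\!(\bigcup\Delta)=\bigcup_{X\in\Delta}\ \downarrow\!\!X$ for directed $\Delta$: the inclusion $\supseteq$ is monotonicity, and for $\subseteq$, given $y\leq\bigvee T$ with $T\subseteq\bigcup\Delta$ finite and nonempty, the finitely many members of $\Delta$ that meet $T$ have a common upper bound $X\in\Delta$ by directedness, and $T\subseteq X$, so $y\in\ \downarrow\!\!X$. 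Alternatively, since $T^\downarrow=\ \downarrow\!\!(\bigvee T)$ for finite nonempty $T$, one has $\downarrow\!\!X=\bigvee\{T^\downarrow:\ T\in\K(2^S\setmin\{\emptyset\}),\ T\subseteq X\}$ for all $X$, so finitariness follows from Lemma~\ref{finitetypeprop}(1).

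For (3), since $S$ has a least element $\bigwedge S$, the poset $\Idl(S)$ has least element the principal ideal $\{\bigwedge S\}$, and the extension is defined by $\downarrow\!\!\emptyset=\{\bigwedge S\}$. Only the closure-operation conditions involving $\emptyset$ need re-examination: monotonicity holds since every nonempty ideal contains $\bigwedge S$; idempotence holds since $\{\bigwedge S\}$ is an ideal; and finitariness on all of $2^S$ goes exactly as in (2), a directed family now possibly containing $\emptyset$ but with $\emptyset$ contributing only $\{\bigwedge S\}$ to the union, which is absorbed by any nonempty member (or, again, via Lemma~\ref{finitetypeprop}(1), using that $\emptyset$ is compact in $2^S$ with $\emptyset^\downarrow=\{\bigwedge S\}$). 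For uniqueness, any closure operation $\star$ on $2^S$ extending $\downarrow$ has $\emptyset^\star\subseteq X^\star=\ \downarrow\!\!X$ for every nonempty $X$ by monotonicity, hence $\emptyset^\star\subseteq\bigcap_{\emptyset\neq X}\ \downarrow\!\!X=\{\bigwedge S\}$, so stipulating $\emptyset^\star=\{\bigwedge S\}$ pins down $\star$. I do not expect a genuine obstacle; the only points requiring a little care are the directedness step in the proof of finitariness and, in part (3), keeping the near sup-lattice $2^S\setmin\{\emptyset\}$ and the sup-lattice $2^S$ carefully apart.
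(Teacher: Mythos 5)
Your proof is correct and matches the paper's approach: the paper declares part (1), the closure-operation axioms, and part (3) routine and concentrates on finitariness via the identity $\downarrow\!\!X = \bigcup\{\downarrow\!\!T :\ T \subseteq X \text{ finite and nonempty}\}$, which is exactly your alternative argument invoking Lemma~\ref{finitetypeprop}(1). Your direct verification of the directed-union identity, together with the spelled-out details of parts (1) and (3), simply fills in what the paper leaves to the reader.
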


\begin{proof}
It is straightforward to show that $\downarrow \!\! X$ is the smallest ideal of $S$ containing $X$ and $\downarrow$ is a closure operation on $2^S \setmin\{\emptyset\}$.  To show that $\downarrow$ is finitary, we show that $\downarrow \!\! X = \bigcup\downarrow \!\! \{T:\ T \subseteq X \mbox{ finite and nonempty}\}$ for any $X \in 2^S \setmin\{\emptyset\}$.  Let $x \in \downarrow \!\! X$.  Then $x \leq \bigvee T$, where $T \subseteq X$ is finite and nonempty. But $\bigvee T \in \downarrow \!\! T$, so $x \in \downarrow \!\! T$.  This proves (1) and (2), and (3) is clear.
\end{proof}

The following result generalizes \cite[Proposition I-4.10]{gie}.

\begin{proposition}\label{algebraicprop2}
Let $L$ be an algebraic near sup-lattice and $S$ a join semilattice.
\begin{enumerate}
\item $\Idl(S)$ is an algebraic near sup-lattice.
\item $\K(L)$ is a sub join semilattice of $L$.
\item The map $S \longrightarrow \K(\Idl(S))$ acting by $x \longmapsto \downarrow\!\! x$ is a poset isomorphism.
\item The map $\Idl(\K(L)) \longrightarrow L$ acting by $I \longmapsto \bigvee I$ is a poset isomorphism with inverse acting by $x \longmapsto (\downarrow \!\! x) \cap \K(L)$.
\end{enumerate}
\end{proposition}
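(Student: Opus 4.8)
The plan is to prove the four statements in order, bootstrapping off the closure-operation machinery of the previous two sections. For (1) I would invoke Lemma \ref{downarrowlemma}(2): $\downarrow$ is a finitary closure operation on the algebraic near sup-lattice $2^S\setmin\{\emptyset\}$ whose image is $\Idl(S)$. Since $\K(2^S\setmin\{\emptyset\}) = \K(2^S)\setmin\{\emptyset\}$ is the set of finite nonempty subsets of $S$, Lemma \ref{finitetypeprop}(4) applies and yields at once that $\Idl(S) = (2^S\setmin\{\emptyset\})^{\downarrow}$ is an algebraic near sup-lattice with $\K(\Idl(S)) = (\K(2^S)\setmin\{\emptyset\})^{\downarrow} = \{\downarrow\!\! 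T:\ T\subseteq S \text{ finite nonempty}\}$. Because $S$ is a join semilattice, $\downarrow\!\! T = \downarrow\!\!(\bigvee T)$, so in fact $\K(\Idl(S)) = \{\downarrow\!\! x:\ x\in S\}$; I will reuse this identification in (3).

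For (2), note that $L$, being a near sup-lattice, is in particular a join semilattice, so Lemma \ref{joinlemma} gives that $\K(L)$ is closed under finite suprema; moreover $\K(L)$ is nonempty (were it empty, algebraicity would force $\bigvee L = \bigvee\emptyset$, i.e.\ $L$ trivial, but then its unique element is minimal, hence compact). Hence $\K(L)$ is a sub join semilattice of $L$. For (3), the map $x\longmapsto \downarrow\!\! x$ is order-preserving and satisfies $x\leq y \Leftrightarrow \downarrow\!\! x\subseteq\downarrow\!\! y$ (the forward direction is immediate; the converse follows since $x\in\downarrow\!\! x$), so it is a poset embedding, in particular injective; surjectivity onto $\K(\Idl(S))$ is precisely the identification established in (1). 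Thus $x\longmapsto\downarrow\!\! x$ is a poset isomorphism from $S$ onto $\K(\Idl(S))$.

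For (4), since $\K(L)$ is a join semilattice by (2), $\Idl(\K(L))$ is defined, and I would show that $I\longmapsto\bigvee I$ and $x\longmapsto(\downarrow\!\! x)\cap\K(L)$ are well-defined, mutually inverse, and monotone. Well-definedness: $\bigvee I$ exists because $I$ is a nonempty subset of the near sup-lattice $L$; and $(\downarrow\!\! x)\cap\K(L) = \{y\in\K(L):\ y\leq x\}$ is downward closed in $\K(L)$, directed because $\K(L)$ is a join semilattice, and nonempty because $x$ is algebraic, so it is an ideal of $\K(L)$. Mutual inverseness: $\bigvee\big((\downarrow\!\! x)\cap\K(L)\big) = \bigvee\{y\in\K(L):\ y\leq x\} = x$ by algebraicity of $L$; and for an ideal $I$ of $\K(L)$ one trivially has $I\subseteq(\downarrow\!\!\bigvee I)\cap\K(L)$, while if $y\in\K(L)$ and $y\leq\bigvee I$ then compactness of $y$ together with directedness of $I$ gives $y\leq z$ for some $z\in I$, hence $y\in I$ since $I$ is downward closed, so equality holds. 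Both maps are plainly inclusion/order preserving, and a monotone bijection with monotone inverse is a poset isomorphism.

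The proof involves no single hard step; the main point requiring care is keeping straight the ``near'' (bottomless) versions throughout — in particular the nonemptiness arguments for $\K(L)$ and for $(\downarrow\!\! x)\cap\K(L)$, which in the classical complete-lattice statement \cite[Proposition I-4.10]{gie} are automatic from the least element — and using the near sup-lattice clause of Lemma \ref{finitetypeprop}(4), rather than its complete-lattice analogue, to obtain the algebraicity of $\Idl(S)$.
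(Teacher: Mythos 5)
Your proposal is correct and follows essentially the same route as the paper: Lemma \ref{downarrowlemma}(2) plus Lemma \ref{finitetypeprop}(4) for (1) and (3), Lemma \ref{joinlemma} for (2), and the algebraicity/compactness argument for the bijection in (4). The only difference is that you spell out the nonemptiness checks (that $\K(L)\neq\emptyset$ and that $(\downarrow\!\! x)\cap\K(L)$ is a nonempty directed set) which the paper leaves implicit; those checks are correct and worth recording given the ``near'' (bottomless) setting.
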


\begin{proof}
By Lemma \ref{downarrowlemma} the map $\downarrow$ is a finitary nucleus on the algebraic near sup-lattice $2^S \setmin \{\emptyset \}$ with $\operatorname{im} \downarrow \ = \Idl(S)$.  By Lemma \ref{finitetypeprop}, then, it follows that $\Idl(S)$ is an algebraic near sup-lattice with $\K(\Idl(S)) = \downarrow \!\!(\K(2^S\setmin \{\emptyset\})) = \{\downarrow\!\! x: x \in S\}$.  This implies (1) and (3).  Next, $\K(L)$ is nonempty and therefore a sub join semilattice of $L$ by Lemma \ref{joinlemma}.  This proves (2).  Finally, the map $f: I \longmapsto \bigvee I$ in (4) is well-defined because $L$ is a near sup-lattice, the map $f$ is clearly order-preserving, and the map $x \longmapsto (\downarrow\!\! x) \cap \K(L)$ is an order-preserving inverse to $f$ because $L$ is algebraic and $\K(L)$ is a join semilattice.
\end{proof}

Theorems \ref{maintheorem} and \ref{maintheorem2} below generalize \cite[Proposition 4.1.4]{ros} and are analogues of Proposition \ref{algebraicprop2} for precoherent near prequantales and precoherent prequantales, respectively.  First, we show that the ideal completion of a multiplicative semilattice (resp., prequantic semilattice) is a precoherent near prequantale (resp., precoherent prequantale) under the operation of {\it $\downarrow$-multiplication}, defined by  $(I,J) \longmapsto \downarrow\!\! (IJ)$.

\begin{lemma}\label{replemma} One has the following.
\begin{enumerate}
\item Let $M$ be a multiplicative semilattice.  The operation $\downarrow: X \longmapsto \downarrow\!\! X$ is a finitary nucleus on the precoherent near prequantale $2^M \setmin\{\emptyset\}$, one has $\Idl(M) = \downarrow\!\! (2^M \setmin\{\emptyset\})$, and $\Idl(M)$ is a precoherent near prequantale under $\downarrow$-multiplication.
\item Let $M$ be a prequantic semilattice.  The operation $\downarrow: X \longmapsto \downarrow\!\! X$ is a finitary nucleus on the precoherent prequantale $2^M$, one has $\Idl(M) = \downarrow\!\! (2^M)$, and $\Idl(M)$ is a precoherent prequantale under $\downarrow$-multiplication.
\end{enumerate}
\end{lemma}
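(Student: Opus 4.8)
The plan is to reduce the statement entirely to machinery already established. By Lemma~\ref{downarrowlemma}, the operation $\downarrow$ is already known to be a finitary closure operation on $2^M\setmin\{\emptyset\}$ (resp.\ on $2^M$), with image $\Idl(M)$; so the only points left to verify are that $\downarrow$ is a \emph{nucleus} on $2^M\setmin\{\emptyset\}$ (resp.\ on $2^M$), and that the resulting quotient $\Idl(M)$, equipped with $\downarrow$-multiplication, is a precoherent near prequantale (resp.\ precoherent prequantale). Granting the first, the second is immediate: the prequantale $2^M$ and the near prequantale $2^M\setmin\{\emptyset\}$ are precoherent for every magma $M$, and $\downarrow$ is a finitary nucleus, so $\downarrow_f=\downarrow$ and Theorem~\ref{klattice} applies directly, identifying $\bigl(2^M\setmin\{\emptyset\}\bigr)^{\downarrow}=\Idl(M)$ (resp.\ $(2^M)^{\downarrow}=\Idl(M)$), under $\downarrow$-multiplication, as a precoherent near prequantale (resp.\ precoherent prequantale).

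For the nucleus property I would argue directly. Since $\downarrow$ is already a closure operation, it suffices to check that $(\downarrow\!\! X)(\downarrow\!\! Y)\subseteq\,\downarrow\!\!(XY)$ for all $X,Y\subseteq M$ --- nonempty in the near prequantale case --- where the left-hand product is taken in $2^M$. So let $u\in\,\downarrow\!\! X$ and $v\in\,\downarrow\!\! Y$, say $u\le\bigvee S$ and $v\le\bigvee T$ with $S\subseteq X$ and $T\subseteq Y$ finite and nonempty. Then $uv\le\bigl(\bigvee S\bigr)\bigl(\bigvee T\bigr)$, and expanding the right-hand side with the multiplicative-semilattice distributive laws (applied finitely many times) gives $\bigl(\bigvee S\bigr)\bigl(\bigvee T\bigr)=\bigvee\{st:s\in S,\ t\in T\}$, a supremum of a finite nonempty subset of $XY$; hence $uv\in\,\downarrow\!\!(XY)$. (Alternatively, this is exactly the hypothesis of Proposition~\ref{closureprop2} for the sup-spanning subset of singletons of $2^M$.) In the prequantale case one must also allow $X$ or $Y$ empty: then $XY=\emptyset$, so $\downarrow\!\!(XY)=\,\downarrow\!\!\emptyset$ is the least ideal $\{\bigwedge M\}$, while $(\downarrow\!\! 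X)(\downarrow\!\! Y)$ likewise reduces to $\{\bigwedge M\}$ because the annihilator $\bigwedge M$ of the prequantic semilattice $M$ absorbs all products; so the inclusion holds there too.

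The main obstacle --- in fact the only step that is not pure bookkeeping --- is this last computation, and it is precisely where the hypothesis enters: the multiplicative-semilattice axiom is exactly what yields $\bigl(\bigvee S\bigr)\bigl(\bigvee T\bigr)=\bigvee(ST)$ for finite nonempty $S,T$, and the prequantic case additionally uses that the annihilator is absorbing. Everything else is bookkeeping: matching $2^M\setmin\{\emptyset\}$ (with $M$ viewed as a join semilattice) against Lemma~\ref{downarrowlemma}(2) and $2^M$ (with $M$ a join semilattice having a least element) against Lemma~\ref{downarrowlemma}(3), and noting that in the latter case the extra value $\downarrow\!\!\emptyset=\{\bigwedge M\}$ is itself an ideal of $M$, so that $\operatorname{im}\downarrow=\Idl(M)$ in both cases.
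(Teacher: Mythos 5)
Your proof is correct and follows essentially the same route as the paper: reduce to Lemma~\ref{downarrowlemma} for the closure/finitary parts, verify $(\downarrow\!\! X)(\downarrow\!\! Y) \subseteq\, \downarrow\!\!(XY)$ by choosing finite nonempty $S \subseteq X$, $T \subseteq Y$ with $u \le \bigvee S$, $v \le \bigvee T$ and using $(\bigvee S)(\bigvee T) = \bigvee(ST)$ (the multiplicative semilattice axiom, i.e.\ Proposition~\ref{semiquantle}(4)), and then invoke Theorem~\ref{klattice}. The only difference is that you spell out the empty-set case in part~(2), which the paper dismisses as ``similar''; your handling there (reading $\downarrow\!\!\emptyset = \{\bigwedge M\}$ and using that the annihilator absorbs products) is the right one.
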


\begin{proof}
We prove statement (1).  The proof of (2) is similar.  Let $X,Y \in 2^M \setmin\{\emptyset\}$, and let $z \in \downarrow\!\! X$ and $w \in \downarrow\!\! Y$.  Then $z \leq \bigvee S$ and $w \leq \bigvee T$ for some finite nonempty sets $S \subseteq X$ and $T \subseteq Y$.  Therefore $zw \leq \bigvee S \bigvee T = \bigvee (ST)$, where $ST$ is a finite nonempty subset of $XY$, so $zw \in \downarrow\!\! (XY)$.  Thus we have $\downarrow\!\! X \downarrow\!\! Y \subseteq \downarrow\!\! (XY)$, so $\downarrow$ is a finitary nucleus on $2^M \setmin\{\emptyset\}$ by Lemma \ref{downarrowlemma}.  By Theorem \ref{klattice}, then, $\Idl(M)$ is a precoherent near prequantale.
\end{proof}

The morphisms in the {\it category of precoherent near prequantales} (resp., {\it category of precoherent prequantales}) are morphisms $f: Q \longrightarrow Q'$ of near prequantales (resp., prequantales), with $Q$ and $Q'$ precoherent, such that $f(\K(Q)) \subseteq \K(Q')$.

\begin{theorem}\label{maintheorem}
Let $Q$ be a precoherent near prequantale and let $M$ be a multiplicative semilattice.
\begin{enumerate}
\item $\Idl(M)$ is a precoherent near prequantale under $\downarrow$-multiplication.
\item $\K(Q)$ is a multiplicative semilattice.
\item The map $M \longrightarrow \K(\Idl(M))$ acting by $x \longmapsto \downarrow\!\! x$ is an isomorphism of ordered magmas.
\item The map $\Idl(\K(Q)) \longrightarrow Q$ acting by $I \longmapsto \bigvee I$ is an isomorphism of ordered magmas.
\item If $f: Q \longrightarrow Q'$ is a morphism of precoherent near prequantales, then the map $\K(f): \K(Q) \longrightarrow \K(Q')$ given by $\K(f)(x) = f(x)$ for all $x \in \K(Q)$ is a morphism of multiplicative semilattices.
\item If $g: M \longrightarrow M'$ is a morphism of multiplicative semilattices, then the map $\Idl(g): \Idl(M) \longrightarrow \Idl(M')$ given by $\Idl(g)(I) = \downarrow\!\! (g(I))$ for all $I \in \Idl(M)$ is a morphism of precoherent near prequantales.
\item The associations $\K$ and $\Idl$ are functorial and provide an equivalence of categories between the category of precoherent near prequantales and the category of multiplicative semilattices.
\end{enumerate}
\end{theorem}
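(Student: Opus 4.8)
The plan is to build everything on top of Proposition \ref{algebraicprop2}, which already supplies the poset-theoretic skeleton, and Lemma \ref{replemma}, which gives statement (1) directly. Statement (2) then follows by noting that $\K(Q)$ is a sub join semilattice of $Q$ by Proposition \ref{algebraicprop2}(2), is a submagma of $Q$ by precoherence, and inherits the distributive laws $a(x\vee y)=ax\vee ay$ and $(x\vee y)a=xa\vee ya$ from $Q$: since $Q$ is a near prequantale these hold for the finite nonempty supremum $x\vee y$, and that supremum agrees with the one computed in $\K(Q)$ (and the products $ax,ay$ and their join lie in $\K(Q)$). Hence $\K(Q)$ is a multiplicative semilattice.

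For statements (3) and (4) I would upgrade the poset isomorphisms of Proposition \ref{algebraicprop2}(3),(4) to isomorphisms of \emph{ordered magmas}; since an order isomorphism whose underlying map is a magma homomorphism automatically has a magma-homomorphism inverse, it suffices to check the forward map is multiplicative. For (3), $x\longmapsto\ \downarrow\!\! x$ is, as a magma homomorphism, the composite of the singleton map $x\longmapsto\{x\}$ (a magma homomorphism $M\to 2^M\setmin\{\emptyset\}$) with the nucleus corestriction $\downarrow\colon 2^M\setmin\{\emptyset\}\to\Idl(M)$ of Lemma \ref{replemma}(1), which by the remark following Lemma \ref{starlemma} is a magma homomorphism into $\Idl(M)$ equipped with $\downarrow$-multiplication; equivalently, $\downarrow\!\!(xy)=\ \downarrow\!\!((\downarrow\!\! x)(\downarrow\!\! y))$, because $z\le x$ and $w\le y$ force $zw\le xy$ while $xy\in(\downarrow\!\! x)(\downarrow\!\! y)$. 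For (4), $I\longmapsto\bigvee I$ satisfies $\bigvee(\downarrow\!\!(IJ))=\bigvee(IJ)=\bigvee I\,\bigvee J$, where the first equality holds because $\downarrow$ does not change suprema and the second is the defining near-prequantale identity for the nonempty sets $I,J$ (Proposition \ref{nearprequantales}(7)).

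Statements (5) and (6) are verifications that the constructions respect morphisms. For (5), $\K(f)$ is well-defined by the hypothesis $f(\K(Q))\subseteq\K(Q')$, is multiplicative because $f$ is, and preserves finite nonempty suprema because $f$ is near sup-preserving and these suprema agree with those in $\K(Q)$ and $\K(Q')$. For (6), $\Idl(g)(I)=\ \downarrow\!\!(g(I))$ is an ideal by Lemma \ref{downarrowlemma}(1) (as $g(I)$ is nonempty); the map is order-preserving; it is near sup-preserving via the identity $\downarrow\!\!\big(g(\downarrow\!\!(\bigcup\mathcal I))\big)=\ \downarrow\!\!\big(\bigcup_{I\in\mathcal I}\downarrow\!\!(g(I))\big)$, which holds because $g$ preserves finite suprema; it is a magma homomorphism for $\downarrow$-multiplication, using $g(IJ)=g(I)g(J)$ together again with preservation of finite suprema; and it sends the compact element $\downarrow\!\! x$ to $\downarrow\!\!(g(\downarrow\!\! x))=\ \downarrow\!\!(g(x))$, which is compact. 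Thus $\Idl(g)$ is a morphism of precoherent near prequantales.

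Finally, for (7), functoriality of $\K$ is immediate from the definitions, and functoriality of $\Idl$ reduces to $\Idl(\id_M)(I)=\ \downarrow\!\! I=I$ and $\Idl(g)\circ\Idl(g')=\Idl(g\circ g')$, the latter being the identity $\downarrow\!\!(g(\downarrow\!\!(g'(I))))=\ \downarrow\!\!(g(g'(I)))$, once more a consequence of $g$ preserving finite suprema. The isomorphisms of (3) and (4) furnish the unit and counit, and their naturality is the pair of computations $\K(\Idl(g))(\downarrow\!\! x)=\ \downarrow\!\!(g(x))=(\downarrow\circ\, g)(x)$ and $f(\bigvee I)=\bigvee f(I)=\bigvee(\downarrow\!\!(f(I)))$. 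I expect no single step to be genuinely difficult; the main obstacle is purely organizational, namely keeping straight the three overlapping ``downward-closure / supremum'' operations — in $2^M\setmin\{\emptyset\}$, inside $\K(Q)$, and in passing between a near prequantale and its ideal completion — and repeatedly exploiting that $\downarrow$ preserves suprema and that morphisms preserve finite nonempty suprema, so that $\downarrow\!\!(g(X))$, $\downarrow\!\!(g(\downarrow\!\! X))$, and $g(\downarrow\!\! X)$ all have the same downward closure.
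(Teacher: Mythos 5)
Your proof is correct and follows essentially the same route as the paper's: both reduce (3) to viewing $x\mapsto\;\downarrow\!\! x$ as the composition $M\to 2^M\setmin\{\emptyset\}\to\Idl(M)$ and invoking Proposition \ref{algebraicprop2}(3), both obtain (4) from the near-prequantale identity $\bigvee(XY)=\bigvee X\bigvee Y$ together with the poset isomorphism of Proposition \ref{algebraicprop2}(4), and both treat (5)--(7) as routine verifications via the key identity $\downarrow\!\!(g(\downarrow\!\! X))=\;\downarrow\!\!(g(X))$. Your version simply spells out details (e.g., the distributivity inherited by $\K(Q)$ in (2), the nucleus computation $\downarrow\!\!(\downarrow\!\! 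X\cdot\downarrow\!\! Y)=\;\downarrow\!\!(XY)$ in (6), and the unit/counit naturality checks in (7)) that the paper leaves implicit as "clear" or "easily shown."
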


\begin{proof}
Statement (1) follows from Lemma \ref{replemma}, and (2) is clear.  To prove (3), note that the map $M \longrightarrow \K(\Idl(M)) \subseteq \Idl(M)$ is the composition $M \longrightarrow  2^M\setmin\{\emptyset\} \longrightarrow \ \downarrow\!\! (2^M \setmin \{\emptyset\}) = \Idl(M)$ of magma homomorphisms and is therefore a magma homomorphism.  Thus it is an isomorphism of ordered magmas, by Proposition \ref{algebraicprop2}(3).  The map in statement (4) is a magma homomorphism by Proposition \ref{nearprequantales} and therefore is an isomorphism of ordered magmas by Proposition \ref{algebraicprop2}(4).  Statement (5) is clear.  To prove (6), first note that $g(\downarrow\!\! X) \subseteq \ \downarrow\!\! (g(X))$ for any nonempty subset $X$ of $M$.   The map $\Idl(g)$ is then easily shown to be a morphism of near prequantales, and since $\Idl(g)(\downarrow\!\! x) = \downarrow\!\! (g(x))$ for all $x \in Q$, it follows from (3) that $\Idl(g)$ is a morphism of precoherent near prequantales.  Finally, (7) follows from (1) through (6).
\end{proof}

\begin{corollary} The following are equivalent for any ordered magma $M$.
\begin{enumerate}
\item $M$ is a precoherent near prequantale.
\item $M$ is isomorphic to the ideal completion $\Idl(N)$ under $\downarrow$-multiplication of some multiplicative semilattice $N$.
\item $\K(M)$ is a sub multiplicative semilattice of $M$ and $M$ is isomorphic to the ordered magma $\Idl(\K(M))$ under $\downarrow$-multiplication.
\item $M$ is isomorphic to $(2^N \setmin\{\emptyset\})^\star$ for some multiplicative semilattice $N$ and some finitary nucleus $\star$ on $2^N \setmin\{\emptyset\}$.
\end{enumerate}
\end{corollary}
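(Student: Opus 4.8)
The plan is to establish the cycle of implications $(1)\Rightarrow(3)\Rightarrow(2)\Rightarrow(4)\Rightarrow(1)$, each step being essentially a quotation of a result already proved in this section. For $(1)\Rightarrow(3)$, assume $M$ is a precoherent near prequantale. Then $\K(M)$ is a submagma of $M$ by the definition of precoherence, and it is a multiplicative semilattice by Theorem \ref{maintheorem}(2), so $\K(M)$ is a sub multiplicative semilattice of $M$; Theorem \ref{maintheorem}(4) then supplies the isomorphism of ordered magmas $\Idl(\K(M)) \longrightarrow M$, $I \longmapsto \bigvee I$, which is exactly $(3)$. For $(3)\Rightarrow(2)$, take $N = \K(M)$: by the first half of $(3)$ this is a multiplicative semilattice, and the isomorphism $M \cong \Idl(\K(M)) = \Idl(N)$ under $\downarrow$-multiplication furnished by the second half of $(3)$ is precisely what $(2)$ asserts.

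For $(2)\Rightarrow(4)$, suppose $M$ is isomorphic to $\Idl(N)$ under $\downarrow$-multiplication for some multiplicative semilattice $N$. By Lemma \ref{replemma}(1) the operation $\downarrow$ is a finitary nucleus on the precoherent near prequantale $2^N\setmin\{\emptyset\}$ and $\Idl(N) = \downarrow\!\!(2^N\setmin\{\emptyset\}) = (2^N\setmin\{\emptyset\})^{\downarrow}$ as ordered magmas, so $(4)$ holds with $\star = \downarrow$. For $(4)\Rightarrow(1)$, suppose $M$ is isomorphic to $(2^N\setmin\{\emptyset\})^\star$ for a multiplicative semilattice $N$ and a finitary nucleus $\star$ on $2^N\setmin\{\emptyset\}$. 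Since $2^N\setmin\{\emptyset\}$ is a precoherent near prequantale, Theorem \ref{klattice} shows that $(2^N\setmin\{\emptyset\})^{\star_f}$ is a precoherent near prequantale; but $\star_f = \star$ because $\star$ is already finitary, so $M$ is isomorphic to a precoherent near prequantale, and hence is one itself.

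There is no genuine obstacle here: the corollary is a repackaging of Theorems \ref{maintheorem} and \ref{klattice} together with Lemma \ref{replemma}. The only points meriting a word of care are the routine observations that $2^N\setmin\{\emptyset\}$ is precoherent for any magma $N$ (its compact elements are the finite nonempty subsets of $N$, which form a submagma, and every nonempty subset is the supremum of its finite nonempty subsets) and that the class of precoherent near prequantales is closed under isomorphism of ordered magmas (such an isomorphism preserves nonempty suprema, the distributivity of multiplication over them, compactness, and algebraicity); both are immediate.
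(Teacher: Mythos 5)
Your proposal is correct. The paper omits the proof of this corollary (as the author indicates, omitted proofs are meant to be routine reconstructions), and your cycle $(1)\Rightarrow(3)\Rightarrow(2)\Rightarrow(4)\Rightarrow(1)$ is the natural assembly of Theorem \ref{maintheorem}, Lemma \ref{replemma}, and Theorem \ref{klattice}, with the right care taken on the two side points (precoherence of $2^N\setmin\{\emptyset\}$ and invariance of the class under ordered-magma isomorphism).
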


\begin{corollary} An ordered magma $M$ is a multiplicative semilattice if and only if $M$ is isomorphic to $\K(Q)$ for some precoherent near prequantale $Q$.
\end{corollary}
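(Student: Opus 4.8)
The plan is to derive this corollary directly from Theorem \ref{maintheorem}, of which it is simply the object-level shadow of the category equivalence. For the ``only if'' direction, suppose $M$ is a multiplicative semilattice. I would take $Q = \Idl(M)$, the ideal completion of $M$ equipped with $\downarrow$-multiplication. By Theorem \ref{maintheorem}(1) this $Q$ is a precoherent near prequantale, and by Theorem \ref{maintheorem}(3) the map $x \longmapsto \ \downarrow\!\! x$ is an isomorphism of ordered magmas from $M$ onto $\K(\Idl(M)) = \K(Q)$. Hence $M$ is isomorphic to $\K(Q)$ for a precoherent near prequantale $Q$, as required.

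For the ``if'' direction, suppose $M \cong \K(Q)$ as ordered magmas for some precoherent near prequantale $Q$. By Theorem \ref{maintheorem}(2), $\K(Q)$ is a multiplicative semilattice. It then remains to observe that being a multiplicative semilattice is invariant under isomorphism in the category of ordered magmas: the defining conditions---that the underlying poset be a join semilattice and that $a(x \vee y) = ax \vee ay$ and $(x \vee y)a = xa \vee ya$ for all $a,x,y$---are phrased entirely in terms of the order relation and the multiplication, both of which are preserved and reflected by an ordered-magma isomorphism. Therefore $M$ is a multiplicative semilattice, completing the equivalence.

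Since the substantive content---that $\Idl$ and $\K$ are mutually inverse up to isomorphism on objects---is already supplied by Theorem \ref{maintheorem}, there is no genuine obstacle here; the only point needing (routine) verification is the isomorphism-invariance of the multiplicative-semilattice axioms invoked in the second paragraph. Alternatively, one could dispense with even that check by appealing to Theorem \ref{maintheorem}(7) and the general fact that an equivalence of categories restricts to a bijection between isomorphism classes of objects.
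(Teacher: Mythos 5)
Your proof is correct and is exactly the routine deduction from Theorem \ref{maintheorem} that the paper intends (the paper omits the proof as routine). Both directions are handled as the paper would have them: the ``only if'' via parts (1) and (3) of Theorem \ref{maintheorem} applied to $Q=\Idl(M)$, and the ``if'' via part (2) together with the evident isomorphism-invariance of the multiplicative-semilattice axioms.
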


Similar proofs of the above results yield the following.

\begin{theorem}\label{maintheorem2}
One has the following.
\begin{enumerate}
\item If $f: Q \longrightarrow Q'$ is a morphism of precoherent prequantales, then the map
$\K(f): \K(Q) \longrightarrow \K(Q')$ given by $\K(f)(x) = f(x)$ for all $x \in \K(Q)$ is a morphism of prequantic semilattices.
\item If $g: M \longrightarrow M'$ is a morphism of prequantic semilattices, then the map $\Idl(g): \Idl(M) \longrightarrow \Idl(M')$ given by $\Idl(g)(I) = \downarrow\!\! (g(I))$ for all $I \in \Idl(M)$ is a morphism of precoherent prequantales.
\item The associations $\K$ and $\Idl$ are functorial and provide an equivalence of categories between the category of precoherent prequantales and the category of prequantic semilattices.
\end{enumerate}
\end{theorem}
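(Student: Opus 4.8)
The plan is to follow the proof of Theorem \ref{maintheorem} essentially verbatim, tracking throughout the single extra piece of structure that distinguishes the present statement from that one: the annihilator $0 = \bigwedge Q = \bigvee\emptyset$, equivalently the empty supremum. Recall that a prequantic semilattice is a multiplicative semilattice with annihilator, that a precoherent prequantale is a precoherent near prequantale with annihilator, and that in each case a morphism is precisely a morphism of the corresponding ``near'' category (of multiplicative semilattices, resp.\ of precoherent near prequantales) that additionally preserves the empty supremum.

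First I would record the object-level statements. If $Q$ is a precoherent prequantale, then its annihilator $0 = \bigvee_Q\emptyset$ is the least element of $Q$, hence a minimal element, hence compact, so $0 \in \K(Q)$; since $\K(Q)$ is already a multiplicative semilattice by Theorem \ref{maintheorem}(2), it is a prequantic semilattice with annihilator $0$. Conversely, if $M$ is a prequantic semilattice, then Lemma \ref{replemma}(2) already gives that $\Idl(M)$ is a precoherent prequantale under $\downarrow$-multiplication, with annihilator $\downarrow\!\!\emptyset = \{0_M\} = \downarrow\!\! 0_M$, the principal ideal generated by the annihilator of $M$.

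Next, the functoriality statements. For (1): given a morphism $f: Q \to Q'$ of precoherent prequantales, $\K(f)$ is a morphism of multiplicative semilattices by Theorem \ref{maintheorem}(5) (applied to $f$ viewed as a morphism of precoherent near prequantales), and since $f$ is sup-preserving it maps $\bigvee_Q\emptyset$ to $\bigvee_{Q'}\emptyset$, i.e.\ $\K(f)(0) = 0'$; hence $\K(f)$ preserves the empty supremum and is a morphism of prequantic semilattices. For (2): given a morphism $g: M \to M'$ of prequantic semilattices, $\Idl(g)$ is a morphism of precoherent near prequantales by Theorem \ref{maintheorem}(6), and it preserves the empty supremum because $\Idl(g)(\{0_M\}) = \downarrow\!\!(g(\{0_M\})) = \downarrow\!\!\{g(0_M)\} = \downarrow\!\!\{0_{M'}\} = \{0_{M'}\}$, using that $g$ preserves the annihilator; hence $\Idl(g)$ is a morphism of precoherent prequantales.

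Finally, for (3): the poset isomorphism $M \to \K(\Idl(M))$, $x \mapsto\, \downarrow\!\! x$ of Proposition \ref{algebraicprop2}(3) is a magma homomorphism by the argument in the proof of Theorem \ref{maintheorem}(3) (it is the composition of magma homomorphisms $M \to 2^M \to\, \downarrow\!\!(2^M) = \Idl(M)$), and the poset isomorphism $\Idl(\K(Q)) \to Q$, $I \mapsto \bigvee I$ of Proposition \ref{algebraicprop2}(4) is a magma homomorphism by the argument in the proof of Theorem \ref{maintheorem}(4), now invoking Proposition \ref{quantales} in place of Proposition \ref{nearprequantales}; both maps carry least element to least element ($\downarrow\!\! 0_M = \{0_M\}$ and $\bigvee\{0_Q\} = 0_Q$), so they are isomorphisms of prequantales. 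Their naturality is inherited from the near-prequantale case, so $\K$ and $\Idl$ restrict to functors that are quasi-inverse equivalences between the category of precoherent prequantales and the category of prequantic semilattices. The only real obstacle is bookkeeping: one must consistently work over $2^M$ rather than $2^M \setmin\{\emptyset\}$, note that $\Idl(M) = \downarrow\!\!(2^M)$ still excludes the empty set since ideals are nonempty, and keep the annihilator $\downarrow\!\!\emptyset = \{0_M\}$ tracked correctly through $\K$ and $\Idl$; no new idea beyond Theorem \ref{maintheorem} is required.
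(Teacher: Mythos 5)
Your proposal is correct and follows exactly the approach the paper intends: the paper disposes of Theorem \ref{maintheorem2} with the single line ``Similar proofs of the above results yield the following,'' and your write-up is a careful unfolding of what ``similar'' means, tracking the annihilator/empty supremum through $\K$ (the least element is minimal, hence compact) and through $\Idl$ (via $\downarrow\!\!\emptyset = \{0_M\}$ and Lemma \ref{replemma}(2)). Nothing is missing, and the adjustments you flag (working over $2^M$ instead of $2^M \setminus \{\emptyset\}$, citing Proposition \ref{quantales} where the near version was used) are the right ones.
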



\begin{corollary} The following are equivalent for any ordered magma $M$.
\begin{enumerate}
\item $M$ is a precoherent prequantale.
\item $M$ is isomorphic to the ideal completion $\Idl(N)$ under $\downarrow$-multiplication of some prequantic semilattice $N$.
\item $\K(M)$ is a sub prequantic semilattice of $M$ and $M$ is isomorphic to the ordered magma $\Idl(\K(M))$ under $\downarrow$-multiplication.
\item $M$ is isomorphic to $(2^N)^\star$ for some prequantic semilattice $N$ and some finitary nucleus $\star$ on $2^N$.
\end{enumerate}
\end{corollary}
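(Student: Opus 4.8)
The plan is to mirror the proof of the preceding corollary for precoherent near prequantales: establish the cycle $(1)\Rightarrow(3)\Rightarrow(2)\Rightarrow(1)$ and then the auxiliary implications $(2)\Rightarrow(4)\Rightarrow(1)$. The ingredients are Theorem~\ref{maintheorem2} (together with the explicit isomorphisms underlying it, obtained exactly as in Theorem~\ref{maintheorem}), Lemma~\ref{replemma}(2), Theorem~\ref{klattice}, Lemma~\ref{joinlemma}, and Proposition~\ref{algebraicprop2}(4).

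For $(1)\Rightarrow(3)$: assume $M$ is a precoherent prequantale. First I would verify that $\K(M)$, with the operations induced from $M$, is a sub prequantic semilattice of $M$. By Lemma~\ref{joinlemma} the set $\K(M)$ is closed under finite suprema, hence is a sub join semilattice of $M$, and it contains $\bigvee\emptyset=\bigwedge M$, which is compact as the least element of $M$; precoherence makes $\K(M)$ closed under multiplication; and the distributive laws, together with the fact that $\bigwedge M$ annihilates, are inherited from $M$. Thus $\K(M)$ is a multiplicative semilattice with annihilator, i.e.\ a prequantic semilattice. Then $\Idl(\K(M))\longrightarrow M$, $I\longmapsto\bigvee I$, is an isomorphism of ordered magmas: by Proposition~\ref{algebraicprop2}(4) it is a poset isomorphism with inverse $x\longmapsto(\downarrow\!\! x)\cap\K(M)$, and it is a magma homomorphism since $\bigvee(\downarrow\!\!(IJ))=\bigvee(IJ)=\bigvee I\bigvee J$ by Proposition~\ref{quantales}. (This is the prequantale analogue of Theorem~\ref{maintheorem}(3)--(4), proved verbatim.) The implication $(3)\Rightarrow(2)$ is immediate, taking $N=\K(M)$.

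For $(2)\Rightarrow(1)$: if $M$ is isomorphic to $\Idl(N)$ under $\downarrow$-multiplication for some prequantic semilattice $N$, then $\Idl(N)$ is a precoherent prequantale by Lemma~\ref{replemma}(2), hence so is $M$; this closes the cycle. For the remaining equivalence: Lemma~\ref{replemma}(2) also states that $\downarrow$ is a finitary nucleus on the precoherent prequantale $2^N$ with $\Idl(N)= \downarrow\!\!(2^N)=(2^N)^{\downarrow}$, so $(2)\Rightarrow(4)$; and if $M\cong(2^N)^\star$ for a prequantic semilattice $N$ and a finitary nucleus $\star$ on $2^N$, then, since $2^N$ is a precoherent prequantale and $\star_f=\star$, Theorem~\ref{klattice} shows $(2^N)^\star$ is a precoherent prequantale, giving $(4)\Rightarrow(1)$.

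The main obstacle is bookkeeping rather than mathematics: the excerpt records Theorem~\ref{maintheorem2} only as functoriality plus the category equivalence, so one must supply the prequantale analogues of Theorem~\ref{maintheorem}(2)--(4)---that $x\longmapsto\downarrow\!\! x$ and $I\longmapsto\bigvee I$ are mutually inverse isomorphisms of ordered magmas---which go through with the same arguments as in the near-prequantale case. The one point meriting care is that $\Idl(N)$ genuinely has a smallest, annihilating element, namely the principal ideal $\downarrow\!\!(\bigwedge N)=\{\bigwedge N\}$ (using that $\bigwedge N$ annihilates in $N$), so that $\Idl(N)$ is a prequantale and not merely a near prequantale, and dually that $\K(M)$ retains $\bigwedge M$ as its annihilator; both follow immediately from the fact that a least element is compact.
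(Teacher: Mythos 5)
Your proof is correct and takes essentially the same approach the paper intends: the corollary is an immediate consequence of Theorem \ref{maintheorem2} (and the prequantale analogues of Theorem \ref{maintheorem}(3)--(4)), Lemma \ref{replemma}(2), and Theorem \ref{klattice}, exactly as you outline. The careful observation that $\bigwedge M$ is a compact annihilator, so that $\K(M)$ is genuinely a prequantic semilattice and $\Idl(N)$ is genuinely a prequantale, is precisely the only adjustment needed from the near-prequantale case.
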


\begin{corollary} An ordered magma $M$ is a prequantic semilattice if and only if $M$ is isomorphic to $\K(Q)$ for some precoherent prequantale $Q$.
\end{corollary}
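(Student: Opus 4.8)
The plan is to deduce this corollary from Theorem \ref{maintheorem2} in precisely the way the analogous corollary for multiplicative semilattices is deduced from Theorem \ref{maintheorem}; indeed, once one has the constructions underlying that category equivalence, both implications are nearly formal.

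For the ``only if'' direction I would argue as follows. Given a prequantic semilattice $M$, Lemma \ref{replemma}(2) provides a precoherent prequantale, namely the ideal completion $Q = \Idl(M)$ under $\downarrow$-multiplication. It then remains to identify $M$ with $\K(Q)$ as ordered magmas. Proposition \ref{algebraicprop2}(3), applied with $S = M$ regarded as a join semilattice, already gives a poset isomorphism $M \longrightarrow \K(\Idl(M))$, $x \longmapsto\ \downarrow\!\! x$; and I would check that this map is a magma homomorphism by exhibiting it as the composite of the inclusion $M \hookrightarrow 2^M$ (a magma homomorphism for the multiplication on $2^M$) with the corestriction to its image of the nucleus $\downarrow$ on $2^M$, the latter being a magma homomorphism by Proposition \ref{closureprop1}. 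Hence $x \longmapsto\ \downarrow\!\! x$ is an isomorphism of ordered magmas, so $Q = \Idl(M)$ witnesses the claim.

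For the ``if'' direction, suppose $M \cong \K(Q)$ with $Q$ a precoherent prequantale; it suffices to show $\K(Q)$ is a prequantic semilattice. Since $Q$ is precoherent, $\K(Q)$ is a submagma of $Q$, and by Lemma \ref{joinlemma} it is closed under finite suprema, so $\K(Q)$ is simultaneously a sub join semilattice and a submagma of $Q$. The distributive laws $a(x \vee y) = ax \vee ay$ and $(x \vee y)a = xa \vee ya$ hold in $Q$ because $Q$ is a prequantale and therefore restrict to $\K(Q)$; thus $\K(Q)$ is a multiplicative semilattice. Finally $0 = \bigwedge Q$ is the minimum element of $Q$, hence compact, so $0 \in \K(Q)$, and $0$ annihilates $\K(Q)$ since it annihilates $Q$; therefore $\K(Q)$ is a multiplicative semilattice with annihilator, i.e., a prequantic semilattice, and the same is true of $M$.

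I do not anticipate a real obstacle: the argument is a transcription of the multiplicative-semilattice case of Theorem \ref{maintheorem} with ``near prequantale'' replaced by ``prequantale'' and $2^M \setmin\{\emptyset\}$ replaced by $2^M$, the one conceptual point being that the prequantale (rather than merely near prequantale) hypothesis is exactly what supplies the annihilator $0 = \bigwedge Q$ needed to upgrade ``multiplicative semilattice'' to ``prequantic semilattice''. The only bit of bookkeeping worth mentioning is confirming that the poset isomorphism of Proposition \ref{algebraicprop2}(3) respects the multiplications, i.e., that $\downarrow\!\!(xy) = \ \downarrow\!\! x\ \downarrow\!\! y$ in $\Idl(M)$, which is immediate from the definition of $\downarrow$-multiplication together with the fact that $\downarrow$ is a nucleus on $2^M$.
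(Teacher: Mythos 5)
Your proposal is correct and follows essentially the same route as the paper, which states ``Similar proofs of the above results yield the following'' and thus intends precisely the transcription you carry out: replace $2^M \setmin \{\emptyset\}$ by $2^M$, ``near prequantale'' by ``prequantale'', and use Lemma \ref{replemma}(2), Proposition \ref{algebraicprop2}(3), and the nucleus property of $\downarrow$ via Proposition \ref{closureprop1} to get the isomorphism $M \cong \K(\Idl(M))$. Your observation that $\bigwedge Q$, being a minimum (hence minimal) element, is automatically compact, and so $\K(Q)$ inherits the annihilator, is exactly the extra bookkeeping the prequantic case requires beyond the multiplicative-semilattice case.
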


The results above also specialize to coherent prequantales and unital prequantic semilattices, coherent near prequantales and unital multiplicative semilattices, and to the corresponding settings where all operations in question are associative and/or commutative.

\section{Stable nuclei}\label{sec:stable}

A semistar operation $\star$ on an integral domain $D$ is said to be {\it stable} if 
$(I\cap J)^\star = I^\star \cap J^\star$ for all $I,J \in \F(D)$.  If $\star$ is stable then $(I:_F J)^\star = (I^\star :_F J)$ for all $I,J \in \F(D)$ with $J$ finitely generated, where $F$ is the quotient field of $D$.  For any semistar operation $\star$ on $D$ there exists a largest stable semistar operation $\overline{\star}$ on $D$ that is smaller than $\star$, given by
$$I^{\overline{\star}} = \bigcup\{(I:_F J): \ J \subseteq D \mbox{ and } J^\star = D^\star\}$$
for all $I \in \F(D)$.  If $\star$ is of finite type then so is $\overline{\star}$, whence $\star_w = \overline{\star_f}$ is the largest stable semistar operation of finite type that is smaller than $\star$.  The {\it $w$ operation} $w = v_w = \overline{t}$ is the largest stable semistar operation on $D$ of finite type such that $D^w = D$.  In this section we generalize these results to the context of semimultiplicative lattices.   All of the results of this section, excluding only Proposition  \ref{stableU}, apply to any coherent residuated semimultiplicative lattice that is also a meet semilattice.  In particular, they apply to star, semistar, and semiprime operations and ideal and module systems.  Unfortunately, we do not know if the results of this section generalize to nonassociative, noncommutative, or nonunital semiprequantales.



We will say that a nucleus $\star$ on a near residuated ordered magma $M$ is {\it stable} if $(\bigwedge X)^\star = \bigwedge(X^\star)$ for any finite subset $X$ of $M$ such that $\bigwedge X$ exists and $(x/t)^\star = x^\star/t$ (resp., $(t \backslash x)^\star = t \backslash x^\star)$ for all $x,t \in M$ with $t$ compact such that $x/t$ (resp., $t \backslash x$) exists.  In general the first condition above does not necessarily imply the second.  (The first condition is automatic if $M$ is a locale.) However, by the following proposition, the implication holds if every compact element of $M$ is the supremum of a finite subset of $\Inv(M)$, which in turn holds for any associative unital semi-$\U$-lattice $M$, such as the near $\U$-lattice $M = \F(D)$ and the semi-$\U$-lattice $\Fp(D)$.

\begin{proposition}\label{stableU}
Let $M$ be  an associative unital semi-$\U$-lattice, or more generally a near residuated ordered magma such that every compact element of $M$ is the supremum of a finite subset of $\Inv(M)$.  A nucleus $\star$ on $M$ is stable if and only if $(\bigwedge X)^\star = \bigwedge(X^\star)$ for any finite subset $X$ of $M$ such that $\bigwedge X$ exists.  Moreover, if $M$ is also a meet semilattice, then $x/t$ and $t \backslash x$ exist in $M$ for all $x,t \in M$ with $t$ compact.
\end{proposition}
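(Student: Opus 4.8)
The approach is to push everything down to the residuation behaviour of the invertible elements and then invoke Proposition \ref{closureprop3}. First I would reduce the first, special, form of the hypothesis to the second. If $M$ is an associative unital semi-$\U$-lattice then $M$ is a monoid, so $\U(M) = \Inv(M)$, and taking $y = 1$ in the identity $xy = \bigvee\{ay : a \in \U(M),\ a \leq x\}$ gives $x = \bigvee\{u \in \Inv(M) : u \leq x\}$ for all $x \in M$. If $x$ is moreover compact, then by Lemma \ref{joinlemma} we have $x \leq \bigvee Y$ for some finite $Y \subseteq \{u \in \Inv(M) : u \leq x\}$, and since also $\bigvee Y \leq x$ we conclude that $x = \bigvee Y$ is a finite supremum of elements of $\Inv(M)$. (Such an $M$ is also near residuated, so that stability is defined for it.) Hence from now on I assume $M$ is near residuated and every compact element of $M$ is a finite supremum of elements of $\Inv(M)$.

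Next I would record the basic computation. Fix $x \in M$ and a compact $t \in M$ and write $t = u_1 \vee \dots \vee u_n$ with each $u_i \in \Inv(M)$; the case $n = 0$, in which $t = \bigwedge M$, is elementary, so assume $n \geq 1$. Since each $u_i \leq t$ and multiplication distributes over finite joins, one has for all $z \in M$
\[ zt \leq x \ \Longleftrightarrow\ z u_i \leq x \ \text{for all } i \ \Longleftrightarrow\ z \leq x u_i^{-1} \ \text{for all } i, \]
the last equivalence because right multiplication by $u_i^{-1}$ inverts right multiplication by $u_i$. Thus $\{z \in M : zt \leq x\}$ is exactly the set of common lower bounds of $xu_1^{-1}, \dots, xu_n^{-1}$, so if it has a largest element that element must be $\bigwedge_i x u_i^{-1}$; in other words, when $x/t$ exists one has $x/t = \bigwedge_i x u_i^{-1}$ and in particular this finite meet exists, and dually $t\backslash x = \bigwedge_i u_i^{-1} x$ when $t\backslash x$ exists. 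This already proves the last assertion of the proposition: if $M$ is in addition a meet semilattice then $\bigwedge_i x u_i^{-1}$ exists unconditionally, and the computation
\[ \Bigl(\bigwedge_i x u_i^{-1}\Bigr) t = \bigvee_j \Bigl(\bigwedge_i x u_i^{-1}\Bigr) u_j \ \leq\ \bigvee_j (x u_j^{-1}) u_j \ =\ x, \]
together with the displayed equivalences, shows that $\bigwedge_i x u_i^{-1}$ is $x/t$; dually for $t\backslash x$.

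Now the main equivalence. The ``only if'' direction is immediate, since the displayed meet condition is literally half of the definition of a stable nucleus. For ``if'', assume $(\bigwedge X)^\star = \bigwedge(X^\star)$ for every finite $X \subseteq M$ with $\bigwedge X$ defined; it remains to produce the other half, namely $(x/t)^\star = x^\star/t$ for every compact $t$ with $x/t$ defined (and the right-handed version). Write $t = u_1 \vee \dots \vee u_n$ as before. Since $x/t$ exists, $x/t = \bigwedge_i x u_i^{-1}$ by the previous paragraph; and $(x/t)t \leq x \leq x^\star$, so by near residuatedness $x^\star/t$ exists, whence likewise $x^\star/t = \bigwedge_i x^\star u_i^{-1}$. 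Finally, each $u_i^{-1}$ lies in $\Inv(M) \subseteq \T^\star(M)$ by Proposition \ref{closureprop3}, so $(x u_i^{-1})^\star = x^\star u_i^{-1}$; applying the hypothesis with $X = \{x u_1^{-1}, \dots, x u_n^{-1}\}$ then gives
\[ (x/t)^\star = \Bigl(\bigwedge_i x u_i^{-1}\Bigr)^\star = \bigwedge_i (x u_i^{-1})^\star = \bigwedge_i x^\star u_i^{-1} = x^\star/t, \]
and the identity $(t\backslash x)^\star = t\backslash x^\star$ follows by the symmetric argument. Hence $\star$ is stable.

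The step I expect to require the most care is the existence of the finite meets $\bigwedge_i x u_i^{-1}$ and $\bigwedge_i x^\star u_i^{-1}$: a near residuated ordered magma need not be bounded complete, so these cannot simply be asserted to exist and must instead be extracted from the existence of $x/t$ (respectively $x^\star/t$) through the displayed equivalences — which is precisely why it matters that $x/t$ is assumed to exist in the relevant clause of the definition of stability. Everything else is a direct transcription through Proposition \ref{closureprop3} and the distributivity of multiplication over finite joins.
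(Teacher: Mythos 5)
Your proof is correct and follows essentially the same route as the paper's: decompose the compact $t$ as a finite join of invertibles, use that $x/u = xu^{-1}$ and that invertibles are transportable (Proposition \ref{closureprop3}), and distribute $\star$ over the resulting finite meet. You simply make explicit the existence checks that the paper compresses into ``it is straightforward to check'' (in particular that $x/t = \bigwedge_i xu_i^{-1}$ whenever $x/t$ exists, and that $x^\star/t$ exists because $(x/t)t \leq x^\star$), and you spell out why the unital semi-$\U$-lattice hypothesis falls under the general one.
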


\begin{proof}
Suppose that $\star$ distributes over finite meets.  Let $x,t \in M$ with $t$ compact, and suppose that $x/t$ exists.  We may write $t = u_1 \vee u_2 \vee \cdots \vee u_n$ with each $u_i \in \Inv(M)$.  Since $x/u$ exists and equals $xu^{-1}$ and $(x/u)^\star = (xu^{-1})^\star = x^\star u^{-1} = x^\star/u$ for all $u \in \Inv(M)$, it straightforward to check that $x/u_1 \wedge x/u_2 \wedge \cdots \wedge x/u_n$ exists and equals $x/t$.  Therefore we have
$$(x/t)^\star = (x/u_1)^\star \wedge \cdots \wedge (x/u_n)^\star = x^\star/u_1 \wedge \cdots \wedge x^\star/u_n
= x^\star/t.$$
The proof for $t \backslash x$ is similar, so $\star$ is stable.  Also, if $M$ is a meet semilattice and $x,t \in M$ with $t$ compact, then, writing $t = u_1 \vee u_2 \vee \cdots \vee u_n$ with each $u_i \in \Inv(M)$, one easily verifies that $x/t = x/u_1 \wedge x/u_2 \wedge \cdots \wedge x/u_n$ exists, and likewise for $t \backslash x$.
\end{proof}

If $M$ is an ordered unital magma and $\star \in \N(M)$, then, borrowing terminology from the theory of semistar operations, we will say that $z \in M$ is {\it $\star$-Glaz-Vasconcelos}, or {\it $\star$-GV}, if $z \leq 1$ and $z^\star = 1^\star$.  We let $\SGV(M)$ denote the set of all $\star$-GV elements of $M$, which is a submagma of $M$.  If $X \subseteq \SGV(M)$ is nonempty, then $\bigvee X \in \SGV(M)$ if $\bigvee X$ exists, and $\bigwedge X \in \SGV(M)$ if $\bigwedge X$ exists and $X$ is finite.  If $\star$ is a nucleus on a semimultiplicative lattice $Q$, then we let
$$x^{\overline{\star}} = \bigvee\{x/z: \ z \in \SGV(Q)\}$$
for all $x \in Q$, which is well-defined since $z$ is residuated for all $z \in \SGV(Q)$.  (Indeed, $z x \leq x$, and $zy \leq x$ implies $y \leq 1^*y = z^* y \leq x^*$, so $\{z \in M: \ zy \leq x\}$ is nonempty and bounded.)

\begin{lemma}\label{stableprop}
Let $\star$ be a nucleus on a semimultiplicative lattice $Q$.
\begin{enumerate}
\item $\overline{\star}$ is a preclosure on $Q$ that is smaller than $\star$, and one has
$xy^{\overline{\star}} \leq (xy)^{\overline{\star}}$ and $x^{\overline{\star}}y \leq (xy)^{\overline{\star}}$ for all $x,y \in Q$.
\item For any $x,t \in Q$ with $t$ compact one has $t \leq x^{\overline{\star}}$ if and only if $tz \leq x$ for some $z \in \SGV(Q)$.
\item If $Q$ is algebraic then $(\bigwedge X)^{\overline{\star}} = \bigwedge(X^{\overline{\star}})$ for any finite subset $X$ of $Q$ such that $\bigwedge X$ exists.
\item If $Q$ is precoherent then $\overline{\star}$ is a stable nucleus on $Q$.
\end{enumerate}
\end{lemma}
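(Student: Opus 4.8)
The plan is to prove the four statements essentially in order, since each relies on the previous ones. For statement (1), I would first observe that $\overline{\star}$ is expansive (take $z = 1$, so $x/1 = x \le x^{\overline{\star}}$) and order-preserving (if $x \le x'$ then $x/z \le x'/z$ for every $z$), hence a preclosure; and that $\overline{\star} \le \star$ because for $z \in \SGV(Q)$ one has $(x/z)^\star \le x^\star/z^\star = x^\star/1^\star = x^\star/1 = x^\star$ using Proposition \ref{CSTstar} (the displayed identity $(x/y)^\star = x/y^\star$ for $x \in M^\star$) together with the fact that $1^\star = z^\star$ and that $x^\star/1 = x^\star$ in a unital magma; so $x^{\overline\star} = \bigvee\{x/z\} \le x^\star$. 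The submultiplicativity inequality $x y^{\overline\star} \le (xy)^{\overline\star}$ follows by computing, for $z \in \SGV(Q)$, that $x(y/z) \le (xy)/z$ (indeed $(x(y/z))z = x((y/z)z) \le xy$ in the associative commutative setting, so $x(y/z) \le (xy)/z$), then taking the supremum over $z$ using that multiplication by $x$ is sup-preserving (since $Q$ is a semimultiplicative lattice and the relevant family is bounded/finite-join-generated — here one uses that $\SGV(Q)$ is closed under finite joins and that $Q$ is a multiplicative semilattice, extending to the needed suprema via Lemma \ref{scott2} or directly). By symmetry (commutativity) one gets $x^{\overline\star} y \le (xy)^{\overline\star}$.

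For statement (2): if $t$ is compact and $t \le x^{\overline\star} = \bigvee\{x/z : z \in \SGV(Q)\}$, then since $\SGV(Q)$ is closed under finite joins the family $\{x/z\}$ is directed (a join of the $z$'s gives a smaller residual... wait, $z \le z'$ gives $x/z \ge x/z'$, so I should instead note $z_1 z_2 \in \SGV(Q)$ and $x/(z_1z_2) \ge (x/z_1)$? — rather, the correct directedness: for $z_1, z_2 \in \SGV(Q)$, $z_1 \wedge z_2 \in \SGV(Q)$ and $x/(z_1\wedge z_2) \ge x/z_1, x/z_2$), so compactness of $t$ gives $t \le x/z$ for a single $z \in \SGV(Q)$, i.e. $tz \le x$; the converse is immediate since $tz \le x$ gives $t \le x/z \le x^{\overline\star}$. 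For statement (3), I would use that $Q$ is algebraic, so it suffices to check that a compact $t$ lies below $(\bigwedge X)^{\overline\star}$ iff it lies below $\bigwedge(X^{\overline\star}) = \bigwedge_{x\in X} x^{\overline\star}$; by (2), $t \le x^{\overline\star}$ iff $t z_x \le x$ for some $z_x \in \SGV(Q)$, and taking $z = \bigwedge_{x\in X} z_x \in \SGV(Q)$ (finite meet, using $X$ finite) one gets $tz \le x$ for all $x$, hence $tz \le \bigwedge X$, hence $t \le (\bigwedge X)^{\overline\star}$; the reverse inclusion is automatic from (1)/order-preservation. One inclusion ($\le$) of the meet-distributivity always holds; the content is the $\ge$ direction just sketched.

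Statement (4) is where the real work is. Assuming $Q$ precoherent, I must upgrade the preclosure $\overline\star$ to a nucleus and verify stability. For idempotence: I would show $\Fix(\overline\star)$ is closed under the relevant meets and residuals and apply the machinery of Lemma \ref{preclosurelemma}, or argue directly that $(x^{\overline\star})^{\overline\star} = x^{\overline\star}$ by a compactness argument — if $t$ is compact with $t \le (x^{\overline\star})^{\overline\star}$, then $tz \le x^{\overline\star}$ for some $z \in \SGV(Q)$, and since $tz$ is compact (precoherence: $\K(Q)$ closed under multiplication) part (2) gives $tz z' \le x$ for some $z' \in \SGV(Q)$, so $t(zz') \le x$ with $zz' \in \SGV(Q)$, whence $t \le x^{\overline\star}$; as $Q$ is algebraic this gives $(x^{\overline\star})^{\overline\star} \le x^{\overline\star}$. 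Then $\overline\star$ is a closure operation, and combined with statement (1) and Lemma \ref{preclosurelemma}(3) — noting $Q$ is bounded complete and near residuated — $\overline\star$ is in fact a nucleus (alternatively, since $\overline\star$ is already idempotent here, the submultiplicativity from (1) directly makes it a nucleus via Proposition \ref{closureprop1}). Finally, stability: statement (3) gives distributivity over finite meets, and for the residual condition I would check $(x/t)^{\overline\star} = x^{\overline\star}/t$ for $t$ compact using (2) and precoherence again — the compact elements below $(x/t)^{\overline\star}$ are those compact $s$ with $sz \le x/t$, i.e. $s t z \le x$ with $st$ compact, i.e. $st \le x^{\overline\star}$, i.e. $s \le x^{\overline\star}/t$ — so the two sides have the same compact elements below them and hence (by algebraicity) coincide. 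The main obstacle is organizing the compactness/precoherence bookkeeping in (4) cleanly so that the closure, nucleus, and stability properties all fall out of the single key fact that $tz$ is compact whenever $t$ is compact and $z \in \SGV(Q)$, together with $\SGV(Q)$ being closed under finite meets and products.
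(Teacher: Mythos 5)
Parts (1), (2), (3), and the stability portion of (4) are essentially on track, though you should consistently use the \emph{product} $z_1 z_2 \cdots z_n$ (not the meet) to combine elements of $\SGV(Q)$: since each $z_i \leq 1$, the product is again $\star$-GV and lies below every $z_i$, so the family $\{x/z : z \in \SGV(Q)\}$ is directed via products, which is exactly how the paper deduces $t \leq x/z$ from compactness of $t$ and how it combines the $z_x$'s in (3). Meets may not exist in a semimultiplicative lattice (the paper adds $x \wedge 1$ exists as a separate hypothesis only in Theorem \ref{stabletheorem}), so the appeal to $z_1 \wedge z_2$ and to $\bigwedge_x z_x$ would need justification that is simply unavailable in general.

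The genuine gap is in the idempotence argument of (4). You assert that $tz$ is compact ``because $\K(Q)$ is closed under multiplication,'' but $z$ here is a $\star$-GV element, which has no reason to be compact; precoherence only gives that a product of \emph{compact} elements is compact. So you cannot apply part (2) to $tz$ to extract a single $z' \in \SGV(Q)$ with $tz z' \leq x$. The paper handles this by using algebraicity to write $z = \bigvee\{u \in \K(Q) : u \leq z\}$, noting that each $tu$ is compact (since both $t$ and $u$ are), applying (2) to each $tu$ to obtain $z_u \in \SGV(Q)$ with $tu z_u \leq x$, and then patching the results: it sets $z' = \bigvee\{u z_u : u \in \K(Q),\ u \leq z\}$, checks $z' \leq 1$ and $(z')^\star = (z\cdot 1^\star)^\star = 1^\star$ so $z' \in \SGV(Q)$, and finally $tz' = \bigvee tu z_u \leq x$, giving $t \leq x^{\overline\star}$. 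This patching step is the substantive content of (4), and your proposal omits it. (Note your stability argument at the end of (4) does \emph{not} have this problem, because there both $s$ and $t$ are compact by hypothesis, so $st$ is legitimately compact.)
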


\begin{proof} \
\begin{enumerate}
\item One has $x^{\overline{\star}} \leq \bigvee\{x^\star/z: \ z \in \SGV(Q)\} =  \bigvee\{x^\star/z^\star: \ z \in \SGV(Q)\} = x^\star$ for all $x \in Q$, whence $\overline{\star}$ is smaller than $\star$, and the rest of statement (1) is equally trivial to verify.
\item Let $x,t \in Q$ with $t$ compact.  If $tz \leq x$ for some $z \in \SGV(Q)$, then $t \leq x/z \leq x^{\overline{\star}}$.  Conversely, if $t \leq x^{\overline{\star}}$, then we have $t \leq x/z_1 \vee x/z_2 \vee \cdots \vee x/z_n \leq x/(z_1 z_2 \cdots z_n)$ for some $z_1, z_2, \ldots, z_n \in \SGV(Q)$, whence $tz \leq x$, where $z = z_1 z_2 \cdots z_n \in \SGV(Q)$.
\item Let $X = \{x_1, x_2, \ldots, x_n\}$ be a finite subset of $Q$ such that $a = \bigwedge X$ exists.   We must show that $a^{\overline{\star}} = \bigwedge(X^{\overline{\star}})$.   Clearly $a^{\overline{\star}} \leq x^{\overline{\star}}$ for all $x \in X$.  Let $b$ be any lower bound of $X^{\overline{\star}}$.  If $t$ is any compact element of $Q$ such that $t \leq b$, then, by statement (2), for each $i$ there exists $z_i \in \SGV(Q)$ such that $tz_i \leq x_i$, whence $tz \leq \bigwedge X = a$, where $z = z_1 z_2 \cdots z_n \in \SGV(Q)$.  Thus $t \leq a^{\overline{\star}}$.  Taking the supremum over all such $t$ we see that $b = \bigvee\{t \in \K(Q): \ t \leq b\} \leq a^{\overline{\star}}$.  Therefore $\bigwedge(X^{\overline{\star}})$ exists and equals $a^{\overline{\star}}$.
\item We first show that $\overline{\star}$ is idempotent and therefore a nucleus on $Q$.  Let $x \in Q$.  Let $t$ be any compact element such that $t \leq (x^{\overline{\star}})^{\overline{\star}}$.  Then
$tz \leq x^{\overline{\star}}$ for some $z \in \SGV(Q)$.   If $u$ is a compact element of $Q$ such that $u \leq z$, then $tu \leq x^{\overline{\star}}$ and $tu$ is compact, whence $tu z_u \leq x$ for some $z_u \in \SGV(Q)$.  Let $z' = \bigvee\{uz_u: \ u \in \K(Q) \mbox{ and } u \leq z\}$.  Then $z' \leq 1$ and $(z')^\star  = (\bigvee\{u z_u^\star: \ u \in \K(Q) \mbox{ and } u \leq z\})^\star = z^\star = 1^\star$, whence $z' \in \SGV(Q)$.  Moreover, one has
$tz' = \bigvee\{tuz_u: \ u \in \K(Q) \mbox{ and } u \leq z\} \leq x$.  It follows that
$t \leq x^{\overline{\star}}$.  Taking the supremum over all $t$, we see that
$(x^{\overline{\star}})^{\overline{\star}} \leq x^{\overline{\star}}$.  Thus
$\overline{\star}$ is a nucleus on $Q$.  To show that $\overline{\star}$ is stable, let $x,t$ be elements of $Q$ with $t$ compact such that $x/t$ exists.  Clearly $x^{\overline{\star}}/t$ also exists and $(x/t)^{\overline{\star}} \leq x^{\overline{\star}}/t$.  Let $u \in \K(Q)$ with $u \leq x^{\overline{\star}}/t$.  Then $tu \leq x^{\overline{\star}}$ and $tu$ is compact, whence $tuz \leq x$ for some $z \in \SGV(Q)$.  Therefore $uz \leq x/t$, whence  $u \leq (x/t)^{\overline{\star}}$.  Taking the supremum over all $u$ we see that $x^{\overline{\star}}/t \leq (x/t)^{\overline{\star}}$, whence equality holds.  Combining this with statement (3), we see that $\overline{\star}$ is stable.
\end{enumerate}
\end{proof}

\begin{theorem}\label{stabletheorem}
Let $Q$ be a precoherent semimultiplicative lattice such that every compact element of $Q$ is residuated and $x \wedge 1$ exists for all $x \in Q$, and let $\star$ be a nucleus on $Q$.  For any $x \in Q$ let $x^{\overline{\star}} = \bigvee\{x/z: \ z \in \SGV(Q)\}$.
Then $\overline{\star}$ is the largest stable nucleus on $Q$ that is smaller than $\star$.  Moreover, the following conditions on $\star$ are equivalent.
\begin{enumerate}
\item $\star$ is stable.
\item $(x\wedge 1)^\star = x^\star \wedge 1^\star$ and $(x/t)^\star = x^\star/t$ for all $x,t \in Q$ with $t$ compact.
\item $(x/t \wedge 1)^\star = x^\star/t \wedge 1^\star$ for all $x,t \in Q$ with $t$ compact.
\item $\star = \overline{\star}$.
\end{enumerate}
\end{theorem}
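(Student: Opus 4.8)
The plan is to extract everything from Lemma~\ref{stableprop} together with a short cycle of implications. By Lemma~\ref{stableprop}(1),(4), and since $Q$ is precoherent, $\overline{\star}$ is a stable nucleus on $Q$ with $\overline{\star}\le\star$, so only the equivalences and the maximality of $\overline{\star}$ remain. For the equivalences I would prove the cycle $(1)\Rightarrow(2)\Rightarrow(3)\Rightarrow(4)\Rightarrow(1)$. Here $(1)\Rightarrow(2)$ is immediate: the first identity of $(2)$ is stability applied to the finite set $\{x,1\}$, whose meet exists by hypothesis, and the second is the quotient half of stability, using that every compact element of $Q$ is residuated so that $x/t$ always exists. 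For $(2)\Rightarrow(3)$, apply the first identity of $(2)$ with $x/t$ in place of $x$ and then the second identity of $(2)$, giving $(x/t\wedge 1)^\star=(x/t)^\star\wedge 1^\star=x^\star/t\wedge 1^\star$. And $(4)\Rightarrow(1)$ is trivial, since $\overline{\star}$ is stable by Lemma~\ref{stableprop}(4).

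The core is $(3)\Rightarrow(4)$. As $\overline{\star}\le\star$ already, it suffices to prove $x^\star\le x^{\overline{\star}}$ for all $x\in Q$; and since $Q$ is precoherent, hence algebraic, $x^\star=\bigvee\{t\in\K(Q):\ t\le x^\star\}$, so it is enough to show that every compact $t\le x^\star$ satisfies $t\le x^{\overline{\star}}$. By Lemma~\ref{stableprop}(2) this amounts to exhibiting $z\in\SGV(Q)$ with $tz\le x$, and the natural candidate is $z=x/t\wedge 1$, which is well-defined since $t$ is compact hence residuated and $y\wedge 1$ exists for all $y\in Q$. Plainly $z\le 1$, and $tz\le t(x/t)\le x$ by order-preservation and the defining property of $x/t$. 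The one genuinely nonroutine point is $z^\star=1^\star$: by $(3)$ one has $z^\star=x^\star/t\wedge 1^\star$, while $t\le x^\star$ gives $t\cdot 1^\star\le t^\star\cdot 1^\star\le(t\cdot 1)^\star=t^\star\le(x^\star)^\star=x^\star$ (using expansiveness, the nucleus inequality, and $t\le x^\star$), hence $1^\star\le x^\star/t$ and therefore $x^\star/t\wedge 1^\star=1^\star$. Thus $z\in\SGV(Q)$, which completes $(3)\Rightarrow(4)$.

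For the maximality statement, let $\star'$ be any stable nucleus on $Q$ with $\star'\le\star$. Applying $(1)\Rightarrow(4)$ to $\star'$ yields $\star'=\overline{\star'}$, where $\overline{\star'}$ is formed using the $\star'$-GV elements of $Q$. Next I would observe that $\star'\le\star$ forces every $\star'$-GV element to be $\star$-GV: if $z\le 1$ and $z^{\star'}=1^{\star'}$, then applying $\star$ to both sides and using $z\le z^{\star'}\le z^\star$, $1\le 1^{\star'}\le 1^\star$, and idempotence of $\star$ gives $z^\star=(z^{\star'})^\star=(1^{\star'})^\star=1^\star$. Consequently $x^{\overline{\star'}}$ (the join of the elements $x/z$ over all $\star'$-GV elements $z$) is at most $\bigvee\{x/z:\ z\in\SGV(Q)\}=x^{\overline{\star}}$ for every $x\in Q$, so $\star'=\overline{\star'}\le\overline{\star}$, as desired.

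The main obstacle throughout is the verification in $(3)\Rightarrow(4)$ that $x/t\wedge 1$ is $\star$-GV, i.e. the computation $x^\star/t\wedge 1^\star=1^\star$ for compact $t\le x^\star$; once that is in hand, everything else is bookkeeping with residuation, expansiveness, algebraicity, and the already-established Lemma~\ref{stableprop}.
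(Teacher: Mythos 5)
Your proof is correct and follows essentially the same route as the paper's: cite Lemma~\ref{stableprop}(4) for $(4)\Rightarrow(1)$, chain $(1)\Rightarrow(2)\Rightarrow(3)$ routinely, and for $(3)\Rightarrow(4)$ reduce by algebraicity to compact $t\le x^\star$, take $z=x/t\wedge 1$, and compute $z^\star=x^\star/t\wedge 1^\star=1^\star$ via $1^\star t\le x^\star$, exactly as in the paper. Your treatment of maximality explicitly justifies the step $\star'=\overline{\star'}\le\overline{\star}$ by showing that $\star'\le\star$ makes every $\star'$-GV element $\star$-GV, a small but genuine detail the paper passes over in silence.
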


\begin{proof}
We first show that the four statements of the proposition are equivalent.  Clearly we have $(1) \Rightarrow (2) \Rightarrow (3)$, and by Lemma \ref{stableprop}(4) we have $(4) \Rightarrow (1)$.   To show that $(3)$ implies $(4)$, we suppose that $(3)$ holds, and then we need only show that $x^\star \leq x^{\overline{\star}}$ for any $x \in Q$.  Let $t$ be any compact element of $Q$ with $t \leq x^\star$.  By the hypothesis on $Q$ the element $z = x/t \wedge 1$ exists in $Q$.  Condition $(3)$ then implies that $z^\star = x^\star/t \wedge  1^\star$.  Note then that, since $1^\star t \leq 1^\star x^\star = x^\star$ one has $x^\star/t \geq 1^\star$, whence $z^\star = x^\star/t \wedge 1^\star = 1^\star$.  Since also $z \leq 1$ we have $z \in \SGV(Q)$.  Therefore, since $zt \leq (x/t)t \leq x$, we have $t \leq x^{\overline{\star}}$.  Since this holds for all compact $t \leq x^\star$, we have $x^\star \leq x^{\overline{\star}}$, as desired.  It remains only to show that $\overline{\star}$ is the largest stable nucleus on $Q$ that is smaller than $\star$.  But by Lemma \ref{stableprop} the nucleus $\overline{\star}$ is itself a stable nucleus that is smaller than $\star$, and if $\star'$ is any other such nucleus, then one has
$\star' = \overline{\star'} \leq \overline{\star}$, whence $\overline{\star}$ is larger than $\star'$.
\end{proof}

Any precoherent multiplicative lattice satisfies the hypotheses of Theorem \ref{stabletheorem} above.  By Propositions \ref{ucoherent} and \ref{stableU} and Corollary \ref{ulattices}, so does any commutative associative unital semi-$\U$-lattice $Q$ with $1$ compact that is also a meet semilattice, such as the near $\U$-lattice $\F(D)$ and the semi-$\U$-lattice $\Fp(D)$ for any integral domain $D$.

Let $\star_w = \overline{\star_f}$ for any nucleus $\star$ on a precoherent semimultiplicative lattice $Q$.

\begin{proposition}\label{stablecor}
Let $\star$ be a nucleus on a coherent semimultiplicative lattice $Q$.
\begin{enumerate}
\item One has $x^{\star_w} = \bigvee\{x/z: \ z \in \SGV(Q)\cap \K(Q)\}$ for all $x \in Q$, and $\star_w$ is finitary.  In particular, if $\star$ is finitary, then so is
$\overline{\star} = \star_w$.
\item If every compact element of $Q$ is residuated and $x \wedge 1$ exists for all $x \in Q$, then $\star_w$ is the largest stable finitary nucleus on $Q$ that is smaller than $\star$.  
\end{enumerate}
\end{proposition}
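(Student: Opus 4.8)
The plan is to carry everything through in terms of the finitary nucleus $\star_f$ and the set $\SGV(Q)=\{z\in Q:\ z\leq 1\text{ and }z^{\star_f}=1^{\star_f}\}$ of its Glaz--Vasconcelos elements, since by definition $\star_w=\overline{\star_f}$, so that $x^{\star_w}=\bigvee\{x/z:\ z\in\SGV(Q)\}$ for all $x\in Q$. First I would record the free facts: a coherent semimultiplicative lattice is a precoherent semiprequantale with $1\in\K(Q)$, so $\star_f$ is a finitary nucleus on $Q$ by Theorem~\ref{klattice} and $\star_w$ is a stable nucleus on $Q$ by Lemma~\ref{stableprop}(4). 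The one substantive step is the following claim: \emph{every $z\in\SGV(Q)$ dominates some $u\in\SGV(Q)\cap\K(Q)$.} To prove it, note that since $\star_f$ is finitary and $Q$ is an algebraic join semilattice, Lemma~\ref{finitetypeprop}(2) gives $z^{\star_f}=\bigvee\{u^{\star_f}:\ u\in\K(Q),\ u\leq z\}$, a directed supremum by Lemma~\ref{joinlemma}; since $1$ is compact and $1\leq 1^{\star_f}=z^{\star_f}$, directedness yields $1\leq u^{\star_f}$ for some compact $u\leq z$, and then $1^{\star_f}\leq(u^{\star_f})^{\star_f}=u^{\star_f}\leq 1^{\star_f}$ (the last inequality because $u\leq z\leq 1$) forces $u^{\star_f}=1^{\star_f}$, so $u\in\SGV(Q)\cap\K(Q)$.

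With the claim in hand, part (1) is quick. For the formula, the inequality $\bigvee\{x/z:\ z\in\SGV(Q)\cap\K(Q)\}\leq x^{\star_w}$ is trivial, and conversely each $z\in\SGV(Q)$ dominates some $u\in\SGV(Q)\cap\K(Q)$, for which $x/z\leq x/u$, so taking suprema gives $x^{\star_w}\leq\bigvee\{x/u:\ u\in\SGV(Q)\cap\K(Q)\}$. To see that $\star_w$ is finitary I would check the hypothesis of Lemma~\ref{finitetypeprop}(1), i.e.\ $x^{\star_w}=\bigvee\{y^{\star_w}:\ y\in\K(Q),\ y\leq x\}$: since $Q$ is algebraic, $x^{\star_w}=\bigvee\{t\in\K(Q):\ t\leq x^{\star_w}\}$, and for such $t$, Lemma~\ref{stableprop}(2) gives $z\in\SGV(Q)$ with $tz\leq x$; the claim then gives $u\in\SGV(Q)\cap\K(Q)$ with $u\leq z$, so $y:=tu$ lies in $\K(Q)$ by precoherence and satisfies $y\leq tz\leq x$, while $y=tu$ with $u\in\SGV(Q)$ gives $t\leq y^{\star_w}$ by Lemma~\ref{stableprop}(2) again. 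The reverse inequality being clear, $\star_w$ is finitary. The final clause of (1) follows because if $\star$ is already finitary then $\star_f=\star$, whence $\overline{\star}=\overline{\star_f}=\star_w$.

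For part (2), under the additional hypotheses $Q$ satisfies the hypotheses of Theorem~\ref{stabletheorem} (coherent implies precoherent), which applied to the nucleus $\star_f$ identifies $\overline{\star_f}=\star_w$ as the largest stable nucleus on $Q$ smaller than $\star_f$; by part (1) it is finitary, hence a stable finitary nucleus below $\star_f\leq\star$. Conversely, any stable finitary nucleus $\star'$ with $\star'\leq\star$ satisfies $\star'\leq\star_f$ because $\star_f$ is the largest finitary nucleus below $\star$ (Theorem~\ref{klattice}), and then $\star'\leq\overline{\star_f}=\star_w$ by Theorem~\ref{stabletheorem}; so $\star_w$ is the largest stable finitary nucleus below $\star$. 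I expect the only real obstacle to be the claim promoting an arbitrary $\star_f$-GV element to a compact one: this is precisely where compactness of $1$ (coherence, not mere precoherence) and finitariness of $\star_f$ are both essential, the remainder being bookkeeping with Lemmas~\ref{stableprop} and~\ref{finitetypeprop} and Theorem~\ref{stabletheorem}.
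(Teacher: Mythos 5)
Your proof is correct and takes essentially the same approach as the paper's: the crucial step, identical in both, is that compactness of $1$ together with finitariness of $\star_f$ lets one replace any $\star_f$-GV element by a compact $\star_f$-GV element below it. The remaining bookkeeping---using monotonicity of $x/(-)$ versus working with compact elements $t\leq x^{\star_w}$ for the formula, and setting $y=tu$ directly versus choosing a compact $y\leq x$ above $tz$ for finitariness---differs only cosmetically from the paper, and part (2) is deduced from Theorems \ref{stabletheorem} and \ref{klattice} in the same way.
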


\begin{proof}
Statement (2) follows from statement (1) and Theorems \ref{stabletheorem} and \ref{klattice}.  To prove (1), let $x \in Q$.  One has $x^{\star_w} = \bigvee\{x/y: \ y\leq 1 \mbox{ and } y^{\star_f} = 1^{\star_f}\}$.  Let $t$ be any compact element of $Q$ with $t \leq x^{\star_w}$.  Then $t \leq x/y$ for some $y \in Q$ with $y \leq 1$ and $y^{\star_f} = 1^{\star_f}$.   Since $1$ is compact, the condition $y^{\star_f} = 1^{\star_f}$ implies $z^\star = 1^\star$ for some compact $z \leq y$.  Note then that $t \leq x/z$ since $tz \leq ty \leq x$.  Therefore, since $z \in \SGV(Q) \cap \K(Q)$, one
has $t \leq \bigvee\{x/z: \ z \in \SGV(Q)\cap \K(Q)\}$.  Since this holds for all compact $t \leq x^{\star_w}$, it follows that $x^{\star_w} \leq \bigvee\{x/z: \ z \in \SGV(Q)\cap \K(Q)\}$, and therefore equality holds since the reverse inequality is obvious.

Suppose now that $\star = \star_f$ is finitary.  To show that $\overline{\star}$ is also finitary,  we let $x \in Q$ and verify that $x^{\overline{\star}} = \bigvee\{y^{\overline{\star}}: \ y \in \K(Q) \mbox{ and } y \leq x\}$.  Let $t$ be any compact element of $Q$ with $t \leq x^{\overline{\star}}$.   Since $\star_w = \overline{\star}$, one has $x^{\overline{\star}} = \bigvee\{x/z: \ z \in \SGV(Q)\cap \K(Q)\}$.  Since $t \leq x^{\overline{\star}}$ is compact and the set $\{z/x:  \ z \in \SGV(Q)\cap \K(Q)\}$ is directed, it follows that $t \leq x/z$ for some $z \in \SGV(Q) \cap \K(Q)$.  Therefore $tz \leq x = \bigvee\{y \in \K(Q): \ y \leq x\}$,
whence $tz \leq y$ for some compact $y \leq x$ since $tz$ is compact.  Thus we have $t \leq y/z$, where $z \in \SGV(Q)$, whence $t \leq y^{\overline{\star}}$.  It follows that
$x^{\overline{\star}} \leq  \bigvee\{y^{\overline{\star}}: \ y \in \K(Q) \mbox{ and } y \leq x\}$, and the desired equality follows.
\end{proof}

The following result follows from Propositions \ref{vclosureprop} and \ref{stablecor} and Theorems \ref{klattice} and \ref{stabletheorem}.

\begin{proposition}
Let $Q$ be a precoherent near multiplicative lattice in which  every compact element of $Q$ is residuated.  Suppose moreover that $x \wedge 1$ exists for all $x \in Q$.  Let $\star$ be a nucleus on $Q$.
\begin{enumerate}
\item The nucleus $\overline{v}(a)= \overline{v(a)}$ exists for any $a \in Q$ and is the largest stable nucleus on $Q$ such that $a^{\overline{v}(a)} = a$, and $\overline{\star} = \bigwedge\{\overline{v}(a): \ a\in Q^{\overline{\star}}\}.$
\item If $1 \in Q$ is compact, then the nucleus $w(a) = \overline{t(a)}$ exists for any $a \in Q$ and is the largest stable finitary nucleus on $Q$ such that $a^{w(a)} = a$, and $\star_w = \bigwedge\{w(a): \ a \in Q^{\star_w}\}.$
\end{enumerate}
\end{proposition}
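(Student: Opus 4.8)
The plan is to derive the statement from three already-established ingredients: the construction $\sigma \longmapsto \overline{\sigma}$ of Theorem~\ref{stabletheorem} with its maximality property, the divisorial closures $v(a)$ of Proposition~\ref{vclosureprop}, and their finitary analogues $t(a) = v(a)_f$ of Corollary~\ref{vclospropcor}. First I would note that $Q$, being a precoherent near multiplicative lattice, is in particular a precoherent semimultiplicative lattice, and that by hypothesis every compact element of $Q$ is residuated and $x \wedge 1$ exists for all $x \in Q$; thus $Q$ meets the hypotheses of Theorem~\ref{stabletheorem}, so for every nucleus $\sigma$ on $Q$ the operation $\overline{\sigma}$ is the largest stable nucleus smaller than $\sigma$. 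Also $Q$ is a near prequantale, so by Proposition~\ref{vclosureprop} $v(a)$ exists for each $a \in Q$ (and is the largest nucleus fixing $a$), and $\sigma = \bigwedge\{v(a): a \in Q^\sigma\}$ for every $\sigma \in \N(Q)$.

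For part (1) I would set $\overline{v}(a) = \overline{v(a)}$. It is stable by Theorem~\ref{stabletheorem}, and since $\overline{v}(a) \leq v(a)$ and $a^{v(a)} = a$, Proposition~\ref{divprop}(1) gives $a^{\overline{v}(a)} = a$. Conversely, any stable nucleus $\star$ with $a^\star = a$ satisfies $\star \leq v(a)$ by Proposition~\ref{divprop}(1), hence $\star \leq \overline{v(a)}$ by the maximality just recorded; so $\overline{v}(a)$ is the largest stable nucleus on $Q$ fixing $a$. For the displayed identity, fix any nucleus $\star$. The nucleus $\overline{\star}$ is stable, and for each $a \in Q^{\overline{\star}}$ it fixes $a$, so $\overline{\star} \leq \overline{v}(a) \leq v(a)$. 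Taking infima over $a \in Q^{\overline{\star}}$ and using $\overline{\star} = \bigwedge\{v(a): a \in Q^{\overline{\star}}\}$ from Proposition~\ref{vclosureprop} yields $\overline{\star} \leq \bigwedge\{\overline{v}(a): a \in Q^{\overline{\star}}\} \leq \bigwedge\{v(a): a \in Q^{\overline{\star}}\} = \overline{\star}$, forcing the equality.

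For part (2) I would assume in addition that $1 \in Q$ is compact, so $Q$ is a coherent semimultiplicative lattice, and fix $a \in Q$. By Corollary~\ref{vclospropcor} the nucleus $t(a) = v(a)_f$ exists and is the largest finitary nucleus on $Q$ fixing $a$. Put $w(a) = \overline{t(a)}$: it is stable by Theorem~\ref{stabletheorem}, it is finitary by Proposition~\ref{stablecor}(1) (since $t(a)$ is finitary, $\overline{t(a)} = t(a)_w$), and from $w(a) \leq t(a) \leq v(a)$ together with Proposition~\ref{divprop}(1) we get $a^{w(a)} = a$. If $\star$ is any stable finitary nucleus with $a^\star = a$, then $\star \leq t(a)$ by Corollary~\ref{vclospropcor}, whence $\star \leq \overline{t(a)} = w(a)$ since $\overline{t(a)}$ is the largest stable nucleus below $t(a)$; so $w(a)$ is the largest stable finitary nucleus fixing $a$. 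Finally, $\star_w = \overline{\star_f}$ is stable and finitary (Theorem~\ref{stabletheorem} and Proposition~\ref{stablecor}(1)); for each $a \in Q^{\star_w}$ it fixes $a$, so $\star_w \leq w(a) \leq v(a)$, and the identity $\star_w = \bigwedge\{w(a): a \in Q^{\star_w}\}$ follows by the same sandwiching argument as in part (1), now using $\star_w = \bigwedge\{v(a): a \in Q^{\star_w}\}$ from Proposition~\ref{vclosureprop}.

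The proof involves no genuinely new computation; every hard step---the existence and maximality of $\overline{\sigma}$, its stability and (for finitary $\sigma$) finitariness, the divisorial closures $v(a)$ and $t(a)$, and the meet formulas of Proposition~\ref{vclosureprop}---has already been proved. The one thing I expect to require care is the bookkeeping: checking at each invocation that $Q$ belongs to the class of ordered magmas to which the cited result applies (near prequantale, precoherent or coherent semimultiplicative lattice, near residuated, and so on), and keeping the three maximality properties---largest stable nucleus fixing $a$, largest finitary nucleus fixing $a$, largest stable finitary nucleus fixing $a$---clearly distinguished throughout.
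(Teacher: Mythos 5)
Your proof is correct and follows the same route the paper indicates (the paper simply cites Propositions \ref{vclosureprop} and \ref{stablecor} and Theorems \ref{klattice} and \ref{stabletheorem} without writing out the argument); your sandwiching argument via $\bigwedge\{\overline{v}(a)\} \leq \bigwedge\{v(a)\} = \overline{\star}$ and its analogue for $\star_w$ is exactly the right way to fill in the details, and your hypothesis-checking at each invocation is carried out correctly.
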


\begin{lemma}
Let $Q$ be a near prequantale.  Then $\bigwedge \Gamma$ is a stable nucleus on $Q$ for any set $\Gamma$ of stable nuclei on $Q$.  
\end{lemma}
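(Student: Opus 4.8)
The plan is to invoke Proposition~\ref{CMC}. Since $Q$ is a near prequantale, hence a near sup-magma, that proposition shows that $\N(Q)$ is a complete lattice and that $\star_0 := \bigwedge\Gamma = \bigsqcapp\Gamma$ is a nucleus on $Q$, with $x^{\star_0} = \bigwedge\{x^\star : \star\in\Gamma\}$ for all $x\in Q$; this infimum exists because $\{x^\star : \star\in\Gamma\}$ is bounded below by $x$ (each $\star$ being expansive) and $Q$ is a near sup-lattice. Because a near prequantale is near residuated, it remains only to check for $\star_0$ the two defining properties of a stable nucleus.

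First I would verify the meet condition. Let $X\subseteq Q$ be finite with $\bigwedge X$ existing. By the stability of each $\star\in\Gamma$ one has $(\bigwedge X)^\star = \bigwedge(X^\star)$. All the infima appearing below exist — each is bounded below by an element of $X$ or by $\bigwedge X$, and $Q$ is a near sup-lattice — so iterated infima may be reassociated freely, and
\[
\left(\bigwedge X\right)^{\star_0}
 = \bigwedge_{\star\in\Gamma}\left(\bigwedge X\right)^{\star}
 = \bigwedge_{\star\in\Gamma}\bigwedge(X^{\star})
 = \bigwedge_{x\in X}\bigwedge_{\star\in\Gamma}x^{\star}
 = \bigwedge(X^{\star_0}).
\]

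Next I would verify the residuation condition. Fix $x,t\in Q$ with $t$ compact and $x/t$ existing. Each $x^{\star}/t$ then exists, since $\{z\in Q : zt\le x\}$ is nonempty and contained in $\{z\in Q : zt\le x^{\star}\}$, and $\{x^{\star}/t : \star\in\Gamma\}$ is bounded below by $x/t$, so $b := \bigwedge_{\star\in\Gamma}(x^{\star}/t)$ exists. For every $\star\in\Gamma$ one has $bt\le x^{\star}$, hence $bt\le x^{\star_0}$, so $b\le x^{\star_0}/t$; conversely $x^{\star_0}\le x^{\star}$ gives $x^{\star_0}/t\le x^{\star}/t$ for every $\star$, so $x^{\star_0}/t\le b$. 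Therefore, using the stability of each $\star$ once more,
\[
(x/t)^{\star_0}
 = \bigwedge_{\star\in\Gamma}(x/t)^{\star}
 = \bigwedge_{\star\in\Gamma}(x^{\star}/t)
 = b
 = x^{\star_0}/t,
\]
and the symmetric argument gives $(t\backslash x)^{\star_0} = t\backslash x^{\star_0}$ whenever $t\backslash x$ exists. Hence $\star_0$ is a stable nucleus.

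I do not expect a genuine obstacle here: the whole argument reduces to interchanging (bounded) infima and to the adjunction $z\le a/t \Leftrightarrow zt\le a$. The only points that need a moment's care are the existence of the various infima and residuals, all of which follow from $Q$ being a near sup-lattice together with the expansiveness of the nuclei in $\Gamma$; the degenerate case $\Gamma=\emptyset$, where $\star_0 = e$ and every requirement is immediate, can be noted in passing.
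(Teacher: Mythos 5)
Your proof is correct and follows the same route as the paper's: rearrange bounded infima to get the finite-meet condition, and establish $\bigwedge_{\star\in\Gamma}(x^\star/t) = x^{\bigwedge\Gamma}/t$ for the residuation condition. The only difference is presentational — you spell out the two inequalities proving the latter equality, which the paper simply asserts, and you explicitly invoke Proposition~\ref{CMC} for the fact that $\bigwedge\Gamma$ is a nucleus.
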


\begin{proof}
Let $X$ be a finite subset of $Q$ that is bounded below.  Then one has
$\left(\bigwedge X\right)^{\bigwedge \Gamma} = \bigwedge\left\{\left(\bigwedge X\right)^\star: \ \star \in \Gamma\right\} = \bigwedge\left\{\bigwedge (X^\star): \ \star \in \Gamma\right\} = \bigwedge\{x^\star: \ x \in X, \ \star \in \Gamma\} = \bigwedge \left(X^{\bigwedge \Gamma}\right)$.
Moreover, for any $x, t \in Q$ with $t$ compact, if $x/t$ exists, then one has
$(x/t)^{\wedge \Gamma} =  \bigwedge\{(x/t)^\star: \ \star \in \Gamma\} =  \bigwedge\{x^\star/t: \ \star \in \Gamma\} = x^{\bigwedge \Gamma}/t$,
and a similar proof holds for $t \backslash x$ if $t \backslash x$ exists.  Thus
$\bigwedge \Gamma$ is stable.
\end{proof}

\begin{corollary}
Let $Q$ be a precoherent near multiplicative lattice in which  every compact element of $Q$ is residuated.  Suppose moreover that $x \wedge 1$ exists for all $x \in Q$.  A nucleus $\star$ on $Q$ is stable if and only if $\star = \bigwedge \{\overline{v}(a): \ a \in X\}$ for some subset $X$ of $Q$.
\end{corollary}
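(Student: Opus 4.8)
The plan is to obtain both implications from the proposition and the lemma immediately preceding this corollary, together with Theorem \ref{stabletheorem}. Recall that the preceding proposition asserts that, under the present hypotheses on $Q$, the operation $\overline{v}(a) = \overline{v(a)}$ is for each $a \in Q$ a stable nucleus on $Q$ (indeed the largest one fixing $a$), and that every stable nucleus $\star$ satisfies $\overline{\star} = \bigwedge\{\overline{v}(a): a \in Q^{\overline{\star}}\}$; recall also that the preceding lemma says an arbitrary meet of stable nuclei on a near prequantale is again a stable nucleus.

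For the forward implication, I would suppose $\star$ is a stable nucleus on $Q$. Since $Q$ is a precoherent near multiplicative lattice it is in particular a precoherent semimultiplicative lattice, every compact element of $Q$ is residuated by hypothesis, and $x \wedge 1$ exists for all $x \in Q$, so Theorem \ref{stabletheorem} applies; its equivalence of conditions $(1)$ and $(4)$ gives $\star = \overline{\star}$. Feeding this into the formula $\overline{\star} = \bigwedge\{\overline{v}(a): a \in Q^{\overline{\star}}\}$ of the preceding proposition yields $\star = \bigwedge\{\overline{v}(a): a \in X\}$ with $X = Q^{\star}$, which is of the asserted form.

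For the reverse implication, I would suppose $\star = \bigwedge\{\overline{v}(a): a \in X\}$ for some subset $X$ of $Q$; this meet is taken in the complete lattice $\N(Q)$, which exists by Proposition \ref{CMC} because a near multiplicative lattice is a near sup-magma. By the preceding proposition each $\overline{v}(a)$ with $a \in X$ is a stable nucleus on $Q$, and a near multiplicative lattice is a near prequantale, so the preceding lemma shows the meet $\star$ is a stable nucleus.

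I do not expect a genuine obstacle: both directions are essentially a matter of assembling the preceding proposition, the preceding lemma, and Theorem \ref{stabletheorem}. The only points requiring a word of care are verifying that the hypotheses of Theorem \ref{stabletheorem} and of the preceding proposition are met by $Q$ (they hold verbatim), and handling the degenerate case $X = \emptyset$, where $\bigwedge\{\overline{v}(a): a \in X\} = \bigvee \N(Q) = e$ is the constant nucleus with value $\bigvee Q$; this is stable (the preceding lemma applies to the empty family), and on the forward side the choice $X = Q^{\star}$ is always nonempty, so no inconsistency arises.
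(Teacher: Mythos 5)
Your proof is correct and matches the paper's intended (omitted) argument: the forward direction chains Theorem \ref{stabletheorem}(1)$\Leftrightarrow$(4) with the formula $\overline{\star} = \bigwedge\{\overline{v}(a): a \in Q^{\overline{\star}}\}$ from the preceding proposition, and the reverse direction combines that proposition's assertion that each $\overline{v}(a)$ is a stable nucleus with the preceding lemma on meets of stable nuclei. Your hypothesis-checking (a precoherent near multiplicative lattice is a precoherent semimultiplicative lattice and a near prequantale) and the note on $X = \emptyset$ are both sound.
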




\section{Star and semistar operations}\label{sec:ASO}

Throughout this section let $D$ denote an integral domain with quotient field $F$.  A {\it Kaplansky fractional ideal} of $D$ is a $D$-submodule of $F$.  Let $\F(D)$ denote the ordered monoid of all nonzero Kaplansky fractional ideals of $D$.  Recall that a {\it semistar operation} on $D$ is a closure operation $\star$ on the poset $\F(D)$ such that $(aI)^\star = aI^\star$ for all nonzero $a \in F$ and all $I \in \F(D)$.  The submonoid $\P(D)$ of $\F(D)$ of all nonzero principal $D$-submodules of $F$ is a sup-spanning subset of $\F(D)$ contained in $\Inv(\F(D))$.  Therefore, by Propositions \ref{closureprop2} and \ref{closureprop3}, a semistar operation on $D$ is equivalently a nucleus on the ordered monoid $\F(D)$.  Together with Propositions \ref{closureprop1} and \ref{closureprop1a}, this yields a proof of Theorem \ref{mainprop1} of the introduction.

A semistar operation on $D$ may be defined alternatively as a closure operation on the poset $\operatorname{Mod}_D(F)$ such that $(aI)^\star = aI^\star$ for all $a \in F$ and all $I \in \operatorname{Mod}_D(F)$, or equivalently a nucleus $\star$ on the $\K$-lattice $\operatorname{Mod}_D(F)$ such that $(0)^\star = (0)$.  This alternative definition is advantageous because $\F(D)$ is typically neither complete nor residuated, while every $\K$-lattice is complete and residuated, and the definition thereby eliminates the need for many results, such as \cite[Proposition 5, Theorem 20, Lemma 40]{oka}, to make exception for the zero ideal.  The definition also allows for the possibility of relaxing the condition $(0)^\star = (0)$ by considering all nuclei on $\operatorname{Mod}_D(F)$.  Since $\F(D)$ inherits its ``near'' properties (such as being a near $\K$-lattice and near $\U$-lattice) from corresponding ``complete'' properties of $\operatorname{Mod}_D(F)$, all of the results of this paper generalizing results on semistar operations apply as well to all nuclei on $\operatorname{Mod}_D(F)$.

Recall that a $D$-submodule $I$ of $F$ is said to be a {\it fractional ideal} of $D$ if $aI \subseteq D$ for some nonzero element $a$ of $F$.   Although the ordered monoid $\Fp(D)$  of all nonzero fractional ideals of $D$ is not necessarily a near prequantale, it is a semiprequantale (and in fact a semi-$\U$-lattice).  Moreover, $\Fp(D)$ is coherent and residuated.  A {\it star operation} $*$ on $D$ is a closure operation on the poset $\Fp(D)$ such that $D^* = D$ and $(aI)^* = aI^*$ for all nonzero $a \in F$ and all $I \in \Fp(D)$.  The same argument as in the proof of Theorem \ref{mainprop1} yields the following.

\begin{theorem}\label{mainprop2}
Let $D$ be an integral domain with quotient field $F$.  The following conditions are equivalent for any self-map $*$ of $\Fp(D)$ such that $D^* = D$.
\begin{enumerate}
\item $*$ is a star operation on $D$.
\item $*$ is a nucleus on the ordered monoid $\Fp(D)$.
\item $*$ is a closure operation on the poset $\Fp(D)$ and $*$-multiplication on $\Fp(D)$ is associative.
\item $*$ is a closure operation on the poset $\Fp(D)$ and $(I^* J^*)^* = (IJ)^*$ for all $I,J \in \Fp(D)$ (or equivalently such that the map $*: \Fp(D) \longrightarrow \Fp(D)^*$ is a magma homomorphism).
\item $HJ \subseteq I^*$ if and only if $H J^* \subseteq I^*$ for all $H,I,J \in \Fp(D)$.
\item $(I^* : J) = (I^* : J^*)$ for all $I, J \in \Fp(D)$.
\item $* = \star|_{\Fp(D)}$ for some semistar operation $\star$ on $D$ such that $D^\star = D$.
\end{enumerate}
\end{theorem}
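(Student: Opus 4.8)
The plan is to prove the equivalence of (1)--(6) by repeating verbatim the argument used for Theorem \ref{mainprop1}, and then to fold (7) into the cycle by a short extension--restriction argument between $\Fp(D)$ and $\F(D)$.

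For (1)--(6) the structural facts in play are that $\Fp(D)$ is a commutative unital ordered monoid (identity $D$) which is coherent and residuated, with residual $I/J = (I:J) := (I:_F J)$, and that $\P(D)$ is a sup-spanning subset of $\Fp(D)$ contained in $\Inv(\Fp(D))$ (every nonzero fractional ideal is the supremum of the principal fractional ideals it contains, and a principal fractional ideal $aD$ is invertible, with inverse $a^{-1}D$). Given these, the equivalences come out exactly as for $\F(D)$: Propositions \ref{closureprop2} and \ref{closureprop3} give (1)$\Leftrightarrow$(2) (a star operation transports each $aD \in \P(D)$, hence is a nucleus; a nucleus transports $\Inv(\Fp(D)) \supseteq \P(D)$, hence, given $D^* = D$, is a star operation); Proposition \ref{closureprop1} gives (2)$\Leftrightarrow$(3)$\Leftrightarrow$(4), since $\Fp(D)$ is an ordered monoid; and Proposition \ref{closureprop1a} gives (2)$\Leftrightarrow$(5), where commutativity makes the single biconditional of (5) equivalent to the two-sided transport condition, and (2)$\Leftrightarrow$(6) via condition (4) of that proposition, since $\Fp(D)$ is unital and residuated with residual $(I:J)$.

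It then remains to insert (7). For (7)$\Rightarrow$(1): if $\star$ is a semistar operation on $D$ with $D^\star = D$ and $I \in \Fp(D)$, choose nonzero $a \in F$ with $aI \subseteq D$; then $aI^\star = (aI)^\star \subseteq D^\star = D$, so $I^\star \in \Fp(D)$, and hence $\star$ restricts to a self-map $* = \star|_{\Fp(D)}$ of $\Fp(D)$ which is visibly a closure operation with $D^* = D$ and $(aI)^* = aI^*$, i.e.\ a star operation. For (1)$\Rightarrow$(7): a star operation $*$ is, by (1)$\Leftrightarrow$(2), a nucleus on $\Fp(D)$; I would apply Proposition \ref{extendingclosures}(2) with $M = \F(D)$ and the submonoid $N = \Fp(D)$. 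The hypotheses hold because $\F(D)$ is a near $\U$-lattice, hence a near prequantale, and $\Fp(D)$ is a sup-spanning subset of $\F(D)$ (it contains the sup-spanning subset $\P(D)$ of $\F(D)$). Thus $\star := \ind_{\F(D)}(*)$ is a nucleus on $\F(D)$ with $\ind_{\F(D)}(*)|_{\Fp(D)} = *$; by Theorem \ref{mainprop1} it is a semistar operation on $D$, and $D^\star = D^* = D$ since $D \in \Fp(D)$, so $* = \star|_{\Fp(D)}$ with $\star$ a semistar operation fixing $D$ (indeed, by Proposition \ref{extendingclosures}(1), the smallest such). This closes the cycle.

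The point that deserves the most care is the assertion that $\Fp(D)$ is a sup-spanning subset of $\F(D)$, together with checking the relevant ``near'' completeness hypotheses, because it is precisely the sup-spanning property that guarantees the extension $\ind_{\F(D)}(*)$ restricts to exactly $*$ rather than to some coarser or finer operation on $\Fp(D)$. Everything else is a direct transcription of the machinery already set up for $\F(D)$, $\Fp(D)$, and their nuclei.
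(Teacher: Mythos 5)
Your proof is correct and follows the paper's approach. For conditions (1)--(6) you correctly transcribe the machinery used for Theorem~\ref{mainprop1} to $\Fp(D)$: the ordered monoid $\Fp(D)$ is commutative, unital, residuated with residual $(I:_F J)$, $\P(D)$ is sup-spanning in $\Fp(D)$ and contained in $\Inv(\Fp(D))$, and then Corollary~\ref{joinspan}, Proposition~\ref{closureprop3}, Proposition~\ref{closureprop1}, and Proposition~\ref{closureprop1a} close the cycle as in the $\F(D)$ case. For condition (7), which Theorem~\ref{mainprop1} has no analogue of and which the paper's one-line proof does not address, your argument is sound: (7)$\Rightarrow$(1) is a short verification that $\star$ carries $\Fp(D)$ into itself when $D^\star = D$, and (1)$\Rightarrow$(7) is obtained by applying Proposition~\ref{extendingclosures}(2) to extend the nucleus $*$ on $\Fp(D)$ to the nucleus $s(*) = \ind_{\F(D)}(*)$ on the near prequantale $\F(D)$, using that $\Fp(D) \supseteq \P(D)$ is sup-spanning. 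This is precisely the construction the paper carries out as Proposition~\ref{semistarfromstar}(1). One small simplification worth noting: the paper also offers the extension $l(*) = \ind^{\F(D)}(*)$ via Proposition~\ref{extendingclosures}(3), using that $\Fp(D)$ is a saturated downward closed subset of the bounded-above ordered magma $\F(D)$; this gives (1)$\Rightarrow$(7) more directly, since $l(*)$ is just $*$ on $\Fp(D)$ and $F$ elsewhere, with no infimum formula to unpack. Either extension works, and your careful flagging of the sup-spanning hypothesis as the crux of the extension--restriction argument is exactly right.
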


The following proposition, which follows from the results of Sections \ref{sec:COCL} and \ref{sec:PFN}, lists properties of the posets $\SStar(D)$, $\SStar_f(D)$, $\Star(D)$, and $\Star_f(D)$ of all semistar operations, finite type semistar operations, star operations, and finite type star operations, respectively, on $D$.  

\begin{proposition}
Let $D$ be an integral domain.  One has $\SStar(D) = \N(\F(D))$, $\SStar_f(D) = \N_f(\F(D))$, $\Star(D) = \N(\Fp(D))_{\leq v}$, and $\Star_f(D) = \N_f(\Fp(D))_{\leq t}$, and all four of these posets are complete.  Let $\Gamma \subseteq \SStar(D)$ and $\Delta \subseteq \Star(D)$, and let $\langle \Gamma \rangle$ (resp., $\langle \Delta \rangle$) denote the submonoid generated by $\Gamma$ (resp., $\Delta$) of the monoid of all self-maps of $\F(D)$ (resp., $\Fp(D)$).
\begin{enumerate}
\item  One has $$I^{\bigwedge \Gamma} = \bigcap\{I^\star: \ \star \in \Gamma\},$$ $$I^{\bigvee \Gamma} = \bigcap\{J \in \F(D): \ J \supseteq I \mbox{ and } \forall \star \in \Gamma \ (J^\star = J)\},$$ for all $I \in \F(D)$.
\item If $\Gamma \subseteq \SStar_f(D)$, then $$I^{\bigvee \Gamma} = \bigcup\{I^\gamma: \ \gamma \in \langle \Gamma \rangle\}$$ for all $I \in \F(D)$, one has $\bigvee_{\SStar_f(D)} \Gamma = \bigvee \Gamma$ and $\bigwedge_{\SStar_f(D)} \Gamma = (\bigwedge \Gamma)_f$, and if $\Gamma$ is finite then $\bigwedge_{\SStar_f(D)} \Gamma = \bigwedge \Gamma$.
\item One has $$I^{\bigwedge \Delta} = \bigcap\{I^*: \ * \in \Delta\},$$ $$I^{\bigvee \Delta} = \bigcap\{J \in \Fp(D): \ J \supseteq I \mbox{ and } \forall * \in \Delta \ (J^* = J)\},$$ for all $I \in \Fp(D)$.
\item If $\Delta \subseteq \Star_f(D)$, then $$I^{\bigvee \Delta} = \bigcup\{I^\gamma: \ \gamma \in \langle \Delta \rangle\}$$  for all $I \in \Fp(D)$, one has $\bigvee_{\Star_f(D)} \Delta = \bigvee \Delta$ and $\bigwedge_{\Star_f(D)} \Delta =  (\bigwedge \Delta)_f$, and if $\Delta$ is finite then $\bigwedge_{\Star_f(D)} \Delta = \bigwedge \Delta$.
\end{enumerate}
\end{proposition}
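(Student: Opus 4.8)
The plan is to derive everything from results proved earlier: Theorems~\ref{mainprop1} and~\ref{mainprop2} give the four identifications, Propositions~\ref{CMC} and~\ref{Nf} give completeness and the supremum and infimum formulas, and Proposition~\ref{finitaryclosure} together with Theorem~\ref{klattice} handles the passage to finite type. First I would establish the identifications. The equality $\SStar(D)=\N(\F(D))$ is the equivalence of (1) and (2) in Theorem~\ref{mainprop1}, and since a finite type semistar operation is exactly a semistar operation that is finitary as a closure operation, $\SStar_f(D)=\SStar(D)\cap\C_f(\F(D))=\N_f(\F(D))$. For star operations, Theorem~\ref{mainprop2} identifies $\Star(D)$ with the set of nuclei $\star$ on the coherent residuated ordered monoid $\Fp(D)$ with $D^\star=D$. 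As $D$ is the identity of $\Fp(D)$, hence cyclic, the divisorial closure $v=v_{\Fp(D)}(D)$ exists (Proposition~\ref{vclosureprop3}, or Proposition~\ref{semiu}), and by Proposition~\ref{divprop}(1) a nucleus $\star$ satisfies $D^\star=D$ if and only if $\star\leq v$; thus $\Star(D)=\N(\Fp(D))_{\leq v}$. Setting $t=v_f$, which is a nucleus by Theorem~\ref{klattice} (since $\Fp(D)$ is a precoherent semimultiplicative lattice) and is the largest finitary closure operation below $v$ by Proposition~\ref{finitaryclosure}, a finitary nucleus $\star$ has $D^\star=D$ iff $\star\leq v$ iff $\star\leq t$; combined with the preceding this gives $\Star_f(D)=\N_f(\Fp(D))_{\leq t}$.

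Next, completeness. Proposition~\ref{CMC} shows $\N(\F(D))$ is a complete lattice (as $\F(D)$ is a near multiplicative lattice, hence a near prequantale by Corollary~\ref{ulattices}) and $\N(\Fp(D))$ is bounded complete. Both $\F(D)$ and $\Fp(D)$ are Scott-topological by Proposition~\ref{preq}, so Proposition~\ref{Nf} makes $\N_f(\F(D))$ a sub sup-lattice of $\N(\F(D))$ and $\N_f(\Fp(D))$ bounded complete. Finally, $\N(\Fp(D))_{\leq v}$ and $\N_f(\Fp(D))_{\leq t}$ are principal down-sets of these bounded complete posets, each containing the least nucleus $d$; such a down-set is itself complete, since a nonempty subset is bounded above by the top of the down-set while the empty subset has supremum $d$. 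Hence all four posets are complete.

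For the formulas, Proposition~\ref{CMC} gives $\bigwedge\Gamma=\bigsqcapp\Gamma$ and $\bigvee\Gamma=\bigsqcupp\Gamma$ in $\N(\F(D))$, whence $I^{\bigwedge\Gamma}=\bigwedge_{\F(D)}\{I^\star:\ \star\in\Gamma\}$ and $I^{\bigvee\Gamma}=\bigwedge_{\F(D)}\{J\in\F(D):\ J\supseteq I\mbox{ and }J^\star=J\mbox{ for all }\star\in\Gamma\}$; since every member of these families contains the nonzero ideal $I$ the infima are just intersections, which is (1). For $\Gamma\subseteq\SStar_f(D)$, Proposition~\ref{Nf}(1) gives $I^{\bigvee\Gamma}=\bigvee\{I^\gamma:\ \gamma\in\langle\Gamma\rangle\}$, and since each $\gamma\in\langle\Gamma\rangle$ is expansive and order-preserving the family $\{I^\gamma\}$ is directed (given $\gamma_1,\gamma_2$ the composite $\gamma_1\circ\gamma_2\in\langle\Gamma\rangle$ dominates both), so that supremum is a union; also $\bigvee_{\SStar_f(D)}\Gamma=\bigvee\Gamma$ because $\N_f(\F(D))$ is a sub sup-lattice, while $\bigwedge_{\SStar_f(D)}\Gamma$, being the largest finitary nucleus below $\bigwedge\Gamma$, equals $(\bigwedge\Gamma)_f$ by Proposition~\ref{finitaryclosure} and Theorem~\ref{klattice}, and coincides with $\bigwedge\Gamma$ when $\Gamma$ is finite since $\F(D)$ is an algebraic meet semilattice and hence $\N_f(\F(D))$ is closed under finite meets in $\N(\F(D))$ by Proposition~\ref{Nf}(3). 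Parts (3) and (4) then follow by identical arguments inside $\N(\Fp(D))$ and $\N_f(\Fp(D))$, using that for nonempty $\Delta$ the sups and infs computed in the intervals $\N(\Fp(D))_{\leq v}$ and $\N_f(\Fp(D))_{\leq t}$ agree with those in the ambient posets, because every member of $\Delta$ is already $\leq v$ (resp.\ $\leq t$).

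I expect the only genuine difficulty to be organizational: keeping straight the ambient poset of each supremum and infimum, and verifying the two compatibility facts just used, namely that $v$ (resp.\ $t$) cuts out exactly the star (resp.\ finite type star) operations and that restricting to the intervals $[d,v]$ and $[d,t]$ leaves the relevant sups and infs unchanged. No new inequality is required; everything is bookkeeping over the cited results, the one mild caveat being that the displayed meet formulas in (1) and (3) are intended for nonempty $\Gamma$, $\Delta$.
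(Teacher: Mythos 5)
Your proposal is correct and follows essentially the route the paper intends: the paper gives no detailed proof, saying only that the proposition ``follows from the results of Sections \ref{sec:COCL} and \ref{sec:PFN},'' and your argument works out precisely those details (identifications via Theorems \ref{mainprop1} and \ref{mainprop2} together with the divisorial closure $v(D)$ on $\Fp(D)$, completeness via Propositions \ref{CMC} and \ref{Nf} plus the down-set observation, the explicit formulas via \ref{CMC} and \ref{Nf} with infima in $\F(D)$/$\Fp(D)$ computed as intersections, and the finitary statements via Proposition \ref{finitaryclosure} and Theorem \ref{klattice}).
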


Next, let $J \in \F(D)$.  For all $I \in \F(D)$ we let $I^{v(J)} = (J:_F (J:_F I))$.   The operation $v(J)$ on $\F(D)$ is called {\it divisorial closure on $D$ with respect to $J$} \cite[Example 1.8(a)]{pic}.   More generally, for all $\S \subseteq \F(D)$ we define $v(\S) = \bigwedge \{v(J): J \in \S\}$, which we call {\it divisorial closure on $D$ with respect to $\S$}.
We set $t(\S) = v(\S)_f$, $\overline{v}(\S) = \overline{v(\S)}$, and $w(\S) = v(\S)_w = \overline{t(\S)}$, where for any semistar operation $\star$ one defines
$\overline{\star}$ by $I^{\overline{\star}} = \bigcup\{(I:_F J):\ J \subseteq D, \ J^\star = D^\star\}$ for all $I \in \F(D)$ and $\star_w = \overline{\star_f}$.  
A semistar operation $\star$ on $D$ is said to be {\it stable} if $(I \cap J)^\star = I^\star \cap J^\star$ for all $I,J \in\F(D)$. 
The results of Sections \ref{sec:PFN}, \ref{sec:DC},  and \ref{sec:stable} yield the following.

\begin{proposition}\label{semidivprop} 
Let $D$ be an integral domain and $J \in \F(D)$.
\begin{enumerate}
\item $v(J)$ is the largest semistar operation $\star$ on $D$ such that $J^{\star} = J$.
\item $t(J)$ is the largest finite type semistar operation $\star$  on $D$ such that $J^{\star} = J$.
\item $\overline{v}(J)$ is the largest stable semistar operation $\star$ on $D$  such that $J^{\star} = J$.
\item $w(J)$ is the largest stable finite type semistar operation $\star$ on $D$ such that $J^{\star} = J$.
\end{enumerate}
More generally, we have the following for any subset $\S$ of $\F(D)$.
\begin{enumerate}
\item[(5)] $v(\S)$ is the largest semistar operation $\star$ on $D$ such that $\S \subseteq \F(D)^\star$.
\item[(6)] $t(\S)$ is the largest finite type semistar operation $\star$ on $D$ such that $\S \subseteq \F(D)^\star$.
\item[(7)] $\overline{v}(\S)$ is the largest stable semistar operation $\star$ on $D$  such that $\S \subseteq \F(D)^\star$.
\item[(8)] $w(\S)$ is the largest stable finite type semistar operation $\star$ on $D$ such that $\S \subseteq \F(D)^\star$.
\end{enumerate}
For any semistar operation $\star$ on $D$, one has the following.
\begin{enumerate}
\item[(9)] $\star = \bigwedge\{v(J): \ J \in \F(D)^\star\}$.
\item[(10)] $\star_f = \bigwedge\{t(J): \ J \in \F(D)^{\star_f}\}$.
\item[(11)] $\overline{\star} = \bigwedge\{\overline{v}(J): \ J \in \F(D)^{\overline{\star}}\}$.
\item[(12)] $\star_w = \bigwedge\{w(J): \ J \in \F(D)^{\star_w}\}$.
\end{enumerate}
Moreover, $\star$ is stable if and only if $\star = \bigwedge\{\overline{v}(J): \ J \in \S\}$ for some $\S \subseteq \F(D)$, in which case $\star = \overline{v}(\S)$.
\end{proposition}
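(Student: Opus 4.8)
The plan is to read off everything from the general theory of Section~\ref{sec:stable} applied to the near multiplicative lattice $Q = \F(D)$, using Theorem~\ref{mainprop1} to identify semistar operations on $D$ with nuclei on $\F(D)$ and stable semistar operations with stable nuclei. First I would record that $\F(D)$ satisfies all the relevant standing hypotheses: it is a precoherent near multiplicative lattice (indeed a near $\K$-lattice), its compact elements are exactly the nonzero finitely generated fractional ideals of $D$, each of which is residuated in $\F(D)$ (the residual $(I :_F J)$ being a nonzero fractional ideal whenever $I \in \F(D)$ and $J$ is finitely generated), and $I \wedge 1 = I \cap D$ exists in $\F(D)$ for every $I$, since any nonzero $I$ contains an element $a = b(a/b)$ with $a, b \in D$ and $a \neq 0$, so $I \cap D \ni a$ is a nonzero fractional ideal. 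Thus $\F(D)$ satisfies the hypotheses of Theorem~\ref{stabletheorem} (as already noted in the discussion following it) and of the final corollary of Section~\ref{sec:stable}.

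With these identifications in place, the asserted equivalence---$\star$ is stable if and only if $\star = \bigwedge\{\overline{v}(J) : J \in \S\}$ for some $\S \subseteq \F(D)$---is exactly the final corollary of Section~\ref{sec:stable} specialized to $Q = \F(D)$. For concreteness one may note that if $\star$ is stable then $\star = \overline{\star}$ by Theorem~\ref{stabletheorem}, so statement~(11) of the present proposition gives $\star = \bigwedge\{\overline{v}(J): J \in \F(D)^{\star}\}$; hence $\S = \F(D)^{\star}$ is a witness. Conversely, any such $\star$ is stable because each $\overline{v}(J)$ is a (stable) nucleus by Lemma~\ref{stableprop}(4) and an arbitrary meet of stable nuclei is stable by the lemma of Section~\ref{sec:stable} to that effect.

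For the final clause---that in this case $\star = \overline{v}(\S)$---I would prove the identity $\bigwedge\{\overline{v}(J): J \in \S\} = \overline{v}(\S)$ for an arbitrary $\S \subseteq \F(D)$. Put $\star' = \bigwedge\{\overline{v}(J): J \in \S\}$. Then $\star'$ is stable by the stable-meet lemma, and for each $J \in \S$ the equality $J^{\overline{v}(J)} = J$ from statement~(3) forces $J^{\star'} = \bigwedge\{J^{\overline{v}(J')} : J' \in \S\} = J$, so $\S \subseteq \F(D)^{\star'}$. Moreover $\star'$ is the \emph{largest} stable semistar operation with $\S$ contained in its fixed set: if $\star$ is stable with $\S \subseteq \F(D)^{\star}$, then $\star \leq \overline{v}(J)$ for every $J \in \S$ by the maximality in statement~(3), hence $\star \leq \star'$. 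Since statement~(7) characterizes $\overline{v}(\S) = \overline{v(\S)}$ by precisely this maximality property, $\star' = \overline{v}(\S)$, which gives the claim.

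The argument is entirely formal once the machinery of Section~\ref{sec:stable} is in hand; the only point demanding genuine (if routine) verification is that $\F(D)$ really does satisfy the standing hypotheses---in particular that $I \cap D$ is always a nonzero fractional ideal, so that ``$x \wedge 1$ exists'' holds, and that finitely generated fractional ideals are residuated. After that, the ``only if'' direction of the equivalence is just statement~(11) combined with Theorem~\ref{stabletheorem}, the ``if'' direction is the stable-meet lemma together with Lemma~\ref{stableprop}(4), and the last clause is the maximality bookkeeping above.
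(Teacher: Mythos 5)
Your proposal is correct and takes essentially the same route the paper intends: the paper gives no explicit proof of Proposition~\ref{semidivprop} but states that it follows from the results of Sections~\ref{sec:PFN}, \ref{sec:DC}, and \ref{sec:stable}, and your argument does exactly that, verifying the standing hypotheses for $Q = \F(D)$ and deriving the final ``Moreover'' clause from the stable-meet lemma, Lemma~\ref{stableprop}(4), Theorem~\ref{stabletheorem}, and the maximality characterizations in items (3), (7), (11). One minor slip: $(I :_F J)$ for $I \in \F(D)$ and $J$ finitely generated is a nonzero \emph{Kaplansky} fractional ideal, i.e., an element of $\F(D)$, but need not be a fractional ideal (e.g.\ when $I = F$); also, the paper's own verification of the hypotheses goes via Propositions~\ref{ucoherent}, \ref{stableU} and Corollary~\ref{ulattices} ($\F(D)$ being a near $\U$-lattice with $1$ compact), whereas you check residuation and $x \wedge 1$ directly, which is equally valid.
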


We are unable to characterize all subsets $\S$ of $\F(D)$ such that $\bigwedge\{t(J): \ J \in \S\}$ is of finite type (although it certainly holds if $\S$ is finite, by Proposition \ref{Nf}(3)).  We leave this as an open problem.

Propositions \ref{vquantales} and \ref{semiu} yield the following.

\begin{proposition}\label{vchar}
Let $D$ be an integral domain with quotient field $F$.  For all $I, J \in \F(D)$ one has
\begin{eqnarray*}
I^{v(J)} & = & \bigcup \{I' \in \F(D):  (J :_F I) \subseteq (J :_F I')\} \\
  & = & \bigcap \{U J: U\in \F(D) \mbox{ is invertible and } I \subseteq U J\}.
\end{eqnarray*}
\end{proposition}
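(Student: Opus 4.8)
The plan is to read off the two displayed formulas as the commutative specializations, to $Q = \F(D)$, of Propositions~\ref{vquantales} and~\ref{semiu}. First I would check that $\F(D)$ meets the hypotheses of both: $\F(D)$ is a near $\U$-lattice (the sup-spanning subset $\P(D)$ lies in $\U(\F(D))$, and a superset of a sup-spanning subset is sup-spanning), hence a near prequantale by Corollary~\ref{ulattices}, and being an ordered monoid it is an associative unital near quantale; moreover a near sup-lattice is a bounded complete join semilattice, so $\F(D)$ is also an associative unital semi-$\U$-lattice. I would also record that, since $\F(D)$ is a monoid, $\U(\F(D)) = \Inv(\F(D))$, which is exactly the group of invertible fractional ideals of $D$. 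Throughout I would use that meets and joins in $\F(D)$ are intersections and sums of $D$-submodules of $F$, under the conventions $\bigwedge\emptyset = \bigvee\F(D) = F$ and $(J :_F (0)) = F$.

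For the first identity I would apply Proposition~\ref{vquantales} with $a = J$, $x = I$, getting $I^{v(J)} = \bigvee\{I' \in \F(D) : \forall r,s \in \F(D)\ (rIs \subseteq J \Rightarrow rI's \subseteq J)\}$. Commutativity gives $rIs \subseteq J \iff rs \subseteq (J :_F I)$, and every $L \in \F(D)$ with $L \subseteq (J :_F I)$ arises as $L = L\cdot D$; hence the condition on $I'$ is equivalent to $L \subseteq (J :_F I) \Rightarrow L \subseteq (J :_F I')$ for all $L \in \F(D)$, i.e.\ to $(J :_F I) \subseteq (J :_F I')$ (the case $(J :_F I) = (0)$ being vacuous and consistent with the convention). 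Thus $I^{v(J)} = \bigvee\{I' \in \F(D) : (J :_F I) \subseteq (J :_F I')\}$, and since $(J :_F I'_1 + I'_2) = (J :_F I'_1) \cap (J :_F I'_2)$ this family is closed under finite sums, so its supremum in $\F(D)$ equals its union, which is the first displayed set.

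For the second identity I would apply Proposition~\ref{semiu} with $a = J$, $x = I$, getting $I^{v(J)} = \bigwedge\{UJV : U,V \in \U(\F(D)),\ I \subseteq UJV\}$. By commutativity $UJV = (UV)J$ with $UV \in \U(\F(D))$, and $\U(\F(D))$ is the set of invertible fractional ideals of $D$, so the index family is $\{UJ : U \in \F(D)\ \text{invertible},\ I \subseteq UJ\}$. This family is bounded below (each member contains $I \neq (0)$), so its infimum in the bounded complete poset $\F(D)$ is its intersection, an empty family giving $\bigwedge\emptyset = F$ in agreement with the empty intersection; this is the second displayed set. The only delicate points are the edge cases just flagged and the identification $\U(\F(D)) = \Inv(\F(D)) = \{\text{invertible fractional ideals of } D\}$; the substantive content is carried entirely by Propositions~\ref{vquantales} and~\ref{semiu}, so I expect no genuine obstacle.
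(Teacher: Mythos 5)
Your proposal is correct and follows exactly the route the paper indicates (the paper's own ``proof'' is just the one-line remark that Propositions~\ref{vquantales} and~\ref{semiu} yield the statement). You supply precisely the necessary bookkeeping: verifying $\F(D)$ is an associative unital near quantale and semi-$\U$-lattice, using commutativity to collapse $rIs$ to $(rs)I$ and $UJV$ to $(UV)J$, identifying $\U(\F(D))=\Inv(\F(D))$ with the invertible fractional ideals, converting the condition $\forall L\ (L\subseteq(J:_F I)\Rightarrow L\subseteq(J:_F I'))$ to the inclusion $(J:_F I)\subseteq(J:_F I')$, noting the family $\{I'\}$ is directed (closed under finite sums via $(J:_F I'_1+I'_2)=(J:_F I'_1)\cap(J:_F I'_2)$) so $\bigvee=\bigcup$, and noting the family $\{UJ\}$ is bounded below by $I$ so $\bigwedge=\bigcap$ with the empty case giving $F$ on both sides. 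This is the intended argument, carefully executed.
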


Theorem \ref{stabletheorem} yields the following characterizations of stable semistar operations.

\begin{proposition}\label{stablechar}
Let $D$ be an integral domain with quotient field $F$, and let $\star$ be a semistar operation on $D$.  The following conditions are equivalent.
\begin{enumerate}
\item $\star$ is stable.
\item $(I \cap D)^\star  = I^\star \cap D^\star$ and $(I :_F J)^\star = (I^\star :_F J)$ for all $I,J \in \F(D)$ with $J$ finitely generated.
\item $(I :_D J)^\star = (I^\star :_{D^\star} J)$ for all $I,J \in \F(D)$ with $J$ finitely generated.
\end{enumerate}
\end{proposition}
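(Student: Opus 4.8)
The plan is to apply Theorem \ref{stabletheorem} to the ordered monoid $Q=\F(D)$, under which the three conditions of that theorem become, verbatim, the three conditions of the present proposition. By Theorem \ref{mainprop1} the semistar operation $\star$ is a nucleus on $\F(D)$, so first I would check that $\F(D)$ satisfies the hypotheses of Theorem \ref{stabletheorem}: it is a near $\K$-lattice, hence a precoherent semimultiplicative lattice; its identity $D$ is compact; $I\wedge 1 = I\cap D$ exists in $\F(D)$ for every $I$; and every compact element of $\F(D)$, that is, every nonzero finitely generated $D$-submodule $J$ of $F$, is residuated in $\F(D)$ with $I/J = (I:_F J)$. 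The only point that needs a moment's thought is that these residuals and meets do not fall out of $\F(D)$ by vanishing: writing $J=(a_1,\dots,a_n)D\ne 0$ with the $a_i$ over a common denominator $d\in D\setminus\{0\}$ and picking $0\ne b\in I$, one checks $db\in(I:_F J)$, so $(I:_F J)\ne 0$; similarly $I\cap D$ and $I\cap J$ are nonzero by clearing denominators, so $\F(D)$ is a meet semilattice. (All of this is also recorded in the remarks following Theorem \ref{stabletheorem}, since $\F(D)$ is a commutative associative unital semi-$\U$-lattice with $1$ compact that is a meet semilattice.)

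Next I would translate the conditions of Theorem \ref{stabletheorem} through this identification. Stability of the nucleus $\star$ on $\F(D)$ a priori requires both $(\bigwedge X)^\star=\bigwedge(X^\star)$ for finite $X$ and the residual condition, but by Proposition \ref{stableU} this is equivalent, for $Q=\F(D)$, to the single requirement $(I\cap J)^\star = I^\star\cap J^\star$ for all $I,J\in\F(D)$, i.e.\ to $\star$ being a stable semistar operation; this is condition (1). Condition (2) of Theorem \ref{stabletheorem}, namely $(x\wedge 1)^\star=x^\star\wedge 1^\star$ and $(x/t)^\star=x^\star/t$ for $t$ compact, becomes exactly condition (2) here, once one reads $x\wedge 1 = I\cap D$, $1^\star = D^\star$, and $x/t=(I:_F J)$ with $J$ finitely generated. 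For condition (3) of Theorem \ref{stabletheorem}, namely $(x/t\wedge 1)^\star = x^\star/t\wedge 1^\star$ for $t$ compact, I would use $x/t\wedge 1 = (I:_F J)\cap D = (I:_D J)$ and $x^\star/t\wedge 1^\star = (I^\star:_F J)\cap D^\star = (I^\star:_{D^\star} J)$, where $D^\star$ is the overring $D^\star\in\F(D)$; this turns (3) of the theorem into (3) of the proposition. The equivalences $(1)\Leftrightarrow(2)\Leftrightarrow(3)$ then follow immediately from Theorem \ref{stabletheorem}.

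The main obstacle is purely bookkeeping: verifying that the abstract residuation and meet of $\F(D)$ are the classical colon and intersection of $D$-submodules, that ``compact in $\F(D)$'' means ``nonzero and finitely generated'', and that $(I:_F J)$, $I\cap D$, and $I\cap J$ stay inside $\F(D)$ (the nonvanishing is what the denominator-clearing is for). One could equivalently carry out the argument in the $\K$-lattice $\operatorname{Mod}_D(F)$ and then restrict along $\F(D)\subseteq\operatorname{Mod}_D(F)$, but working directly in $\F(D)$ avoids dragging the zero submodule through the colon computations and keeps the translation with Theorem \ref{stabletheorem} transparent.
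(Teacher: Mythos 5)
Your proof is correct and matches the paper's intended route: the paper introduces Proposition \ref{stablechar} simply with ``Theorem \ref{stabletheorem} yields the following characterizations of stable semistar operations,'' and the remarks following Theorem \ref{stabletheorem} already record that $\F(D)$, as an associative unital semi-$\U$-lattice with $1 = D$ compact that is a meet semilattice, satisfies its hypotheses (via Propositions \ref{ucoherent}, \ref{stableU} and Corollary \ref{ulattices}). You have merely spelled out the routine identifications (compact $=$ nonzero finitely generated, $x/t = (I :_F J)$, $x \wedge 1 = I \cap D$, and the nonvanishing checks) that the paper leaves implicit.
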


Since  the ordered monoid $\Fp(D)$ of nonzero fractional ideals of a domain $D$ is a  precoherent residuated semimultiplicative lattice, a meet semilattice, and an associative unital semi-$\U$-lattice, and since the ordered monoid $\mathcal{I}(R)$ of ideals of a ring $R$ is a quantale,  results analogous to Propositions  \ref{semidivprop}, \ref{vchar}, and \ref{stablechar}  hold for star operations and for semiprime operations.

Proposition \ref{semidivprop} shows that all semistar operations can be obtained as infima of generalized divisorial closure semistar operations.  This has the potential for yielding new results on the number of semistar operations on an integral domain, which is a commonly studied problem in the theory of semistar operations.   (Similar comments hold for star operations and  for semiprime operations.) For example, combining Corollary \ref{simplenearmult} with \cite[Theorem 48]{oka}, we obtain the following.

\begin{proposition}
The following are equivalent for any integral domain $D$ with quotient field $F \neq D$. 
\begin{enumerate}
\item $D$ is a DVR.
\item The only semistar operations on $D$ are $d$ and $e$.
\item $|\SStar(D)| = 2$.
\item The near multiplicative lattice $\F(D)$ is simple.
\item $v(J) = d$ for all $J \in \F(D)$ with $J \neq F$.
\item $(J :_F (J:_F I)) = I$ for all $I,J \in \F(D)$ with $I,J \neq F$.
\end{enumerate}
\end{proposition}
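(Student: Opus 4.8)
The plan is to establish the cycle $(2)\Leftrightarrow(3)\Leftrightarrow(4)\Leftrightarrow(5)\Leftrightarrow(6)$ purely internally, using the structure theory of simple near prequantales from Sections \ref{sec:MCO} and \ref{sec:DC}, and then to hook in $(1)$ via the classical result \cite[Theorem 48]{oka}. For the setup I would first recall that, since $D\neq F$, the ring $D$ and its quotient field $F$ are distinct elements of $\F(D)$; hence the identity semistar operation $d$ and the top operation $e$ (with $J^e=\bigvee\F(D)=F$ for all $J$) are distinct, because $D^d=D\neq F=D^e$. Therefore $d$ and $e$ are the only semistar operations on $D$ exactly when $|\SStar(D)|=2$, which gives $(2)\Leftrightarrow(3)$. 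I would also note that $\F(D)$ is a near multiplicative lattice: it is a near $\U$-lattice by hypothesis, hence a near prequantale by Corollary \ref{ulattices}, and it is commutative and unital, with largest element $\bigvee\F(D)=F$.

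Next, by Theorem \ref{mainprop1} the semistar operations on $D$ are precisely the nuclei on $\F(D)$, i.e.\ $\SStar(D)=\N(\F(D))$. Applying Corollary \ref{simpleprequantales} to the near prequantale $Q=\F(D)$ then yields at once the equivalence of $(4)$, of ``$d$ and $e$ are the only nuclei on $\F(D)$'' (which, by the previous paragraph, is exactly $(2)$), and of ``$v(J)=d$ for every $J\in\F(D)$ with $J<\bigvee\F(D)=F$'', which is $(5)$. For $(5)\Leftrightarrow(6)$ I would unwind the definition $I^{v(J)}=(J:_F(J:_F I))$: since $v(J)$ is a closure operation and $F$ is the top of $\F(D)$ one always has $F^{v(J)}=F$, so $v(J)=d$ holds precisely when $(J:_F(J:_F I))=I$ for all $I\in\F(D)$ with $I\neq F$, and quantifying also over $J\neq F$ gives $(6)$. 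Alternatively, $(4)\Leftrightarrow(6)$ follows from Corollary \ref{simplenearmult} after checking that $(6)$ faithfully records both the existence of the residual $J/I=(J:_F I)$ in $\F(D)$ and the identity $J/(J/I)=I$.

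Finally, the equivalence of $(1)$ with the others is \cite[Theorem 48]{oka}, which characterizes the domains $D\neq F$ possessing exactly two semistar operations as the DVRs; combined with $(2)\Leftrightarrow(3)$ this closes the proposition. The one point demanding genuine care is the interaction with residuation in $\F(D)$: since $\F(D)$ is near residuated but not residuated, the residual $(J:_F I)$ of two nonzero submodules need not be nonzero, hence need not lie in $\F(D)$. The observation that neutralizes this is that, for nonzero $D$-submodules $I,J\subsetneq F$, one has $(J:_F I)\neq F$ --- because $(J:_F I)=F$ would force $FI=F\subseteq J$ --- so in condition $(6)$ the double residual $(J:_F(J:_F I))$ collapses to $F$ (hence differs from $I$) exactly when $(J:_F I)=(0)$; thus $(6)$ automatically excludes that degenerate case and matches the simplicity criterion of Corollary \ref{simplenearmult}. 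The rest is bookkeeping.
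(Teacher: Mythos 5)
Your proposal is correct and follows essentially the same route the paper indicates (combining Corollary~\ref{simplenearmult} with the cited result from Okabe--Matsuda), with the welcome extra detail of routing explicitly through Corollary~\ref{simpleprequantales} and carefully checking that the degenerate case $(J:_F I)=(0)$ is automatically excluded by condition~(6). No gaps.
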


Using the divisorial closure operations, one can generalize the equivalence of (1) through (3) in the theorem above as follows.

\begin{theorem}[\cite{ell}]\label{pid}
Let $D$ be a Dedekind domain with quotient field $F$ and $\Max(D)$ the set of all maximal ideals of $D$.  There is a poset embedding of the lattice $\C(2^{\Max(D)})$ of all closure operations on $2^{\Max(D)}$ into the lattice $\SStar(D)$ of all semistar operations on $D$.  Explicitly, the map $\C(2^{\Max(D)})   \longrightarrow  \SStar(D)$ acting by $* \longmapsto v(\S)$, where 
$$\S = \left\{I \in \F(D): \{\ppp: I D_\ppp = F\} \mbox{ is } *\mbox{-closed and }  \{\ppp: D_\ppp \subsetneq ID_\ppp \subsetneq F\} \mbox{ is finite} \right\},$$
is a poset embedding.  Moreover, the given embedding has an order-preserving left inverse acting by $\star \longmapsto *$, where $*$ is the largest closure operation on $2^{\Max(D)}$ such that $\{\ppp \in \Max(D): ID_\ppp = F\}$ is $*$-closed for all $I \in \F(D)^\star$.  Finally, if $\Max(D)$ is finite, then the given embedding $\C(2^{\Max(D)})   \longrightarrow  \SStar(D)$ is an isomorphism.
\end{theorem}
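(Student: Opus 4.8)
The plan is to reduce everything to the valuation-theoretic description of $\F(D)$. Since $D$ is Pr\"ufer, every nonzero $D$-submodule $I$ of $F$ equals $\bigcap_{\ppp \in \Max(D)} ID_\ppp$, and each $D_\ppp$ is a DVR, so $ID_\ppp$ is $F$ or $\ppp^{\,n}D_\ppp$ for a unique integer $n =: v_\ppp(I)$; setting $v_\ppp(I) = -\infty$ in the first case, $I \longmapsto (v_\ppp(I))_\ppp$ identifies $\F(D)$ with the ordered monoid of functions $\Max(D) \to \ZZ \cup \{-\infty\}$ that are $\leq 0$ off a finite set, with $I \subseteq J$ iff $v_\ppp(I) \geq v_\ppp(J)$ for all $\ppp$, $v_\ppp(IJ) = v_\ppp(I) + v_\ppp(J)$, and $v_\ppp(I \cap J) = \max(v_\ppp(I), v_\ppp(J))$. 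Write $A(I) = \{\ppp : ID_\ppp = F\}$ and $B(I) = \{\ppp : D_\ppp \subsetneq ID_\ppp\}$, so that $A(I) \subseteq B(I)$, $A(IJ) = A(I) \cup A(J)$, $A(I \cap J) = A(I) \cap A(J)$, and $\Inv(\F(D))$ is exactly the set of finitely generated members of $\F(D)$, namely those with $A(I) = \emptyset$ and $B(I)$ finite. By Proposition \ref{closureprop3}, every semistar operation $\star$ satisfies $(UI)^\star = UI^\star$ for $U \in \Inv(\F(D))$, so $\star$ fixes $I$ iff it fixes $UI$, and $v(UI) = v(I)$.

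For $T \subseteq \Max(D)$ put $E_T = \bigcap_{\ppp \notin T} D_\ppp$, so $v_\ppp(E_T) = 0$ for $\ppp \notin T$ and $v_\ppp(E_T) = -\infty$ for $\ppp \in T$. I would first show $A(E_T) = T$: the inclusion $\subseteq$ is immediate, and for $\ppp \in T$ and $n \geq 1$ the standard fact that every ideal class of $D$ contains an integral ideal coprime to $\ppp$ produces $x \in F^{\times}$ with $v_\ppp(x) = -n$ and $v_\qqq(x) \geq 0$ for all $\qqq \neq \ppp$, whence $x \in E_T$ and $(E_T)_\ppp = F$. Next, if $B(J) \setminus A(J)$ is finite then $J = U E_{A(J)}$ for the invertible ideal $U = \prod_{\ppp \notin A(J)} \ppp^{\,v_\ppp(J)}$, so $v(J) = v(E_{A(J)})$; and $E_T \in \S$ with $A(E_T) = T$ for every $*$-closed $T$. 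Feeding $J = E_T$ into the formula $I^{v(J)} = \bigcap\{UJ : U \in \Inv(\F(D)),\ I \subseteq UJ\}$ of Proposition \ref{vchar} and analyzing valuation-wise then yields
$$I^{v(E_T)} = \begin{cases} I^{(T)} & \text{if } A(I) \subseteq T \text{ and } B(I) \setminus T \text{ is finite,} \\ F & \text{otherwise,} \end{cases}$$
where $I^{(T)} \in \F(D)$ has $v_\ppp(I^{(T)}) = -\infty$ for $\ppp \in T$ and $v_\ppp(I^{(T)}) = v_\ppp(I)$ for $\ppp \notin T$; in particular $\F(D)^{v(E_T)} = \{I : A(I) = T,\ B(I) \setminus T \text{ finite}\} \cup \{F\}$.

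Assembling these, $v(\S) = \bigwedge\{v(J) : J \in \S\} = \bigwedge\{v(E_T) : T \text{ is } *\text{-closed}\}$, so $I^{v(\S)} = \bigcap_{T \, *\text{-closed}} I^{v(E_T)}$. Since localization at $\ppp$ on submodules of $F$ only records $v_\ppp$ and module intersection is computed valuation-by-valuation, this equals $I^{(T_0)}$ with $T_0 = \bigcap\{T : T \text{ is } *\text{-closed},\ T \supseteq A(I),\ B(I)\setminus T \text{ finite}\}$, an intersection of $*$-closed sets and so itself $*$-closed. Hence every $I \in \F(D)^{v(\S)}$ has $A(I) = T_0$ $*$-closed, while conversely every $*$-closed $T$ equals $A(E_T)$ with $E_T \in \F(D)^{v(\S)}$; thus $\{A(I) : I \in \F(D)^{v(\S)}\}$ is exactly the family of $*$-closed subsets of $\Max(D)$. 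That family determines $*$, and it is precisely what the prescribed left inverse $\psi : \star \longmapsto *$ recovers from $\star$; so $\psi \circ \phi = \id$, where $\phi$ is the map of the theorem. Since $\phi$ is order-preserving (enlarging $\S$ strengthens the constraints, shrinking $v(\S)$) and $\psi$ is order-preserving (enlarging $\F(D)^\star$ enlarges the collection of sets $A(I)$, shrinking the largest closure operation in question), $\phi$ is a poset embedding with order-preserving left inverse $\psi$.

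When $\Max(D)$ is finite the finiteness conditions above are vacuous, so $\F(D)^{\phi(*)} = \{I : A(I) \text{ is } *\text{-closed}\}$ (as $T_0 = A(I)^*$); moreover any semistar operation $\star$ is then determined by the intersection-closed family $\mathcal{A}(\star) = \{A(I) : I \in \F(D)^\star\}$, since $A(I) = A(J)$ forces $I = (I_0 J_0^{-1})J$ with $I_0 = \prod_{\ppp \notin A(I)}\ppp^{\,v_\ppp(I)}$ and $J_0$ defined likewise both invertible, whence $I$ is $\star$-closed iff $J$ is, by Proposition \ref{closureprop3}. Therefore $\F(D)^\star = \{I : A(I) \in \mathcal{A}(\star)\}$ and $\phi(\psi(\star)) = \star$, so $\phi$ is also onto, hence an isomorphism. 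The step I expect to be the main obstacle is the honest justification of the displayed formula for $v(E_T)$ and of the meet computation: both involve arbitrary intersections and localizations of possibly non-finitely-generated Kaplansky fractional ideals, which behave well here only because of the valuation-theoretic rigidity of Dedekind domains (and, for $A(E_T) = T$, the presence of ideals coprime to $\ppp$ in each class); one may alternatively simply invoke \cite{ell}.
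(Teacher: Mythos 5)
Your reconstruction is correct, and since the paper's own proof is just the citation to \cite{ell} (Corollary 3.5 and Lemma 4.1), there is no in-text argument to compare against; but your valuation-theoretic approach is evidently the one used there, as the paper itself later invokes \cite{ell} (Proposition 3.1(3),(4)) for exactly this description of $\F(D)$, and you yourself note that one may simply cite \cite{ell}. You have correctly isolated the real technical content---that over a Dedekind domain both localization and arbitrary intersections of nonzero $D$-submodules of $F$ are computed valuation-by-valuation, which underlies $A(E_T)=T$, the formula for $I^{v(E_T)}$ via Proposition \ref{vchar}, and the $*$-closedness of $T_0$ via Corollary \ref{characterizingclosures}(1)---and the rest of the argument (reduction of $\S$ to the $E_T$, the left-inverse verification, and the surjectivity in the finite case using $A(I)=A(J)\Rightarrow I=UJ$ with $U$ invertible together with Proposition \ref{closureprop3}) is sound.
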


\begin{proof}
The theorem follows from \cite[Corollary 3.5 and Lemma 4.1]{ell}.
\end{proof}

Note that, if $D$ is a Dedekind domain, then $\{\ppp: D_\ppp \subsetneq ID_\ppp \subsetneq F\}$ is finite if and only if $I$ is a fractional ideal of some overring of $D$, if and only if $I$ is a fractional ideal of $(I:_F I)$, if and only if $((I :_F I) :_F I) = (I :_F I^2)$ is nonzero.

If $D$ and $D'$ are Dedekind domains, then by \cite[Proposition 3.1(4)]{ell} the complete lattices $\F(D)$ and $\F(D')$ are isomorphic if $\Max(D)$ and $\Max(D')$ have the same cardinality.  Combining this with Theorem \ref{pid} above, Corollary \ref{latticeisom}, \cite[Table 1]{col}, and results in \cite{alek} on the number of closure operations on $2^S$ for finite sets $S$, we obtain the following.

\begin{corollary}[{\cite[Theorem 1.3]{ell}}] Let $D$ and $D'$ be Dedekind domains.
\begin{enumerate}
\item The lattices $\SStar(D)$ and $\SStar(D')$  are isomorphic if $\Max(D)$ and $\Max(D')$ have the same cardinality. 
\item  If $\Max(D)$ is infinite, then $|\SStar(D)|$ is equal to $2^{2^{|\Max(D)|}}$.   
\item  If $|\Max(D)| = n$ is finite, then $2^{{n \choose [n/2]}} \leq |\SStar(D)| \leq 2^{2^n}$.
\item $|\SStar(D)|$ is given for $n \leq 7$ as in Table \ref{tabl}.
\end{enumerate}
\end{corollary}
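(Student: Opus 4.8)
The plan is to deduce all four parts from the identification $\SStar(D)\cong\C(2^{\Max(D)})$ supplied by Theorem~\ref{pid} (an embedding in general, an isomorphism when $\Max(D)$ is finite), together with the fact that, for a Dedekind domain, the ordered monoid $\F(D)$ depends only on the cardinality of $\Max(D)$.

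For part~(1) I would first check that \cite[Proposition~3.1(4)]{ell} produces an \emph{ordered-monoid} isomorphism $\F(D)\cong\F(D')$ whenever $|\Max(D)|=|\Max(D')|$. This is transparent from localization: for a Dedekind domain $D$, every $M\in\F(D)$ equals $\bigcap_{\ppp\in\Max(D)}MD_\ppp$, each localization $MD_\ppp$ is either $F$ or of the form $\ppp^{n}D_\ppp$ with $n\in\ZZ$ (since $D_\ppp$ is a DVR with fraction field $F$), and multiplication of submodules commutes with localization; so $\F(D)$ is isomorphic, as an ordered monoid, to the monoid of functions $\Max(D)\to\ZZ\cup\{-\infty\}$ that are nonpositive off a finite set, under pointwise addition and the reverse pointwise order, which plainly depends only on $|\Max(D)|$. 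Then Corollary~\ref{latticeisom} immediately gives the lattice isomorphism $\SStar(D)\cong\SStar(D')$.

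For parts~(2) and~(3) I would use the embedding $\C(2^{\Max(D)})\hookrightarrow\SStar(D)$ of Theorem~\ref{pid}. When $n=|\Max(D)|$ is finite this is an isomorphism, so $|\SStar(D)|=|\C(2^{\Max(D)})|$, and part~(3) reduces to the bounds $2^{{n \choose [n/2]}}\le|\C(2^{\Max(D)})|\le 2^{2^{n}}$: the upper bound holds because a closure operation on $2^{\Max(D)}$ is determined by its family of closed sets, a subset of the $2^{n}$-element lattice $2^{\Max(D)}$, and the lower bound is one of the standard estimates for the number of Moore families on an $n$-set (from \cite{alek}), e.g.\ by injecting the subfamilies of the middle layer of $2^{\Max(D)}$ into the set of Moore families. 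When $\Max(D)$ is infinite the same embedding still gives $|\SStar(D)|\ge|\C(2^{\Max(D)})|=2^{2^{|\Max(D)|}}$, using the standard fact that an infinite power set carries $2^{2^{|\Max(D)|}}$ closure operations; for the reverse inequality I would note $\SStar(D)=\N(\F(D))$, so $|\SStar(D)|\le 2^{|\F(D)|}$, while the localization description above embeds $\F(D)$ into $(\ZZ\cup\{-\infty\})^{\Max(D)}$, giving $|\F(D)|\le\aleph_0^{|\Max(D)|}=2^{|\Max(D)|}$ and hence $|\SStar(D)|\le 2^{2^{|\Max(D)|}}$, so equality holds. Part~(4) is then a matter of reading off, via the isomorphism $\SStar(D)\cong\C(2^{\Max(D)})$ for $|\Max(D)|=n\le 7$, the tabulated values of the number of Moore families on an $n$-element set recorded in \cite[Table~1]{col}.

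The work here is organizational rather than deep. The one point demanding genuine care is that \cite[Proposition~3.1(4)]{ell} must be invoked at the level of ordered monoids (not merely complete lattices) so that Corollary~\ref{latticeisom} applies; this is why I would lean on the explicit localization picture. In the infinite case the only subtlety is that the crude bound $|\SStar(D)|\le 2^{|\F(D)|}$ is adequate precisely because the localization embedding forces $|\F(D)|\le 2^{|\Max(D)|}$, and the remaining cardinal arithmetic ($\aleph_0^{\kappa}=2^{\kappa}$ for infinite $\kappa$, and the doubly-exponential count of closure operations on an infinite set) is standard.
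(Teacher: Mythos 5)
Your proposal is correct and follows the same route the paper indicates: deduce part (1) from the ordered-monoid isomorphism $\F(D)\cong\F(D')$ via Corollary~\ref{latticeisom}, and deduce parts (2)--(4) from the embedding/isomorphism of Theorem~\ref{pid} combined with the Moore-family counts in \cite{alek} and \cite{col}. The one place you rightly pause---that \cite[Proposition~3.1(4)]{ell} is quoted in the paper as a lattice isomorphism while Corollary~\ref{latticeisom} requires an ordered-monoid isomorphism---is a genuine subtlety, and your explicit localization picture (identifying $\F(D)$ with functions $\Max(D)\to\ZZ\cup\{-\infty\}$ nonpositive off a finite set, under pointwise addition and reverse order) both resolves it and simultaneously supplies the cardinality bound $|\F(D)|\le 2^{|\Max(D)|}$ you need for the upper estimate in the infinite case, so your argument is a self-contained and slightly tighter version of the paper's citation-heavy sketch.
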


\begin{table}
\caption{Number of semistar operations on a Dedekind domain $D$}\label{tabl}
\centering 
\begin{tabular}{c|c}
$|\Max(D)|$ & $|\SStar(D)|$ \\ \hline
$1$ & $2$ \\ 
$2$ & $7$ \\
$3$ & $61$ \\
$4$ & $2\ 480$ \\
$5$ & $1\ 385\ 552$ \\
$6$ & $75\ 973\ 751\ 474$ \\
$7$ & $14\ 087\ 648\ 235\ 707\ 352\ 472$
\end{tabular}
\end{table}

We speculate that it is possible to carry out similar applications of the divisorial closure operations to star operations and semiprime operations.

\begin{example}
Let $D$ be a Dedekind domain with quotient field $F$.   Let $I_D = \{\ppp \in \Max(D): ID_\ppp = F\}$ for any $I \in \F(D)$.  From \cite[Proposition 3.1(3)]{ell} one can show that
$$I^{v(J)}  =  \left\{\begin{array}{ll} K & \mbox{if }  I_D \nsubseteq J_D 
\\ \bigcap_{\ppp \notin J_D} ID_\ppp &  \mbox{if }  I_D \subseteq J_D \end{array}\right.$$
for all $I,J \in \F(D)$.    In particular, one has
$$I^{v} = \left\{\begin{array}{ll} K & \mbox{if } I_D \neq \emptyset \\ I &   \mbox{if } I_D =  \emptyset,\end{array}\right.$$ and $v(J) = v$ if and only if $J_D = \emptyset$.
Suppose that $D$ has exactly two maximal ideals $\ppp_1$ and $\ppp_2$.     Define $I^{\star_i} = I D_{\ppp_i}$ for $i = 1,2$.  One has $v(J) = \star_1$ if and only if $J_D = \{\ppp_2\}$ and $v(J)  = \star_2$ if and only if $J_D = \{\ppp_1\}$.  Also, one has $v(J) = e$ if and only if $J_D = \{\ppp_1, \ppp_2\}$.  Since $\star_1 \wedge \star_2 = d$, it follows that $\SStar(D) = \{e, v, \star_1, \star_2, \star_1 \wedge v, \star_2 \wedge v, d\} \cong  \C(2^{\{1,2\}}) \cong 2^{\{1,2,3\}} - \{\{2\}\}$, with Hasse diagram given below.
\begin{eqnarray*}
\SelectTips{cm}{11}\xymatrix{
 & e\ar@{-}[dl] \ar@{-}[d] \ar@{-}[dr] & \\
  \star_1 \ar@{-}[d]  & v \ar@{-}[dl] \ar@{-}[dr] & \star_2 \ar@{-}[d] \\
  \star_1 \wedge v \ar@{-}[dr] & & \star_2 \wedge v \ar@{-}[dl] \\
 & d & 
 }
\end{eqnarray*}
\end{example}

For any star operation $*$ on $D$, there is a unique largest semistar operation $l(*)$ on $D$ such that $I^{l(*)} = I^*$ for all $I \in \Fp(D)$.  Indeed, one may extend $*$ to $\F(D)$ by defining $I^{l(*)} = F$ for all $I \in \F(D)\setmin\Fp(D)$.   (See Proposition \ref{extendingclosures}(3).)  Using Proposition \ref{extendingclosures}(2), we may construct the unique smallest semistar operation $s(*)$ on $D$ such that $I^{s(*)} = I^*$ for all $I \in \Fp(D)$, as follows.

\begin{proposition}\label{semistarfromstar}
Let $D$ be an integral domain.
\begin{enumerate}
\item For any star operation $*$ on $D$, there exists a unique smallest semistar operation $s(*)$ on $D$ such that ${s(*)}|_{\Fp(D)} = *$, and for all $I \in \F(D)$ one has
$$I^{s(*)} = \bigcap \{K \in \F(D): \ K \supseteq I \mbox{ and } \forall J \in \Fp(D) \ (J \subseteq K \Rightarrow J^* \subseteq K)\}.$$
\item More generally, for any nucleus $\gamma$ on a submonoid $\mathcal{M}$ of $\F(D)$ containing $\P(D)$, there exists a unique smallest semistar operation $\star$ on $D$ such that $\star|_{\mathcal{M}} = \gamma$, and for all $I \in \F(D)$ one has
$$I^\star = \bigcap \{K \in \F(D): \ K \supseteq I \mbox{ and } \forall J \in \mathcal{M} \ (J \subseteq K \Rightarrow J^\gamma \subseteq K)\}.$$
\end{enumerate}
\end{proposition}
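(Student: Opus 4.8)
The plan is to derive both statements from Proposition~\ref{extendingclosures}(2), applied to the near sup-magma $M = \F(D)$. Statement (1) is the special case of statement (2) with $\mathcal{M} = \Fp(D)$ and $\gamma = *$: indeed, $\Fp(D)$ is a submonoid of $\F(D)$ containing $\P(D)$ (every nonzero principal fractional ideal is a fractional ideal), and by Theorem~\ref{mainprop2} a star operation $*$ on $D$ is the same datum as a nucleus on the ordered monoid $\Fp(D)$, so the conditions ``$s(*)|_{\Fp(D)} = *$'' and ``$\star|_{\mathcal{M}} = \gamma$'' agree in that case. So I would just prove (2).

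First I would record the ambient structure. The ordered monoid $\F(D)$ is a near multiplicative lattice, hence a near prequantale, and $\P(D)$ is a sup-spanning subset of $\F(D)$ (as noted just before Theorem~\ref{mainprop1}). Since $\mathcal{M} \supseteq \P(D)$, the submonoid $\mathcal{M}$ is also a sup-spanning subset of $\F(D)$: for $I,J \in \F(D)$ we already know $IJ = \bigvee\{aJ : a \in \P(D),\ a \subseteq I\}$, and passing from the index set $\P(D)$ to the larger set $\mathcal{M}$ only adjoins further elements $aJ$ (with $a \in \mathcal{M}$, $a \subseteq I$), each of which is still $\subseteq IJ$; thus $IJ$ remains the supremum of the larger set, and symmetrically for the other factor. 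Then Proposition~\ref{extendingclosures}(2), applied with $N = \mathcal{M}$ and the nucleus $\gamma$ on $\mathcal{M}$, tells us that $\star := \ind_{\F(D)}(\gamma)$ is a nucleus on $\F(D)$ with $\star|_{\mathcal{M}} = \gamma$ and
$$I^{\star} = \bigwedge\bigl\{K \in \F(D) : K \supseteq I \ \mbox{ and } \ \forall J \in \mathcal{M}\ (J \subseteq K \Rightarrow J^{\gamma} \subseteq K)\bigr\}$$
for all $I \in \F(D)$ (the order on $\F(D)$ being inclusion). The displayed set is nonempty, since it contains $F$, and bounded below by $I$; hence its infimum in $\F(D)$ is just the intersection, which gives the formula in the statement.

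It remains to identify $\star$ as the smallest semistar operation with the stated property. By the Remark following Proposition~\ref{closureprop3} (equivalently, by Propositions~\ref{closureprop2} and~\ref{closureprop3}), every nucleus on $\F(D)$ is a semistar operation on $D$; so $\star$ is a semistar operation restricting to $\gamma$ on $\mathcal{M}$. Moreover, since $\star|_{\mathcal{M}} = \gamma$, the set $\{\star' \in \N(\F(D)) : \star'|_{\mathcal{M}} = \gamma\}$ is nonempty, so Proposition~\ref{extendingclosures}(1) identifies $\star = \ind_{\F(D)}(\gamma)$ as the smallest nucleus on $\F(D)$ restricting to $\gamma$; as semistar operations on $D$ are precisely the nuclei on $\F(D)$, it is the smallest semistar operation with that property, and a smallest element is unique. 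Specializing to $\mathcal{M} = \Fp(D)$ and $\gamma = *$ yields (1) with $s(*) = \ind_{\F(D)}(*)$. There is essentially no genuine obstacle here; all of the content is packaged in Proposition~\ref{extendingclosures}, and the only points needing (routine) care are the verification that $\mathcal{M}$ inherits the sup-spanning property from $\P(D)$ and the passage from the abstract infimum in Proposition~\ref{extendingclosures}(2) to the concrete intersection appearing in the statement --- which amounts to nothing more than the facts that $\F(D)$ is ordered by inclusion and that a nonempty family of nonzero $D$-submodules of $F$ all containing a fixed $I \in \F(D)$ has nonzero intersection.
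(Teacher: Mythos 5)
Your proof is correct and takes essentially the same route the paper intends: the sentence immediately preceding the proposition already points to Proposition~\ref{extendingclosures}(2) as the engine, and you apply it with $M=\F(D)$, $N=\mathcal{M}$, using the fact that any superset of a sup-spanning subset is again sup-spanning and that the abstract infimum in $\F(D)$ of a nonempty family containing a fixed nonzero module is just the intersection. The reduction of (1) to (2) via Theorem~\ref{mainprop2} and the identification of nuclei on $\F(D)$ with semistar operations are handled correctly as well.
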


Although $l(*)$ has been studied predominantly in the literature, the semistar operation $s(*)$ is a useful alternative.  Note in particular that $s(d) = d$ on any domain $D$, and by Proposition \ref{sstarfintype} below $*$ is of finite type if and only if $s(*)$ is of finite type, where a star or semistar operation $\star$ is said to be {\it of finite type} if $\star = \star_f$.  By contrast, although the star operation $d$ is of finite type, the semistar operation $l(d)$ on $D$ is of finite type if and only if $l(d) = d$ on $D$, if and only if $D$ is {\it conducive}, that is, every overring of $D$ other than $F$ is a fractional ideal of $D$.  

\begin{proposition}\label{sstarfintype}
Let $D$ be an integral domain.  For any semistar operation $\star$ and star operation $*$ on $D$, one has $\star|_{\Fp(D)} = *$ if and only if $s(*) \leq \star \leq l(*)$, in which case
$s(*_f) = s(*)_f = \star_f = l(*)_f$.  In particular, $*$ is a finite type star operation if and only if $s(*)$ is a finite type semistar operation, in which case $s(*)$ is the unique finite type semistar operation $\star$ on $D$ such that $\star|_{\Fp(D)} = *$.
\end{proposition}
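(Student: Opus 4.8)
I would derive the statement from the general machinery of Sections \ref{sec:COCL} and \ref{sec:PFN}, using that $\F(D)$ is a bounded complete, near residuated near $\K$-lattice (being a near $\U$-lattice), that $\Fp(D)$ is a saturated, downward closed submonoid of the bounded-above $\F(D)$, and that $\K(\F(D)) = \K(\Fp(D))$, this common set being the nonzero finitely generated $D$-submodules of $F$ --- each of which is a fractional ideal.

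First I would handle the equivalence $\star|_{\Fp(D)} = *$ $\Leftrightarrow$ $s(*) \leq \star \leq l(*)$. Since semistar operations on $D$ are precisely the nuclei on $\F(D)$, and $*$ is a nucleus on the submonoid $\Fp(D)$, this is Proposition \ref{extendingclosures}(1) applied with $M = \F(D)$ and $N = \Fp(D)$: the set $\{\star' \in \N(\F(D)) : \star'|_{\Fp(D)} = *\}$ is nonempty because Proposition \ref{extendingclosures}(3) --- valid since $\Fp(D)$ is saturated and downward closed in the bounded-above $\F(D)$ --- produces the member $\ind^{\F(D)}(*)$; as $\F(D)$ is moreover bounded complete and near residuated, part (1) then gives that $s(*) = \ind_{\F(D)}(*)$ and $l(*) = \ind^{\F(D)}(*)$ are the smallest and largest semistar operations restricting to $*$, with $\star'|_{\Fp(D)} = *$ iff $s(*) \leq \star' \leq l(*)$.

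Next I would prove $s(*_f) = s(*)_f = \star_f = l(*)_f$ under the hypothesis $\star|_{\Fp(D)} = *$. The engine is the defining formula for $(\cdot)_f$: by Theorem \ref{klattice}, for any semistar operation $\dagger$ with $\dagger|_{\Fp(D)} = *$ and any $I \in \F(D)$,
$$I^{\dagger_f} = \bigvee\{J^\dagger : J \in \K(\F(D)),\ J \leq I\} = \bigvee\{J^* : J \in \K(\F(D)),\ J \leq I\},$$
the second equality because $\K(\F(D)) \subseteq \Fp(D)$. The right side is independent of $\dagger$, so $\dagger_f$ is one and the same semistar operation $\gamma := s(*)_f$ (a nucleus on the near $\K$-lattice $\F(D)$, by Theorem \ref{klattice}) for every $\dagger$ with $s(*) \leq \dagger \leq l(*)$; hence $s(*)_f = \star_f = l(*)_f = \gamma$. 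To identify $\gamma$ with $s(*_f)$: for $I \in \Fp(D)$ the family $\{J^* : J \in \K(\F(D)),\ J \leq I\}$ is bounded above by $I^* \in \Fp(D)$ and $\Fp(D)$ is downward closed in $\F(D)$, so its supremum in $\F(D)$ lies in $\Fp(D)$ and equals its supremum in $\Fp(D)$, which is $I^{*_f}$; thus $\gamma|_{\Fp(D)} = *_f$. And if $\star'$ is any semistar operation with $\star'|_{\Fp(D)} = *_f$, then $J^{\star'} = J^{*_f} = J^*$ for $J \in \K(\F(D))$ by Proposition \ref{finitaryclosure}, so $J \leq I$ forces $J^* \leq I^{\star'}$, and taking suprema over such $J$ gives $I^\gamma \leq I^{\star'}$, i.e. $\gamma \leq \star'$. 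Therefore $\gamma$ is the smallest semistar operation restricting to $*_f$, i.e. $\gamma = s(*_f)$, completing the four-fold equality.

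Finally, the ``in particular'' clause is immediate: if $* = *_f$ then $s(*) = s(*_f) = \gamma = s(*)_f$ is finitary; conversely, if $s(*)$ is finitary then $* = s(*)|_{\Fp(D)} = (s(*)_f)|_{\Fp(D)} = \gamma|_{\Fp(D)} = *_f$, so $*$ is of finite type; and when $* = *_f$, any finite type semistar operation $\star$ with $\star|_{\Fp(D)} = *$ satisfies $s(*) \leq \star \leq l(*)$ by the first part, so $\star = \star_f = \gamma = s(*)$, giving uniqueness. The only routine inputs I would invoke without detail are that $\F(D)$ is a bounded complete near residuated near $\K$-lattice and that $\K(\F(D)) = \K(\Fp(D))$; I anticipate no real obstacle, the one delicate bookkeeping point being whether a supremum is taken in $\Fp(D)$ or in $\F(D)$ --- which is exactly why I separate out the downward-closedness of $\Fp(D)$.
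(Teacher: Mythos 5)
Your proposal is correct and follows essentially the same route as the paper's proof: both rest on the observation that $I^{\dagger_f} = \bigcup_{J \subseteq I \text{ f.g.}}J^*$ for any semistar $\dagger$ restricting to $*$, which immediately gives $s(*)_f = \star_f = l(*)_f$, and then identify this common operation with $s(*_f)$ by checking it restricts to $*_f$ on $\Fp(D)$ and invoking minimality of $s(\cdot)$. The only cosmetic difference is that you are a bit more explicit about $\K(\F(D)) = \K(\Fp(D))$ and about which poset suprema are computed in, whereas the paper appeals directly to the explicit intersection formula from Proposition \ref{semistarfromstar} to get the inequality $s(*)_f \leq s(*_f)$.
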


\begin{proof}
The first equivalence is clear from the construction of $s(*)$ and $l(*)$.  Suppose that $\star|_{\Fp(D)} = *$.   For any $I \in \F(D)$, one has $I^{\star_f} = \bigcup_{J \subseteq I \textup{ f.g.}} J^*$.  It follows that $s(*)_f = \star_f = l(*)_f$.  It also follows that if $*$ is of finite type, then $s(*)_f|_{\Fp(D)} = *$, whence $s(*) \leq s(*)_f$, whence $s(*)$ is of finite type.  In particular, one has $s(*_f) = s(*_f)_f \leq s(*)_f$.  To prove the reverse inequality, let $I \in \F(D)$, and note as above that $I^{s(*)_f} =\bigcup_{J \subseteq I \textup{ f.g.}} J^*$.  It follows that if $K \supseteq I$, and if $J \subseteq K$ implies $J^{*_f} \subseteq K$ for all $J \in \F(D)$, then $I^{s(*)_f} \subseteq K$.  Therefore $I^{s(*)_f} \subseteq I^{s(*_f)}$.  Thus we have $s(*)_f \leq s(*_f)$.  The rest of the proposition follows.
\end{proof}

The star operation $v = v(D)$ is the largest star operation on $D$, and $t = v_f$ is the largest finite type star operation on $D$.  One has $l(v) = v$ and $l(t) = t$, but one may have $s(v) < l(v)$ and $s(t) < l(t)$.  Indeed, let $D$ be any divisorial domain, that is, any domain for which $I^v = I$ for all $I \in \Fp(D)$.  Since $v = t = d$ on $D$, it follows that $s(v) = s(t) = d$ on $D$ as well.  However, if $D$ is non-conducive, say, if $D = \ZZ$, then $d < l(t) \leq l(v)$, whence $s(v) < l(v)$ and $s(t) < l(t)$.

Finally, we comment on the requirement $D^* = D$ for a star operation $*$.  Let us say that a {\it quasistar operation} is a self-map $*$ of $\Fp(D)$ that satisfies all of the star operation axioms except possibly $D^* = D$.  Equivalently, a quasistar operation is a nucleus on the semimultiplicative lattice $\Fp(D)$.  The poset $\QStar(D) = \N(\Fp(D))$ of all quasistar operations on $D$ is of course bounded complete, and $\Star(D) = \QStar(D)_{\leq v}$.  An interesting question arises as to when $\QStar(D)$ is complete, that is, when there is a largest quasistar operation.  We show that the  answer is in the affirmative if and only if $(D :_F \widetilde{D}) \neq (0)$, where $\widetilde{D}$ is the complete integral closure of $D$ \cite{gilmer} and $F$ is the quotient field of $D$. 

 An element $x$ of $F$ is said to be {\it almost integral over D} if there exists a nonzero element $c$ of $D$ such that $c x^n \in D$ for all positive integers $n$, or equivalently, if $(D:_F D[x]) \neq (0)$. The {\it complete integral closure $\widetilde{D}$ of $D$} is the set of all elements of $F$ that are almost integral over $D$.  The set $\widetilde{D}$ is an overring of $D$ containing the integral closure $\overline{D}$ of $D$, and if $D$ is Noetherian then $\widetilde{D} = \overline{D}$.  One says that $D$ is {\it completely integrally closed} if $\widetilde{D} = D$.  In general $\widetilde{D}$ may not be completely integrally closed; but in the next section we will show that there is a smallest completely integrally closed overring $D^\sharp$ of $D$, which we call the {\it complete complete integral closure of $D$}.

\begin{proposition}
Let $D$ be an integral domain with quotient field $F$.  If $*$ is any quasistar operation on $D$, then $D^* \subseteq \widetilde{D}$.  Moreover, the following conditions are equivalent.
\begin{enumerate}
\item $\QStar(D)$ is a complete lattice.
\item There is a largest quasistar operation $e$ on $D$.
\item There is a largest overring $D'$ of $D$ such that $(D :_F D') \neq (0)$.
\item $(D :_F \widetilde{D}) \neq (0)$.
\end{enumerate}
If the above conditions hold, then one has $D^e = D' = \widetilde{D} = D^\sharp$ and $e = v(D^e)$.  Moreover, the above conditions hold if $\widetilde{D}$ is finitely generated as a $D$-algebra, hence also if $D$ is completely integrally closed or if $D$ is Noetherian and $\overline{D}$ is module finite over $D$.
\end{proposition}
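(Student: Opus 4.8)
The plan is to prove the four conditions equivalent along the cycle $(1)\Leftrightarrow(2)\Rightarrow(3)\Rightarrow(4)\Rightarrow(2)$, identifying the largest quasistar operation explicitly along the way, after first recording the basic lemma that underlies everything: if $*$ is a quasistar operation, i.e.\ a nucleus on $\Fp(D)$, then $D^*$ is an overring of $D$. Indeed, Proposition~\ref{closureprop1}(3) gives $(D^*D^*)^* = (DD)^* = D^*$, and since $1\in D\subseteq D^*$ this forces $D^* = D^*\cdot D \subseteq D^*D^* \subseteq (D^*D^*)^* = D^*$, so $D^*D^* = D^*$; being a $D$-submodule of $F$ containing $1$ and closed under multiplication, $D^*$ is an overring. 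As $D^*$ is a nonzero fractional ideal there is $0\neq c\in D$ with $cD^*\subseteq D$, and for $x\in D^*$ one has $x^n\in (D^*)^n = D^*$, hence $cx^n\in D$ for all $n$, so $x\in\widetilde D$; this proves the opening assertion $D^*\subseteq\widetilde D$. The same computation shows more generally that \emph{any} overring $D'$ of $D$ with $(D:_F D')\neq(0)$ satisfies $D'\subseteq\widetilde D$, and (replacing $D$ by $\widetilde D$) that $\widetilde D^*$ is an overring whenever $\widetilde D\in\Fp(D)$; I will reuse these observations repeatedly.

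Next I would run the cycle. For $(1)\Leftrightarrow(2)$: $\QStar(D) = \N(\Fp(D))$ is bounded complete by Corollary~\ref{charclos}(2) (since $\Fp(D)$ is bounded complete and residuated, hence near residuated) and has least element $d$, so it is a complete lattice exactly when it has a top element, which is by definition the largest quasistar operation. For $(2)\Rightarrow(3)$: if $e$ is the largest quasistar operation then $D^e\in\Fp(D)$, so $(D:_F D^e)\neq(0)$; and for any overring $D'$ with $(D:_F D')\neq(0)$ the map $\star_{D'}\colon I\longmapsto ID'$ is a quasistar operation (expansive and idempotent since $D'\supseteq D$ is an overring, a nucleus since $(ID'\cdot JD')D' = IJD'$, and $\Fp(D)$-valued since $D'\in\Fp(D)$) with $D^{\star_{D'}} = D'$, whence $D' = D^{\star_{D'}}\subseteq D^e$; thus $D^e$ is the largest such overring. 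For $(3)\Rightarrow(4)$: if $D'$ is that largest overring, then every $x$ almost integral over $D$ satisfies $(D:_F D[x])\neq(0)$, so $D[x]\subseteq D'$ and hence $\widetilde D\subseteq D'$; combined with $D'\subseteq\widetilde D$ from the first paragraph this gives $D' = \widetilde D$ and $(D:_F\widetilde D) = (D:_F D')\neq(0)$.

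The crux is $(4)\Rightarrow(2)$, which I would prove by showing $e := v(\widetilde D)$ is the largest quasistar operation. Assuming $(D:_F\widetilde D)\neq(0)$, we have $\widetilde D\in\Fp(D)$ and $v(\widetilde D)$ exists by Proposition~\ref{semiu} ($\Fp(D)$ being an associative unital semi-$\U$-lattice). The key claim is that every quasistar operation $*$ fixes $\widetilde D$: since $\widetilde D$ is an overring, $\widetilde D\widetilde D = \widetilde D$, so $(\widetilde D^*\widetilde D^*)^* = (\widetilde D\widetilde D)^* = \widetilde D^*$ forces $\widetilde D^*\widetilde D^* = \widetilde D^*$, making $\widetilde D^*$ an overring; it is a fractional ideal, so $\widetilde D^*\subseteq\widetilde D$ by almost-integrality, and with expansiveness $\widetilde D^* = \widetilde D$. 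Hence every quasistar operation is $\leq v(\widetilde D)$, so $v(\widetilde D)$ is the largest and $(2)$ holds. I expect this step — the double application of the ``overring plus almost-integral'' bookkeeping — to be the only place where real care is needed; the rest is routine once the first-paragraph lemma is in hand.

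Finally, for the identifications: under the equivalent conditions $D^e\subseteq\widetilde D$ by the first paragraph, while $\star_{\widetilde D}\colon I\mapsto I\widetilde D$ is a quasistar operation with $D^{\star_{\widetilde D}} = \widetilde D$ and $\star_{\widetilde D}\leq e$, so $\widetilde D\subseteq D^e$; thus $D^e = \widetilde D$, $e = v(\widetilde D) = v(D^e)$, and $D' = D^e = \widetilde D$ by the $(3)\Rightarrow(4)$ analysis. For $D^\sharp = \widetilde D$ I would show two things: $\widetilde D$ is completely integrally closed (if $0\neq c\in\widetilde D$ with $cx^n\in\widetilde D$ for all $n$, pick $0\neq d\in D$ with $d\widetilde D\subseteq D$; then $(dc)x^n\in D$ for all $n$ with $0\neq dc\in D$, so $x\in\widetilde D$), and every completely integrally closed overring $E$ of $D$ contains $\widetilde D$ (an element almost integral over $D$ is a fortiori almost integral over $E$, hence lies in $\widetilde E = E$); so $\widetilde D$ is the smallest such overring, i.e.\ $D^\sharp = \widetilde D$. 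For the sufficient conditions: if $\widetilde D = D[x_1,\ldots,x_n]$ with each $x_i$ almost integral over $D$, choose $0\neq c_i\in D$ with $c_ix_i^k\in D$ for all $k$; then $c := c_1\cdots c_n$ carries every monomial $x_1^{k_1}\cdots x_n^{k_n}$ into $D$, so $c\widetilde D\subseteq D$ and $(4)$ holds. This subsumes $D$ completely integrally closed ($\widetilde D = D$) and $D$ Noetherian with $\overline D$ module-finite over $D$ ($\widetilde D = \overline D$, a finitely generated $D$-module, hence a fractional ideal).
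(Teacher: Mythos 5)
Your proof is correct and follows essentially the same route as the paper: the overring-preservation lemma for nuclei, the correspondence $D' \longleftrightarrow \star_{D'}$ between restricted overrings and quasistar operations, and the identification of the top element with a divisorial closure via Proposition~\ref{semiu}. The only substantive difference is that where the paper cites Gilmer--Heinzer for the fact that $(D:_F\widetilde D)\neq(0)$ forces $\widetilde D$ to be completely integrally closed, you give the short direct argument (absorb the conductor $d$), which is a nice self-contained touch; your implication cycle $(1)\Leftrightarrow(2)\Rightarrow(3)\Rightarrow(4)\Rightarrow(2)$ is also a slightly tighter reorganization of the paper's $(2)\Rightarrow(3)$, $(3)\Rightarrow(2)$, $(3)\Rightarrow(4)$, $(4)\Rightarrow(3)$.
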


\begin{proof}
If $*$ is a quasistar operation on $D$, then $(D :_F D^*) \neq (0)$, which implies that $(D :_F D[x]) \neq (0)$ for all $x \in D^*$, and therefore $D^* \subseteq \widetilde{D}$.  Clearly (1) and (2) are equivalent.  If $e$ is a largest quasistar operation on $D$ and $E$ is any overring of $D$ such that $(D :_F E) \neq (0)$, then the operation $*_{E}: I \longmapsto IE$ is a quasistar operation on $D$, and therefore $*_E \leq e$ and so $E = D^{*_{E}} \subseteq D^e$.  Therefore (2) implies (3).  Suppose that (3) holds.  Then for any quasistar operation $*$ one must have  $D'^* = D'$ and therefore $* \leq v(D')$, so that $e = v(D')$ is the largest quasistar operation on $D$.  Furthermore, if $x \in \widetilde{D}$, then $(D :_F D[x]) \neq (0)$, so that $D[x] \subseteq D'$ and therefore $x \in D'$.  Conversely, one has $D' \subseteq \widetilde{D}$, so that $D' = \widetilde{D}$.  Therefore (3) implies both (2) and (4).  Since also (4) implies (3), all four conditions are equivalent.  

Suppose, now, that the four conditions hold.  We have seen already that $D^e = D' = \widetilde{D}$ and $e = v(D^e)$.  Since $(D :_F \widetilde{D}) \neq (0)$, by \cite[Corollary 6]{gilmer} the domain $\widetilde{D}$ is completely integrally closed and therefore $\widetilde{D} = D^\sharp$.  Finally, the last statement of the proposition is clear.
\end{proof}

\section{Integral, complete integral, and tight closure}\label{sec:ICCICTC}

Let us recall the definition of the integral closure semistar operation.  Let $D$ be an integral domain with quotient field $F$.  If $I \in \F(D)$, then $x \in F$ is {\it integral over I} if there exists a positive integer $n$ and elements $a_i \in I^i$ such that $x^n + a_1 x^{n-1} + \cdots + a_{n-1}x + a_n = 0$.  The {\it semistar integral closure of $I$} is the set $\overline{I}$ of all elements of $F$ that are integral over $I$ and is given by $\overline{I} = \bigcap\{IV: V \textup{ is a valuation overring of } D\}$.  It follows that semistar integral closure is a semistar operation on $D$.  In the literature of semistar operations it is often called the {\it $b$-operation}.  Semistar integral closure is of finite type.  Indeed, if $x \in \overline{I}$, say, if $x^n + a_1 x^{n-1} + \cdots + a_{n-1}x + a_n = 0$ for elements
$a_i = \sum_j b_{ij1}b_{ij2}\cdots b_{iji} \in I^i$, where each $b_{ijk}$ lies in $I$, then $x \in \overline{J}$, where $J \subseteq I$ is the ideal generated by the $b_{ijk}$.  
For any ideal $I$ of $D$, the ideal $\overline{I} \cap D$ is known as the {\it integral closure} of $I$.

The operation of complete integral closure can also be used to define a semistar operation. If $I \in \F(D)$, then $x \in F$ is {\it almost integral over I} if there exists a nonzero element $c$ of $F$ such that $c x^n \in I^n$ for all positive integers $n$.  We define $\widetilde{I}$ to be the set of all elements of $F$ that are almost integral over $I$, and we say that $I$ is {\it completely integrally closed} if $\widetilde{I} = I$.  Although one may have $\widetilde{D}\subsetneq \widetilde{\widetilde{D}}$, the map $I \longmapsto \widetilde{I}$ is at least a preclosure on $\F(D)$. We define
$$I^\sharp = \bigcap\{J \in \F(D): J \supseteq I \mbox{ and } J \mbox{ is completely integrally closed}\},$$
which we call the {\it semistar complete integral closure} of $I$.  Since $a\widetilde{J} = \widetilde{aJ}$ for all nonzero $a \in F$, and therefore $I\widetilde{J} \subseteq \widetilde{IJ}$, for all $I,J \in \F(D)$, by Lemma \ref{preclosurelemma} the operation $\sharp$ is a semistar operation on $D$.  In particular, we have the following.

\begin{proposition}
Let $D$ be an integral domain.  The operation $\sharp$ is the unique semistar operation on $D$ such that $I^\sharp = I$ for $I \in \F(D)$ if and only if $I$ is completely integrally closed.  Moreover,  for all $I \in \F(D)$ one has $\overline{I} \subseteq \widetilde{I} \subseteq \widetilde{I^\sharp} = I^\sharp$, with equalities holding if $D$ is Noetherian.  Thus $I^\sharp$ is the smallest completely integrally closed Kaplansky fractional ideal of $D$ containing $I$, and $\sharp$ equals the semistar integral closure operation on $D$ if $D$ is Noetherian. 
\end{proposition}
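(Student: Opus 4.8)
The plan is to reduce the entire statement to Lemma~\ref{preclosurelemma}, applied to the preclosure $I \longmapsto \widetilde{I}$ on $\F(D)$, together with two standard commutative--algebra facts: that an element integral over an ideal is almost integral over it, and that a Noetherian domain, hence any of its Rees algebras, is its own complete integral closure. First I would recall, as in the paragraph preceding the proposition, that $I \longmapsto \widetilde{I}$ is a preclosure on the bounded complete and near residuated ordered magma $\F(D)$ (a near $\U$-lattice, hence a near prequantale), that it is bounded above by the largest closure operation $e\colon I \longmapsto F$, and that $I\widetilde{J} \subseteq \widetilde{IJ}$ for all $I,J$. Lemma~\ref{preclosurelemma} then gives that $\sharp$ is the smallest closure operation larger than $I \longmapsto \widetilde{I}$, that it is a nucleus and so a semistar operation (using $\P(D) \subseteq \Inv(\F(D))$), that $\F(D)^\sharp = \Fix(\widetilde{\,\cdot\,}) = \{I \in \F(D) : \widetilde{I} = I\}$, which is precisely the set of completely integrally closed Kaplansky fractional ideals, and that $I^\sharp = \bigwedge\{J \in \F(D) : J \supseteq I,\ \widetilde{J} = J\}$, i.e.\ the displayed intersection. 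From this the characterization ``$I^\sharp = I$ iff $I$ is completely integrally closed'' is immediate, as is the assertion that $I^\sharp$ is the smallest completely integrally closed Kaplansky fractional ideal containing $I$ (it lies in $\F(D)$ since $I^\sharp \supseteq I \neq (0)$, it is completely integrally closed since $\widetilde{I^\sharp} = I^\sharp$, and it contains $I$); uniqueness then follows because a closure operation on a poset is determined by its set of closed elements, by Proposition~\ref{characterizingclosures2}(1).

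Next I would verify the chain $\overline{I} \subseteq \widetilde{I} \subseteq \widetilde{I^\sharp} = I^\sharp$. The middle inclusion is monotonicity of the preclosure $\widetilde{\,\cdot\,}$ applied to $I \subseteq I^\sharp$, and $\widetilde{I^\sharp} = I^\sharp$ holds because $I^\sharp \in \F(D)^\sharp = \Fix(\widetilde{\,\cdot\,})$. For $\overline{I} \subseteq \widetilde{I}$ I would argue directly from a relation $x^n + a_1 x^{n-1} + \cdots + a_n = 0$ with $a_i \in I^i$: the finitely generated nonzero $D$-module $N = \sum_{j=0}^{n-1} I^{\,n-1-j} x^j$ (with $I^0 = D$) satisfies $xN \subseteq IN$, hence $x^k N \subseteq I^k N$ for all $k \geq 0$, and then, choosing $0 \neq c \in N$ and $0 \neq d \in D$ with $dN \subseteq D$, one gets $(dc) x^k \in d(I^k N) = I^k(dN) \subseteq I^k$ for all $k$, so $x$ is almost integral over $I$.

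It remains to treat the Noetherian case, which I expect to be the only nonroutine point. The goal is to show $\overline{I} = \widetilde{I}$ for all $I \in \F(D)$, for then the chain collapses to $\overline{I} = \widetilde{I} = I^\sharp = \widetilde{I^\sharp}$; moreover, since semistar integral closure is itself a semistar operation whose closed ideals are exactly the integrally closed ones, the coincidence $\{I : \widetilde{I} = I\} = \{I : \overline{I} = I\}$ and the uniqueness from Proposition~\ref{characterizingclosures2}(1) then identify $\sharp$ with semistar integral closure. To prove $\widetilde{I} \subseteq \overline{I}$ when $D$ is Noetherian: given $x \in \widetilde{I}$, pick $0 \neq c$ with $cx^k \in I^k$ for all $k$; in the Rees algebra $R = D[It] = \bigoplus_{k \geq 0} I^k t^k \subseteq D[t]$ one has $c(xt)^k \in I^k t^k$, so $xt$ is almost integral over $R$; since $D$ is Noetherian so is $R$, and a Noetherian domain equals its complete integral closure (as recalled in the introduction), so $xt$ is integral over $R$; because $R$ is graded and $xt$ is homogeneous of degree $1$, the integral equation may be taken with homogeneous coefficients, yielding $x^n + a_1 x^{n-1} + \cdots + a_n = 0$ with $a_i \in I^i$, i.e.\ $x \in \overline{I}$. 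The delicate steps are exactly this Rees-algebra reduction and the correct invocation of ``a Noetherian domain is its own complete integral closure''; everything else is bookkeeping around Lemma~\ref{preclosurelemma}.
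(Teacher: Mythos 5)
Your overall strategy coincides with the paper's: everything is reduced to Lemma~\ref{preclosurelemma} applied to the preclosure $I \longmapsto \widetilde{I}$ on $\F(D)$, together with the observation $I\widetilde{J} \subseteq \widetilde{IJ}$ (the paper compresses all of this into the single sentence preceding the proposition). Your identification of $\F(D)^\sharp = \Fix(\widetilde{\,\cdot\,})$, the formula for $I^\sharp$, and the appeal to Proposition~\ref{characterizingclosures2}(1) for uniqueness are all correct and aligned with the intended argument.

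There are, however, two issues in the part you supply beyond the paper's sketch. First, a minor one: in your argument for $\overline{I} \subseteq \widetilde{I}$, the module $N = \sum_{j=0}^{n-1} I^{n-1-j}x^j$ is \emph{not} finitely generated when $I$ is not (and then neither is there a $d \in D$ with $dN \subseteq D$, since $N \supseteq I^{n-1}$ need not be a fractional ideal). This is easily repaired: since the coefficients $a_1, \dots, a_n$ lie in powers of a finitely generated $J \subseteq I$, replace $I$ by such a $J$; then $N$ is a finitely generated, hence fractional, nonzero $D$-submodule of $F$, and the argument goes through to give $x \in \widetilde{J} \subseteq \widetilde{I}$.

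Second, a genuine gap in the Noetherian case. You write ``since $D$ is Noetherian so is $R = D[It]$,'' but this only holds when $I$ is a finitely generated $D$-module. A Kaplansky fractional ideal $I \in \F(D)$ is an arbitrary nonzero $D$-submodule of $F$; it need not be finitely generated even over a Noetherian $D$ (for instance $D = \ZZ$ and $I = \sum_{p}\ZZ\cdot\frac{1}{p}$, for which the Rees algebra is not Noetherian). So the step ``a Noetherian domain equals its complete integral closure, hence $xt$ is integral over $R$'' is not justified for general $I$. The proposition does claim equality $\overline{I} = \widetilde{I} = I^\sharp$ for all $I \in \F(D)$ when $D$ is Noetherian, so some additional argument is needed to handle non--finitely-generated $I$ — for example by reducing to the statement that $\overline{I}$ is completely integrally closed whenever $D$ is Noetherian, or by a direct valuation-overring computation with $\overline{I} = \bigcap_V IV$ — and, unlike the first point, this reduction is not supplied by the obvious finite-generation trick (the semistar complete integral closure $\widetilde{\,\cdot\,}$ is not evidently of finite type, so one cannot simply write $\widetilde{I}$ as a union of $\widetilde{J}$ over finitely generated $J \subseteq I$).
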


\begin{corollary}
For any integral domain $D$ with quotient field $F$, one has the following.
\begin{enumerate}
\item $D^\sharp$ is the smallest completely integrally closed overring of $D$. 
\item Define $D^{\sharp_0} = D$, $D^{\sharp_{\alpha+1}} = \widetilde{D^{\sharp_{\alpha}}}$ for all successor ordinals $\alpha+1$, and $D^{\sharp_{\alpha}}  = \bigcup_{\beta < \alpha} D^{\sharp_\beta}$ for all limit ordinals $\alpha$.  Then $D^\sharp = D^{\sharp_\alpha}$ for all $\alpha  \gg 0$.
\item If $J \in \F(D)$ is completely integrally closed, then the largest overring $(J:_F J)$ of $D$ for which $J$ is a Kaplansky fractional ideal is also completely integrally closed.
\end{enumerate}
\end{corollary}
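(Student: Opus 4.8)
The plan is to prove the three parts more or less independently. Parts (1) and (3) are short: (1) comes from the fact that $\sharp$ is a nucleus on $\F(D)$, and (3) is a one‑line consequence of Proposition~\ref{CSTstar}. The bulk of the (modest) work is in part~(2), which I would obtain by recognizing the transfinite iteration as an instance of Lemma~\ref{preclosurelemma}(2); the only delicate point there is matching the two recursively defined chains at limit ordinals and checking that the suprema that occur are honest unions of $D$-submodules of $F$.

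For part~(1): since $\sharp$ is a semistar operation it is a nucleus on the ordered monoid $\F(D)$ by Theorem~\ref{mainprop1}, so $D^\sharp D^\sharp \subseteq (DD)^\sharp = D^\sharp$ because $D$ is the identity element of $\F(D)$; as $D^\sharp$ is a $D$-submodule of $F$ containing $D \ni 1$, it is therefore a subring of $F$, i.e.\ an overring of $D$. By the preceding proposition $D^\sharp = \widetilde{D^\sharp}$ is completely integrally closed and is the smallest completely integrally closed Kaplansky fractional ideal of $D$ containing $D$. Since any completely integrally closed overring $E$ of $D$ is in particular such a Kaplansky fractional ideal, we get $D^\sharp \subseteq E$, so $D^\sharp$ is the smallest completely integrally closed overring of $D$.

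For part~(2): let $+$ be the preclosure $I \longmapsto \widetilde{I}$ on the bounded complete poset $\F(D)$ (it is a preclosure by the remark preceding the proposition), and note that it is bounded above by the closure operation $\sharp$ since $\widetilde{I} \subseteq \widetilde{I^\sharp} = I^\sharp$. Here $\Fix(+)$ is exactly the set of completely integrally closed Kaplansky fractional ideals of $D$, so by Lemma~\ref{preclosurelemma}(1) the smallest closure operation larger than $+$ sends $I$ to $\bigwedge\{J \in \Fix(+): J \supseteq I\} = I^\sharp$, i.e.\ it equals $\sharp$. Lemma~\ref{preclosurelemma}(2) with $x = D$ then produces a transfinite chain $D^{+_0} = D$, $D^{+_\alpha} = \bigvee\{(D^{+_\beta})^+: \beta < \alpha\}$ that stabilizes at $D^\sharp$ for $\alpha \gg 0$. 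It remains to check by transfinite induction that $D^{+_\alpha} = D^{\sharp_\alpha}$ for all $\alpha$; the chain $(D^{\sharp_\gamma})$ is easily seen to be increasing (a routine transfinite induction using expansiveness of $+$), so at a successor $\alpha = \beta+1$ one has $D^{+_{\beta+1}} = \bigvee_{\gamma \le \beta}\widetilde{D^{\sharp_\gamma}} = \widetilde{D^{\sharp_\beta}} = D^{\sharp_{\beta+1}}$, and at a limit $\alpha$ one uses that $\{\gamma+1: \gamma < \alpha\}$ is cofinal in $\alpha$ and that the supremum in $\F(D)$ of an increasing chain of submodules is their union, getting $D^{+_\alpha} = \bigvee_{\gamma<\alpha}D^{\sharp_{\gamma+1}} = \bigcup_{\delta<\alpha}D^{\sharp_\delta} = D^{\sharp_\alpha}$. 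Hence $D^\sharp = D^{\sharp_\alpha}$ for $\alpha \gg 0$. The limit-ordinal bookkeeping is the only place any care is needed.

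For part~(3): $(J:_F J)$ is precisely the residual $J/J$ in $\F(D)$, namely the largest element $z \in \F(D)$ with $zJ \subseteq J$ (this set is nonempty since $DJ = J$, and it has a largest element because $\F(D)$, being a near prequantale, is near residuated). Since $J$ is completely integrally closed we have $J^\sharp = J$, i.e.\ $J \in \F(D)^\sharp$, and $\sharp$ is a nucleus on $\F(D)$; so by the last assertion of Proposition~\ref{CSTstar}, $(J/J)^\sharp = J/J$, whence $(J:_F J)$ is $\sharp$-closed and therefore completely integrally closed. (Alternatively, argue directly: if $c \ne 0$ satisfies $cx^n \in (J:_F J)$ for all $n \ge 1$ and $y \in J$, then $c(xy)^n = (cx^n)y^n \in (J:_F J)J^n \subseteq J^n$, so $xy \in \widetilde{J} = J$; as $y \in J$ was arbitrary, $x \in (J:_F J)$, and since $+$ is expansive this shows $(J:_F J)$ is completely integrally closed.)
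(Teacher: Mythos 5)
Your proof is correct, and since the paper omits the proof (declaring such omissions routine), your reconstruction follows exactly the intended machinery. Part (1) correctly combines the nucleus property $D^\sharp D^\sharp \subseteq D^\sharp$ with the last sentence of the preceding proposition; part (3) is indeed a one-line application of the residuation identity in Proposition \ref{CSTstar} (with $\F(D)$ near residuated because it is a near $\U$-lattice hence a near prequantale), and your alternative direct argument is also fine once one notes that $(J:_F J)^n = (J:_F J)$ because $(J:_F J)$ is a ring, a step you rely on implicitly when writing $cx^n \in (J:_F J)$ in place of $cx^n \in (J:_F J)^n$. Part (2) is the right way to read Lemma \ref{preclosurelemma}(2), and your transfinite-induction bookkeeping — that the chain $(D^{\sharp_\gamma})$ is increasing, that the lemma's $\bigvee\{(D^{+_\beta})^+ : \beta < \alpha\}$ collapses at successors to the top term and at limits to the union via cofinality, and that suprema of increasing chains in $\F(D)$ are literal unions — is exactly the care needed to reconcile the lemma's recursion with the corollary's. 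No gaps.
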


Next, let $D^+$ denote the integral closure of $D$ in an algebraic closure of $F$.  For all $I \in \F(D)$, let $I^\pi = ID^+ \cap F$, which we call the {\it semistar plus closure} of $I$, and which is easily seen to define a semistar operation $\pi$ on $D$.   Note that $D^\pi = \overline{D}$ and therefore $I\overline{D} \subseteq I^\pi$ for all $I \in \F(D)$.  Like semistar integral closure, semistar plus closure is of finite type.  Also, for any ideal $I$ of $D$, the ideal $I^\pi \cap D = ID^+ \cap D$ is known as the {\it plus closure} of $I$.

The operation of tight closure also yields a semistar operation, as follows.  Let $D$ be any integral domain (not necessarily Noetherian) of characteristic $p > 0$ with quotient field $F$.  For any $I \in \F(D)$ let  $$I^T = \{x \in F: \ \exists c \in F\setmin\{0\} \mbox{ such that } cx^q \in I^{[q]} \mbox{ for all  } q = p^e, e \geq 0\},$$
where $I^{[q]}$ for any $q = p^e$ denotes the $D$-submodule of $F$ generated by the image of $I$ under the $q$-powering Frobenius endomorphism of $F$.  It is clear that $T$ is a preclosure on $\F(D)$.   Moreover, one has $aJ^T = (aJ)^T$ for all nonzero $a \in F$, and therefore $IJ^T \subseteq (IJ)^T$, for all $I,J \in \F(D)$.  Therefore by Lemma \ref{preclosurelemma} the operation $\tau$ on $\F(D)$ of {\it semistar tight closure}, defined by $$I^\tau = \bigcap\{J \in \F(D): \ J \supseteq I \mbox{ and } J^T = J\}$$
for all $I \in \F(D)$, is a semistar operation on $D$.  Note that by \cite[Corollary 4]{AZ} one has $\widetilde{D} \subseteq D^T \subseteq \widetilde{\widetilde{D}}$,
from which it follows that $D^\tau = D^\sharp$.  In particular, $\tau$ restricts to a star operation on $D$ if and only if $D^\sharp = D$ if and only if $D$ is completely integrally closed.  Also, if $D$ is Noetherian then $I^\tau = I^T \subseteq \overline{I}$ for all $I \in \F(D)$.

\begin{example}\label{tightexample}
Let $k$ be a finite field of characteristic $p > 0$, and let $D$ be the subring $k[X,XY,XY^p, XY^{p^2}, \ldots]$ of $k[X,Y]$.  Then $D^\tau = D^T = \widetilde{\widetilde{D}} = k[X,Y]$, but $Y \notin \widetilde{D}$, whence $\widetilde{D} \subsetneq D^\tau$.
\end{example}

The {\it tight closure} $I^*$ of an ideal $I$ of $D$ may be defined as the ideal $I^* = I^\tau \cap D$.  Since $\tau = T$ if $D$ is Noetherian, this definition coincides with the well-known definition of tight closure in the Noetherian case.  In fact, we may generalize this definition to an arbitrary commutative ring $R$ of prime characteristic $p$, as follows.  For any $I \in \mathcal{I}(R)$ we let $$I^T = \{x \in R: \ \exists c \in R^o \mbox{ such that } cx^q \in I^{[q]} \mbox{ for all  } q = p^e \gg 0\},$$ where $R^o$ is the complement of the union of the minimal primes of $R$ and $I^{[q]}$ is the ideal of $R$ generated by the image of $I$ under the $q$-powering Frobenius endomorphism of $R$.  We say that $I \in \mathcal{I}(R)$ is {\it tightly closed} if $I^T = I$.  It is clear that $T$ is a preclosure on the complete lattice $\mathcal{I}(R)$.  For any $I \in \mathcal{I}(R)$ we let
$$I^* = \bigcap\{J \in \mathcal{I}(R): \ J \supseteq I \mbox{ and } J \mbox{ is tightly closed}\}.$$
Since $T$ is a preclosure on $\mathcal{I}(R)$ such that $IJ^T \subseteq (IJ)^T$ for all $I,J \in \mathcal{I}(R)$, we obtain from Lemma \ref{preclosurelemma} the following result.

\begin{proposition}
Let $R$ be a commutative ring of prime characteristic $p$.  The operation $*$ is the unique semiprime operation on $R$ such that $I^* = I$ for $I \in \mathcal{I}(R)$ if and only if $I$ is tightly closed.  Moreover,  for all $I \in \mathcal{I}(R)$ one has $I^T \subseteq (I^*)^T = I^*$, with equalities holding if $R$ is Noetherian.  In particular, $I^*$ for any $I \in \mathcal{I}(R)$ is the smallest tightly closed ideal of $R$ containing $I$, and $*$ equals the ordinary tight closure operation if $R$ is Noetherian.
\end{proposition}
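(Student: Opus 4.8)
The plan is to recognize this as a direct application of Lemma \ref{preclosurelemma} to the preclosure $T$ on the multiplicative lattice $\mathcal{I}(R)$, exactly parallel to the preceding treatments of $\sharp$ and of semistar tight closure. First I would record that $\mathcal{I}(R)$, being a commutative unital quantale, is complete and residuated (Proposition \ref{quantales}), hence in particular bounded complete and near residuated, and that $T$ is genuinely a preclosure on it: it is expansive since $x \in I$ gives $x^q \in I^{[q]}$ for all $q$ while $1 \in R^o$, and it is order-preserving since $I \subseteq J$ implies $I^{[q]} \subseteq J^{[q]}$. Moreover $T$ is bounded above by the closure operation $e \colon I \longmapsto R$, so the hypotheses of Lemma \ref{preclosurelemma} are satisfied.

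Next I would invoke Lemma \ref{preclosurelemma}(1) to produce the smallest closure operation $*$ on $\mathcal{I}(R)$ larger than $T$, for which $\mathcal{I}(R)^* = \Fix(T) = \{I \in \mathcal{I}(R): I \text{ tightly closed}\}$ and $I^* = \bigwedge\{J \in \Fix(T): J \supseteq I\} = \bigcap\{J \in \mathcal{I}(R): J \supseteq I \text{ and } J \text{ tightly closed}\}$, which is precisely the operation defined before the proposition. Since $IJ^T \subseteq (IJ)^T$ for all $I,J \in \mathcal{I}(R)$ and hence, by commutativity, $I^T J \subseteq (IJ)^T$ as well, Lemma \ref{preclosurelemma}(3) then yields that $*$ is a nucleus on $\mathcal{I}(R)$, i.e.\ a semiprime operation on $R$. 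For the uniqueness clause I would appeal to Proposition \ref{characterizingclosures2}(1): a closure operation on $\mathcal{I}(R)$ is completely determined by its set of fixed points, so among all closure operations, and a fortiori among semiprime operations, $*$ is the only one whose fixed ideals are exactly the tightly closed ones; that is, $I^* = I$ iff $I$ is tightly closed.

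The ``moreover'' chain is then immediate: $I \subseteq I^*$ together with monotonicity of $T$ gives $I^T \subseteq (I^*)^T$, while $I^* \in \mathcal{I}(R)^* = \Fix(T)$ gives $(I^*)^T = I^*$, so $I^T \subseteq (I^*)^T = I^*$; and since $I^*$ is a tightly closed ideal containing $I$ yet is contained in every such ideal by its defining formula, it is the smallest one. For the Noetherian case I would supply the one nonformal input — the standard fact from tight closure theory (Hochster--Huneke) that ordinary tight closure is already a closure operation, i.e.\ $(I^T)^T = I^T$ for all $I$, on the ideal lattice of a Noetherian ring of prime characteristic — so that $T$ itself is the smallest closure operation larger than $T$, whence $T = *$; the displayed inclusions then collapse to equalities and $*$ coincides with ordinary tight closure. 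I expect this last, cited, step to be the only real obstacle: the remainder is a mechanical instantiation of Lemma \ref{preclosurelemma} together with the observation that closure operations correspond bijectively to their fixed-point sets.
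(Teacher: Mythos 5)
Your proposal is correct and follows the paper's own route: the paper's entire justification for this proposition is the observation that $T$ is a preclosure on the multiplicative lattice $\mathcal{I}(R)$ satisfying $IJ^T \subseteq (IJ)^T$, whence the result is read off from Lemma \ref{preclosurelemma}. You have simply unpacked the mechanical consequences (parts (1) and (3) of that lemma, the fixed-point characterization via Proposition \ref{characterizingclosures2}(1), and the monotonicity chain $I^T \subseteq (I^*)^T = I^*$), and you correctly identify the one external input — idempotence of ordinary tight closure for Noetherian $R$ — needed to collapse the inclusions to equalities in the Noetherian case.
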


Thus $*$ provides a potential definition of tight closure for non-Noetherian commutative rings of prime characteristic.  The finitary nucleus $*_f$ on $\mathcal{I}(R)$ is another candidate for a definition of tight closure in the non-Noetherian case.  Further investigation may determine whether or not either of these generalizations of tight closure is useful.  One may seek, for example, a generalization of Tight Closure via Colon-Capturing \cite[Theorem 3.1]{hun} with respect to some definition of non-Noetherian Cohen-Macaulay rings, such as the notion presented in \cite{ham}.

\section{Ideal and module systems}\label{sec:AMSIS}

For any magma $M$, let $M_0$ denote the magma $M \amalg \{0\}$, where $0x = 0 = x0$ for all $x \in M_0$.  A {\it module system} on an abelian group $G$ is a closure operation $r$ on the $\K$-lattice $2^{G_0}$ such that $\emptyset^r = \{0\}$ and $(cX)^r = cX^r$ for all $c \in G_0$ and all $X \in 2^{G_0}$.

\begin{theorem}\label{modulesystems}
A module system on an abelian group $G$ is equivalently a nucleus $r$ on the multiplicative lattice $2^{G_0}$ such that $\emptyset^r = \{0\}$.   Moreover, the following are equivalent for any self-map $r$ of $2^{G_0}$ such that $\emptyset^r = \{0\}$.
\begin{enumerate}
\item $r$ is a module system on $G$.
\item $r$ is a closure operation on the poset $2^{G_0}$ and $r$-multiplication is associative.
\item $r$ is a closure operation on the poset $2^{G_0}$ and $(X^r Y^r)^r = (XY)^r$ for all $X,Y \in 2^{G_0}$.
\item $XY \subseteq Z^r$ if and only if $XY^r \subseteq Z^r$ for all $X,Y,Z \subseteq G_0$.
\end{enumerate}
\end{theorem}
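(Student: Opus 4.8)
The plan is to follow the template used for Theorem~\ref{mainprop1}, exploiting the machinery of Section~\ref{sec:MCO} on sup-spanning subsets, transportable elements, and invertible elements. First I would record the structural facts about $2^{G_0}$. Since $G$ is an abelian group, $G_0 = G \amalg \{0\}$ is a commutative monoid, so $2^{G_0}$ is a (commutative, unital) multiplicative lattice, with identity $\{1\}$ (the group identity of $G$) and least element, which is also an annihilator, $\emptyset$. Let $\Sigma = \{\{c\}: c \in G_0\}$ be the set of all singletons. A routine verification shows $XY = \bigcup_{c \in X}\{c\}Y = \bigcup_{c \in Y}X\{c\}$ for all $X,Y \in 2^{G_0}$, so $\Sigma$ is a sup-spanning subset of $2^{G_0}$. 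Because $G_0$ is commutative, the defining module-system requirement $(cX)^r = cX^r$ for all $c \in G_0$ and all $X \in 2^{G_0}$ is exactly the assertion that $\Sigma \subseteq \T^r(2^{G_0})$ (here $cX = \{c\}X = X\{c\}$, so the two conditions defining transportability coincide).

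Next I would prove the first assertion, that a self-map $r$ of $2^{G_0}$ with $\emptyset^r = \{0\}$ is a module system on $G$ if and only if it is a nucleus on $2^{G_0}$. For the forward direction: if $r$ is a module system then $r$ is a closure operation and $\{c\}X^r = (\{c\}X)^r$ for all $c \in G_0$ and all $X$; by commutativity the conditions of Proposition~\ref{closureprop2} with $\Sigma$ the set of singletons are satisfied, so $r$ is a nucleus. For the converse, suppose $r$ is a nucleus with $\emptyset^r = \{0\}$; it suffices to show $\Sigma \subseteq \T^r(2^{G_0})$. For $c \neq 0$ the singleton $\{c\}$ lies in $\Inv(2^{G_0})$ with inverse $\{c^{-1}\}$, so $\{c\} \in \T^r(2^{G_0})$ by Proposition~\ref{closureprop3}. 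For $c = 0$, idempotence gives $\{0\}^r = (\emptyset^r)^r = \emptyset^r = \{0\}$, and then for arbitrary $X$ one checks directly that $(\{0\}X)^r = \{0\}X^r$: both sides are $\{0\}$ when $X \neq \emptyset$ (using $\{0\}^r = \{0\}$ and $X^r \supseteq X \neq \emptyset$), and both are $\{0\}$ when $X = \emptyset$ (using $\emptyset^r = \{0\}$). Hence $\Sigma \subseteq \T^r(2^{G_0})$, and since $r$ is a closure operation with $\emptyset^r = \{0\}$ it is a module system.

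Finally, the equivalences (1)--(4) follow by transporting Propositions~\ref{closureprop1} and~\ref{closureprop1a} through the first assertion. As $2^{G_0}$ is an ordered monoid, Proposition~\ref{closureprop1} gives, for a closure operation $r$ on $2^{G_0}$, that $r$ is a nucleus iff $r$-multiplication is associative iff $(X^rY^r)^r = (XY)^r$ for all $X,Y$; combined with the first assertion (and the fact that a module system is in particular a closure operation) this yields (1)$\Leftrightarrow$(2)$\Leftrightarrow$(3). As $2^{G_0}$ is unital, Proposition~\ref{closureprop1a} gives, for an arbitrary self-map $r$, that $r$ is a nucleus iff $XY \subseteq Z^r \Leftrightarrow XY^r \subseteq Z^r \Leftrightarrow X^rY \subseteq Z^r$ for all $X,Y,Z$; condition (4) is one link of this chain, and applying (4) to the pair $(Y,X)$ together with commutativity of $2^{G_0}$ supplies the other link, so (4) is equivalent to $r$ being a nucleus, hence (given $\emptyset^r = \{0\}$) to (1).

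I expect the only point requiring care, rather than a bare citation, to be the treatment of the element $\{0\}$: unlike the nonzero singletons it is not invertible in $2^{G_0}$, so Proposition~\ref{closureprop3} does not apply to it, and one must instead deduce $\{0\} \in \T^r(2^{G_0})$ from $\emptyset^r = \{0\}$ and idempotence, exactly as above. Everything else is a direct application of results from Sections~\ref{sec:POM} and~\ref{sec:MCO}.
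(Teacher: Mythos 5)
Your proof is correct and follows essentially the same route as the paper: sup-spanning singletons via Corollary~\ref{joinspan} (equivalently Proposition~\ref{closureprop2}) in one direction, Proposition~\ref{closureprop3} for the invertible singletons plus a separate check of $\{0\}$ in the other, and Propositions~\ref{closureprop1} and~\ref{closureprop1a} for the list of equivalences. If anything, you are more careful than the printed proof, which writes $(\{0\}X)^r = \{0\}^r$ without noting the degenerate case $X=\emptyset$, and which cites Proposition~\ref{closureprop2} where Proposition~\ref{closureprop3} is what is actually invoked.
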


\begin{proof}
Let $\Sigma = \{\{c\}: c \in G_0\}$.  Let $r$ be a module system on $G$.  Since  $\Sigma$ is a sup-spanning subset of $2^{G_0}$ and $\Sigma \subseteq \T^r(2^{G_0})$, it follows from Corollary \ref{joinspan} that $r$ is a nucleus on $2^{G_0}$.  Conversely, let $r$ be a nucleus on $2^{G_0}$ such that $\emptyset^r = \{0\}$.  Since $\Sigma\setmin\{\{0\}\} \subseteq \Inv(2^{G_0})$, by Proposition \ref{closureprop2}, we have $(\{c\}X)^r = \{c\}X^r$ for all $c \in G$ and all $X \in 2^{G_0}$.  Also, we have $(\{0\}X)^r = \{0\}^r = \{0\} = \{0\}X^r$ for all $X \in 2^{G_0}$.  Thus $r$ is a module system on $G$.
\end{proof}

An alternative proof of Theorem \ref{mainprop1} can be given using Theorem \ref{modulesystems}, Propositions \ref{closureprop1} and \ref{closureprop1a}, and the following lemma.

\begin{lemma}\label{ssexample} Let $D$ be an integral domain, $F^\times$ the group of nonzero elements of the quotient field $F$ of $D$, and $\star$ a self-map of $\F(D)$.  Let $\emptyset^r = \{0\}^ r = \{0\}$ and $X^r = (DX)^\star$ for all subsets $X$ of $F$ containing a nonzero element, where $DX$ denotes the $D$-submodule of $F$ generated by $X$.  Then $\star$ is a semistar operation on $D$ if and only if $r$ is a module system on $F^\times$.
\end{lemma}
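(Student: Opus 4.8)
The plan is to exploit the set-level identification $G_0 = F^\times \amalg \{0\} = F$, under which $2^{G_0}$ is just the power set of $F$ equipped with the product $XY = \{xy : x \in X,\, y \in Y\}$, and $\F(D)$ sits inside $2^{G_0}$ as the set of those $X \subseteq F$ that are $D$-submodules containing a nonzero element. The only real content is the observation that $DX \in \F(D)$ (i.e.\ $DX$ is a \emph{nonzero} $D$-submodule) if and only if $X$ contains a nonzero element, together with the bridging identity $I^r = (DI)^\star = I^\star$ valid for every $I \in \F(D)$, since $DI = I$. Thus $r$ is a well-defined self-map of $2^{G_0}$ that extends $\star$, with the two degenerate subsets $\emptyset$ and $\{0\}$ both sent to $\{0\}$.

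For the implication that $\star$ being a semistar operation forces $r$ to be a module system, I would verify the closure-operation axioms for $r$ on $2^{G_0}$ by a case split according to whether the subsets involved contain a nonzero element or lie in $\{\emptyset, \{0\}\}$. In the nondegenerate case, expansiveness, monotonicity and idempotence of $r$ follow at once from the corresponding properties of $\star$ applied to $DX, DY$ (for idempotence one also uses $D(DX)^\star = (DX)^\star$); the degenerate cases are immediate, since $\emptyset^r = \{0\}^r = \{0\}$ and $\{0\} \subseteq Y^r$ for every nondegenerate $Y$ because $0 \in DY \subseteq Y^r$. The equality $\emptyset^r = \{0\}$ holds by definition, and the translation law $(cX)^r = cX^r$ (for $c \in G_0$) reduces, when $c \neq 0$ and $X$ contains a nonzero element, to the semistar identity $(cDX)^\star = c(DX)^\star$, while in every remaining case both sides collapse to $\{0\}$ (using that $X^r$ is always nonempty, so $0 \cdot X^r = \{0\}$). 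Hence $r$ is a module system; alternatively, once the closure axioms for $r$ are in hand, one may deduce this via Corollary \ref{joinspan} (and Theorem \ref{modulesystems}), since $\{\{c\}: c \in G_0\}$ is a sup-spanning subset of $2^{G_0}$ contained in $\T^r(2^{G_0})$.

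For the converse, I would assume $r$ is a module system and simply restrict it to $\F(D)$. Using $I^r = I^\star$ for $I \in \F(D)$, expansiveness and monotonicity of $r$ transfer verbatim to $\star$; the one point to record is that $I^\star = I^r$ automatically lies in $\F(D)$ because $\star$ is \emph{given} as a self-map of $\F(D)$, so the computation $(I^\star)^\star = (I^\star)^r = (I^r)^r = I^r = I^\star$ is legitimate and yields idempotence. For the defining property of a semistar operation, $(aI)^\star = (aI)^r = aI^r = aI^\star$ for $a \in F^\times$ and $I \in \F(D)$, invoking $aI \in \F(D)$ and the module-system law $(cX)^r = cX^r$ with $c = a$.

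I expect the only genuine obstacle to be the bookkeeping around the two degenerate subsets $\emptyset$ and $\{0\}$: they lie outside $\F(D)$, the formula $X^r = (DX)^\star$ does not govern them, and one must check by hand that the closure axioms and the translation law still hold there. Everything else is a transparent translation of the already-established properties of $\star$ across the identification $2^{G_0} = 2^F \supseteq \F(D)$.
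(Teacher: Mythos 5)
Your proof is correct and is essentially the routine verification the paper intends when it omits this proof: identify $2^{(F^\times)_0}$ with $2^F$, use the bridging identity $I^r = (DI)^\star = I^\star$ on $\F(D)$ (since $DI = I$ there, and $D(DX)^\star = (DX)^\star$ for idempotence), reduce the translation law to $D(cX) = cDX$ and the semistar axiom, and dispose of the degenerate subsets $\emptyset$ and $\{0\}$ by direct inspection. The case analysis is complete and the subtle point you flag (that $X^r$ is always nonempty, so $0 \cdot X^r = \{0\}$) is exactly the right observation to make the $c = 0$ case of the translation law go through.
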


Next, a {\it weak ideal system} \cite{hal1} on a commutative monoid $M$ is a closure operation $r$ on the $\K$-lattice $2^{M_0}$ such that $0 \in \emptyset^r$, $cM_0 \subseteq \{c\}^r$, and $cX^r \subseteq (cX)^r$ for all $c \in M_0$ and all $X \in 2^{M_0}$.  A weak ideal system on $M$ is said to be an {\it ideal system} on $M$ if $(cX)^r = cX^r$ for all such $c$ and $X$.  We have the following result, whose proof is similar to that of Theorem \ref{modulesystems}.

\begin{proposition}\label{idealystems}
Let $M$ be a commutative monoid.  A weak ideal system on $M$ is equivalently a nucleus $r$ on the $\K$-lattice $2^{M_0}$ such that $\{0\}^r = \emptyset^r$ and $\{1\}^r = M_0$. Moreover, a weak ideal system $r$ on $M$ is an ideal system on $M$ if and only if every singleton in $2^{M_0}$ is transportable through $r$. 
\end{proposition}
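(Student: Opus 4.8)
The plan is to mirror the proof of Theorem~\ref{modulesystems}, taking $\Sigma = \{\{c\} : c \in M_0\}$, the set of singletons, as a sup-spanning subset of the $\K$-lattice $2^{M_0}$; it is sup-spanning because $XY = \bigvee\{\{c\}Y : c \in X\} = \bigvee\{X\{d\} : d \in Y\}$ for all $X,Y \subseteq M_0$. The one place where the argument must diverge from the module-system case is that, since $M$ is only a monoid and not a group, the singletons need not lie in $\Inv(2^{M_0})$, so one cannot extract transportability of singletons from Proposition~\ref{closureprop3}; instead one uses that recognizing a nucleus requires only the one-sided inequality $cX^r \subseteq (cX)^r$, via Proposition~\ref{closureprop2}. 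This is precisely why the reverse inclusion, hence the ideal-system condition, has to be imposed as a separate hypothesis rather than coming for free.

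First I would establish the equivalence. If $r$ is a weak ideal system, then the axiom $cX^r \subseteq (cX)^r$ says $\{c\}X^r \subseteq (\{c\}X)^r$ for all $c \in M_0$ and $X \subseteq M_0$, and by commutativity of $2^{M_0}$ also $X^r\{c\} \subseteq (X\{c\})^r$; since $\Sigma$ is sup-spanning, Proposition~\ref{closureprop2} yields that $r$ is a nucleus. Moreover $0 \in \emptyset^r$ gives $\{0\} \subseteq \emptyset^r$, hence $\{0\}^r \subseteq (\emptyset^r)^r = \emptyset^r$, while $\emptyset \subseteq \{0\}$ gives $\emptyset^r \subseteq \{0\}^r$, so $\{0\}^r = \emptyset^r$; and $M_0 = 1 \cdot M_0 \subseteq \{1\}^r \subseteq M_0$ gives $\{1\}^r = M_0$. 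Conversely, let $r$ be a nucleus on $2^{M_0}$ with $\{0\}^r = \emptyset^r$ and $\{1\}^r = M_0$. Then $0 \in \{0\} \subseteq \{0\}^r = \emptyset^r$; for $c \in M_0$ one has $cM_0 = \{c\}M_0 = \{c\}\{1\}^r \subseteq (\{c\}\{1\})^r = \{c\}^r$ by Proposition~\ref{closureprop1}(2) together with $\{c\}\{1\} = \{c\}$; and $cX^r = \{c\}X^r \subseteq (\{c\}X)^r = (cX)^r$, again by Proposition~\ref{closureprop1}(2). Hence $r$ is a weak ideal system.

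For the second assertion, observe that in the commutative magma $2^{M_0}$ a singleton $\{c\}$ is transportable through $r$ exactly when $(\{c\}X)^r = \{c\}X^r$ for all $X \subseteq M_0$, that is, when $(cX)^r = cX^r$ for all such $X$; hence the requirement that every singleton be transportable through $r$ is literally the condition $(cX)^r = cX^r$ for all $c \in M_0$ and $X \subseteq M_0$, which by definition is exactly what promotes a weak ideal system to an ideal system. I do not expect a genuine obstacle: once $\Sigma$ is identified the whole argument is a short diagram chase, and the only points deserving care are the bookkeeping around the zero element $0$ and the observation already flagged, namely that the reverse inclusion $(cX)^r \subseteq cX^r$ is a real extra hypothesis rather than something obtained from invertibility as in the semistar and module-system settings.
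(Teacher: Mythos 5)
Your proof is correct and follows exactly the approach the paper intends (the paper only says the proof is ``similar to that of Theorem~\ref{modulesystems}''), including the key adjustment you flag: in the monoid setting singletons need not be invertible, so the forward direction invokes Proposition~\ref{closureprop2} on the sup-spanning set of singletons rather than Corollary~\ref{joinspan}/Proposition~\ref{closureprop3}, and the reverse inclusion $(cX)^r \subseteq cX^r$ must be imposed separately as transportability. The bookkeeping for $\{0\}^r = \emptyset^r$ and $\{1\}^r = M_0$ in both directions and the reduction of transportability to the ideal-system axiom via commutativity are all handled correctly.
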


A module system on an abelian group $G$ may be seen equivalently as a nucleus on the $\K$-lattice $2^{G_0} \setmin 2^G$.  Since $2^{G_0}$ and $2^{G_0} \setmin 2^G$ are (coherent) $\K$-lattices and $2^{G_0} \setmin 2^G$ is a $\U$-lattice, most of the results of this paper, and in particular the results of Sections \ref{sec:MCO} through \ref{sec:stable}, apply specifically to module systems.  Our results on star and semistar operations generalize to module systems, since the ordered monoids $2^{G_0}-2^G$ and $\F(D)$, where $D$ is any integral domain, share the relevant properties.  Finally, as $2^{M_0}$ is a $\K$-lattice for any commutative monoid $M$, many of our results apply as well to weak ideal systems.  We leave it to the interested reader to carry out the details.



\end{document}